\documentclass{amsart}

\usepackage{amsmath}
\usepackage{mathrsfs}
\usepackage{amsthm}
\usepackage[utf8]{inputenc}
\usepackage{amsfonts}
\usepackage{amssymb}
\usepackage{amsthm}
\usepackage{caption}
\usepackage{amssymb, amsmath} 
\usepackage{graphicx}
\usepackage{tikz}
\usepackage[upright]{fourier}
\usepackage{tkz-graph}
\usetikzlibrary{arrows}
\usepackage{color}
\usepackage{color,soul}
\usepackage[normalem]{ulem}
\usepackage{enumitem}
\usepackage[toc,page]{appendix}
\usetikzlibrary{arrows,positioning,automata,shapes,calc,3d,graphs}
\usetikzlibrary{decorations.markings}
\usetikzlibrary{arrows, shadows}
\usepackage[underline=true,rounded corners=false]{pgf-umlsd}
\usepackage{xspace}
\usepackage{verbatim}
\usepackage[margin=1in]{geometry}
\usepackage{xassoccnt}
\tikzstyle{vertex}=[circle, draw, inner sep=0pt, minimum size=10pt]
 \usepackage{graphicx, scalefnt}
 \usepackage{pgfplots}
 \usepackage{etex}
 \usetikzlibrary{decorations.pathmorphing}
\usetikzlibrary{shapes,arrows}
\tikzstyle{decision} = [diamond, draw, text width=5em, text badly centered, node distance=3cm, inner sep=0pt]
\tikzstyle{block} = [rectangle, draw, text width=5em, text centered, rounded corners, minimum height=4em]
\usetikzlibrary{intersections}

\newtheorem{theorem}{Theorem}[section]

\newtheorem{corollary}[theorem]{Corollary}
\newtheorem{lemma}[theorem]{Lemma}

\newtheorem{proposition}[theorem]{Proposition}
\newtheorem{problem}{Problem}
\newtheorem{conjecture}{Conjecture}

\newcommand{\pf}{\bf Proof:}
\newcommand{\epf}{\Box\vspace{0.15in}}

\theoremstyle{definition}
\newtheorem{definition}[theorem]{Definition}
\newtheorem{example}[theorem]{Example}

\numberwithin{equation}{section}

\newcommand{\sone}{\textsf{S}_1}
\newcommand{\gone}{{\sf G}_1}
\newcommand{\sfin}{{\sf S}_{fin}}

%%%%%%%%%%%%%%%%%%%%%%%%%%%%%%%%%
% Open covers                   %
%%%%%%%%%%%%%%%%%%%%%%%%%%%%%%%%%
\newcommand{\open}{\mathcal{O}}

%%%%%%%%%%%%%%%%%%%%%%%%%%%%%%%%%
% Special Families              %
%%%%%%%%%%%%%%%%%%%%%%%%%%%%%%%%%

\newtheorem*{repp@theorem}{\repp@title (reformulated)}
\newcommand{\newrepptheorem}[2]{%
\newenvironment{repp#1}[1]{%
 \def\repp@title{#2 \ref{##1}}%
 \begin{repp@theorem}}%
 {\end{repp@theorem}}}
\makeatother
\newrepptheorem{theorem}{Theorem}

\theoremstyle{definition}
\makeatletter
\newtheorem*{rep@theorem}{\rep@title \ continued}
\newcommand{\newreptheorem}[2]{%
\newenvironment{rep#1}[1]{%
 \def\rep@title{#2 \ref{##1}}%
 \begin{rep@theorem}}%
 {\end{rep@theorem}}}
\makeatother
\newreptheorem{example}{Example}

%%%%%%%%%%%%%%%%%%%%%%%%%%%%%%%%%
% Standard Spaces               %
%%%%%%%%%%%%%%%%%%%%%%%%%%%%%%%%%
%\newcommand{\naturals}{{\mathbb N}}
\newcommand{\integers}{{\mathbb Z}}

\newcommand{\egp}{{\sf E}}

\newcommand{\naturals}{{\mathbb N}}

\begin{document}

\title{Ramsey Theory and the Borel Conjecture}
\author{Marion Scheepers}
\address{Department of Mathematics\\ Boise State University\\ Boise, Idaho 83725}
\email{mscheepe@boisestate.edu}
\subjclass[2000]{03E02, 03E05, 05D10, 22A05, 54D20, 54G10, 54H11 }
\keywords{Ramsey theory, Rothberger bounded, strong measure zero, uniformizable space, topological group, infinite game}

\date{}

\begin{abstract}
The Borel covering property, introduced a century ago by E. Borel, is intimately connected with Ramsey theory, initiated ninety years ago in an influential paper of F.P. Ramsey. The current state of knowledge about the connection between the Borel covering property and Ramsey theory is outlined in this paper. Initially the connection is established for the situation when the set with the Borel covering property is a proper subset of a $\sigma$-compact uniform space. Then the connection is explored for a stronger covering property introduced by Rothberger. After establishing the fact that in this case several landmark Ramseyan theorems are characteristic of this stronger covering property, the case when the space with this stronger covering property is in fact $\sigma$-compact is explored.
\end{abstract}

\maketitle

In his 1919 paper \cite{Borel} E. Borel, studying the notion of Lebesgue measure zero, introduced a covering property for subsets of the real line. A set $X$ of real numbers has Borel's covering property if for each sequence $(\epsilon_n:n\in\mathbb{N})$ of positive real numbers there is a sequence $(x_n:n \in\mathbb{N})$ of elements of $X$ such that the set $\{(x_n-\epsilon_n,\; x_n+\epsilon_n):n\in\mathbb{N}\}$ of open intervals of real numbers covers $X$. Here, and throughout the paper, $\mathbb{N}$ denotes the set of positive integers, also called natural numbers.
 
Borel's definition has been adapted in several ways to other contexts. For a subset $X$ of a metric space $(M,d)$ the corresponding definition is as follows: For each sequence $(\epsilon_n:n\in\mathbb{N})$ of positive real numbers there is a sequence $(x_n:n\in\mathbb{N})$ of elements of $X$ such that the set $\{B_d(x_n,\;\epsilon_n):n\in\mathbb{N}\}$ is a cover for $X$. Here, $B_d(x,\epsilon)$ denotes the set $\{y\in X: d(x,y)<\epsilon\}$.  For a subset $X$ of a topological group $(G,\odot)$ the corresponding definition is as follows: For each sequence $(U_n:n\in\mathbb{N})$ of neighborhoods of the identity element $id$ of $G$, there is a sequence $(x_n:n\in\mathbb{N})$ of elements of $X$ such that the set $\{x_n\odot U_n:\; n\in\mathbb{N}\}$ is a cover for $X$. Here the symbol $x\odot N$ denotes the set $\{x\odot y: y\in N\}$.

A remark made in \cite{Borel} provided an initial stimulus for investigating Borel's covering property: Borel noted that every countable set of real numbers has Borel's covering property, and he speculated that the only sets of real numbers that have this covering property are the countable sets. This speculation became known as Borel's Conjecture. By 1976 it was known that Borel's conjecture is independent of the Zermelo-Fraenkel axioms, denoted \textsf{ZFC}. Despite the status of Borel's Conjecture as a statement undecidable in  the axiomatic system \textsf{ZFC}, research on Borel's covering property and its relatives continues unabated and has affected numerous branches of mathematics.  

In \cite{rothberger38} Rothberger modified the definition of Borel's covering property for the context of general topological spaces: For a topological space $(S,\tau)$ and a subset $X$ of $S$, the analogous covering property is as follows: For each sequence $(\mathcal{O}_n:n\in\mathbb{N})$ of open covers of $S$ there is a sequence $(T_n:n\in\mathbb{N})$ of open sets such that for each $n$ it is the case that $T_n\in\mathcal{O}_n$, and $\{T_n:n\in\mathbb{N}\}$ is a cover of $X$. This was a step towards a much broader framework, selection principles in mathematics, in which the Borel covering property and its modifications would be special cases. In this paper we take this broader approach, but we confine ourselves to instances of topological covering properties intimately related to the Borel covering property.
 
Ten years after Borel's paper \cite{Borel} F.P. Ramsey reported \cite{Ramsey} a significant extension of the classical pigeon hole principle. This work was taking place independently of considerations related to the Borel covering property, and was an initial stimulus for the development of the field now known as Ramsey theory.

The purpose of this paper is to give a coherent expository survey on the connection between covering properties related to the Borel covering property, on the one hand, and Ramsey theory on the other hand. As such the reader should not expect a large volume of new results, but instead a reorganization of several known results into one narrative. As this paper is only one among several in this volume, each dedicated to a different aspect of mathematics inspired by the Borel covering property, the reader can expect an overlap of some basic information among these papers. For example, in the first section of this paper, to set the stage, we describe the broader selection principles context for Borel's covering property. In the second section of the paper we connect this broader context with an essential tool, an infinite two-person game inspired by work of F. Galvin, for making the connection in section 3 with Ramsey theory. With this initial outline established, the rest of the paper narrates the deep connections between the covering properties under consideration, and several landmark developments in Ramsey theory.  Ramseyan equivalents of several other covering properties besides the Borel covering property and its relatives have been explored. A very interesting recent investigation along these lines has been published by Tsaban in \cite{Tsaban}. 

Without further notice all topological spaces in this paper are assumed to be Hausdorff spaces - meaning that for any two distinct points $x$ and $y$, there are disjoint open sets $U$ and $V$ such that $x\in U$ and $y\in V$.

\section{Borel's Covering Property as a Selection Principle}

Rothberger's approach in \cite{rothberger38} to the Borel covering property suggests a common framework for treating Borel's covering property and its variations. To describe this framework let $\mathcal{A}$ and $\mathcal{B}$ be families of sets. The symbol
\[
 \sone(\mathcal{A},\mathcal{B})
\]
denotes the statement (called a selection principle) that there is for each sequence $(O_n:n\in\mathbb{N})$ of elements of $\mathcal{A}$ a corresponding sequence $(T_n:n\in\mathbb{N})$ such that for each $n\in\mathbb{N}$ we have $T_n\in O_n$, and $\{T_n:\;n\in\mathbb{N}\}$ is an element of $\mathcal{B}$. To see that the selection principle $\sone(\mathcal{A},\mathcal{B})$ subsumes the already mentioned variations on Borel's covering property, consider the following.

\begin{example}\label{ex:metric}
For a metric space $(M,d)$ define for $\epsilon>0$ the set $\mathcal{U}_{\epsilon} = \{B_d(x,\epsilon):x\in M\}$. Then $\mathcal{U}_{\epsilon}$ is an open cover of $M$. Use the symbol $\mathcal{O}_{diam}$ to denote the set $\{\mathcal{U}_{\epsilon}:\;\epsilon>0\}$ of open covers of $M$ arising in this way. For a subset $X$ of $M$, let $\mathcal{O}_X$ denote the set of $\mathcal{V}$ where $\mathcal{V}$ consists of open subsets of $M$ and $\mathcal{V}$ covers $X$.  Then $X$ has the Borel covering property exactly when the selection principle $\sone(\mathcal{O}_{diam},\mathcal{O}_X)$ holds.
\end{example}

\begin{example}\label{ex:topgp}
For a topological group $(G,\odot)$ define for a open neighborhood $N$ of the identity element $i_G$ the set $\mathcal{U}_{N} = \{x\odot N:\; x\in G\}$. Then $\mathcal{U}_{N}$ is an open cover of $G$. Use the symbol $\mathcal{O}_{nbd}$ to denote the set $\{\mathcal{U}_{N}:\; N \mbox{ a neighborhood of } i_G\}$ of open covers of $G$ arising in this way. For a subset $X$ of $G$, let $\mathcal{O}_X$ denote the set of $\mathcal{V}$ where $\mathcal{V}$ consists of open subsets of $G$ and $\mathcal{V}$ covers $X$.  Then $X$ has the Borel covering property exactly when the selection principle $\sone(\mathcal{O}_{nbd},\mathcal{O}_X)$ holds.
\end{example}

Borel's covering property arose for the real line $\mathbb{R}$ with its standard algebraic operations and its Euclidean metric. For this specific example Borel's covering property can be interpreted both as a property of the metric space $(\mathbb{R}, \vert\cdot\vert)$, and (independently) as a property of the topological group $(\mathbb{R},\; +)$.  A natural common generalization is obtained by formulating the covering property in terms of uniform spaces. See for example \cite{willard}, Chapter 9 for more details about uniform spaces.  
 
\begin{definition}\label{def:uniformity}
For a set $X$, a collection $\Psi$ of subsets of $X\times X$  is said to be a uniformity on $X$ if $\Psi$ has the following properties:
\begin{itemize}
\item[(U1)]{For each $U\in\Psi$ it is the case that $\{(x,x):x\in X\}\subseteq U$}
\item[(U2)]{If $U$ and $V$ are members of $\Psi$, then $U\cap V$ is a member of $\Psi$}
\item[(U3)]{If $U$ is a member of $\Psi$ and $V$ is a subset of $X\times X$ such that $U\subseteq V$, then $V$ is an element of $\Psi$}
\item[(U4)]{If $U$ is a member of $\Psi$, then also the set $\{(y,x):\; (x,y)\in U\}$ is a member of $\Psi$}
\item[(U5)]{For each $U\in\Psi$ there is a $V\in\Psi$ such that whenever $(x,y)\in V$ and $(y,z)\in V$, then it is the case that $(x,z)\in U$}
\end{itemize}
\end{definition}

Let $\Psi$ is a uniformity on a set $X$. The pair $(X,\Psi)$ is said to be a uniform space. An element of the uniformity $\Psi$ is said to be an entourage. Convenient notation for the properties stated in (U4) and in (U5) is as follows: For an entourage $U$, $U^{-1}$ denotes $\{(y,x):\; (x,y)\in U\}$. For a subset $U$ of $X\times X$ the symbol $U\circ U$ denotes the set $\{(x,z)\in X\times X:\; \mbox{ there is a }y\in X \mbox{ such that } (x,y)\in U \mbox{ and } (y,z)\in U\}$. An entourage $U$ is said to be symmetric if $U = U^{-1}$. 
Properties (U1), (U2) and (U3) state that a uniformity is a filter on the set $X\times X$ with the property that each element of the filter contains the set $\Delta_X = \{(x,x):x\in X\}$, while property (U4) states that the filter is closed under the operation $U^{-1}$, and property (U5) states that for each element $U$ of the filter there is an element $V$ of the filter such that $V\circ V\subseteq U$. The blanket assumption of this paper is that topological spaces considered here are Hausdorff spaces: This requirement is imposed on the uniformities relevant to this paper by requiring that a uniformity $\Psi$ is \emph{separating}, which is that $\Delta_X = \bigcap\{U:U\in\Psi\}$.
 
A uniformity $\Psi$ on a set $X$ gives rise to a topology on $X$: For each $U\in\Psi$, and for each $x\in X$ define
\[
  U(x) = \{y\in X:\;(x,y)\in U\}.
\]
Then for $x\in X$ the set $\mathcal{N}_{\Psi}(x) = \{U(x): U\in\Psi\}$ defines a neighborhood basis of $x$ in $X$, and each subset of $X$ of the form $U(x)$ is said to be a neighborhood of $x$. Given a set $\mathcal{N}$ of neighborhood bases for the elements of a set $X$, there is a standard topology on $X$, denoted $\tau_{\mathcal{N}}$, giving rise to $\mathcal{N}$ -- see for example \cite{willard}, Section 5. Accordingly, given a uniformity $\Psi$ on the set $X$, there is a naturally associated topology, denoted $\tau_{\Psi}$, on $X$. By classical results of Tychonoff and of Weil \cite{weil} the topology of a topological space is uniformizable (i.e., there is a uniformity $\Psi$ such that the topology is of the form $\tau_{\Psi}$) if, and only if, the space is completely regular. Since the spaces in this paper are all assumed to be Hausdorff spaces, being uniformizable is equivalent to being a Tychonoff space. 

\begin{example}\label{ex:uniform}
For uniformity $\Psi$ on the set $X$ define a special subfamily of the family of all open covers of the space $(X,\tau_{\Psi})$: For any $N\in \Psi$, the open cover $\{Int_X(N(x)):x\in X\}$ of $X$ is denoted $\mathcal{O}(N)$. The symbol $\mathcal{O}_{\Psi}$ denotes the set $\{\mathcal{O}(N):N\in\Psi\}$ of these special open covers of $X$. When $Y$ is a subset of $X$, then the symbol $\mathcal{O}_Y$ denotes the set of all families $\mathcal{U}$ for which each element of $\mathcal{U}$ is an open set in the topological space $(X,\tau_{\Psi})$, and for which $Y\subseteq \bigcup\mathcal{U}$. Borel's covering property translates to $\sone(\mathcal{O}_{\Psi},\mathcal{O}_Y)$.
\end{example}

Generalizing further, Nachbin introduced the notion of a quasi uniformity \cite{Nachbin}:
\begin{definition}[Nachbin 1948]
A set $\mathcal{U}\subseteq X\times X$ is a quasi-uniformity if 
\begin{enumerate}
\item{$\mathcal{U}$ is a filter,}
\item{For all $U\in\mathcal{U}$ the set $\{(x,x):x\in X\}$ is a subset of $U$,}
\item{For all $U\in\mathcal{U}$ there is a $V\in\mathcal{U}$ such that $V\circ V\subseteq U$.}
\end{enumerate}
If $\mathcal{U}$ is a quasi-uniformity on $X$, then $(X,\mathcal{U})$ is said to be a quasi-uniform space.
\end{definition}
Observe that a quasi-uniformity is a uniformity whenever for each $U\in\mathcal{U}$ it is also the case that $U^{-1}\in\mathcal{U}$. For a quasi-uniformity $\mathcal{U}$, the symbol $\mathcal{U}^{*}$ denotes the set $\{U^{-1}:U\in\mathcal{U}\}$. Note that $\mathcal{U}^{*}$ also is a quasi-uniformity and is called the conjugate quasi-uniformity of $\mathcal{U}$.
As in the case of uniformities, if $U$ is a member of a quasi-uniformity $\mathcal{U}$, then $U(x)$ denotes the set $\{y\in X:\; (x,y)\in U\}$. There is a natural topology associated with a quasi-uniformity: If $\mathcal{U}$ is a quasi-uniformity on the set $X$, define
\[
  \tau_{\mathcal{U}} = \{G\subseteq X:\; (\forall x\in G)(\exists U\in\mathcal{U})(U(x)\subseteq G)\}.
\]
Then $\tau_{\mathcal{U}}$ is a topology on $X$. A topology $\tau$ on $X$ is \emph{compatible} with the quasi-uniformity $\mathcal{U}$ if $\tau = \tau_{\mathcal{U}}$.

\begin{example}\label{ex:quasiuniform}
For quasi uniformity $\mathcal{U}$ on the set $X$ define a special subfamily of the family of all open covers of the space $(X,\tau_{\mathcal{U}})$: For an element $U$ of $\mathcal{U}$, the open cover $\{U(x):x\in X\}$ of $X$ is denoted $\mathcal{O}(U)$. The symbol $\mathcal{O}_{\mathcal{U}}$ denotes the set $\{\mathcal{O}(N):N\in\mathcal{U}\}$ of these special open covers of $X$. When $Y$ is a subset of $X$, then the symbol $\mathcal{O}_Y$ denotes the set of all families $\mathcal{V}$ for which each element of $\mathcal{V}$ is an open set in the topological space $(X,\tau_{\mathcal{U}})$, and for which $Y\subseteq \bigcup\mathcal{V}$. In this example Borel's covering property translates to $\sone(\mathcal{O}_{\mathcal{U}},\mathcal{O}_Y)$.
\end{example}

\begin{example}\label{ex:Rothberger}
For a general topological space $(X,\tau)$, let $\open$ denote the collection of all open covers of $X$. We say that $(X,\tau)$ has the \emph{Rothberger covering property} if $\sone(\open,\open)$ holds. Rothberger introduced this covering property in \cite{rothberger38}. It is evident that if $\tau$ is the topology $\tau_{\Psi}$ generated by the (quasi-) uniformity $\Psi$ on $X$, and if $(X,\tau_{\Psi})$ satisfies $\sone(\open,\open)$, then it satisfies $\sone(\open_{\Psi},\open)$. As such, the Rothberger covering property implies the Borel covering property.
\end{example}

\section{An infinite game and covering properties}

We now arrive at the crucial tool that links the considered covering properties with Ramseyan properties.
\begin{definition}\label{def:Game}
Let families of sets, $\mathcal{A}$ and $\mathcal{B}$, be given. The game $\textsf{G}_1(\mathcal{A},\mathcal{B})$, played between players ONE and TWO, is defined as follows: The game has an inning per positive integer $n$. In inning $n$ ONE first chooses an element $O_n$ of $\mathcal{A}$, and then TWO responds by choosing an element $T_n$ of $O_n$. The play
\[
  O_1,\; T_1,\; O_2,\; T_2,\; \cdots,\; O_n,\; T_n,\; \cdots
\]
is won by player TWO if $\{T_n:\;n\in\mathbb{N}\}$ is a member of $\mathcal{B}$. Else, ONE wins the play.
\end{definition}
The instance of this game when $\mathcal{A} = \mathcal{B} = \mathcal{O}$, the set of all open covers of a topological space $(X,\tau)$, was introduced by F. Galvin in \cite{Galvin}. 
Note that if player ONE does not have a winning strategy in the game $\gone(\mathcal{A},\mathcal{B})$, then the selection principle $\sone(\mathcal{A},\mathcal{B})$ holds: For let a sequence $(A_n:n\in\mathbb{N})$ of elements of the family $\mathcal{A}$ be given. Assign the strategy $F$, defined to play $A_n$ in the $n$-th inning, to player ONE of the game $\gone(\mathcal{A},\mathcal{B})$. Since ONE has no winning strategy in this game, $F$ is not a winning strategy. Consider a play of the game where ONE used the strategy $F$, but lost. It is of the form
\[
  A_1, B_1, A_2, B_2, \cdots, A_n, B_n, \cdots
\]
where for each $n$ we have $B_n\in A_n$. Since ONE lost this play, TWO won and we have $\{B_n:n\in\mathbb{N}\}\in\mathcal{B}$. But then the sequence $(B_n:n\in\mathbb{N})$ witnesses $\sone(\mathcal{A},\mathcal{B})$ for the sequence $(A_n:n\in\mathbb{N})$.

One of the fundamental questions is: When does the selection principle $\sone(\mathcal{A},\mathcal{B})$ imply that ONE has no winning strategy in the game $\gone(\mathcal{A},\mathcal{B})$? Perhaps the most delicate instance of a positive result for this question is the following theorem of Pawlikowski \cite{JP}: 
\begin{theorem}[J. Pawlikowski]\label{thm:Pawlikowski} For any topological space $(X,\tau)$, the selection principle $\sone(\open,\open)$ holds if, and only if, ONE has no winning strategy in the game $\gone(\open,\open)$.
\end{theorem}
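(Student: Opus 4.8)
The plan is to prove the nontrivial direction — that $\sone(\open,\open)$ implies ONE has no winning strategy in $\gone(\open,\open)$ — since the converse was already observed in the paragraph preceding the theorem. So suppose, for contradiction, that $F$ is a winning strategy for ONE in $\gone(\open,\open)$ on the space $(X,\tau)$, yet $X$ satisfies $\sone(\open,\open)$. The idea is to organize all possible finite plays against $F$ into a tree indexed by finite sequences of open sets (the moves of TWO), and then to extract from $F$, by a diagonalization guided by the selection principle, a single play that defeats $F$.

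\emph{First} I would set up the combinatorial bookkeeping. Since in each inning TWO's legal responses are elements of an open cover, and we may assume (passing to a suitable countable cofinal family, or simply working abstractly) that at each node of the game tree there are only countably many "essentially different" responses to track, we arrange the nodes of the strategy tree into countably many levels, and within the construction we will at stage $n$ be handling a countable collection of partial plays $\sigma$ of length $n$ that have arisen so far. \emph{Second}, the heart of the argument: for each such partial play $\sigma$, the strategy $F$ dictates an open cover $\open_\sigma := F(\sigma) \in \open$; so at stage $n$ we have a countable family $\{\open_\sigma : \sigma \text{ a relevant partial play of length } n\}$ of open covers. Enumerate the union of all these families over all $n$ as a single sequence $(\mathcal{U}_k : k \in \mathbb{N})$ of open covers of $X$. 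Apply $\sone(\open,\open)$ to this sequence to obtain open sets $T_k \in \mathcal{U}_k$ with $\{T_k : k \in \mathbb{N}\} \in \open$, i.e., covering $X$.

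\emph{Third}, I would use the sets $T_k$ to build an actual branch through the strategy tree. Starting from the root, at each inning ONE plays according to $F$, and TWO plays the set $T_k$ corresponding (via the enumeration) to the cover $\open_\sigma$ that $F$ just produced at the current node $\sigma$. This produces an infinite play $O_1, T_{k_1}, O_2, T_{k_2}, \dots$ conforming to $F$. The catch is that a single branch only uses countably many of the $T_k$, and TWO wins this branch only if those particular $T_{k_i}$ already cover $X$ — which need not follow just from the full collection $\{T_k\}$ covering $X$. This is exactly the delicacy Pawlikowski's theorem is famous for, and it is why the naive one-pass argument fails: the resolution is Pawlikowski's trick of \textbf{re-playing}. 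Rather than a single branch, one interleaves: one builds not one play but a scheme of plays so that for every point $x \in X$ there is \emph{some} branch that has already covered $x$ by some finite stage, and then one splices these finitely-initiated branches together into one genuine infinite play against $F$. Concretely, one processes the (countably many) points/basic open sets in order, and each time one reaches a stage where the current play has not yet covered the next point, one continues the play long enough, using freshly selected $T_k$'s from the relevant covers, to cover it — the bound on how long "long enough" is comes precisely from the fact that $\{T_k : k\}$ does cover $X$, so the needed $T_k$ appears at some finite index and can be inserted. Carrying this out yields one infinite $F$-conforming play whose TWO-moves cover all of $X$, contradicting that $F$ is winning for ONE.

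The main obstacle, as indicated, is \emph{Step three}: reconciling the fact that the selection principle only delivers a cover assembled from \emph{across} the whole tree with the requirement that the witnessing play live on a \emph{single} branch. I expect to spend essentially all the real work there, carefully specifying the scheduling that guarantees every point of $X$ gets covered at a finite stage of one constructed play while keeping that play conforming to $F$; the tree setup in Steps one and two and the final contradiction are routine by comparison.
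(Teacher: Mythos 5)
There is a genuine gap, and it sits exactly where you suspect, in your Step three; the fix you describe is not actually a fix. Your one application of $\sone(\open,\open)$ is made to an enumeration $(\mathcal{U}_k)$ of the covers attached to \emph{all} nodes of the strategy tree, so each selected set $T_k$ is tied to a specific history $\sigma$: it is a legal move for TWO only at the moment the play so far equals $\sigma$. The fact that $\{T_k:k\in\mathbb{N}\}$ covers $X$ therefore tells you nothing about any single branch, since the selections witnessing the cover come from pairwise incomparable nodes. Your proposed remedy --- ``continue the play long enough so that the needed $T_k$ appears at some finite index and can be inserted'' --- is precisely the illegal move: a $T_k$ chosen from the cover at some other node $\sigma'$ need not belong to the cover $F$ produces at the node you are currently at, and ``splicing finitely-initiated branches together'' does not produce an $F$-conforming play, because an $F$-play is a single branch and two diverging branches cannot be merged. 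In addition, your scheduling step quietly assumes there are only countably many points or basic open sets to process, which is not available for an arbitrary space (a Rothberger space is Lindel\"of, but need not be second countable or countable), and even the preliminary pruning of the tree to countably many responses per node requires an argument (one must replace ONE's covers by countable subcovers and check the modified strategy still simulates $F$), not just the phrase ``essentially different responses.''

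For comparison: the paper does not prove this theorem at all; it is quoted from Pawlikowski \cite{JP}, and the surrounding text explicitly flags it as the most delicate instance of the ``no winning strategy from the selection principle'' phenomenon. The easier results the paper does prove (Theorems \ref{Th:commongeneralize} and \ref{th:MSUniform}) succeed only because $\sigma$-total boundedness or $\sigma$-compactness lets ONE's covers be thinned to \emph{finite} subfamilies covering the pieces $Y_n$, so that each level of the tree has only finitely many relevant nodes and the selection principle can be applied to sequences organized along the partition $\mathbb{N}=\bigcup_n S_n$, with the selections genuinely usable on the branch being built. Without such finiteness, the one-shot selection-plus-splicing scheme you outline is exactly the ``naive one-pass argument'' you yourself note fails, and Pawlikowski's actual proof replaces it by a substantially more involved iterated argument rather than by a scheduling trick. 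As it stands, your proposal does not constitute a proof of the nontrivial direction.
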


More easily proved instances of such equivalences can be obtained by imposing additional constraints on the underlying topological space, or on the objects ONE is allowed to play during the game. In the rest of this section we prove, for the convenience of the reader, some instances relevant to the Borel covering property. The proofs are not new but can be gleaned from the proof of Theorem 2 in \cite{GMS} where it is proved for metric spaces:
 
\begin{theorem}\label{Th:commongeneralize}  For $(X ,\Psi)$ a $\sigma$ -totally bounded uniform space and $Y$ a subset of $X$, the following are equivalent:
\begin{enumerate}
\item{$(X ,\Psi)$ satisfies $\textsf{S}_1(\mathcal{O}_{\Psi},\mathcal{O}_Y )$.}
\item{ONE has no winning strategy in the game $\textsf{G}_1(\mathcal{O}_{\Psi},\mathcal{O}_Y )$.}
\end{enumerate}
\end{theorem}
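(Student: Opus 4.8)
The plan: the implication (2)$\,\Rightarrow\,$(1) requires nothing beyond what is already in Section 2 — the general observation that if ONE has no winning strategy in $\gone(\mathcal{A},\mathcal{B})$ then $\sone(\mathcal{A},\mathcal{B})$ holds (feed ONE the strategy that plays the $n$-th prescribed cover at inning $n$; a play it loses yields the required selection). So the substance is (1)$\,\Rightarrow\,$(2). Since ONE has no winning strategy precisely when every strategy $F$ for ONE admits a play consistent with $F$ that TWO wins, I would assume $\sone(\mathcal{O}_\Psi,\mathcal{O}_Y)$, fix an arbitrary strategy $F$ for ONE, and set out to manufacture such a play.

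I would first isolate the sole role of total boundedness. Write $X=\bigcup_{n\in\mathbb N}X_n$ with each $X_n$ totally bounded and $X_1\subseteq X_2\subseteq\cdots$. Lemma: for every entourage $N$ and every $n$, the cover $\mathcal{O}(N)\in\mathcal{O}_\Psi$ has a finite subfamily covering $X_n$. Proof sketch: using (U5), (U4), (U2), choose a symmetric $V\in\Psi$ with $V\circ V\subseteq N$ and a finite $V$-net $S$ of $X_n$; since $V(x)\subseteq\operatorname{Int}_X(N(x))$ for each $x$, the family $\{\operatorname{Int}_X(N(x)):x\in S\}$ is as wanted. Using the lemma, build the \emph{strategy tree} $T$ of $F$: a tree of finite $F$-consistent partial plays in which, at the position reached after TWO has moved $n$ times and ONE has just played a cover $\mathcal{O}(N)$, TWO's legal responses are restricted to the members of a fixed finite subfamily of $\mathcal{O}(N)$ covering $X_{n+1}$. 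By the lemma, $T$ is finitely branching; hence $T$ has countably many nodes, every level $L_n$ of $T$ is finite, and the set of covers ONE plays along $T$ is a countable subfamily of $\mathcal{O}_\Psi$.

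The heart of the argument is the ensuing diagonalization. Because $L_n$ is finite, $M_n:=\bigcap\{N_t:t\in L_n\}$ — where $\mathcal{O}(N_t)$ denotes ONE's move at the node $t$ — is again an entourage, and $\mathcal{O}(M_n)$ refines $\mathcal{O}(N_t)$ for each $t\in L_n$. I would apply $\sone(\mathcal{O}_\Psi,\mathcal{O}_Y)$ to the sequence $(\mathcal{O}(M_n):n\in\mathbb N)$ to obtain $V_n=\operatorname{Int}_X(M_n(z_n))\in\mathcal{O}(M_n)$ with $Y\subseteq\bigcup_nV_n$, and then extract from $T$ a single infinite branch — a play against $F$ — along which TWO's responses cover $Y$. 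The mechanism that should make this possible: whichever node $t\in L_n$ a branch passes through, ONE's move there is $\mathcal{O}(N_t)$, $M_n\subseteq N_t$, and the legal response $\operatorname{Int}_X(N_t(z_n))$ already contains $V_n$; so a branch ought to be able to "track" the $V_n$'s. Carrying this out so that TWO's responses are simultaneously legal at the nodes actually visited \emph{and} together cover all of $Y$ is the delicate point — I expect to have to choose the finite subfamilies above with the $z_n$'s in view (or iterate the construction) and to invoke König's lemma to pin down the branch.

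So the principal obstacle is exactly this last reconciliation: $\sone$ returns one open set per cover, scattered a priori over the whole tree, whereas TWO must commit to a single run against $F$; getting "consistency with $F$" and "coverage of $Y$" to hold at once is the crux, and it is powered by the finiteness of the levels (so that entourages may be intersected level by level) together with König's lemma — a fusion argument in spirit. The remaining pieces — the lemma, the construction of $T$, and the verification that the resulting branch is an $F$-consistent play won by TWO — are routine.
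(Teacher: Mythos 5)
Your (2)$\Rightarrow$(1) is the same general observation the paper uses, and your opening reduction -- decompose $X=\bigcup_nX_n$, prune the strategy tree of $F$ so that TWO's responses at each node come from a finite subfamily of ONE's uniform cover that covers the relevant totally bounded piece, so that levels are finite -- is also how the paper's proof begins. But the point you yourself flag as "the delicate point" is a genuine gap, and the device you propose does not close it. Intersecting ONE's entourages level by level to get $M_n$ and making a single application of $\sone(\mathcal{O}_{\Psi},\mathcal{O}_Y)$ to $(\mathcal{O}(M_n))_n$ gives sets $V_n=\operatorname{Int}_X(M_n(z_n))$ that no branch of your pruned tree can track: the natural response $\operatorname{Int}_X(N_t(z_n))$ is legal in the unrestricted game but is generally not one of the finitely many responses you fixed at the node $t$, so either you cannot stay inside $T$, or you leave $T$ and then ONE's subsequent moves are no longer among the entourages intersected to form the later $M_n$'s, so $M_{n+1}\subseteq N'$ fails and the diagonalization collapses. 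Choosing the finite subfamilies "with the $z_n$'s in view" is circular, since the $z_n$'s are produced from the $M_n$'s, which are produced from those subfamilies. There is also a second coordination problem you do not address: the finite family at level $n$ only covers $X_{n+1}$, while $z_n$ and the portion of $Y$ covered by $V_n$ can sit anywhere in $X$.

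The paper resolves exactly these two points with ingredients absent from your sketch. First, instead of $M_n=\bigcap_t N_t$ it chooses, per level, a single symmetric entourage $N_{k+1}$ with a \emph{Lebesgue-type} property holding simultaneously at all (finitely many) nodes of that level: for every node's finite family, which covers the totally bounded piece $Y_{n_{k+1}}$ and can be taken to consist of $N$-balls about a finite net, and for every $x\in Y_{n_{k+1}}$, some member contains $(N_{k+1}\circ N_{k+1})(x)$. (This uses that ONE's moves lie in $\mathcal{O}_{\Psi}$, i.e.\ are uniform-ball covers; for an arbitrary finite open cover of a merely totally bounded, non-compact set no Lebesgue entourage need exist, which is another reason the bare intersection $M_n$ cannot substitute.) Second, it partitions the innings $\mathbb{N}=\bigcup_m S_m$ and applies $\sone(\mathcal{O}_{\Psi},\mathcal{O}_{Y_m})$ separately to $(\mathcal{O}(N_k))_{k\in S_m}$ for each piece $Y_m=Y\cap X_m$, then uses symmetry of $N_k$ to replace the selected centers $x_k$ by points of $Y_m$ so that the sets $(N_k\circ N_k)(x_k)$ still cover $Y_m$. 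With these in hand the winning branch is built by a direct recursion -- at each node the Lebesgue property yields a legal restricted response containing $(N_k\circ N_k)(x_k)$, and these sets cover $Y$ -- so no K\"onig's lemma or fusion is needed. Your proposal contains neither the per-level Lebesgue entourage nor the piecewise application of the selection principle with the center adjustment, and as written the crux remains open.
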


\begin{proof} The proof of $(2)\Rightarrow(1)$ has been explained above. We prove that $(1)\Rightarrow (2):$ 
Let $F$ be a strategy for player ONE in the game $\textsf{G}_1(\mathcal{O}_{\Psi},\mathcal{O}_Y )$ on $X$. Since $X$ is $\sigma$-totally bounded, and write $X = \bigcup_{n\in\mathbb{N}}X_n$
where for each $n$ we have $\emptyset\neq X_n \subseteq X_{n+1}$ and each $X_n$ is totally bounded. For each $n$, putting $Y_n = Y \cap X_n$, the uniform space $(X,\Psi)$ satisfies the property $\textsf{S}_1(\mathcal{O}_{\Psi},\mathcal{O}_{Y_n} )$.

To defeat ONE's strategy TWO will in specific innings $m$ concentrate attention on specific $Y_n$'s. To this end, partition $\mathbb{N}$ into infinitely many infinite subsets $S_n$, $n\in\mathbb{N}$. For innings numbered by members of $S_n$ TWO will focus on $Y_n$.
We now use ONE's strategy $F$ to recursively define a sequence $\{N_k: k\in\mathbb{N}\}$ and an array of sets $\mathcal{U}(T_1, . . . , T_k)$ where:
\begin{enumerate}
\item{ For each k, $N_k$ is a symmetric member of the uniformity $\Psi$;}
\item{ With $n_1$ such that $1 \in S_{n_1} $, $\mathcal{U}(\emptyset)$ is a finite subset of $F(\emptyset)$ (ONE's first move) that covers $Y_1$ (as $X_1$ is totally bounded), and $N_1$ is such that for
each $x \in Y_1$ there is a $V \in \mathcal{U}(\emptyset)$ with $(N_1 \circ N_1)(x) \subseteq V$;}
\item{For each $(T_1,\dots , T_k)$ such that $T_1 \in \mathcal{U}(\emptyset)$,  $T_2 \in \mathcal{U}(T_1),\cdots$ and $T_k \in \mathcal{U}(T_1, \cdots. , T_{k-1})$ and for $n_{k+1}$ such that $k+1 \in S_{n_{k+1}}$
we have $\mathcal{U}(T_1, \cdots , T_k)$ a finite subset of $F (T_1,  \cdots , T_k)$ that covers $Y_{n_{k+1}}$. Note that there are only finitely many such $(T_1, . . . , T_k)$. $N_{k+1}$ is a member of the uniformity such that for each such sequence $(T_1, . . . , T_k)$ and for each $x\in Y_{n_{k+1}}$ there is a $U\in \mathcal{U}(T_1, . . . , T_k)$ with $(N_{k+1}\circ N_{k+1})(x)\subseteq U$.
}
\end{enumerate}

With this data available, construct a play against $F$ won by TWO as follows: 

Fix an $m \in \mathbb{N}$. Since $\textsf{S}_1(\mathcal{O}_{\Psi},\mathcal{O}_{Y_m} )$ holds, select for each $k \in S_m$ an $x_k \in Y$ such that $(N_k(x_k): k \in S_m)$ covers $Y_m$.
We may assume each $x_k$ is in $Y_m$ - for suppose an $x_k$ is not in $Y_m$. If $N_k(x_k)\cap Y_m = \emptyset$ we may with impunity replace this
$x_k$ by one from $Y_m$. 
However, if $N_k(x_k)\cap Y_m \neq \emptyset$, then let y be an element of this intersection. We claim that $N_k(x_k)\cap  Y_m \subseteq
(N_k\circ N_k)(y)$. 
For let $z\in  N_k(x_k)\cap Y_m$ be given. Note that $(x_k, y)\in  N_k$ and $(x_k, z)\in  N_k$. Since $N_k$ is symmetric we have
$(y, x_k)\in N_k$ and $(x_k, z)\in N_k$.
 But then $(y, z)\in  N_k \circ N_k$, which implies that $z \in (N_k \circ N_k)(y)$. Thus, letting this y be the new
$x_k$ we see that we may choose for each $k \in S_m$ an $x_k \in Y_k$ such that $((N_k\circ  N_k)(x_k): k\in S_m)$ covers $Y_k$.
Finally, recursively choose a sequence $(T_k: k\in\mathbb{N})$ as follows: Choose $T_1 \in \mathcal{U}(\emptyset)$ with $(N_1\circ N_1)(x_1)\subseteq T_1$. With $T_1, . . . , T_m$
chosen, choose $T_{m+1} \in \mathcal{U}(T_1, . . . , T_m)$ with $(N_{m+1}\circ N_{m+1})(x_{m+1})\subseteq T_{m+1}$. Then the sequence
$F (\emptyset), T_1, F (T_1), . . . , T_k, F(T_1, . . . , T_k), T_{k+1}, . . .$
is an $F$ -play lost by ONE.
\end{proof}

Strengthening the hypothesis on $X$ in Theorem \ref{Th:commongeneralize} from $\sigma$-totally bounded to $\sigma$-compact gives a result towards a characterization of $\textsf{S}_1(\mathcal{O}_{\Psi},\mathcal{O}_{X} )$ in terms of a Ramseyan property. The argument needs, as one of its components, the following  Theorem \ref{th:LebesgueCov} which is proven in Theorem 33 on p. 199 of \cite{Kelley}:
\begin{theorem}\label{th:LebesgueCov}
Let $(X,\Psi)$ be a uniform space and $K\subset X$ a subset compact in the uniform topology. For each open cover $\mathcal{U}$ of $X$ there is a member $W$ of the uniformity $\Psi$ such that for each $x\in K$ there is a $U\in\mathcal{U}$ such that $W(x)\subseteq U$.
\end{theorem}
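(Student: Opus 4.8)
The plan is to run the classical Lebesgue-number argument, transported from the metric setting to uniform spaces, with the triangle inequality replaced by axiom (U5) and the symmetrization handled via (U4).

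First, for each point $x\in K$ I would use that $\mathcal{U}$ covers $X$ to fix a member $U_x\in\mathcal{U}$ with $x\in U_x$. Since the topology $\tau_{\Psi}$ is exactly the one generated by the neighbourhood bases $\mathcal{N}_{\Psi}(x)=\{V(x):V\in\Psi\}$, the open set $U_x$ contains a basic neighbourhood of $x$, so there is an entourage $V_x\in\Psi$ with $V_x(x)\subseteq U_x$. Applying (U5), and then intersecting the resulting entourage with its own $(\cdot)^{-1}$ (legitimate by (U4), (U2) and (U3)), I would produce a \emph{symmetric} entourage $W_x\in\Psi$ with $W_x\circ W_x\subseteq V_x$.

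Next comes the only place compactness is used. Each set $W_x(x)$ is a $\tau_{\Psi}$-neighbourhood of $x$, hence $x\in Int_X(W_x(x))$, so $\{Int_X(W_x(x)):x\in K\}$ is an open cover of the compact set $K$; I extract a finite subcover indexed by $x_1,\dots,x_n\in K$ and set $W=\bigcap_{i=1}^{n}W_{x_i}$, which is an entourage of $\Psi$ by (U2) and (U3). Finally I would verify that this $W$ works: given $x\in K$, pick $i$ with $x\in Int_X(W_{x_i}(x_i))\subseteq W_{x_i}(x_i)$, i.e.\ $(x_i,x)\in W_{x_i}$; then for any $y\in W(x)$ we have $(x,y)\in W\subseteq W_{x_i}$, so chaining $(x_i,x)\in W_{x_i}$ with $(x,y)\in W_{x_i}$ through $W_{x_i}\circ W_{x_i}\subseteq V_{x_i}$ yields $(x_i,y)\in V_{x_i}$, hence $y\in V_{x_i}(x_i)\subseteq U_{x_i}$. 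Thus $W(x)\subseteq U_{x_i}\in\mathcal{U}$, as required.

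I do not expect a real obstacle here; the one point needing care — and the reason symmetry of $W_x$ is demanded — is the orientation of pairs when passing through $W_{x_i}\circ W_{x_i}$: the chaining needs $(x_i,x)$, not $(x,x_i)$, which is precisely what symmetry of $W_{x_i}$ supplies. A secondary subtlety worth flagging explicitly is that $W(x)$ itself need not be open; but the conclusion only asserts the containment $W(x)\subseteq U$, so it is openness of the members of $\mathcal{U}$, together with compatibility of $\tau_{\Psi}$ with $\Psi$, and not any openness of the sets $W(x)$, that drives the proof.
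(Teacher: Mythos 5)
Your proof is correct. The paper does not prove Theorem \ref{th:LebesgueCov} at all --- it simply cites Theorem 33 on p.~199 of Kelley --- and your argument is exactly the standard Lebesgue-number proof given there: shrink each $U_x$ to an entourage neighbourhood, halve it via (U5), use compactness of $K$ to pass to a finite subcover of the interiors, and take $W$ to be the finite intersection.

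One small remark on your closing comment: symmetry of the $W_{x_i}$ is in fact never used. The pair you need, $(x_i,x)\in W_{x_i}$, comes directly from $x\in W_{x_i}(x_i)$ (by the definition $W_{x_i}(x_i)=\{z:(x_i,z)\in W_{x_i}\}$), and $(x,y)\in W_{x_i}$ comes from $y\in W(x)\subseteq W_{x_i}(x)$; these are already in the right orientation to chain through $W_{x_i}\circ W_{x_i}\subseteq V_{x_i}$ as the paper defines $\circ$. Passing to symmetric entourages is harmless, but the justification you give for why it is needed does not reflect an actual obstacle in the argument.
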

Another component towards a Ramseyan characterization is next given in Theorem \ref{th:MSUniform}. 
Note that the equivalence of (2) and (3) for general topological spaces (not only $\sigma$-compact uniform spaces) holds by Pawlikowski's theorem stated above as Theorem \ref{thm:Pawlikowski}. 

\begin{theorem}\label{th:MSUniform} Let $(X ,\Psi)$ be a $\sigma$-compact uniform space and let Y be a subset of $X$. The following are equivalent:
\begin{enumerate}
\item{$(X ,\Psi)$ satisfies $\textsf{S}_1(\mathcal{O}_{\Psi},\mathcal{O}_Y )$.}
\item{$(X ,\Psi)$ satisfies $\textsf{S}_1(\mathcal{O},\mathcal{O}_Y )$.}
\item{ONE has no winning strategy in the game $\textsf{G}_1(\mathcal{O},\mathcal{O}_Y )$.}
\item{ONE has no winning strategy in the game $\textsf{G}_1(\mathcal{O}_{\Psi},\mathcal{O}_Y )$.}
\end{enumerate}
\end{theorem}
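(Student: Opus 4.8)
The plan is to derive all four equivalences from three ingredients: Theorem~\ref{Th:commongeneralize}, Pawlikowski's Theorem~\ref{thm:Pawlikowski}, and one new argument that uses $\sigma$-compactness through the Lebesgue covering lemma (Theorem~\ref{th:LebesgueCov}). First I would dispose of the cheap steps. A compact subset of a uniform space is totally bounded, so $(X,\Psi)$ is $\sigma$-totally bounded and Theorem~\ref{Th:commongeneralize} gives $(1)\Leftrightarrow(4)$ outright. Each $\mathcal{O}(N)$ with $N\in\Psi$ is an open cover of $X$, so $\mathcal{O}_{\Psi}\subseteq\mathcal{O}$; hence any selector witnessing $\textsf{S}_1(\mathcal{O},\mathcal{O}_Y)$ for a sequence drawn from $\mathcal{O}_{\Psi}$ also witnesses $\textsf{S}_1(\mathcal{O}_{\Psi},\mathcal{O}_Y)$, i.e. $(2)\Rightarrow(1)$. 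Finally $(2)\Leftrightarrow(3)$ is exactly the instance of Pawlikowski's theorem flagged in the remark preceding the statement: the proof of Theorem~\ref{thm:Pawlikowski} goes through verbatim with the subset $Y$ in place of $X$ as the set to be covered. Thus the whole theorem reduces to the single implication $(1)\Rightarrow(2)$.

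To prove $(1)\Rightarrow(2)$, fix a decomposition $X=\bigcup_{m\in\mathbb{N}}K_m$ with each $K_m$ compact and $K_m\subseteq K_{m+1}$, set $Y_m=Y\cap K_m$, and fix a partition $\mathbb{N}=\bigcup_{m\in\mathbb{N}}S_m$ into infinitely many pairwise disjoint infinite sets. Let $(\mathcal{U}_n:n\in\mathbb{N})$ be a sequence of open covers of $X$. For $n\in S_m$, apply Theorem~\ref{th:LebesgueCov} to the compact set $K_m$ and the open cover $\mathcal{U}_n$ to obtain an entourage $V_n\in\Psi$, and then choose a symmetric $W_n\in\Psi$ with $W_n\circ W_n\subseteq V_n$. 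A short symmetry computation, of the same kind as in the proof of Theorem~\ref{Th:commongeneralize}, shows that if an element $G=Int_X(W_n(z))$ of $\mathcal{O}(W_n)$ meets $K_m$, say $x\in G\cap K_m$, then $W_n(z)\subseteq (W_n\circ W_n)(x)$, and $(W_n\circ W_n)(x)$ is contained in some member of $\mathcal{U}_n$ because $x\in K_m$; so every element of $\mathcal{O}(W_n)$ meeting $K_m$ is contained in some element of $\mathcal{U}_n$. Now fix $m$. The sequence $(\mathcal{O}(W_n):n\in S_m)$, re-indexed by $\mathbb{N}$, lies in $\mathcal{O}_{\Psi}$, and since $\mathcal{O}_Y\subseteq\mathcal{O}_{Y_m}$ the hypothesis $\textsf{S}_1(\mathcal{O}_{\Psi},\mathcal{O}_Y)$ yields $\textsf{S}_1(\mathcal{O}_{\Psi},\mathcal{O}_{Y_m})$, so we may pick $G_n\in\mathcal{O}(W_n)$ for $n\in S_m$ with $\bigcup_{n\in S_m}G_n\supseteq Y_m$. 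For those $n$ with $G_n\cap Y_m\neq\emptyset$ (hence $G_n\cap K_m\neq\emptyset$) choose $U_n\in\mathcal{U}_n$ with $G_n\subseteq U_n$, and for the remaining $n\in S_m$ choose $U_n\in\mathcal{U}_n$ arbitrarily; then $\bigcup_{n\in S_m}U_n\supseteq Y_m$. Running this over all $m$ produces $U_n\in\mathcal{U}_n$ for every $n\in\mathbb{N}$ with $\{U_n:n\in\mathbb{N}\}\supseteq\bigcup_m Y_m=Y$, which is the required witness for $\textsf{S}_1(\mathcal{O},\mathcal{O}_Y)$.

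The genuinely new content, and the only place ``$\sigma$-compact'' is used beyond what Theorem~\ref{Th:commongeneralize} already supplies, is the step that converts an arbitrary open cover $\mathcal{U}_n$ of $X$ into a cover $\mathcal{O}(W_n)$ of the special $\mathcal{O}_{\Psi}$-form that still refines $\mathcal{U}_n$ on the relevant compact piece $K_m$: this is precisely the Lebesgue covering lemma together with passage to a symmetric square root of the entourage. I expect this to be the main obstacle only in the sense of requiring care; the remaining moves (partitioning $\mathbb{N}$ so each $Y_m$ is treated in infinitely many coordinates, the inclusion $\mathcal{O}_Y\subseteq\mathcal{O}_{Y_m}$, and the symmetry bookkeeping) are routine. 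An alternative that sidesteps quoting Pawlikowski for $(2)\Leftrightarrow(3)$ is to prove $(4)\Rightarrow(3)$ directly by the same device, transforming a strategy of ONE in $\textsf{G}_1(\mathcal{O},\mathcal{O}_Y)$ into one in $\textsf{G}_1(\mathcal{O}_{\Psi},\mathcal{O}_Y)$ by Lebesgue-refining each cover ONE plays; this works but is fussier, since the refining entourages must be selected coherently along the finitely branching tree of TWO's possible replies, so I would keep Pawlikowski and prove only $(1)\Rightarrow(2)$.
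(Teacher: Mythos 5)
Your proposal is correct, and its one genuinely new step, $(1)\Rightarrow(2)$, is essentially the paper's own argument: Lebesgue's covering lemma (Theorem \ref{th:LebesgueCov}) applied to the compact pieces $K_m$ along a partition of $\mathbb{N}$, followed by passage to a symmetric entourage $W_n$ with $W_n\circ W_n$ inside the Lebesgue entourage. Your version is mildly streamlined — you observe once and for all that every element of $\mathcal{O}(W_n)$ meeting $K_m$ lies inside a member of $\mathcal{U}_n$, whereas the paper picks points $x_n$ and then relocates them into $K_m$ by the same symmetry computation — but the content is identical. Where you diverge is in the overall architecture. The paper proves the cycle $(1)\Rightarrow(2)\Rightarrow(3)\Rightarrow(4)\Rightarrow(1)$ with a direct, self-contained proof of $(2)\Rightarrow(3)$: ONE's strategy in $\gone(\open,\open_Y)$ is defeated by building entourages $W_n$ coherently along the finitely branching tree of TWO's possible replies (the argument of Theorem 9 of \cite{MSFinPowers}), which is exactly the ``fussier'' alternative you describe and decline. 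You instead close the equivalences with the trivial $(2)\Rightarrow(1)$ and an appeal to Pawlikowski for $(2)\Leftrightarrow(3)$. That appeal is sanctioned by the paper's remark immediately preceding the theorem, but note that Theorem \ref{thm:Pawlikowski} as stated concerns $\sone(\open,\open)$ only, so you are leaning on the unproved (though asserted) claim that Pawlikowski's proof relativizes to $\open_Y$; what the paper's direct $(2)\Rightarrow(3)$ argument buys is precisely independence from that claim, at the cost of the strategy-transfer bookkeeping, while your route is shorter and isolates the use of $\sigma$-compactness in the single Lebesgue-lemma step.
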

\begin{proof}
The equivalence of (1) and (4) was proven in Theorem \ref{Th:commongeneralize}. We must show that (1) implies (2), (2) implies (3) and (3) implies (4). Some of these implications have been proven in prior work, as we shall point out. For the convenience of the reader we give proofs here.

{\flushleft{\underline{Proof that (1) implies (2):}}}
Let $(\mathcal{O}_n:n\in\mathbb{N})$ be a sequence of open covers of $X$. Partition $\mathbb{N}$ into infinitely many infinite subsets $S_n,\; n\in\mathbb{N}$. Fix any $k\in\mathbb{N}$, and consider the subsequence $(\mathcal{O}_n:n\in S_k)$ of the given sequence of open covers. For a fixed $n\in S_k$, apply Theorem \ref{th:LebesgueCov} to $\mathcal{O}_n$ and the compact subset $K_k$ of $X$. Fix an element $W_n$ of $\Psi$ such that  for each $x\in K_k$ there is a $U\in\mathcal{O}_n$ for which we have $W_n(x)\subseteq U$. We may assume that $W_n$ is symmetric. Then choose $V_n\in\Psi$ symmetric such that $V_n\circ V_n\subset W_n$

Now for each $n\in S_k$ the set $\mathcal{U}_n = \{V_n(x):x\in X\}$ is an element of $\mathcal{O}_{\Psi}$. Applying (1) for $\sone(\mathcal{O}_{\Psi},\mathcal{O}_{Y_k})$ to the sequence $(\mathcal{U}_n:n\in S_k)$, choose for each $n\in S_k$ an $x_n$ such that $Y_k \subseteq \bigcup_{n\in S_k}V_n(x_n)$. For each $n\in S_k$ consider $V_n(x_n)$: If $V_n(x_n)\cap K_k \neq \emptyset$ but $x_n\not\in K_k$, then consider $y\in K_k\cap V_n(x_n)$. Then we have $K_k\cap V_n(x_n) \subseteq W_n(y)$: For pick a $z\in K_k\cap V_n(x_n)$. We have $(z,x_n)\in V_n$, and $(x_n,y)\in V_n$. Since $V_n\circ V_n\subseteq W_n$ it follows that $(z,y)\in W_n$, and thus $z\in W_n(y)$.  In this case we redefine $x_n$ to be such a $y$. Then the sequence $(W_n(x_n):n\in S_k)$ is a cover of $Y_k$. For each $n\in S_k$, when $x_n\in K_k$, choose a $U_n\in\mathcal{O}_n$ for which $W_n(x_n)\subseteq U_n$, and otherwise choose $U_n\in\mathcal{O}_n$ arbitrarily. Then the sequence $(U_n:n\in S_k)$ covers $Y_k$.

It follows that the sequence $(U_n:n\in\mathbb{N})$ where of each $n$ we have $U_n\in\mathcal{O}_n$, is an open cover of $Y$. This completes the proof that (1) implies (2).

{\flushleft{\underline{Proof that (2) implies (3):}}} (We follow the argument in the proof of $(2)\Rightarrow(3)$ in Theorem 9 of \cite{MSFinPowers})  To this end, write
\[
  X = \bigcup_{n\in\mathbb{N}} K_n
\]
where for $m<n$ we have $K_m\subset K_n$ and each $K_m$ is compact. For each $n$, put $Y_n = Y\cap K_n$. 

Fix a partition $\mathbb{N} = \cup\{S_n:n\in\mathbb{N}\}$ so that each $S_n$ is infinite and whenever $m\neq n$, then $S_m$ and $S_n$ are disjoint.  Let $F$ be a strategy of player ONE of the game $\gone(\open,\open_Y)$. To find a play of $\gone(\open,\open_Y)$ during which ONE used the strategy $F$ and yet TWO won, we proceed as follows: 

Recursively define a sequence $( W_n : n \in \mathbb{N})$ of elements of the uniformity $\Psi$,  and an array $(\mathcal{U}(U_1, \cdots, U_n): n \in \mathbb{N})$ of finite families of open sets such that:
\begin{enumerate}
\item{With $n_1$ such that $1 \in S_{n_1}$, $\mathcal{U}(\emptyset)$ is a finite subset of $F(\emptyset)$ which is an open cover of $Y_{n_1}$, and $W_1$ is a member of $\Psi$ such that for each $x\in Y_1$ we have $W_1(x)\subset U$ for some $U\in \mathcal{U}(\emptyset)$;}
\item{For each $(U_1,\cdots, U_k)$ such that $U_1 \in \mathcal{U}(\emptyset)$ and $U_{i+1} \in \mathcal{U}(U_1... , U_i)$, $i < k$,
and for $n_{k+1}$ such that $k + 1 \in S_{n_{k+1}}$, the set $\mathcal{U}(U_1,\cdots, U_k)$ is a finite subset of
$F(U_1,\cdots,U_k)$ which covers $Y_{n_{k+1}}$ and $W_{n_{k+1}}\in\Psi$ is such that $W_{n_{k+1}}\subset W_{n_k}$ and for each $x\in Y_{n_{k+1}}$ there is a $U\in\mathcal{U}(U_1,\cdots,U_{k})$ with $W_{n_{k+1}}(x)\subseteq U$.}
\end{enumerate}

Suppose we have defined these objects up to the $m$-th stage. Here is how stage $m + 1$'s definition is made: For each sequence $(U_1, \cdots, U_n)$ of open sets such
that $U_1 \in \mathcal{U}(\emptyset)$, $U_2 \in \mathcal{U} (U_1), \cdots$. and $U_m \in \mathcal{U} (U_1, \cdots, U_{m-1})$, apply ONE's strategy $F$ to obtain an open cover $F(U_1,\cdots, U_m)$ of $X$. Determine the $n_{m+ 1}$ or which
$m + 1 \in S_{n_{m+1}}$ and then let $\mathcal{U}(U_1,\cdots,U_{m+1})$ be a finite subset of $F(U_1,\cdots,U_{m+1})$ which covers $Y_{n_{m+1}}$. For this finite cover of $Y_{n_{m+1}}$ choose a $Z_{n_{m+1}}\subset W_{n_m}$ in $\Psi$ such that for each $x\in Y_{n_{m+1}}$ there is a $U\in \mathcal{U}(U_1,\cdots,U_{n_{m+1}})$ for which $Z_{n_{m+1}}(x)\subseteq U$. Do this for all the finitely many possible $(U_1, \cdots, U_{m+1})$, and finally let
$W_{n_{m+1}}$ be an element of $\Psi$ contained in each $Z_{n_{m+1}}$.

Note that for any fixed $m$ the selection principle $\sone(\open,\open_{Y_m})$ holds. Thus for the sequence $(W_n : n \in S_m )$ choose a corresponding sequence $(x_n : n \in S_m )$ of elements of $Y_m$ such that $Y_m$ is covered by the set $\{W_n(x_n):n\in S_m)$. Then the sequence $(W_n(x_n)):n\in\mathbb{N})$ covers $Y$. 

Recursively choose a sequence $(U_n: n \in \mathbb{N})$ as follows: Choose $U_1 \in \mathcal{U}(\emptyset)$ with $W_1(x_1) \subset U_1$, and with $U_1, \cdots, U_m$ chosen, choose $U_{m+1} \in \mathcal{U}(U_1, \cdots, U_m)$ with $W_{n_{m+1}}(x_{n_{m+1}}) \subset U_{m+1}$. These choices are possible on account of the way we chose the sets  $W_{n_m}$. Then
\[
  F(\emptyset), U_1, F(U_1), U_2, F(U_1, U_2), \cdots
\]
is an $F$-play of $\gone(\open, \open_Y)$ which is lost by ONE.

{\flushleft\underline{(3)$\Rightarrow$(4):}} Since $\open_{\Psi}\subset \open$ it follows that if ONE has no winning strategy in $\gone(\open,\open_Y)$, then ONE has no winning strategy in $\gone(\open_{\Psi},\open)$.
\end{proof}

\begin{example}\label{ex:KWilliams}
The equivalence of (1) and (2) in Theorem \ref{th:MSUniform} requires hypotheses on the underlying space. To demonstrate, consider the following example (communicated to me by Kameryn J. Williams): For any uncountable set $X$ there is a quasi-uniformity $\Psi$ on $X$ such that the selection principle $\sone(\open_{\Psi},\open)$ holds, yet the topology generated by $\Psi$ is the discrete topology. For let an uncountable set $X$ be given.  
For each countably infinite subset $F$ of $X$ define
\[
  U_F = \Delta_X \bigcup ((X\setminus F) \times X)
\]
Then let $\Psi$ be the filter generated on $X\times X$ by the family $\{U_F: F\in\lbrack X\rbrack^{\aleph_0}\}$. Then $\Psi$ is a quasi-uniformity on $X$, and the topology $\tau_{\Psi}$ generated on $X$ by $\Psi$ is the set 
\[
  \{V\subseteq X: (\forall x\in V)(\exists U\in\mathcal{U})(U(x)\subseteq V)\}.
\]
To see that for each $x\in X$ the set $\{x\}$ is a member of $\tau_{\Psi}$, note that for any countable set $F\subset X$ with $x\in F$ we have, for $U_F = \Delta_X\bigcup((X\setminus F)\times X)$ that $U_F(x) = \{x\}$.
To see that $(X,\Psi)$ satisfies the selection principle $\sone(\open_{\Psi},\open)$, let a sequence $(U_n:n\in\mathbb{N})$ of elements of $\mathcal{U}$ be given. Note that each $U_n$ contains a set of the form $U_{F_n}$ where $F_n$ is a countably infinite subset of $X$. Choose, for each $n$ an $x_n\not\in \bigcup\{F_m:m\in\mathbb{N}\}$. Then the sequence $(U_n(x_n):n\in\mathbb{N})$ covers $X$. An even stronger fact holds: TWO has a winning strategy in the game $\gone(\open_{\Psi},\open)$: For when ONE plays the open cover $\{U(x):x\in X\}$ for some $U$ in $\Psi$, then there is an $x\in X$ for which $U(x) = X$. Yet, $X$ does not satisfy the property $\sone(\open,\open)$, for take for each $n$ the open cover $\open_n = \{\{x\}:x\in X\}$ of $X$. Since $X$ is uncountable, no countable subset of the set $\{\{x\}:x\in X\}$ covers $X$.
\end{example}

On the other hand, the hypotheses in Theorem \ref{th:MSUniform} are not the optimal hypotheses under which the statements of the theorem are true. For example
\begin{example}\label{ex:MSUniformNotOptimal}
In \cite{MSUniform} Proposition 6 it is shown that there is for each uncountable cardinal $\kappa$ a $\textsf{T}_0$ topological group that has cardinality $\kappa$, does not embed as a closed subgroup into any $\sigma$-compact group, and yet TWO has a winning strategy in the game $\gone(\open_{nbd},\open)$ in this group. Even more, TWO has a winning strategy in the game $\gone(\open,\open)$ in this group. More information about this example is given in Section 8.
\end{example}

Towards connecting covering properties with Ramseyan statements we now introduce a concept the significance of which will be clear soon: Following \cite{GN} we say that an open cover $\mathcal{U}$ of a topological space $(X,\tau)$ is an $\omega$-cover if $X$ is not a member of $\mathcal{U}$, but for each finite subset $F$ of $X$ there is a $U\in\mathcal{U}$ such that $F\subseteq U$. The symbol $\Omega$ denotes the collection of all $\omega$-covers of the space $X$. 
\begin{theorem}\label{thm:OmegaEquiv} Let $(X,\tau)$ be a topological space and let $Y$ be a subset of $X$. The following are equivalent:
\begin{enumerate} 
\item{$(X ,\tau)$ satisfies $\textsf{S}_1(\mathcal{O},\mathcal{O}_Y )$.}
\item{$(X ,\tau)$ satisfies $\textsf{S}_1(\Omega,\mathcal{O}_Y )$.}
\end{enumerate}
\end{theorem}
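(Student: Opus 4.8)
The implication $(1)\Rightarrow(2)$ should be immediate, so I would dispose of it first: every $\omega$-cover of $X$ is in particular an open cover of $X$ (applying the $\omega$-cover condition to a singleton $\{x\}$ yields a member containing $x$), hence $\Omega\subseteq\mathcal{O}$; so any sequence of $\omega$-covers is a sequence of open covers, and a selection witnessing $\textsf{S}_1(\mathcal{O},\mathcal{O}_Y)$ for it is automatically a selection witnessing $\textsf{S}_1(\Omega,\mathcal{O}_Y)$.

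For $(2)\Rightarrow(1)$ my plan is the usual ``pass to finite unions'' device, but arranged so that one $\omega$-cover absorbs infinitely many of the given open covers. Starting from a sequence $(\mathcal{U}_n:n\in\mathbb{N})$ of open covers of $X$, I would partition $\mathbb{N}$ into infinitely many infinite pieces $\{I_j:j\in\mathbb{N}\}$ and enumerate each as $I_j=\{i(j,1)<i(j,2)<\cdots\}$. For each $j$ let $\mathcal{W}_j$ consist of all sets $U_1\cup\cdots\cup U_r$ with $r\ge 1$ and $U_s\in\mathcal{U}_{i(j,s)}$ for $1\le s\le r$. Each member of $\mathcal{W}_j$ is open, and $\mathcal{W}_j$ covers $X$ (take $r=1$); given a finite $F=\{x_1,\dots,x_p\}\subseteq X$, picking $U_s\in\mathcal{U}_{i(j,s)}$ with $x_s\in U_s$ for $s\le p$ exhibits a member of $\mathcal{W}_j$ containing $F$. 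Thus $\mathcal{W}_j$ is an $\omega$-cover of $X$ \emph{unless} $X$ itself equals one of these finite unions.

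I would then split into cases. If for some $j$ we have $X=U_1\cup\cdots\cup U_r$ with $U_s\in\mathcal{U}_{i(j,s)}$, I am done at once: set $T_{i(j,s)}=U_s$ for $s\le r$, and let $T_n$ be an arbitrary member of $\mathcal{U}_n$ for every remaining $n$; then $\bigcup_n T_n\supseteq X\supseteq Y$. Otherwise every $\mathcal{W}_j$ lies in $\Omega$, so I apply $\textsf{S}_1(\Omega,\mathcal{O}_Y)$ to the sequence $(\mathcal{W}_j:j\in\mathbb{N})$ to get $T^{*}_j\in\mathcal{W}_j$ with $Y\subseteq\bigcup_j T^{*}_j$. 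Writing $T^{*}_j=U^{(j)}_1\cup\cdots\cup U^{(j)}_{r_j}$ with $U^{(j)}_s\in\mathcal{U}_{i(j,s)}$, I set $T_{i(j,s)}=U^{(j)}_s$ whenever $s\le r_j$ and choose $T_n\in\mathcal{U}_n$ arbitrarily for all remaining indices; since $\{I_j:j\in\mathbb{N}\}$ partitions $\mathbb{N}$, this defines $T_n\in\mathcal{U}_n$ for every $n$, and $\bigcup_n T_n\supseteq\bigcup_j T^{*}_j\supseteq Y$, witnessing $\textsf{S}_1(\mathcal{O},\mathcal{O}_Y)$.

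The one point that needs care — and the only place a naive argument stumbles — is the degenerate possibility that $X$ is exhausted by finitely many open sets drawn from finitely many of the $\mathcal{U}_n$'s; this is precisely the obstruction to $\mathcal{W}_j$ being an $\omega$-cover, and the case split above shows it is a favorable situation rather than a problematic one. The remaining work is the routine bookkeeping of the partition and the re-indexing that distributes the unpacked pieces of each $T^{*}_j$ back over the innings $n\in I_j$; I expect no genuine difficulty there.
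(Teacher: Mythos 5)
Your argument is correct and follows essentially the same route as the paper's: partition $\naturals$ into infinitely many infinite pieces, form $\omega$-covers out of finite unions of sets drawn from the covers indexed by each piece, apply $\sone(\Omega,\open_Y)$ to these, and unpack each selected union back into single selections, filling in the unused innings arbitrarily. The only difference is that you explicitly handle the degenerate case in which some finite union equals $X$ (the paper simply asserts that each auxiliary family is an $\omega$-cover, tacitly ignoring the requirement $X\notin\mathcal{V}_n$), and your case split disposes of that possibility correctly.
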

\begin{proof} (2)$\Rightarrow$(1) requires a proof.
Thus, let a sequence $(\mathcal{U}_n:n\in\mathbb{N})$  of elements of $\mathcal{O}$ be given.

Next, write $\mathbb{N} = \bigcup\{S_n:n\in\mathbb{N}\}$ where each $S_n$ is infinite, and for $m\neq n$ we have $S_m\cap S_n = \emptyset$. Then, for each $n$ define 
\[
   \mathcal{V}_n = \{\bigcup\{U_x:x\in F\}: F\subset S_n \mbox{ nonempty, finite and } U_x\in\mathcal{U}_x\}
\]
Observe that each $\mathcal{V}_n$ is an $\omega$ cover. Applying $\sone(\Omega,\mathcal{O}_Y)$ to the sequence $(\mathcal{V}_n:n\in\mathbb{N})$, choose for each $n$ a $V_n\in\mathcal{V}_n$ such that $\{V_n:n\in\mathbb{N}\}$ is an open cover of $Y$.

Now for each $n$, choose a finite subset $F_n$ of $S_n$ such that $V_n = \cup\{U_x:x\in F_n\}$, where each corresponding $U_x$ is a member of the original open cover $\mathcal{U}_x$ of $X$. Then, for each $m\in F = \bigcup\{F_n:n\in\mathbb{N}\}$ we have a $U_m\in\mathcal{U}_m$ such that the set $\{U_m:m\in F\}$ is a cover of $Y$. By choosing $U_m\in\mathcal{U}_m$ arbitrary when $m\in\mathbb{N}\setminus F$, we find a sequence $(U_n:n\in\mathbb{N})$ that witnesses $\sone(\mathcal{O},\mathcal{O}_Y)$ for the given sequence $(\mathcal{U}_n:n\in\mathbb{N})$.
\end{proof}

At this point it is worth observing that when the topology arises from a uniformity $\Psi$ for which $X$ is not totally bounded, we obtain a specific subset of the collection of $\omega$-covers as follows: For $U\in\Psi$ we define $\Omega(U)$ to be the set $\{\cup_{x\in F} U(x) : F\subset X \mbox{ finite}\}$. When $X$ is not a member of $\Omega(U)$, then $\Omega(U)$ is indeed an $\omega$-cover of $X$. The symbol $\Omega_{\Psi}$ denotes the open $\omega$-covers of $X$ arising in this way from uniformity $\Psi$. In light of the statement of Theorem \ref{thm:OmegaEquiv} one might speculate that for a subspace $Y$ of a uniform space $(X,\Psi)$ the statements $\textsf{S}_1(\mathcal{O}_{\Psi},\mathcal{O}_Y )$ and $\textsf{S}_1(\Omega_{\Psi},\mathcal{O}_Y )$ are equivalent. This, however, is not so.
\begin{example}\label{ex:nbdomegacover} The real line with its standard metric topology satisfies the statement $\sone(\Omega_{nbd},\open)$, but does not satisfy the statement $\sone(\open_{nbd},\open)$. For let a sequence $(\epsilon_n:n\in\mathbb{N})$ of positive real numbers be given. Then for each $n$ the set $\mathcal{U}_n = \{\cup\{(x-\epsilon_n,x+\epsilon_n):x\in F, F\subset\mathbb{R} \mbox{ finite}\}$ is a member of $\Omega_{nbd}$ for the real line. For each $n$, by the compactness of the closed interval $\lbrack-n, n\rbrack$, choose a finite subset $F_n$ of $\mathbb{R}$ such that $\lbrack-n, n\rbrack\subset S_n = \bigcup \{(x-\epsilon_n,x+\epsilon_n):x\in F_n\}$. Note that $S_n\in \mathcal{U}_n$, and the set $\{S_n:n\in\mathbb{N}\}$ is an open cover of the real line. On the other hand, the real line does not have the property $\sone(\open_{nbd},\open)$. 
\end{example}
More information about the differences between $\sone(\Omega_{\Psi},\open_Y)$ and $\sone(\open_{\Psi},\open_Y)$ can be gleaned from \cite{coc11}.

\section{A Theorem of F.P. Ramsey}

We now arrive at the introduction of the other main concept of the paper, partition relations. Partition relations, as well as corresponding symbolic representations, were originally introduced by Erd\"{o}s and Rado \cite{ER} as various Ramseyan statements.  
The following theorem, entering the mathematics literature ten years after the Borel covering property, is the original stimulus for the development of Ramsey theory as a subfield of mathematics: 
\begin{theorem}[Ramsey \cite{Ramsey}]\label{Th:Ramsey}  
For all positive integers $m$ and $k$, and for each function $f:\lbrack \mathbb{N}\rbrack^m\longrightarrow \{1,\; 2,\; \cdots,\;k\}$ there are an infinite subset $M$ of $\mathbb{N}$, and an element $i$ of $\{1,\; 2,\; \cdots,\; k\}$ such that $f$ is constant, of value $i$, on the set $\lbrack M\rbrack^m$.
\end{theorem}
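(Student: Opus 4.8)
The plan is to prove Ramsey's theorem by induction on the exponent $m$, with the case $m=1$ being the infinite pigeonhole principle, and then to extract a diagonal argument from repeated application of the $(m-1)$-dimensional case. Since the theorem is stated for an arbitrary finite number $k$ of colors, the induction on $m$ will carry $k$ along as a parameter; no separate induction on $k$ is needed once $m=1$ is handled, because the pigeonhole principle already delivers the general $k$.

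\textbf{Base case.} For $m=1$ a function $f:[\mathbb{N}]^1\to\{1,\dots,k\}$ is just a $k$-coloring of $\mathbb{N}$ itself. Since $\mathbb{N}$ is infinite and there are only finitely many colors, some color class $f^{-1}(i)$ is infinite; take $M = f^{-1}(i)$. This is the infinite pigeonhole principle and requires no machinery beyond the fact that a finite union of finite sets is finite.

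\textbf{Inductive step.} Assume the theorem holds for exponent $m-1$ (and every finite number of colors), and let $f:[\mathbb{N}]^m\to\{1,\dots,k\}$ be given. I will build a nested sequence of infinite sets together with a sequence of ``pivot'' elements by recursion. Set $A_0=\mathbb{N}$. Having chosen an infinite set $A_j$ and pivots $a_1<\dots<a_j$ with each $a_i\in A_{i-1}$, pick $a_{j+1}$ to be the least element of $A_j$, and define a coloring $g_{j+1}:[A_j\setminus\{a_{j+1}\}]^{m-1}\to\{1,\dots,k\}$ by $g_{j+1}(s) = f(\{a_{j+1}\}\cup s)$. By the inductive hypothesis applied to $g_{j+1}$ (restricted to the infinite set $A_j\setminus\{a_{j+1}\}$, which is order-isomorphic to $\mathbb{N}$), there is an infinite $A_{j+1}\subseteq A_j\setminus\{a_{j+1}\}$ on which $g_{j+1}$ is constant, say of value $c_{j+1}$. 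This produces an infinite sequence of pivots $a_1<a_2<\cdots$ and an associated sequence of colors $c_1,c_2,\dots\in\{1,\dots,k\}$ with the key homogeneity property: for any $j$ and any $m-1$ pivots $a_{i_1}<\dots<a_{i_{m-1}}$ all exceeding $a_j$ (so with indices $>j$), we have $f(\{a_j,a_{i_1},\dots,a_{i_{m-1}}\}) = c_j$, because $\{a_{i_1},\dots,a_{i_{m-1}}\}\subseteq A_j$. Now apply the base case (pigeonhole) to the $k$-coloring $j\mapsto c_j$ of the index set: there is an infinite set $J$ of indices on which $c_j$ is constantly equal to some $i$. Let $M = \{a_j : j\in J\}$. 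Then $M$ is infinite, and for any $m$-element subset $\{a_{j_1},\dots,a_{j_m}\}$ of $M$ with $j_1<\dots<j_m$ all in $J$, applying the homogeneity property at the pivot $a_{j_1}$ gives $f(\{a_{j_1},\dots,a_{j_m}\}) = c_{j_1} = i$. Hence $f$ is constantly $i$ on $[M]^m$, as required.

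\textbf{Main obstacle.} The step that needs the most care is the recursion that builds the pivots: one must ensure at each stage that the set on which the inductive hypothesis is being invoked is genuinely infinite (so that it is order-isomorphic to $\mathbb{N}$ and the hypothesis applies verbatim), and that the colors $c_j$ are well defined in the sense of depending only on the pivot $a_j$ and not on which later pivots are chosen --- this is exactly what the homogeneity of $g_j$ on all of $A_j$ guarantees. A secondary bookkeeping point is that the recursion is a genuine dependent choice construction, so the argument is carried out in \textsf{ZFC}; this is harmless here since the ambient framework of the paper is \textsf{ZFC} throughout. Everything else is routine once the nested structure $A_0\supseteq A_1\supseteq\cdots$ with $a_{j+1}=\min A_j$ is in place.
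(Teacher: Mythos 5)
Your proof is correct: the base case is the infinite pigeonhole principle, and the inductive step is the classical nested-sets construction (pivots $a_{j+1}=\min A_j$, homogeneous refinements $A_{j+1}$ supplied by the $(m-1)$-dimensional hypothesis, then pigeonhole on the colors $c_j$), and the key homogeneity claim is justified correctly since all later pivots lie in $A_j$. Note that the paper does not prove this theorem at all --- it is quoted from Ramsey's 1929 paper as a classical result --- so there is no in-paper argument to compare with; what you have written is the standard proof, and it is sound as stated.
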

The statement of Theorem \ref{Th:Ramsey} uses notation requiring introduction:  
For an arbitrary set $S$ and for a positive integer $m$ the symbol $\lbrack S\rbrack^m$ denotes the set of all $m$-element subsets of the set $S$. The following notation will also be needed: When $\lambda$ is an infinite cardinal number and $S$ is a set, the symbol $\lbrack S\rbrack^{<\lambda}$ denotes the set consisting of subsets of cardinality less than $\lambda$ of the set $S$, while $\lbrack S\rbrack^{\lambda}$ denotes the set whose elements are the subsets $A$ of $S$ for which $\vert A\vert = \lambda$.

In terms of the notation that was introduced by Erd\"{o}s and collaborators, Theorem \ref{Th:Ramsey} can be stated as follows:
\[
  \mbox{For all positive integers m and k, }\aleph_0\longrightarrow(\aleph_0)^m_k.
\]
More generally, for cardinal numbers $\kappa$ and $\lambda$, the symbolic statement
\[
  \mbox{For positive integers m and k, }\kappa\longrightarrow(\lambda)^m_k.
\]
denotes that for any set $X$ of cardinality $\kappa$, and for any function
\[
  f:\lbrack X\rbrack^m\rightarrow \{1,\;\cdots,\;k\}
\]
there is an $i\in\{1,\;\cdots,\;k\}$ and a subset $Y$ of $X$ such that $\vert Y\vert = \lambda$, and $f$ is constant of value $i$ on the set $\lbrack Y\rbrack^m$.
In this paper we adopt the following more general notation: 
\begin{definition}\label{def:Ramsey}
Let $k$ and $m$ be positive integers. Let $\mathcal{A}$, $\mathcal{B}_1$, $\cdots$ $\mathcal{B}_k$ be families of sets. The symbol
\[
  \mathcal{A}\longrightarrow(\mathcal{B}_1,\cdots,\mathcal{B}_k)^m
\]
denotes that for each $A\in\mathcal{A}$, and for each function $f:\lbrack A\rbrack^m\rightarrow\{1,\cdots,k\}$, there is an $i\in\{1,\cdots,k\}$ and a $B_i\subset A$ such that $B_i\in\mathcal{B}_i$, and $f$ is constant of value $i$ on $\lbrack B_i\rbrack^m$.
\end{definition}

When for $1\le i\le k$ it is the case that $\mathcal{B}_i = \mathcal{B}$, then $ \mathcal{A}\longrightarrow(\mathcal{B}_1,\cdots,\mathcal{B}_k)^m$  is denoted $\mathcal{A}\longrightarrow(\mathcal{B})^m_k$.
If in Definition \ref{def:Ramsey} we take the special instances $\mathcal{A} = \lbrack\mathbb{N}\rbrack^{\aleph_0}$ and $\mathcal{B} = \mathcal{A}$, then the symbol $\mathcal{A}\longrightarrow(\mathcal{B})^m_k$ denotes the instance for $m$ and $k$ of Ramsey's theorem. If, instead, we take $\mathcal{A}$ to be $\Omega$, the set of $\omega$-covers of a topological space, then the following holds:
\begin{lemma}\label{lemma:OmegaisRamsey} Let $(X,\tau)$ be a topological space which has  
infinite $\omega$-covers. Then for each positive integer $k$ the partition relation
\[
  \Omega \longrightarrow(\Omega)^1_k
\]
holds.
\end{lemma}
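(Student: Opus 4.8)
The plan is to prove this by a direct pigeonhole argument; it is the natural $\omega$-cover analogue of the infinite pigeonhole principle $\lbrack\mathbb{N}\rbrack^{\aleph_0}\longrightarrow(\lbrack\mathbb{N}\rbrack^{\aleph_0})^1_k$. Fix $k$, fix an $\omega$-cover $\mathcal{U}$ of $X$, and fix a colouring $f:\lbrack\mathcal{U}\rbrack^1\longrightarrow\{1,\dots,k\}$, which we identify with a function $f:\mathcal{U}\longrightarrow\{1,\dots,k\}$. For $1\le i\le k$ put $\mathcal{U}_i=\{U\in\mathcal{U}:f(U)=i\}$, so that $\mathcal{U}=\bigcup_{i=1}^{k}\mathcal{U}_i$. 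The claim to establish is that at least one $\mathcal{U}_i$ is itself an $\omega$-cover of $X$; granting this, taking $\mathcal{B}=\mathcal{U}_i$ for such an $i$ gives a subfamily $\mathcal{B}\subseteq\mathcal{U}$ with $\mathcal{B}\in\Omega$ on which $f$ is constant of value $i$, which is exactly the assertion $\Omega\longrightarrow(\Omega)^1_k$.

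To prove the claim I would argue by contradiction: suppose no $\mathcal{U}_i$ is an $\omega$-cover. First note that, since each $\mathcal{U}_i\subseteq\mathcal{U}$ and $X\notin\mathcal{U}$ (as $\mathcal{U}$ is an $\omega$-cover), we have $X\notin\mathcal{U}_i$ for every $i$. Hence the only way $\mathcal{U}_i$ can fail to be an $\omega$-cover is that there is a finite set $F_i\subseteq X$ that is contained in no member of $\mathcal{U}_i$. The key step is then to glue these finitely many finite witnesses together: set $F=F_1\cup\cdots\cup F_k$, again a finite subset of $X$. Since $\mathcal{U}$ is an $\omega$-cover there is some $U\in\mathcal{U}$ with $F\subseteq U$; letting $i=f(U)$ we get $U\in\mathcal{U}_i$ while $F_i\subseteq F\subseteq U$, contradicting the choice of $F_i$. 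This contradiction establishes the claim and hence the lemma.

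There is no serious obstacle here; the only points that need care are spelling out the negation of ``$\mathcal{U}_i$ is an $\omega$-cover'' correctly -- namely that, once $X\notin\mathcal{U}_i$ has been observed, the failure is witnessed by a \emph{single} finite set rather than by plain non-coverage -- and the remark that a finite union of such witnesses is still finite, which is precisely what makes the pigeonhole go through. (The hypothesis that $X$ has infinite $\omega$-covers serves only to make the relation non-vacuous; in fact every $\omega$-cover is automatically infinite, since a finite family $\{U_1,\dots,U_n\}$ none of whose members equals $X$ fails to contain the finite set $\{x_1,\dots,x_n\}$ with $x_j\in X\setminus U_j$, so the conclusion $\mathcal{B}\in\Omega$ automatically yields an infinite monochromatic subfamily, in parallel with the infinite target set in Ramsey's theorem.)
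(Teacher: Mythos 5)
Your proof is correct and is essentially the same argument as the paper's: partition $\mathcal{U}$ into the colour classes, suppose none is an $\omega$-cover, take finite witness sets $F_i$, and observe that their finite union $F$ must lie in some $U\in\mathcal{U}$, yielding a contradiction. The extra care you take in noting $X\notin\mathcal{U}_i$ (so failure is witnessed by a single finite set) only makes explicit a step the paper leaves implicit.
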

\begin{proof}
Suppose that on the contrary $\mathcal{U}$ is an $\omega$-cover of $X$, and that there is a partition, say
\[
  \mathcal{U} = \mathcal{U}_1\bigcup \cdots \bigcup\mathcal{U}_k
\]
of $\mathcal{U}$ into $k>1$ pieces such that none of the $\mathcal{U}_i$ is an $\omega$-cover of $X$. For each $1\le i\le k$ choose a finite set $F_i\subset X$ for which there is no $U\in\mathcal{U}_i$ such that $F_i\subset U$. Then the set $F = \bigcup\{F_i:\;1\le i \le k\}$ is a finite subset of $X$, and yet no element of $\mathcal{U}$ covers $F$, contradicting the fact that $\mathcal{U}$ is an $\omega$-cover of $X$.
\end{proof}

Later on in the paper we will need the following consequence of Lemma \ref{lemma:OmegaisRamsey}:
\begin{corollary}\label{cor:bdedomegaisRamsey}
Assume that for each sequence $(\mathcal{U}_n:n\in\naturals)$ of $\omega$-covers of $X$ there is a positive integer $k$ and a sequence $(\mathcal{F}_n:n\in\naturals)$ such that for each $n$ it is the case that $\mathcal{F}_n\subset \mathcal{U}_n$ and $\vert\mathcal{F}\vert\le k$, and $\bigcup\{\mathcal{F}_n:n\in\naturals\}$ is an $\omega$-cover of $X$. Then $\sone(\Omega,\Omega)$ holds. 
\end{corollary}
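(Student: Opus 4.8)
The plan is to read the hypothesis as a uniformly bounded form of $\sfin(\Omega,\Omega)$ and then to use the partition relation $\Omega\longrightarrow(\Omega)^1_k$ furnished by Lemma~\ref{lemma:OmegaisRamsey} to collapse such a bounded selection to a genuine one-per-coordinate selection. (We may assume $X$ has an infinite $\omega$-cover, as otherwise there is nothing to prove; in fact the argument of Lemma~\ref{lemma:OmegaisRamsey} applies to any single $\omega$-cover, since a singleton family is never an $\omega$-cover.)

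So I would begin with an arbitrary sequence $(\mathcal{U}_n:n\in\naturals)$ of $\omega$-covers of $X$; the goal is to choose $U_n\in\mathcal{U}_n$ for each $n$ so that $\{U_n:n\in\naturals\}$ is an $\omega$-cover. First apply the hypothesis to this sequence to obtain a positive integer $k$ together with finite families $\mathcal{F}_n\subseteq\mathcal{U}_n$ with $\vert\mathcal{F}_n\vert\le k$ for every $n$, such that $\mathcal{V}:=\bigcup\{\mathcal{F}_n:n\in\naturals\}$ is an $\omega$-cover of $X$. For each $n$ fix a list $U_n^1,\dots,U_n^k$ of length exactly $k$ whose underlying set is $\mathcal{F}_n$ (repeating members of $\mathcal{F}_n$ as needed when $\vert\mathcal{F}_n\vert<k$), so that each $U_n^i$ lies in $\mathcal{U}_n$.

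Next I would colour $\mathcal{V}$ with $k$ colours: for $V\in\mathcal{V}$ choose once and for all indices $n_V$ and $i_V$ with $V=U_{n_V}^{i_V}$, and give $V$ the colour $i_V$. Since $\mathcal{V}$ is an $\omega$-cover, Lemma~\ref{lemma:OmegaisRamsey} yields a colour $i^\ast\in\{1,\dots,k\}$ and a subfamily $\mathcal{B}\subseteq\mathcal{V}$ which is again an $\omega$-cover of $X$ and all of whose members have colour $i^\ast$. Now set $U_n:=U_n^{i^\ast}$ for every $n$, so $U_n\in\mathcal{F}_n\subseteq\mathcal{U}_n$. Each $V\in\mathcal{B}$ has colour $i^\ast$, hence $V=U_{n_V}^{i_V}=U_{n_V}^{i^\ast}=U_{n_V}$, so $\mathcal{B}\subseteq\{U_n:n\in\naturals\}$; therefore $\{U_n:n\in\naturals\}$ covers every finite subset of $X$. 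Moreover each $U_n$ belongs to the $\omega$-cover $\mathcal{U}_n$ and so is a proper subset of $X$, whence $X\notin\{U_n:n\in\naturals\}$. Thus $\{U_n:n\in\naturals\}$ is an $\omega$-cover, which is exactly what $\sone(\Omega,\Omega)$ requires for $(\mathcal{U}_n:n\in\naturals)$.

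The only step that is more than bookkeeping is the passage from the uniformly bounded selection $(\mathcal{F}_n)$ to one set per coordinate; the padding convention for the lists $U_n^1,\dots,U_n^k$ is precisely the device that lets the monochromatic sub-$\omega$-cover $\mathcal{B}$ be rewritten as a subfamily of a single-choice selection, and the remaining verification that this selection is an $\omega$-cover (in particular that it omits $X$) is immediate. I expect no genuine obstacle beyond arranging this reduction cleanly.
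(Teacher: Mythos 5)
Your proposal is correct and follows essentially the same route as the paper's proof: both index the members of the bounded selections $\mathcal{F}_n$ by their position $i\le k$ and apply Lemma \ref{lemma:OmegaisRamsey} ($\Omega\longrightarrow(\Omega)^1_k$) to extract a position-homogeneous subfamily that is still an $\omega$-cover, yielding at most one choice from each $\mathcal{U}_n$. Your padding of each $\mathcal{F}_n$ to a length-$k$ list is only a cosmetic tidying of the paper's "$\mathcal{G}_i$ results from at most one selection from each $\mathcal{U}_n$" step, making the completion to a full selection explicit.
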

\begin{proof}
For let a sequence $(\mathcal{U}_n:n\in\naturals)$ of $\omega$-covers of $X$ be given, and fix a positive integer $k$ as in the hypothesis of the Corollary. For each $n$ fix a $\le k$-element set $\mathcal{F}_n$ of $\mathcal{U}_n$ such that 
$\mathcal{F} = \bigcup\{\mathcal{F}_n:n\in\naturals\}$ is an $\omega$-cover of $X$. For each $n$ enumerate $\mathcal{F}_n$ bijectively as $\{F^n_1,\cdots,F^n_{j_n}\}$ where $j_n\le k$.
For $i\le k$ set $\mathcal{G}_i =\{F^n_i:n\in\naturals \mbox{ and }i\le j_n\}$. Then the sets $\mathcal{G}_1,\cdots,\mathcal{G}_k$ are pairwise disjoint, and $\mathcal{G} = \bigcup\{\mathcal{G}_i:i\le k\}$ is an $\omega$-cover. By Lemma \ref{lemma:OmegaisRamsey}, for an $i\le k$ the set $\mathcal{G}_i$ is an $\omega$-cover. But $\mathcal{G}_i$ results from at most one selection from each $\mathcal{U}_n$, confirming $\sone(\Omega,\Omega)$ for the given sequence of $\omega$-covers of $X$. Since the given sequence was an arbitrary sequence of $\omega$-covers of $X$, the corollary follows.
\end{proof}
Recall that a topological space is said to be Lindel\"of if each open cover of the space has a countable subset that is a cover of the space. The following classical result is another theoretical component towards establishing the connection between Ramsey theory and variations of the Borel covering property.
\begin{theorem}[Arkhangelskii, Pytkeev]\label{Th:ArkhPytk}
If $(X,\tau)$ is a topological space for which each finite power is Lindel\"of, then each $\omega$-cover of $X$ has a countable subset that is an $\omega$-cover.
\end{theorem}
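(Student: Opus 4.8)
The plan is to take an arbitrary $\omega$-cover $\mathcal{U}$ of $X$ and manufacture, for each positive integer $n$, an open cover of $X^n$ whose members are the $n$-fold products $U_1 \times \cdots \times U_n$ with each $U_i \in \mathcal{U}$; then use the Lindel\"of property of $X^n$ to extract a countable subcover, and finally assemble the countably many sets of $\mathcal{U}$ that appear as factors in these subcovers into a single countable subfamily of $\mathcal{U}$, which I then verify is an $\omega$-cover of $X$. The first step is where the $\omega$-cover hypothesis does its work: given a finite $F = \{x_1,\ldots,x_n\} \subseteq X$, there is a single $U \in \mathcal{U}$ with $F \subseteq U$, and then $(x_1,\ldots,x_n) \in U^n = U \times \cdots \times U$. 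So the family $\{U^n : U \in \mathcal{U}\}$ (or, if one prefers, the larger family of all products of members of $\mathcal{U}$) is genuinely an open cover of $X^n$.

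Concretely: fix $\mathcal{U} \in \Omega$. For each $n \in \mathbb{N}$ the family $\mathcal{W}_n = \{U^n : U \in \mathcal{U}\}$ is an open cover of $X^n$ by the observation above, so by the Lindel\"of property of $X^n$ there is a countable $\mathcal{U}_n \subseteq \mathcal{U}$ with $\{U^n : U \in \mathcal{U}_n\}$ covering $X^n$. Put $\mathcal{V} = \bigcup_{n \in \mathbb{N}} \mathcal{U}_n$; this is a countable subfamily of $\mathcal{U}$. To see $\mathcal{V}$ is an $\omega$-cover, let $F \subseteq X$ be finite, say $|F| = n$, and enumerate $F = \{x_1,\ldots,x_n\}$ (padding with repetitions if $|F| < n$ is ever an issue, though taking $n = |F|$ suffices). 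The point $(x_1,\ldots,x_n) \in X^n$ lies in $U^n$ for some $U \in \mathcal{U}_n \subseteq \mathcal{V}$, which means $x_i \in U$ for all $i$, i.e.\ $F \subseteq U$. Finally, since $\mathcal{U}$ is an $\omega$-cover we have $X \notin \mathcal{U}$, hence $X \notin \mathcal{V}$, so $\mathcal{V}$ is an $\omega$-cover of $X$.

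I do not anticipate a serious obstacle here; the argument is essentially bookkeeping once the right cover of $X^n$ is written down. The one point that deserves a moment's care is the passage between finite subsets of $X$ and points of finite powers: a finite set of size $m < n$ corresponds to a point of $X^n$ only after repeating coordinates, so it is cleanest to handle a finite $F$ by working in $X^{|F|}$ rather than fixing a single power in advance — which is exactly why the construction ranges over all $n$ and takes the union $\mathcal{V} = \bigcup_n \mathcal{U}_n$. A secondary cosmetic point is ensuring $X \notin \mathcal{V}$, but this is immediate since $\mathcal{V} \subseteq \mathcal{U}$ and $\mathcal{U}$ omits $X$ by definition of $\omega$-cover. (If one only assumes each finite power is Lindel\"of and wants to avoid even discussing whether $X^n$ is Hausdorff etc., note that Lindel\"ofness alone is all that is used, so no regularity assumptions enter.)
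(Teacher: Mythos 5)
Your proof is correct, and it is the standard argument for this result: the paper itself gives no proof but defers to Theorem II.1.1 of Arkhangel'ski\u{\i}'s book, where essentially this same reduction (cover each power $X^n$ by the sets $U^n$, extract countable subcovers by Lindel\"ofness, and take the union over $n$) is used. The one point needing care — that a finite set of size $m$ is handled in the power $X^m$, so that all powers must be used and the countable families united — is exactly the point you address.
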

A proof of Therorem \ref{Th:ArkhPytk} can be found in Theorem II.1.1 of \cite{Arkhangelskii}. If a space is $\sigma$-compact, it is Lindel\"of in all finite powers, and thus each of its $\omega$-covers has, by the Arkhangel'skii-Pytkeev Theorem, a countable subset that is an $\omega$-cover of $X$.
We now arrive at our first result, Theorem \ref{th:RealsRamsey}, that establishes a Ramseyan equivalence of the Borel covering property. This result was proven in \cite{MSFinPowers}, Theorem 9, for the special context of $\sigma$-compact metric spaces. Its generalization to Theorem \ref{th:RealsRamsey} was briefly explained previously in Theorem 3 of \cite{MSUniform}. For the reader's convenience we provide a proof of Theorem \ref{th:RealsRamsey} here. 
\begin{theorem}\label{th:RealsRamsey}
Let $(X,\Psi)$ be a $\sigma$-compact uniform space and let $Y$ be a subset of $X$. Then the following statements are equivalent:
\begin{enumerate}
\item{The selection principle $\sone(\open,\open_Y)$ holds.} 
\item{For each positive integer $k$ the partition relation $\Omega \rightarrow(\mathcal{O}_Y)^2_k$ holds.}
\end{enumerate}
\end{theorem}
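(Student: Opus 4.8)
The plan is to prove the two implications separately, leaning on the machinery already assembled. Since $X$ is $\sigma$-compact it is Lindel\"of in all finite powers, so by the Arkhangel'skii--Pytkeev Theorem \ref{Th:ArkhPytk} every $\omega$-cover of $X$ has a countable subfamily that is again an $\omega$-cover; thus throughout we may replace each $\omega$-cover by a countable one. Moreover, by Theorem \ref{thm:OmegaEquiv} statement (1) is equivalent to $\sone(\Omega,\open_Y)$, and by Theorem \ref{th:MSUniform} it is equivalent to ONE having no winning strategy in $\gone(\open,\open_Y)$ and hence (since $\Omega\subseteq\open$, so a strategy in the $\Omega$-game is a strategy in the $\open$-game) also to ONE having no winning strategy in $\gone(\Omega,\open_Y)$. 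The combinatorial engine for passing between covers and colourings is Lemma \ref{lemma:OmegaisRamsey} and its Corollary \ref{cor:bdedomegaisRamsey}: any finite partition of an $\omega$-cover of $X$ has a cell that is again an $\omega$-cover.

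For $(1)\Rightarrow(2)$: fix a countable $\omega$-cover $\mathcal{U}=\{U_n:n\in\mathbb{N}\}$ of $X$ and a colouring $f:[\mathcal{U}]^2\to\{1,\dots,k\}$. I would build a strategy $\sigma$ for ONE in $\gone(\Omega,\open_Y)$ which, along every branch, keeps TWO's choices $f$-homogeneous with the colour pinned down relative to the history: having seen $U_{m_1},\dots,U_{m_j}$, ONE responds with the sub-$\omega$-cover of $\mathcal{U}$ consisting of those $U$ for which $f(\{U_{m_j},U\})=c_j$ for a fixed value $c_j$ depending on the history --- such a cell being an $\omega$-cover precisely by Lemma \ref{lemma:OmegaisRamsey} applied to the partition of $\mathcal{U}$ induced by $U\mapsto f(\{U_{m_j},U\})$. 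Since ONE has no winning strategy, some $\sigma$-play is won by TWO, yielding a cover of $Y$ together with a ``staircase'' pattern $f(\{U_{m_a},U_{m_b}\})=c_a$ for $a<b$; pigeonholing the $c_a$ produces a single colour $i$ and an infinite $f$-homogeneous-in-colour-$i$ subfamily.

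The main obstacle --- the step I expect to cost the most effort --- is that this monochromatic subfamily need not cover $Y$: Ramsey-thinning a cover destroys the covering property in general, whereas thinning an $\omega$-cover is protected by Lemma \ref{lemma:OmegaisRamsey}. The resolution is to bring in $\sigma$-compactness: write $X=\bigcup_m K_m$ with $K_m$ compact and increasing and $Y_m=Y\cap K_m$, and organise the bookkeeping of $\sigma$ so that (using Theorem \ref{th:LebesgueCov} to relate $\mathcal{U}$ to the uniformity on each compact $K_m$, and the finite form of Ramsey's Theorem \ref{Th:Ramsey}) one single colour $i$ is forced to serve simultaneously for every $Y_m$ --- if no colour served for the relevant finite-stage piece $Y_M$, then $\sone(\open,\open_{Y_M})$, a consequence of (1), would fail. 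I would model the interleaving on the proof of Theorem \ref{th:MSUniform}, where the innings indexed by a fixed block $S_m$ of a partition of $\mathbb{N}$ are devoted to $Y_m$.

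For $(2)\Rightarrow(1)$: assuming the partition relations, it suffices by Theorem \ref{thm:OmegaEquiv} to verify $\sone(\Omega,\open_Y)$. Given countable $\omega$-covers $(\mathcal{U}_n:n\in\mathbb{N})$ of $X$, I would amalgamate them into a single $\omega$-cover $\mathcal{W}$ --- taking, over a partition $\mathbb{N}=\bigsqcup_n S_n$ into infinite sets, the finite unions $\bigcup_{j\in F}V_j$ with $F$ finite and $V_j$ drawn from $\mathcal{U}_{n(j)}$, exactly as in the proof of Theorem \ref{thm:OmegaEquiv} --- and equip $[\mathcal{W}]^2$ with a $k$-colouring recording enough information about the index-blocks of its two arguments that an $f$-homogeneous member of $\open_Y$ decodes to a choice of one set from each $\mathcal{U}_n$ still covering $Y$. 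Applying $\Omega\to(\open_Y)^2_k$ to $\mathcal{W}$ and $f$ and unpacking the homogeneous subfamily then yields the required selector. The delicate point here is to design the colouring so that \emph{every} colour class which happens to cover $Y$ decodes correctly --- not merely the intended one; this is where one again exploits the freedom to take $k$ as large as convenient.
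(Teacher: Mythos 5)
The obstacle you single out in $(1)\Rightarrow(2)$ --- that after pigeonholing the staircase colours $c_a$ the monochromatic subfamily need no longer cover $Y$ --- is indeed the crux, but the resolution you sketch does not close it. Interleaving the compact pieces $Y_m$, Theorem \ref{th:LebesgueCov} and the finite form of Ramsey's theorem speak to covering compacta, not to the colours of pairs chosen in different innings, and you give no mechanism forcing one colour to serve every $Y_m$ simultaneously; in fact $\sigma$-compactness is not what rescues this step at all (the same Ramseyan characterization is proved for $Y=X$ over general regular spaces in Theorem \ref{thm:rothberger} by the very same argument, with Pawlikowski's theorem supplying the game equivalence). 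The missing idea is to pin the colour down \emph{before} the game starts instead of pigeonholing afterwards: first build a decreasing chain of $\omega$-covers $\mathcal{U}_1\supseteq\mathcal{U}_2\supseteq\cdots$ together with colours $i_n$ such that $\mathcal{U}_n$ consists of those $U_m$ ($m>n$) with $f(\{U_n,U_m\})=i_n$ (Lemma \ref{lemma:OmegaisRamsey}); then set $\mathcal{W}_j=\{U_n:i_n=j\}$ and apply Lemma \ref{lemma:OmegaisRamsey} once more, using the decreasing chain, to fix a single $j$ for which every $\mathcal{U}_n\cap\mathcal{W}_j$ is an $\omega$-cover. ONE's strategy plays only the covers $\mathcal{U}_{n_m}\cap\mathcal{W}_j$. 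Then any two sets TWO ever selects automatically receive colour exactly $j$: if $U_{n_u}$ is chosen before $U_{n_v}$, then $U_{n_v}\in\mathcal{U}_{n_u}$, so $f(\{U_{n_u},U_{n_v}\})=i_{n_u}=j$. Homogeneity thus holds along every play, and the covering of $Y$ comes free from a play lost by ONE (Theorem \ref{th:MSUniform}); no after-the-fact thinning of the cover is ever performed. Here $\sigma$-compactness enters only through Theorem \ref{Th:ArkhPytk} (to make $\mathcal{U}$ countable) and Theorem \ref{th:MSUniform} (to know ONE has no winning strategy in $\gone(\open,\open_Y)$).

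Your $(2)\Rightarrow(1)$ sketch also leaves its delicate point unresolved. With the finite-union amalgamation $\bigcup_{j\in F}V_j$, a homogeneous family covering $Y$ may reuse the same $\mathcal{U}_j$ in several of its members unless the colouring forces pairwise disjoint supports, and the complementary homogeneous class (pairwise meeting supports) admits no evident decoding; no concrete colouring is exhibited, and ``take $k$ large'' by itself does not produce one. The standard device is leaner: amalgamate as $\mathcal{U}=\{U^1_m\cap U^m_n:m,n\in\naturals\}$ and use the single $2$-colouring recording whether the two first indices agree. Then the colour-$0$ homogeneous case forces $Y\subseteq U^1_k$ for one fixed $k$ (select $U^1_k$ from $\mathcal{U}_1$ and arbitrary sets elsewhere), while the colour-$1$ case gives pairwise distinct first indices $m_k$, so selecting $U^{m_k}_{n_k}$ from $\mathcal{U}_{m_k}$ and arbitrary sets from the remaining covers yields the required selector; both homogeneous outcomes decode, and only $k=2$ is needed.
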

\begin{proof} \underline{$(1)\Rightarrow (2):$} Let $(X,\tau)$ be a $\sigma$-compact uniform space for which the selection principle $\sone(\open,\open_Y)$ holds. Let $\mathcal{U}$ be a given $\omega$-cover of $X$. By Theorem \ref{Th:ArkhPytk} we may assume that $\mathcal{U}$ is countable. Fix an enumeration of $\mathcal{U}$, say $(U_n:n\in\mathbb{N})$. Also, let a coloring
\[
  f:\lbrack \mathcal{U}\rbrack^2 \longrightarrow \{1,\cdots, k\}
\]
be given.

Now recursively construct two sequences $(\mathcal{U}_n:n\in\mathbb{N})$ and $(i_n:n\in\mathbb{N})$ such that
\begin{enumerate}
\item{$\mathcal{U}_1 := \{ U_m: m > 1 \mbox{ and }f (\{ U_1, U_m\}) = i_1\}$ is an $\omega$-cover of $X$ and}
\item{For each $n$,
      $\mathcal{U}_{n+l }:= \{ U_m \in \mathcal{U}_n : m > n + 1 \mbox{ and } f (\{ U_{n+ I}, U_m\}) = i_{n+l}\} $
    is an $\omega$-cover of $Y$.}
\end{enumerate}
To obtain $\mathcal{U}_1$ and $i_1$, observe that $\mathcal{V} = \mathcal{U}\setminus\{U_1\}$ is an $\omega$-cover of $X$. Setting $\mathcal{V}_j = \{U\in\mathcal{V}: f(\{U_1,U\} = j\}$, we obtain a partition
\[
  \mathcal{V} = \mathcal{V}_1\cup\cdots\cup\mathcal{V}_k
\]
of the $\omega$-cover $\mathcal{V}$ into finitely many parts. By Lemma \ref{lemma:OmegaisRamsey} at least one of the parts is an $\omega$-cover. Select $i_1$ so that $\mathcal{V}_{i_1}$ is an $\omega$-cover, and put $\mathcal{U}_1 = \mathcal{V}_{i_1}$. Assuming that the $\omega$-cover $\mathcal{U}_n$ and $i_n$ have been determined, proceed in the same way to obtain the $\omega$-cover $\mathcal{U}_{n+1}$ and $i_{n+1}$ from $\mathcal{U}_n$, using the function $f$.
Observe that for each $n$ we have $\mathcal{U}_{n+1}\subset \mathcal{U}_n$. 

Next, define for $j\in\{1,\cdots,k\}$ the set $\mathcal{W}_j = \{U_n:i_n = j\}$. Then for each $n$ we have the partition
\[
  \mathcal{U}_n = (\mathcal{U}_n\cap\mathcal{W}_1) \cup \cdots \cup (\mathcal{U}_n\cap\mathcal{W}_k).
\]
Applying Lemma \ref{lemma:OmegaisRamsey} to each of these partitions we fix for each $n$ a $j_n\in\{1,\cdots,k\}$ for which $\mathcal{U}_n\cap\mathcal{W}_{j_n}$ is an $\omega$-cover.  Since for each $n$ we have $\mathcal{U}_{n+1}\subset \mathcal{U}_n$, we may assume that the selected $j_n$'s are all of the same value, say $j$. Thus, for each $n$, $\mathcal{U}_n\cap\mathcal{W}_j$ is an $\omega$-cover of $X$.

With the sequence of $\mathcal{U}_n\cap\mathcal{W}_j$ selected, define the following strategy, $F$, for ONE in the game $\gone(\mathcal{O},\mathcal{O}_Y)$ played on $X$. ONE's first move is $F(\emptyset) = \mathcal{U}_1\cap\mathcal{W}_j$. When TWO plays $U_{n_1}\in F(\emptyset)$, ONE responds with $F(U_{n_1}) = \mathcal{U}_{n_1}\cap\mathcal{W}_j$. When TWO responds with a $U_{n_2}\in F(U_{n_1})$, ONE responds with $F(U_{n_1},U_{n_2}) = \mathcal{U}_{n_2}\cap\mathcal{W}_j$, and so on. By hypothesis $\sone(\mathcal{O},\mathcal{O}_Y)$ holds. Since $X$ is $\sigma$-compact, Theorem \ref{th:MSUniform} implies that ONE does not have a winning strategy in the game $\gone(\open,\open_Y)$. Thus, the strategy $F$ just defined for player ONE is not a winning strategy in the game $\gone(\open,\open_Y)$. Choose an $F$-play
\[
  F(\emptyset), U_{n_1}, F(U_{n_1}), U_{n_2}, \cdots, U_{n_m}, F(U_{n_1},\cdots,U_{n_m}), U_{n_{m+1}},\cdots
\]
that is lost by ONE. Then the set $\mathcal{H} = \{U_{n_j}:j\in\mathbb{N}\}$ is an element of $\open_Y$ (that is, a cover of $Y$), and for all $u<v$ we have $f(\{U_{n_u},U_{n_v}\}) = j$. But then $\mathcal{H}\subseteq\mathcal{U}$ witnesses that $\Omega\rightarrow(\open_Y)^2_k$ holds for the function $f$. Since the $\omega$-cover $\mathcal{U}$ of $X$, and the function $f:\lbrack\mathcal{U}\rbrack^2\rightarrow\{1,\cdots,k\}$ were arbitrary, this completes the proof of the implication $(1)\Rightarrow (2)$.

{\flushleft${(2)\Rightarrow(1)}$} Since, by Theorem \ref{thm:OmegaEquiv}, for any topological space and a subspace $Y$ of it the statements $\sone(\open,\open_Y)$ and $\sone(\Omega,\open_Y)$ are equivalent, it suffices to show that if the space satisfies the statement $\Omega\longrightarrow(\open_Y)^2_k$ for each positive integer $k$, then indeed that space satisfies the statement $\sone(\Omega,\open_Y)$.

Thus, let a sequence $(\mathcal{U}_n:n\in\mathbb{N})$ of $\omega$-covers of the space $X$ be given. By the $\sigma$-compactness of $X$ and Theorem \ref{Th:ArkhPytk} we may assume that each $\mathcal{U}_n$ is countable. Fix for each $n$ a repetitionfree enumeration of $\mathcal{U}_n$, say $(U^n_m:m\in\mathbb{N})$. Then define $\mathcal{U} = \{U^1_m\cap U^m_n:m,n\in\mathbb{N}\}$.  Then $\mathcal{U}$ is an $\omega$-cover of $X$, for let a finite subset $F$ of $X$ be given. Since $\mathcal{U}_1$ is an $\omega$-cover of $X$, choose $U^1_m\in\mathcal{U}_1$ with $F\subseteq U^1_m$. Then, since $\mathcal{U}_m$ is an $\omega$-cover of $X$, choose $U^m_n\in\mathcal{U}_m$ with $F\subseteq U^m_n$. Then $F$ is a subset of the element $U^1_m\cap U^m_n$ of $\mathcal{U}$. 

Next define a function $f:\lbrack\mathcal{U}\rbrack^2\rightarrow\{0,1\}$ as follows:
\[
  f(\{U^1_m\cap U^m_n, U^1_k\cap U^k_{\ell}\}) = \left\{ \begin{tabular}{ll}
                                                                                          0 & if $m = k$\\
                                                                                          1 & otherwise
                                                                                          \end{tabular}
                                                                                          \right. 
\]
By the hypothesis that $\Omega\longrightarrow(\open_Y)^2_2$ holds, select a subset $\mathcal{S}$ of $\mathcal{U}$ and an $i\in\{0,1\}$ such that $\mathcal{S}$ is a cover of $Y$, and for any two $A, B\in\mathcal{S}$, we have $f(\{A,B\}) = i$.

If $i=0$, then there is a fixed $k$ such that each element of $\mathcal{V}$ is of the form $U^1_k\cap U^k_m$. It follows that in this case $Y\subseteq U^1_k$. But then to obtain an element of $\open_Y$ that witnesses $\sone(\Omega,\open_Y)$ for the sequence $(\mathcal{U}_n:n\in\mathbb{N})$, select $U^1_k$ from $\mathcal{U}_1$, and arbitrary elements from $\mathcal{U}_m$ when $m>1$.

On the other hand, if $i=1$, then $\mathcal{V}$ is of the form $\{U^1_{m_k}\cap U^{m_k}_{n_k}:k\in\mathbb{N}\}$ where $m_k\neq m_{\ell}$ wheneve $k\neq \ell$. In this case choose elements $V_j\in\mathcal{U}_j$ so that $V_1 = U^1_{m_1}$, $V_{m_k} = U^{m_k}_{n_k}$ when $j=m_k$, and for all other values of $j$, $V_j\in\mathcal{U}_j$ are arbitrarily chosen.

In either case we obtain an element of $\open_Y$.
\end{proof}

With the basic result that in $\sigma$-compact uniformizable spaces Borel's covering property is a Ramseyan property now established, the rest of the paper is dedicated to a deeper analysis of the Ramsey-theoretic aspects of covering properties analogous to Borel's covering property.

%%%%%%%%%%%%%%%%%%%%%%%%%%%%%%%%%%%%%%%%%%%%%%%%

\section{More Partition Relations for the Borel Covering Property.}

Let a topological space $(X,\tau)$ and subspace $Y$ be given. We now explore strengthening the partition relation $\Omega\longrightarrow(\open_Y)^2_k$: Two immediate targets are the exponent $2$, and the type of open cover of $Y$ 
that can be obtained in the partition relation. To this end let $\Omega_Y$ denote the set of $\omega$-covers of the subspace $Y$, using sets that are open in $X$. Our first quest is identifying  
the conditions under which the stronger partition relation $\Omega\rightarrow(\Omega_Y)^2_k$ holds. This section encapsulates some of the results obtained in \cite{MSFinPowers}. 

For the finite powers $X^n$ and $Y^n$ with the product topologies, use the following special notation for clarity.\\ 
\begin{center}
\begin{tabular}{cl}
 $\open(n)$          & The collection of open covers of $X^n$ \\
 $\Omega(n)$     & The collection of open $\omega$-covers of $X^n$ \\
$\open_{Y^n}$     & The collection of covers of $Y^n$ by sets open in $X^n$ \\
$\Omega_{Y^n}$ & The collection of $\omega$-covers of $Y^n$ by sets open in $X^n$\\
\end{tabular}
\end{center}

If $(X,\Psi)$ is a $\sigma$-compact uniformizable space then by Theorem \ref{th:MSUniform} the following equivalence holds: For a positive integer $n$, $X^n$ satisfies $\sone(\open(n),\open_{Y^n})$ if, and only if, the partition relation $\Omega(n) \longrightarrow (\open_{Y^n})^2_k$ holds for each positive integer $k$.

The following sequence of lemmas will be used in the main argument. 
\begin{lemma}\label{lemma:omegapowers1}
Let $X$ be a topological space and let $\mathcal{U}$ be an $\omega$-cover for the finite power $X^n$. Then there is an $\omega$-cover $\mathcal{V}$ of  $X$ such that for each $V\in\mathcal{V}$ there is a $U\in\mathcal{U}$ with $V^n\subseteq U$, and $\{V^n:V\in\mathcal{V}\}$ is an $\omega$-cover for $X^n$.
\end{lemma}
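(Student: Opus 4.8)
The plan is to pass from an $\omega$-cover of $X^n$ to an $\omega$-cover of $X$ by pulling back along the diagonal embedding and then thinning. Given $\mathcal{U}\in\Omega(n)$, I would first use the diagonal map $\delta:X\to X^n$, $\delta(x)=(x,\dots,x)$, which is continuous, so that $\delta^{-1}(U)$ is open in $X$ for each $U\in\mathcal{U}$. The natural candidate is $\mathcal{V}_0 = \{\delta^{-1}(U):U\in\mathcal{U},\ \delta^{-1}(U)\neq X\}$, but this is not quite what we want because we need a single $V\in\mathcal{V}$ with $V^n\subseteq U$ (i.e.\ $V\times\cdots\times V\subseteq U$), which is a strictly stronger condition than $\delta(V)\subseteq U$. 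So the real work is to produce, for each finite $F\subseteq X$, an open $V\supseteq F$ and a single $U\in\mathcal{U}$ with $V^n\subseteq U$.

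The key step is the following. Fix a finite $F=\{x_1,\dots,x_r\}\subseteq X$. Then $F^n\subseteq X^n$ is a finite set, so since $\mathcal{U}$ is an $\omega$-cover of $X^n$ there is a single $U\in\mathcal{U}$ with $F^n\subseteq U$. Now for each tuple $\bar y=(y_1,\dots,y_n)\in F^n$, openness of $U$ gives basic open boxes $B^{\bar y}_1\times\cdots\times B^{\bar y}_n\subseteq U$ with $y_i\in B^{\bar y}_i$. For each $x_j\in F$ set $W_j = \bigcap\{B^{\bar y}_i : \bar y\in F^n,\ 1\le i\le n,\ y_i = x_j\}$, a finite intersection of opens containing $x_j$, hence an open neighborhood of $x_j$. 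Put $V_F = W_1\cup\cdots\cup W_r$, an open set containing $F$. The claim is $V_F^n\subseteq U$: given $(z_1,\dots,z_n)\in V_F^n$, each $z_i$ lies in some $W_{j_i}$; set $\bar y=(x_{j_1},\dots,x_{j_n})\in F^n$ and observe $z_i\in W_{j_i}\subseteq B^{\bar y}_i$ by construction, so $(z_1,\dots,z_n)\in B^{\bar y}_1\times\cdots\times B^{\bar y}_n\subseteq U$. This verifies the claim. Then one takes $\mathcal{V} = \{V_F : F\subseteq X\text{ finite}\}$ (discarding any $V_F$ that equals $X$, if necessary, so that genuine $\omega$-cover status is retained — one should check that $X$ itself being excluded from $\mathcal{U}$ forces no $V_F$ to equal $X$ for the relevant witnesses, or else just restrict attention to those $F$ for which $V_F\neq X$).

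It remains to check the two assertions: that $\mathcal{V}$ is an $\omega$-cover of $X$, and that $\{V^n:V\in\mathcal{V}\}$ is an $\omega$-cover of $X^n$. The first is immediate since for any finite $F\subseteq X$ we have $F\subseteq V_F\in\mathcal{V}$, and $X\notin\mathcal{V}$ by the exclusion above. For the second, let $G\subseteq X^n$ be finite; let $F$ be the (finite) set of all coordinates of all points of $G$, so $G\subseteq F^n\subseteq V_F^n$, and $V_F^n\in\{V^n:V\in\mathcal{V}\}$; again $X^n$ is not a member since $V_F\neq X$. Finally the ``for each $V\in\mathcal{V}$ there is $U\in\mathcal{U}$ with $V^n\subseteq U$'' clause is exactly the claim proved in the previous paragraph, recording for each $V=V_F$ the witness $U$ chosen for $F$.

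\textbf{Main obstacle.} The only genuinely delicate point is the gap between $\delta(V)\subseteq U$ and $V^n\subseteq U$ — a crude pullback along the diagonal does not suffice, and one must do the box-shrinking argument above to find an $open$ $V$ whose full $n$-th power (not just its diagonal) sits inside a single $U$. A secondary nuisance is bookkeeping around the requirement that $\omega$-covers by definition exclude the whole space: one must ensure the constructed $V_F$'s can be taken $\neq X$ (equivalently that $\mathcal{V}$ genuinely is an $\omega$-cover rather than merely a cover), which follows from $X^n\notin\mathcal{U}$ but deserves an explicit sentence.
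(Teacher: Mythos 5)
Your proof is correct, and it is essentially the argument behind the result the paper cites (Lemma 3.3 of \cite{coc2}) rather than proving: fix a finite $F\subseteq X$, take $U\in\mathcal{U}$ with $F^n\subseteq U$, shrink basic open boxes around the points of $F^n$ and intersect coordinatewise to get an open $V\supseteq F$ with $V^n\subseteq U$. Your handling of the side condition $V_F\neq X$ (forced by $X^n\notin\mathcal{U}$) is also right, so there is nothing to add.
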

A proof of Lemma \ref{lemma:omegapowers1} can be found in Lemma 3.3 of \cite{coc2}. Lemma \ref{lemma:FinPowers}, which is also Lemma 11 of \cite{MSFinPowers}, will be left as an exercise.
\begin{lemma}\label{lemma:FinPowers} For each $n$ let $Y_n$ be a subset of the $\sigma$-compact uniform space $(X_n,\Psi_n)$, and let $\Omega_n$ be the set of $\omega$-covers of $X_n$, and $\open_n$ the set of covers of $Y_n$ by sets open in $X_n$. Let $\open_{\sum_n X_n}$ be the set of open covers of $\sum_n X _n$ and let $\open_{\sum Y_n}$ be the set of covers of $\sum_nY _n$ by sets open in $\sum_n X_n$. If each $X_n$ satisfies $\sone(\Omega_n,\open_n)$, then the uniform space $\sum_n X_n$ satisfies $\sone(\open_{\sum_n X_n},\open_{\sum_n Y_n})$
\end{lemma}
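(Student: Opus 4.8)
The plan is to reduce the assertion to countably many applications of the selection principle on the individual summands $X_n$, exploiting that each $X_n$ is clopen in the topological sum $S:=\sum_n X_n$.

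To begin, I would fix a sequence $(\mathcal{W}_k:k\in\mathbb{N})$ of open covers of $S$, together with a partition $\mathbb{N}=\bigcup_n T_n$ into pairwise disjoint infinite sets; the innings indexed by $T_n$ will be dedicated to covering $Y_n$. Since $X_n$ is open in $S$, for every $k$ the family $\mathcal{W}_k\!\restriction\! X_n:=\{W\cap X_n:W\in\mathcal{W}_k\}$ is an open cover of $X_n$ by sets open in $X_n$; writing $T_n=\{k^n_1<k^n_2<\cdots\}$, this yields a sequence $(\mathcal{W}_{k^n_j}\!\restriction\! X_n:j\in\mathbb{N})$ of open covers of $X_n$.

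Next I would convert the hypothesis into a usable form: applying the nontrivial implication $(2)\Rightarrow(1)$ of Theorem \ref{thm:OmegaEquiv} to the topological space $X_n$ with the subset $Y_n$, the assumption $\sone(\Omega_n,\open_n)$ implies that $X_n$ satisfies $\sone(\open,\open_n)$, where here $\open$ denotes the collection of all open covers of $X_n$. Feeding the sequence $(\mathcal{W}_{k^n_j}\!\restriction\! X_n:j\in\mathbb{N})$ into this principle produces, for each $j$, a set $D^n_j\in\mathcal{W}_{k^n_j}\!\restriction\! X_n$ with $Y_n\subseteq\bigcup_{j}D^n_j$. Choosing for each $j$ some $W_{k^n_j}\in\mathcal{W}_{k^n_j}$ with $W_{k^n_j}\cap X_n=D^n_j$ gives $Y_n\subseteq\bigcup_{k\in T_n}W_k$. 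Performing this for every $n$ defines $W_k\in\mathcal{W}_k$ for all $k\in\mathbb{N}$, because the blocks $T_n$ exhaust $\mathbb{N}$; and since $\{W_k:k\in T_n\}$ covers $Y_n$ for every $n$, the family $\{W_k:k\in\mathbb{N}\}$ is a cover of $\sum_n Y_n$ by sets open in $S$. As the sequence $(\mathcal{W}_k:k\in\mathbb{N})$ was arbitrary, this establishes $\sone(\open_{\sum_n X_n},\open_{\sum_n Y_n})$.

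I do not anticipate a genuine obstacle: the whole content is the clopen decomposition of the topological sum together with the standard trick of splitting $\mathbb{N}$ into infinitely many infinite blocks, one per summand. The single point requiring attention is that a restriction $\mathcal{W}_k\!\restriction\! X_n$ is only an open cover of $X_n$, not an $\omega$-cover, which is exactly why Theorem \ref{thm:OmegaEquiv} is invoked to pass from $\sone(\Omega_n,\open_n)$ to $\sone(\open,\open_n)$ on $X_n$. One could instead avoid that theorem by building $\omega$-covers of $X_n$ directly --- partition $T_n$ further into infinitely many infinite blocks, and on each block take all finite unions formed by picking one set from each of finitely many of the covers $\mathcal{W}_k\!\restriction\! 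X_n$ --- and then apply $\sone(\Omega_n,\open_n)$ to these. Note also that $\sigma$-compactness of the $X_n$ plays no role in the argument.
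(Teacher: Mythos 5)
Your proof is correct. Note that the paper itself supplies no argument for this lemma --- it is explicitly left as an exercise, with a pointer to Lemma 11 of \cite{MSFinPowers} --- and what you wrote is precisely the intended argument: partition the innings into infinitely many infinite blocks, one block per summand; use that each $X_n$ is clopen in $\sum_n X_n$ so that restricting a cover of the sum to $X_n$ yields an open cover of $X_n$; upgrade the hypothesis $\sone(\Omega_n,\open_n)$ to $\sone(\open,\open_n)$ on $X_n$ via the nontrivial direction of Theorem \ref{thm:OmegaEquiv} (which is stated and proved for arbitrary topological spaces, so its use here is legitimate); and lift the selected sets back to members of the original covers of the sum. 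Your two closing remarks are also accurate: one can avoid citing Theorem \ref{thm:OmegaEquiv} by forming $\omega$-covers of $X_n$ from finite unions taken within further sub-blocks, but this merely inlines the proof of that theorem; and $\sigma$-compactness of the $X_n$ is indeed never used, the hypothesis appearing in the statement only because that is the setting in which the lemma is applied later in the paper.
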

For the convenience of the reader we give the proof of the following lemma.
\begin{lemma}\label{lemma:OneStr}
Let $((X_n,\tau_n):n\in\mathbb{N})$ be a sequence of topological spaces and for each $n$, let $Y_n$ be a subspace of $X_n$. If for each $n$ player ONE does not have a winning strategy in the game $\gone(\open_{X_n},\open_{Y_n})$, then ONE has no winning strategy in the game $\gone(\open_{\Sigma X_n},\open_{\Sigma Y_n})$.
\end{lemma}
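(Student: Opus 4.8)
The plan is to bypass any direct surgery on strategies and instead route everything through the selection principle, using Pawlikowski's theorem at both ends. The first observation is that, by the remark following Definition~\ref{def:Game}, the hypothesis that ONE has no winning strategy in $\gone(\open_{X_n},\open_{Y_n})$ already implies that $\sone(\open_{X_n},\open_{Y_n})$ holds, for each $n$. So it suffices to establish two things: (a) if $\sone(\open_{X_n},\open_{Y_n})$ holds for every $n$, then $\sone(\open_{\Sigma X_n},\open_{\Sigma Y_n})$ holds; and (b) $\sone(\open_{\Sigma X_n},\open_{\Sigma Y_n})$ implies that ONE has no winning strategy in $\gone(\open_{\Sigma X_n},\open_{\Sigma Y_n})$. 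Part (b) is precisely Pawlikowski's theorem (Theorem~\ref{thm:Pawlikowski}), in the relative form for a space and a subspace -- the same form already invoked for the equivalence of (2) and (3) in Theorem~\ref{th:MSUniform} -- applied to the topological sum $\Sigma X_n$ with subspace $\Sigma Y_n$.

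For part (a): let $(\mathcal{O}_k:k\in\mathbb{N})$ be a sequence of open covers of $\Sigma X_n$, and partition $\mathbb{N}$ into pairwise disjoint infinite sets $S_n$, $n\in\mathbb{N}$. Since each summand sits as a clopen subspace inside the topological sum, for $k\in S_n$ the family $\mathcal{O}_k^{(n)}:=\{U\cap X_n:U\in\mathcal{O}_k\}$ is an open cover of $X_n$. Apply $\sone(\open_{X_n},\open_{Y_n})$ to $(\mathcal{O}_k^{(n)}:k\in S_n)$ to obtain, for each $k\in S_n$, a member $W_k\in\mathcal{O}_k^{(n)}$ with $\{W_k:k\in S_n\}$ a cover of $Y_n$; then choose $T_k\in\mathcal{O}_k$ with $T_k\cap X_n=W_k$. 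Carrying this out for every $n$ yields $(T_k:k\in\mathbb{N})$ with each $T_k\in\mathcal{O}_k$. For each $n$ the subfamily $\{T_k:k\in S_n\}$ covers $Y_n$ (indeed already on $X_n$), so $\{T_k:k\in\mathbb{N}\}$ covers $\bigcup_n Y_n=\Sigma Y_n$; this witnesses $\sone(\open_{\Sigma X_n},\open_{\Sigma Y_n})$ for the given sequence.

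The step I expect to carry the weight is part (b): one needs the relative form of Pawlikowski's theorem, namely that for an arbitrary topological space $Z$ and subspace $W$ the statement $\sone(\open_Z,\open_W)$ is equivalent to ONE having no winning strategy in $\gone(\open_Z,\open_W)$. The paper has already appealed to this (it is cited as a consequence of Theorem~\ref{thm:Pawlikowski} just before Theorem~\ref{th:MSUniform}), so within the paper's framework nothing further is required; but it is worth flagging that this is the nontrivial ingredient. A self-contained alternative would try to defeat a strategy $F$ of ONE in $\gone(\open_{\Sigma X_n},\open_{\Sigma Y_n})$ by running, on the innings indexed by $S_n$, a counter-play in the game on $X_n$; the obstruction is that $F$'s response at an $S_n$-inning can depend on TWO's earlier choices in the other coordinates, so the $S_n$-restriction of the constructed play is not literally a play against a fixed strategy in $\gone(\open_{X_n},\open_{Y_n})$, and a delicate simultaneous recursion would be needed to get around this. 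Routing through the selection principle avoids that difficulty entirely.
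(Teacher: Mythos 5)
Your argument is sound as far as the paper's own framework goes, but it takes a genuinely different route from the paper's proof, and the difference is worth spelling out. The paper proves Lemma \ref{lemma:OneStr} by direct strategy surgery: from a strategy $F$ of ONE on $\Sigma X_n$ it recursively manufactures, inning by inning (using a partition of $\naturals$ into the sets $S_n$ and a well-ordering of the topology of the sum to lift TWO's coordinate moves back to moves in the sum), auxiliary strategies $F_k$ for ONE in the coordinate games $\gone(\open_{X_k},\open_{Y_k})$; the hypothesis supplies a losing play against each $F_k$, and these are reassembled into an $F$-play lost by ONE. This is exactly the ``delicate simultaneous recursion'' you flag as the obstruction to the naive restriction argument -- the paper carries it out rather than avoiding it. Your route instead passes to the selection principle on each coordinate (the easy direction of the game/selection equivalence), sums it -- your part (a) is correct, and is essentially the $\open$-version of the argument in Lemma \ref{lemma:FinPowers} -- and then converts back using the hard direction of Pawlikowski's theorem in its \emph{relative} form $\sone(\open,\open_Y)\Leftrightarrow$ ONE has no winning strategy in $\gone(\open,\open_Y)$. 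Two caveats attach to that last step. First, the theorem actually stated as Theorem \ref{thm:Pawlikowski} is only the absolute case $Y=X$; the relative form is asserted in the paper only in a passing remark before Theorem \ref{th:MSUniform} and is never proved there, so you are resting the whole weight of the lemma on an unproved (and genuinely deep) black box, whereas the paper's point in including Lemma \ref{lemma:OneStr} -- like its direct proofs of Theorem \ref{Th:commongeneralize} and of (2)$\Rightarrow$(3) in Theorem \ref{th:MSUniform} -- is precisely to keep the game-theoretic transfer elementary and self-contained. Second, in the one place the lemma is used (Theorem \ref{thm:FinPowers2}) the sum $\sum_n X^n$ is itself $\sigma$-compact, so if one is willing to bypass the lemma as you do, one could equally apply the already-proved implication (2)$\Rightarrow$(3) of Theorem \ref{th:MSUniform} to the sum; your proof buys brevity and generality-modulo-Pawlikowski, the paper's buys an elementary argument valid for arbitrary spaces that uses only the stated game hypotheses.
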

\begin{proof}
Observe that open subsets of $\Sigma_nX_n$ are sets of the form $\Sigma_nU_n$ where for each $n$ the set $U_n$ is an open subset of the set $X_n$. Let $F$ be a strategy of player ONE for the game $\gone(\open_{\Sigma X_n},\open_{\Sigma Y_n})$. Choose a well-ordering $\prec$ of the topology of $X = \Sigma_n X_n$. Also, choose a partition of $\naturals$ into infinitely many pairwise disjoint infinite sets, say $\naturals = \cup\{S_n:n\in\naturals\}$. Then for each natural number $k$ fix $m_k$ so that $k\in S_{m_k}$. Further, if $O$ is an open subset of $\Sigma X_m$ and $k$ is a positive integer, then use $O_k$ to denote the term $U_k$ in the representation $O = \Sigma_m U_m$.

Define responses of player TWO to the strategy $F$ as follows:

In the first inning, ONE plays $F(\emptyset)$, an open cover of $\Sigma X_n$. Fix $m_1$ so that $1\in S_{m_1}$. Define a strategy $F_{m_1}$ for ONE of the game $\gone(\open_{X_{m_1}},\open_{Y_{m_1}})$ as follows: 
\[
  F_{m_1}(\emptyset) = \{U_{m_1}: \Sigma U_m \in F(\emptyset)\}
\]
For a response $T$ of TWO to $F_{m_1}$, choose the $\prec$-first $U =\Sigma U_m \in F(\emptyset)$ for which $U_{m_1} = T$, and put $T_1 = U$.

In the next inning ONE plays $F(T_1)$, an open cover of $\Sigma X_n$. Take $m_2$ so that $2\in S_{m_2}$. Put $O_{m_2} = \{U_{m_2}: \Sigma U_m\in F(T_1)\}$. Then define the strategy $F_{m_2}$ for ONE of the game $\gone(\open_{X_{m_2}},\open_{Y_{m_2}})$ as follows:
If $m_2 = m_1$, then $F_{m_2}(T) = \{U_{m_2}: \Sigma U_m\in F(T_1)\}$. Else, $F_{m_2}(\emptyset) = \{U_{m_2}: \Sigma U_m\in F(T_1)\}$. For a response $T$ of TWO in the game $\gone(\open_{X_{m_2}},\open_{Y_{m_2}})$, pick the $\prec$-first $U\in F(T_1)$ with $U_{m_2} = T$, and set $T_2 = U$.

In general, suppose that $k-1$ innings of the game have been played, producing the data
\[
  F(\emptyset), T_1, F(T_1, T_2), T_3,\cdots, F(T_1,\cdots,T_{k-2}), T_{k-1}
\]
and numbers $m_1,\cdots,m_{k-1}$ such that for $1\le i\le k-1$ we have $i \in S_{m_i}$.

Now when ONE plays $F(T_1,\cdots,T_{k-1})$, the $k$-th inning is underway. Then choose the longest possible sequence $i_1<\cdots<i_{j}$ so that
\begin{itemize}
\item{$i_j = k$}
\item{$i_1<\cdots<i_j$ and }
\item{$m_{i_1} = \cdots = m_{i_j}$.}
\end{itemize}
Define the strategy $F_{m_k}$ for ONE of the game $\gone(\open_{X_{m_k}},\open_{Y_{m_k}})$ as follows:
{\flushleft \underline{ If $j=1$:}} Then we proceed as follows: 
\[
  F_{m_k}(\emptyset) = \{U_{m_k}: \Sigma U_m \in F(T_1,\cdots,T_{k-1})\}
\]
For a response $T$ of TWO to $F_{m_k}$, choose the $\prec$-first $U =\Sigma U_m \in F(T_1,\cdots,T_{k-1})$ for which $U_{m_k} = T$, and put $T_k = U$.

{\flushleft \underline{If $j>1$:}} Then we proceed as follows: The strategy $F_{m_k}$ is the same as $F_{m_{i_1}}$, and we define
\[
 F_{m_{i_1}}((T_{i_1})_{m_k},\cdots,(T_{i_{j-1}})_{m_k}) = \{U_{m_k}: U = \Sigma U_n\in F(T_1,\cdots,T_{k-1}) \}
\]
Now for a response $T$ of TWO to the move $F_{m_{i_1}}((T_{i_1})_{m_k},\cdots,(T_{i_{j-1}})_{m_k}) $, choose the $\prec$-first $T_k\in F(T_1,\cdots,T_{k-1})$ so that $(T_k)_{m_k} = T$ and let this $T_k$ be TWO's response to $F(T_1,\cdots,T_{k-1})$.

This process recursively defines for each $k\in\naturals$ the strategy $F_k$ for player ONE of the game $\gone(\open_{X_k},\open_{Y_k})$. Each strategy $F_k$ is by hypothesis not a winning strategy for ONE of the game $\gone(\open_{X_k},\open_{Y_k})$. Thus, for each there is a play lost by ONE. For each $k$ choose a play
\[
  F_k(\emptyset), T^k_1,\; F_k(T^k_1),\; T^k_2,\; \cdots,\; 
\]
lost by ONE in the game $\gone(\open_{X_k},\open_{Y_k})$.

Now consider the following $F$-play of the game $\gone(\open_{\Sigma X_m},\; \open_{\Sigma Y_m})$ obtained as follows (recall the definition of the sequence $(m_k:k\in\naturals)$):

Choose the $\prec$-first $T_1\in F(\emptyset)$ with $T_{m_1} = F_{m_1}(\emptyset)$.  Then choose the $\prec$-first $T_2\in F(T_1)$ so that $(T_2)_{m_2} \in F_{j}(A)$ where $A=\emptyset$ if $m_2\neq m_1$, and $A = T_{m_1}$. Observe that when $m_1 = m_2$, then $(T_2)_{m_2} = T^{m_1}_2$, and when $m_1\neq m_2$, then $(T_2)_{m_2} = T^{m_2}_1$.

With $T_1,\cdots,T_k$ selected, choose the $\prec$-first $T_{k+1} \in F(T_1,\cdots,T_k)$ so that 
\[
   (T_{k+1})_{m_{k+1}} = T^{m_{k+1}}_{i_j}\in F_{m_{k+1}}((T_{i_1})_{m_{k+1}}, \cdots, (T_{i_j})_{m_{k+1}})
\]
 where  $i_j$ is such that $m_{i_1} = \cdots = m_{i_j}$.

It is left to the reader to verify that 
the $F$-play 
\[
  F(\emptyset), T_1,\; F(T_1),\; T_2,\; \cdots,T_k,\; F(T_1,\cdots,T_k),\; \cdots
\]
of the game $\gone(\open_{\Sigma X_m},\open_{\Sigma Y_m})$ that comes about in this way is lost by ONE. 
\end{proof}

\begin{theorem}\label{thm:FinPowers1} Let $Y$ be a subspace of a $\sigma$-compact uniform space $(X,\Psi)$. The following are equivalent:
\begin{enumerate}
\item{For each $n$, $X^n$ satisfies $\sone(\Omega(n),\open_{Y^n})$;}
\item{$X$ satisfies $\sone(\Omega,\Omega_Y)$}
\end{enumerate}
\end{theorem}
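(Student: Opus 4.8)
The plan is to prove the two implications separately, in both directions passing through the following elementary observation, call it the \emph{power--trace correspondence}: for a family $\mathcal{W}=\{W_j:j\in\naturals\}$ of open subsets of $X$ and any $n$, a tuple $(y_1,\dots,y_n)\in Y^n$ lies in $W_j^{\,n}$ exactly when $\{y_1,\dots,y_n\}\subseteq W_j$, so $\mathcal{W}$ is an $\omega$-cover of $Y$ if and only if for every $n$ the family $\{W_j^{\,n}:j\in\naturals\}$ covers $Y^n$, equivalently if and only if $\{\sum_n W_j^{\,n}:j\in\naturals\}$ is a cover of the topological sum $\sum_n Y^n$ by sets open in $\sum_n X^n$ (here $\sum_n W_j^{\,n}$ denotes the subset of $\sum_n X^n$ whose $n$-th block is $W_j^{\,n}$). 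Recasting ``$\omega$-cover of $Y$'' as ``ordinary cover of $\sum_n Y^n$'' is the hinge of the whole argument.

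For $(1)\Rightarrow(2)$ I would first observe that each $X^n$ is again a $\sigma$-compact uniform space, so that with the identifications $X_n:=X^n$, $Y_n:=Y^n$, $\Omega_n:=\Omega(n)$, $\open_n:=\open_{Y^n}$, hypothesis (1) is precisely the assumption of Lemma \ref{lemma:FinPowers}; that lemma then yields that $\sum_n X^n$ satisfies $\sone(\open_{\sum_n X^n},\open_{\sum_n Y^n})$. Now, given a sequence $(\mathcal{U}_k:k\in\naturals)$ of $\omega$-covers of $X$, I would set $\mathcal{W}_k:=\{\sum_n U^n:U\in\mathcal{U}_k\}$; since $\mathcal{U}_k$ is an $\omega$-cover of $X$, each finite tuple over $X$ lies in some $U\in\mathcal{U}_k$, so $\mathcal{W}_k$ is an open cover of $\sum_n X^n$. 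Applying $\sone(\open_{\sum_n X^n},\open_{\sum_n Y^n})$ to $(\mathcal{W}_k:k\in\naturals)$ produces $U_k\in\mathcal{U}_k$ with $\{\sum_n U_k^{\,n}:k\in\naturals\}$ a cover of $\sum_n Y^n$, which by the power--trace correspondence says exactly that $\{U_k:k\in\naturals\}$ is an $\omega$-cover of $Y$, so the sequence $(U_k:k\in\naturals)$ witnesses $\sone(\Omega,\Omega_Y)$ for the given sequence. (One could instead avoid Lemma \ref{lemma:FinPowers} entirely: split $\naturals$ into infinitely many infinite pieces $S_n$, note $\{U^n:U\in\mathcal{U}_k\}\in\Omega(n)$ for $k\in S_n$, apply $\sone(\Omega(n),\open_{Y^n})$ to $(\{U^n:U\in\mathcal{U}_k\}:k\in S_n)$ for each $n$, and assemble; I prefer the sum formulation, which is why the sum lemmas are pre-positioned.)

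For $(2)\Rightarrow(1)$ I would fix $n$ and a sequence $(\mathcal{U}_k:k\in\naturals)$ of $\omega$-covers of $X^n$. Lemma \ref{lemma:omegapowers1} provides, for each $k$, an $\omega$-cover $\mathcal{V}_k$ of $X$ with $\{V^n:V\in\mathcal{V}_k\}$ an $\omega$-cover of $X^n$ and every such $V^n$ contained in a member of $\mathcal{U}_k$. Applying $\sone(\Omega,\Omega_Y)$ to $(\mathcal{V}_k:k\in\naturals)$ gives $V_k\in\mathcal{V}_k$ with $\{V_k:k\in\naturals\}$ an $\omega$-cover of $Y$; by the power--trace correspondence $\{V_k^{\,n}:k\in\naturals\}$ then covers $Y^n$, and picking $U_k\in\mathcal{U}_k$ with $V_k^{\,n}\subseteq U_k$ yields a selection $\{U_k:k\in\naturals\}\in\open_{Y^n}$. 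As $n$ was arbitrary, (1) holds. Note this direction genuinely needs the $\omega$-cover output of $\sone(\Omega,\Omega_Y)$, not merely a cover of $Y$: if $\{V_k\}$ only covered $Y$, the powers $\{V_k^{\,n}\}$ need not cover $Y^n$.

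Given the machinery already in place I do not expect a real obstacle in either direction; the one point that demands care is the encoding used in $(1)\Rightarrow(2)$. It is natural to try devoting a separate block of innings to each dimension $n$, but then the block $X^m$ sitting inside $\sum_n X^n$ already covers the entire factor $Y^m$ by itself, and the link with $\omega$-covers collapses. Bundling \emph{all} powers of $U$ at once into $\sum_n U^n$ is exactly what forces the resulting cover of $\sum_n Y^n$ to encode an $\omega$-cover of $Y$. The remaining verifications --- that each $\mathcal{W}_k$ is an honest open cover of $\sum_n X^n$, the finite-subset/power equivalence, and the degenerate clause that no selected set equal $Y$ itself (handled since each selected set is a proper subset of $X$) --- are routine.
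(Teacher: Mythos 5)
Your proof is correct and takes essentially the same route as the paper: $(1)\Rightarrow(2)$ by passing to the topological sum $\sum_n X^n$ and invoking Lemma \ref{lemma:FinPowers}, and $(2)\Rightarrow(1)$ via Lemma \ref{lemma:omegapowers1}, exactly as in the text. The only cosmetic difference is in the encoding for $(1)\Rightarrow(2)$: the paper covers $\sum_n X^n$ by the individual sets $U^k$ (for $U\in\mathcal{U}_m$ and $k\in\naturals$) while you bundle all powers of $U$ into the single open set $\sum_n U^n$; both encodings recover the same $\omega$-cover of $Y$ from the selected sets.
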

\begin{proof} $(1)\Rightarrow(2):$ Let $(\mathcal{U}_n:n\in\mathbb{N})$ be a sequence of $\omega$-covers for the space $X$. Then for each $n$ define
\[
  \mathcal{V}_n = \{U^k: U\in\mathcal{U}_n \mbox{ and }k\in\mathbb{N}\}.
\] 
Then each $\mathcal{V}_n$ is an open cover of the $\sigma$-compact uniformizable space $\sum_m X^m$. By hypothesis (1) each $X^m$ has the property $\sone(\Omega(m),\open_{Y^m})$, and thus by Theorem \ref{thm:OmegaEquiv} each $X^m$ has the property $\sone(\open(m),\open_{Y^m})$. But then by Lemma \ref{lemma:FinPowers}, $\sum_mX^m$ satisfies the property $\sone(\open_{\sum_mX^m},\open_{\sum_mY^m})$. Thus, select for each $m$ a set$ V_m\in\mathcal{V}_m$ so that $\{V_m:m\in\mathbb{N}\}$ is an open cover of $\sum_mY^m$. For each $m$ choose a positive integer $k_m$ and a $U_m\in\mathcal{U}_m$ so that $V_m = U^{k_m}_m$. We claim that $\{U_m:m\in\mathbb{N}\}$ is an $\omega$-cover for $Y$. For let a finite subset $\{x_1,\cdots,x_p\}$ of $Y$ be given. Then the point $(x_1,\cdots,x_p)$ is an element of $Y^p$ and thus of $\sum_mY^m$. Choose an $m$ such that $(x_1,\cdots,x_p)\in V_m$. Then $k_m = p$ and $(x_1,\cdots,x_p)\in U^p_m$, meaning that $\{x_1,\cdots,x_p\}\subset U_m$. This completes the proof of $(1)\Rightarrow(2)$.

{\flushleft{$(2)\Rightarrow(1):$}} Fix a positive integer $n$, and let $(\mathcal{U}_m:m\in\mathbb{N})$ be a sequence of $\omega$-covers of $X^n$. By Lemma \ref{lemma:omegapowers1} choose for each $m$ an $\omega$-cover $\mathcal{V}_m$ of $X$ such that $\{V^n:V\in\mathcal{V}_m\}$ refines $\mathcal{U}_m$ (and is an $\omega$-cover of $X^n$). Applying the hypothesis $\sone(\Omega,\Omega_Y)$ to the sequence $(\mathcal{V}_m:m\in\mathbb{N})$ of $\omega$ covers of $X$, choose for each $m$ a $V_m\in\mathcal{V}_m$ so that $\{V_m:m\in\mathbb{N}\}$ is an $\omega$-cover of $Y$. But then $\{V^n_m:m\in\mathbb{N}\}$ is an $\omega$ cover of $Y^n$ by sets open in $X^n$. For each $m$ choose a $V_m\in\mathcal{V}_m$ so that $U^n_m\subseteq V_m$. Then the set $\{V_m:m\in\mathbb{N}\}$ is an $\omega$-cover of $Y^n$, and witnesses $\sone(\Omega(n),\Omega_{Y^n})$ for the given sequence $(\mathcal{V}_m:m\in\mathbb{N})$ of $\omega$-covers of $X^n$.
\end{proof}
The proof of Theorem \ref{thm:FinPowers1} is a rewrite for uniform spaces of the proof of $(2)\Leftrightarrow(3)$ of Theorem 12 of \cite{MSFinPowers}, originally given for metric spaces. Next we expand the equivalences to include the appropriate game theoretic version.

\begin{theorem}\label{thm:FinPowers2} Let $Y$ be a subspace of a $\sigma$-compact uniform space $(X,\Psi)$. The following are equivalent:
\begin{enumerate}
\item{$X$ satisfies $\sone(\Omega,\Omega_Y)$}
\item{On $X$ ONE has no winning strategy in the game $\gone(\Omega,\Omega_Y)$.}
\end{enumerate}
\end{theorem}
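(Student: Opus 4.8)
The implication $(2)\Rightarrow(1)$ is the general fact recorded right after Definition \ref{def:Game}: if ONE has no winning strategy in $\gone(\Omega,\Omega_Y)$, then in particular $\sone(\Omega,\Omega_Y)$ holds. So the work is entirely in $(1)\Rightarrow(2)$, and the plan is to transfer a strategy for ONE on $X$ to a strategy for ONE on the topological sum $\sum_m X^m$, where we already know ONE is powerless.

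First I would assemble the ingredients. By Theorem \ref{thm:FinPowers1}, hypothesis (1) is equivalent to: each finite power $X^n$ satisfies $\sone(\Omega(n),\open_{Y^n})$. By Theorem \ref{thm:OmegaEquiv} this is the same as $\sone(\open(n),\open_{Y^n})$. Since a finite product of $\sigma$-compact uniform spaces is again $\sigma$-compact uniform, Theorem \ref{th:MSUniform} applied to $X^n$ and $Y^n$ gives that ONE has no winning strategy in $\gone(\open_{X^n},\open_{Y^n})$. Lemma \ref{lemma:OneStr} then upgrades this to: ONE has no winning strategy in the game $\gone(\open_{\sum_m X^m},\open_{\sum_m Y^m})$.

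Now let $F$ be a strategy for ONE in $\gone(\Omega,\Omega_Y)$ on $X$. To each $\omega$-cover $\mathcal{U}$ of $X$ associate, exactly as in the proof of Theorem \ref{thm:FinPowers1}, the open cover $\mathcal{U}^{\sharp}=\{U^k: U\in\mathcal{U},\ k\in\mathbb{N}\}$ of $\sum_m X^m$. Define a strategy $G$ for ONE in $\gone(\open_{\sum_m X^m},\open_{\sum_m Y^m})$ by $G(\emptyset)=F(\emptyset)^{\sharp}$ and, once TWO has responded with sets $T_1,\dots,T_m$ — each necessarily of the form $T_i=U_i^{k_i}$ with $U_i\in F(U_1,\dots,U_{i-1})$ — by $G(T_1,\dots,T_m)=F(U_1,\dots,U_m)^{\sharp}$. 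By the previous paragraph $G$ is not a winning strategy, so there is a $G$-play $G(\emptyset),T_1,G(T_1),T_2,\dots$ lost by ONE, i.e.\ $\{T_m:m\in\mathbb{N}\}$ covers $\sum_m Y^m$. Writing $T_m=U_m^{k_m}$ with $U_m\in F(U_1,\dots,U_{m-1})$, the sequence
\[
  F(\emptyset),\ U_1,\ F(U_1),\ U_2,\ F(U_1,U_2),\ U_3,\ \dots
\]
is a legal $F$-play of $\gone(\Omega,\Omega_Y)$ on $X$. One checks that $\{U_m:m\in\mathbb{N}\}\in\Omega_Y$: each $U_m$ is a member of an $\omega$-cover of $X$, hence open in $X$; and given a finite $\{y_1,\dots,y_p\}\subseteq Y$, the point $(y_1,\dots,y_p)$ lies in $Y^p\subseteq\sum_m Y^m$, hence in some $T_m=U_m^{k_m}$; since $T_m\subseteq X^{k_m}$ this forces $k_m=p$ and $(y_1,\dots,y_p)\in U_m^p$, i.e.\ $\{y_1,\dots,y_p\}\subseteq U_m$. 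Thus TWO wins this $F$-play, so $F$ is not winning; as $F$ was arbitrary, ONE has no winning strategy in $\gone(\Omega,\Omega_Y)$.

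I expect the only real obstacle to be organizational rather than mathematical: making the translation $T_i\mapsto U_i$, and hence the strategy $G$, rigorously well-defined on the game tree (a set of the form $U^k$ might admit several representations, so one fixes a representation once and for all), and verifying that the recovered sequence is genuinely a legal $F$-play — the corresponding remark at the end of the proof of Lemma \ref{lemma:OneStr} shows this is routine. One should also dispatch the harmless degenerate possibility that some $U_m$ already contains $Y$, which is handled in the standard way.
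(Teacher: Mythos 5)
Your proof is correct and follows essentially the same route as the paper: the same chain Theorem \ref{thm:FinPowers1} $\to$ Theorem \ref{thm:OmegaEquiv} $\to$ Theorem \ref{th:MSUniform} $\to$ Lemma \ref{lemma:OneStr} to show ONE is powerless on $\sum_m X^m$, followed by the identical translation of a strategy $F$ for $\gone(\Omega,\Omega_Y)$ into a strategy $G$ on the sum via the covers $\{U^k: U\in F(\cdots),\ k\in\mathbb{N}\}$ and the read-back of a losing $G$-play into a losing $F$-play. The organizational points you flag (fixing representations $T_m=U_m^{k_m}$, legality of the recovered play) are handled the same way, implicitly, in the paper.
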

\begin{proof} $(1)\Rightarrow(2):$ Assume that the space $X$ has the property $\sone(\Omega,\Omega_Y)$. Fix a strategy $F$ for ONE in the game $\gone(\Omega,\Omega_Y)$. We must show that $F$ is not a winning strategy.

Define from the strategy $F$ for ONE in the game $\gone(\Omega,\Omega_Y)$ a corresponding strategy $G$ for ONE in the game $\gone(\open_{\sum_nX^n},\open_{\sum_nY^n})$ as follows, as depicted in Figure \ref{fig:GfromF}: Define ONE's first move, $G(\emptyset)$ in the game $\gone(\open_{\sum_nX^n},\open_{\sum_nY^n})$ as the following open cover of $\sum_nX^n$ obtained from $F(\emptyset)$: 
\[
  G(\emptyset) = \{U^k:k\in\mathbb{N}, U\in F(\emptyset)\}.
\]
For a response $T_1\in G(\emptyset)$ by TWO, define $G(T_1)$ as follows: Fix $U_1\in F(\emptyset)$ so that $T_1 = U_1^{k_1}$ for some positive integer $k_1$, and define
\[
 G(T_1) = \{U^k:k\in\mathbb{N}, U\in F(U_1)\},
\]
and so on.

\begin{figure}[h]
\begin{center}
\begin{tabular}{c|l}
\multicolumn{2}{c}{$\gone(\Omega,\Omega_Y)$ on $X$}\\ \hline\hline
\multicolumn{2}{c}{}\\
ONE              & TWO                                     \\ \hline
$F(\emptyset)$   &                                       \\
                 &                                         \\
                 & $U_1$ \\
$F(U_1)$   &                                       \\
                 &                                         \\
                 & $U_2 $ \\
$F(U_1,U_2)$   &                                       \\
                 &                                         \\
                 & $U_3 $ \\
$\vdots$         & $\vdots$ \\
\end{tabular}
\hspace{1in}
\begin{tabular}{c|l}
\multicolumn{2}{c}{$\gone(\open_{\sum_nX^n},\open_{\sum_nY^n})$ on $\sum_nX^n$}                        \\ \hline\hline
\multicolumn{2}{c}{}\\ 
ONE   & TWO                                                 \\ \hline
      &                                                     \\
$G(\emptyset) = \{U^k:U\in F(\emptyset), k\in\mathbb{N}\}$ & $T_1 = U_1^{k_1}$                                      \\
      &                                                     \\
      &                                                     \\
$G(T_1) = \{U^k: U\in F(U_1), k\in\mathbb{N}\}$ & $T_2=U_2^{k_2}$                                    \\
      &                                                     \\
      &                                                     \\
$G(T_1,T_2) = \{U^k: U\in F(U_1,U_2), k\in\mathbb{N}\}$ & $T_3=U_3^{k_3}$                                    \\
$\vdots$         & $\vdots$ \\
\end{tabular}
\end{center}
\caption{Defining the strategy $G$ from the strategy $F$}\label{fig:GfromF}
\end{figure}

Now note that by $(2)\Rightarrow(1)$ of Theorem \ref{thm:FinPowers1} and by Theorem \ref{thm:OmegaEquiv}, for each $n$ the space $X^n$ satisfies $\sone(\open(n),\open_{Y^n})$. Theorem \ref{th:MSUniform} implies that ONE has no winning strategy in the game $\gone(\open(n),\open_{Y^n})$. Lemma \ref{lemma:OneStr} implies that ONE has no winning strategy in the game $\gone(\open_{\sum_nX^n},\open_{\sum_nY^n})$.

It follows that the strategy $G$ just defined is not a winning strategy for ONE in the game $\gone(\open_{\sum_nX^n},\open_{\sum_nY^n})$. Thus, fix a $G$-play lost by ONE, say
\[
  G(\emptyset), T_1, G(T_1), T_2, G(T_1,T_2), \cdots, T_n, G(T_1,T_2,\cdots,T_n),\cdots.
\]
Since this play is lost by ONE, the set $\{T_n:n\in\mathbb{N}\}$ is a cover of $\sum_nY^n$.

Note from the definition of $G$ that there is a corresponding sequence $((U_m,k_m):m\in\mathbb{N})$ such that for each $m$
\begin{enumerate}
\item{$T_m = U_m^{k_m}$}
\item{$U_1\in F(\emptyset)$ and for each $m$, $U_{m+1}\in F(U_1,\cdots,U_m)$.}
\end{enumerate}
Thus, 
\[
  F(\emptyset), U_1, F(U_1), U_2, F(U_1,U_2),\cdots,U_m,F(U_1,\cdots,U_m),\cdots
\]
is a corresponding $F$-play of the game $\gone(\Omega,\Omega_Y)$ on $X$. As in the proof of $(1)\Rightarrow(2)$ of Theorem \ref{thm:FinPowers1}, it follows that $\{U_m:m\in\mathbb{N}\}$ is an $\omega$-cover of $Y$, and thus the corresponding $F$-play is lost by ONE. This completes the proof of $(1)\Rightarrow(2)$.

The implication $(2)\Rightarrow(1)$ follows from earlier remarks.
\end{proof}

The proof of Theorem \ref{thm:FinPowers2} is a rewrite for uniform spaces of the proof of $(3)\Leftrightarrow(4)$ of Theorem 12 of \cite{MSFinPowers}, originally given for metric spaces. 
In the case when $X = Y$, the following result is known:
\begin{theorem}\label{thm:XisYRothberger} For a topological space $(X,\tau)$ the following are equivalent:
\begin{enumerate}
\item{For each natural number $n$ the space $X^n$ has property $\sone(\open,\open)$.}
\item{The space $X$ has the property $\sone(\Omega,\Omega)$}
\item{ONE has no winning strategy in the game $\gone(\Omega,\Omega)$.}
\end{enumerate}
\end{theorem}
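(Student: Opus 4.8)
The plan is to establish the cycle of implications $(1)\Rightarrow(3)\Rightarrow(2)\Rightarrow(1)$. The implication $(3)\Rightarrow(2)$ is immediate from the general observation recorded just after Definition \ref{def:Game}: whenever ONE has no winning strategy in $\gone(\mathcal{A},\mathcal{B})$, the selection principle $\sone(\mathcal{A},\mathcal{B})$ holds; apply this with $\mathcal{A}=\mathcal{B}=\Omega$. So the real work is concentrated in the two remaining implications.

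For $(2)\Rightarrow(1)$ I would fix $n$ and show that $X^n$ satisfies $\sone(\open,\open)$. By Theorem \ref{thm:OmegaEquiv} applied to the space $X^n$ with the subspace taken to be all of $X^n$ (so that $\open_Y$ becomes $\open(n)$), it suffices to verify $\sone(\Omega(n),\open(n))$. Given a sequence $(\mathcal{U}_m:m\in\naturals)$ of $\omega$-covers of $X^n$, Lemma \ref{lemma:omegapowers1} supplies for each $m$ an $\omega$-cover $\mathcal{V}_m$ of $X$ such that $\{V^n:V\in\mathcal{V}_m\}$ is an $\omega$-cover of $X^n$ refining $\mathcal{U}_m$. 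Applying hypothesis $(2)$, i.e.\ $\sone(\Omega,\Omega)$, to $(\mathcal{V}_m:m\in\naturals)$ yields $V_m\in\mathcal{V}_m$ with $\{V_m:m\in\naturals\}$ an $\omega$-cover of $X$; since every point of $X^n$ has a finite coordinate set, $\{V_m^n:m\in\naturals\}$ covers $X^n$, and choosing for each $m$ a $U_m\in\mathcal{U}_m$ with $V_m^n\subseteq U_m$ (via Lemma \ref{lemma:omegapowers1}) produces the desired selection. This is the classical ``$\sone(\Omega,\Omega)$ implies Rothberger in all finite powers'' argument, rewritten in the present notation.

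The implication $(1)\Rightarrow(3)$ is the main point, and the one I expect to require the most care. First, Pawlikowski's Theorem \ref{thm:Pawlikowski}, applied to each $X^n$, turns hypothesis $(1)$ into the statement that for every $n$ player ONE has no winning strategy in $\gone(\open(n),\open(n))$. Lemma \ref{lemma:OneStr}, with $X_n=Y_n=X^n$, then gives that ONE has no winning strategy in $\gone(\open_{\sum_nX^n},\open_{\sum_nX^n})$ on the topological sum $\sum_nX^n$. Next, given any strategy $F$ for ONE in $\gone(\Omega,\Omega)$ on $X$, I would build from it a strategy $G$ for ONE in $\gone(\open_{\sum_nX^n},\open_{\sum_nX^n})$ exactly as in the proof of Theorem \ref{thm:FinPowers2}: set $G(\emptyset)=\{U^k:k\in\naturals,\ U\in F(\emptyset)\}$, and for a response $T_1=U_1^{k_1}$ with $U_1\in F(\emptyset)$ set $G(T_1)=\{U^k:k\in\naturals,\ U\in F(U_1)\}$, continuing recursively. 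Since $G$ cannot be a winning strategy, there is a $G$-play $G(\emptyset),T_1,G(T_1),T_2,\ldots$ lost by ONE, that is, with $\{T_m:m\in\naturals\}$ a cover of $\sum_nX^n$; writing $T_m=U_m^{k_m}$ with $U_m\in F(U_1,\ldots,U_{m-1})$ reads off a corresponding $F$-play $F(\emptyset),U_1,F(U_1),U_2,\ldots$ on $X$.

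The crux is then to verify that this $F$-play is lost by ONE, i.e.\ that $\{U_m:m\in\naturals\}$ is an $\omega$-cover of $X$. Given any finite $F=\{x_1,\ldots,x_k\}\subseteq X$, regard $(x_1,\ldots,x_k)$ as a point of the summand $X^k$ of $\sum_nX^n$; it lies in some $T_m=U_m^{k_m}$, which forces $k_m=k$ and $x_1,\ldots,x_k\in U_m$, so $F\subseteq U_m$; and each $U_m$, being a member of an $\omega$-cover, differs from $X$. Hence $\{U_m:m\in\naturals\}$ is an $\omega$-cover of $X$, TWO wins the $F$-play, and $F$ is not winning; as $F$ was arbitrary, ONE has no winning strategy in $\gone(\Omega,\Omega)$, closing the cycle. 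The point worth emphasizing is that the only role of $\sigma$-compactness in the analogous Theorems \ref{thm:FinPowers1} and \ref{thm:FinPowers2} was to secure the finite-powers equivalences for covers of a proper subspace $Y$; when $Y=X$ those appeals are replaced by Theorem \ref{thm:OmegaEquiv} and Pawlikowski's Theorem \ref{thm:Pawlikowski}, both of which hold for arbitrary topological spaces.
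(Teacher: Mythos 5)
Your proof is correct. It is worth noting, though, that the paper itself does not prove Theorem \ref{thm:XisYRothberger} at all: it attributes the equivalence of (1) and (2) to Sakai and of (2) and (3) to Theorem 2 of \cite{coc3}. What you have done is reassemble a self-contained proof from the paper's own toolkit, and your assembly is sound: the $(3)\Rightarrow(2)$ step is the standard remark following Definition \ref{def:Game}; your $(2)\Rightarrow(1)$ is exactly the Lemma \ref{lemma:omegapowers1} refinement argument that the paper uses inside Theorems \ref{thm:FinPowers1} and \ref{thm:finitepower}, combined with Theorem \ref{thm:OmegaEquiv} (of which only the nontrivial direction $\sone(\Omega(n),\open(n))\Rightarrow\sone(\open(n),\open(n))$ is needed, since $\Omega(n)\subseteq\open(n)$); and your $(1)\Rightarrow(3)$ is the strategy-translation device of Theorem \ref{thm:FinPowers2} specialized to $Y=X$. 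The genuinely useful observation you make explicit is that the only place the paper's subspace versions invoke $\sigma$-compactness is through Theorem \ref{th:MSUniform}, and that when $Y=X$ this appeal can be replaced by Pawlikowski's Theorem \ref{thm:Pawlikowski} applied to each power $X^n$, after which Lemma \ref{lemma:OneStr} (stated for arbitrary spaces) handles the sum $\sum_n X^n$; your direct verification that the induced $F$-play yields an $\omega$-cover of $X$ (the point of $X^k$ built from a $k$-element set forces $k_m=k$, and no $U_m$ equals $X$ since each lies in an $\omega$-cover) is exactly what is needed, with the implicit fixing of a representation $T_m=U_m^{k_m}$ being harmless for the same reason as in the paper's Theorem \ref{thm:FinPowers2}. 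So your route buys a proof from first principles within the paper, at the cost of leaning on Pawlikowski's theorem as the game-theoretic engine, whereas the paper simply outsources both equivalences to the literature.
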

Observe that in Theorem \ref{thm:XisYRothberger} the hypothesis that $X$ is $\sigma$-compact is dropped, but $Y=X$ is assumed. The equivalence of (1) and (2) in Theorem \ref{thm:XisYRothberger} was discovered by M. Sakai \cite{Sakai}, while the equivalence of (2) and (3) was proven in Theorem 2 of \cite{coc3}. 

Next we expand the equivalences in Theorem \ref{thm:FinPowers2} to include the appropriate Ramsey theoretic version. The corresponding work for Theorem \ref{thm:XisYRothberger} is done in Section 7. 

\begin{theorem}\label{thm:FinPowers3}
Let $Y$ be a subspace of a $\sigma$-compact uniform space $(X,\Psi)$. The following are equivalent:
\begin{enumerate}
\item{$X$ satisfies $\sone(\Omega,\Omega_Y)$}
\item{For each $k$, $X$ satisfies $\Omega \longrightarrow(\Omega_Y)^2_k$.}
\end{enumerate}
\end{theorem}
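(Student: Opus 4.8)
The plan is to prove $(1)\Rightarrow(2)$ by the game-theoretic method used for Theorem~\ref{th:RealsRamsey}, now carried out for the game $\gone(\Omega,\Omega_Y)$ instead of $\gone(\open,\open_Y)$, and to prove $(2)\Rightarrow(1)$ by descending, via Lemma~\ref{lemma:omegapowers1}, to the partition relations on the finite powers $X^n$ and then quoting the chain of equivalences in Theorems~\ref{th:RealsRamsey}, \ref{thm:OmegaEquiv} and \ref{thm:FinPowers1}.

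For $(1)\Rightarrow(2)$ I would reproduce, almost verbatim, the argument for $(1)\Rightarrow(2)$ in the proof of Theorem~\ref{th:RealsRamsey}, with two changes. First, in place of Theorem~\ref{th:MSUniform} I would invoke Theorem~\ref{thm:FinPowers2}: from $\sone(\Omega,\Omega_Y)$ it follows that ONE has no winning strategy in the game $\gone(\Omega,\Omega_Y)$. Second, given an $\omega$-cover $\mathcal{U}$ of $X$ --- which we may take to be countable by $\sigma$-compactness and Theorem~\ref{Th:ArkhPytk} --- and a colouring $f\colon[\mathcal{U}]^2\to\{1,\dots,k\}$, the strategy $F$ built for ONE from the $\subseteq$-decreasing $\omega$-covers $\mathcal{U}_n$ and homogeneous ``pivot'' colours $i_n$ produced by repeated use of Lemma~\ref{lemma:OmegaisRamsey} (after collapsing the $i_n$'s to a single colour $j$ so that each $\mathcal{U}_n\cap\mathcal{W}_j$ is an $\omega$-cover of $X$, where $\mathcal{W}_j=\{P_m:i_m=j\}$) is now a strategy in $\gone(\Omega,\Omega_Y)$, which is legitimate precisely because each of ONE's moves is an $\omega$-cover of $X$. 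Since $F$ is not a winning strategy, I would fix an $F$-play lost by ONE; the set of TWO's selections is then, by the definition of that game, an $\omega$-cover of $Y$, i.e.\ a member of $\Omega_Y$, and it is $f$-homogeneous with value $j$ by construction, which is exactly what $\Omega\longrightarrow(\Omega_Y)^2_k$ demands.

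For $(2)\Rightarrow(1)$ I would first show that $(2)$ forces $\Omega(n)\longrightarrow(\open_{Y^n})^2_k$ to hold for every $n$ and $k$. Given an $\omega$-cover $\mathcal{U}$ of $X^n$ and $f\colon[\mathcal{U}]^2\to\{1,\dots,k\}$, Lemma~\ref{lemma:omegapowers1} supplies an $\omega$-cover $\mathcal{V}$ of $X$ and a function $\phi\colon\mathcal{V}\to\mathcal{U}$ with $V^n\subseteq\phi(V)$ for each $V$, such that $\{V^n:V\in\mathcal{V}\}$ is an $\omega$-cover of $X^n$. Colour $[\mathcal{V}]^2$ with $k+1$ colours by giving $\{V,V'\}$ the colour $0$ when $\phi(V)=\phi(V')$ and the colour $f(\{\phi(V),\phi(V')\})$ otherwise, and apply $(2)$ to extract an $\omega$-cover $\mathcal{W}\subseteq\mathcal{V}$ of $Y$ homogeneous of some colour $c$. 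If $c\neq 0$ then $\phi$ is injective on $\mathcal{W}$, and $\{\phi(V):V\in\mathcal{W}\}$ is an $f$-homogeneous (colour $c$) subfamily of $\mathcal{U}$ that covers $Y^n$, since $\{V^n:V\in\mathcal{W}\}$ covers $Y^n$ and $V^n\subseteq\phi(V)$; if $c=0$ then $\phi$ is constant on $\mathcal{W}$, say $\phi\equiv U_0$, and then $Y^n\subseteq U_0$, so the one-element family $\{U_0\}\subseteq\mathcal{U}$ is a (trivially $f$-homogeneous) member of $\open_{Y^n}$. Either way $\Omega(n)\longrightarrow(\open_{Y^n})^2_k$ holds. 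Now Theorem~\ref{th:RealsRamsey}, applied to the $\sigma$-compact uniform space $X^n$ with its product uniformity, converts this (for all $k$) into $\sone(\open(n),\open_{Y^n})$, Theorem~\ref{thm:OmegaEquiv} upgrades it to $\sone(\Omega(n),\open_{Y^n})$, and, as this holds for every $n$, Theorem~\ref{thm:FinPowers1} delivers $\sone(\Omega,\Omega_Y)$, which is $(1)$.

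The step I expect to be delicate is $(1)\Rightarrow(2)$, and specifically the reliance on Theorem~\ref{thm:FinPowers2}: without $\sigma$-compactness there is no reason for the failure of a winning strategy for ONE in $\gone(\Omega,\Omega_Y)$ to follow from $\sone(\Omega,\Omega_Y)$, and the whole game-reflection step would collapse; once that determinacy-for-ONE is in hand the bookkeeping is the same as in Theorem~\ref{th:RealsRamsey}. In $(2)\Rightarrow(1)$ the ``same-preimage'' colour looks like an obstruction but is harmless, because on the power $X^n$ the conclusion asked of the homogeneous set is only that it cover $Y^n$, so a single set suffices.
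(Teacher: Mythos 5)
Your proof is correct. The direction $(1)\Rightarrow(2)$ is essentially the paper's own argument: the same recursive use of Lemma \ref{lemma:OmegaisRamsey} to produce the decreasing $\omega$-covers and the single colour $j$, the same strategy $F$ for ONE in $\gone(\Omega,\Omega_Y)$, and the same appeal to Theorem \ref{thm:FinPowers2} (which is exactly where $\sigma$-compactness enters) to obtain a lost $F$-play whose TWO-selections form the homogeneous member of $\Omega_Y$.

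For $(2)\Rightarrow(1)$ you take a genuinely different route. The paper argues directly: given a sequence $(\mathcal{U}_n:n\in\mathbb{N})$ of $\omega$-covers (countable by Theorem \ref{Th:ArkhPytk}), it forms the single $\omega$-cover $\{U^1_m\cap U^m_n:m,n\in\mathbb{N}\}$, applies only the two-colour relation $\Omega\rightarrow(\Omega_Y)^2_2$ to the colouring that records whether the first indices agree, rules out the constant-first-index colour because the homogeneous family must be an $\omega$-cover of $Y$, and reads off a selector in $\Omega_Y$. You instead push the partition relation down to each finite power: via Lemma \ref{lemma:omegapowers1} and a $(k+1)$-colouring (with an extra colour for pairs having the same $\phi$-image) you deduce $\Omega(n)\rightarrow(\open_{Y^n})^2_k$ for all $n,k$, noting correctly that the ``same-image'' colour only yields a single set $U_0\supseteq Y^n$, which is a legitimate (vacuously homogeneous) witness; then Theorem \ref{th:RealsRamsey} applied to the $\sigma$-compact uniform space $X^n$, Theorem \ref{thm:OmegaEquiv}, and Theorem \ref{thm:FinPowers1} give $\sone(\Omega,\Omega_Y)$. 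This is sound and not circular, since all the results you quote precede Theorem \ref{thm:FinPowers3} and do not depend on it; its advantage is that it recycles machinery already in place and makes the finite-power content of the hypothesis explicit. The paper's direct argument is leaner: it needs only the case $k=2$ of the partition relation and no finite-power or game-theoretic input at all in this direction, so it isolates more sharply how little of $(2)$ is actually used.
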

\begin{proof}
$(1)\Rightarrow(2):$ Assume that for the subspace $Y$ of $X$ it is true that $\sone(\Omega,\Omega_Y)$ holds. Let $\mathcal{U}$ be a given $\omega$-cover of $X$. Since $X$ is $\sigma$-compact the Arkhangel'skii-Pytkeev theorem implies that $\mathcal{U}$ has a countable subset that is an $\omega$-cover of $X$. We may as well assume that $\mathcal{U}$ is countable, and enumerate it without repetitions as $(U_n:n\in\mathbb{N})$. We now use an argument analogous to that in the proof of Theorem \ref{th:RealsRamsey}.

Let $k$ be a positive integer and let a function $f:\lbrack \mathcal{U}\rbrack^2\rightarrow \{1,\cdots,k\}$ be given.
%%%%%%%%%%%%%%%%%%%%%%%%%%%%%%%%%%%%%%%%%%%%%%%%%%%%
Recursively construct two sequences $(\mathcal{U}_n:n\in\mathbb{N})$ and $(i_n:n\in\mathbb{N})$ such that
\begin{enumerate}
\item{$\mathcal{U}_1 := \{ U_m: m > 1 \mbox{ and }f (\{ U_1, U_m\}) = i_1\}$ is an $\omega$-cover of $X$ and}
\item{For each $n$,
      $\mathcal{U}_{n+l }:= \{ U_m \in \mathcal{U}_n : m > n + 1 \mbox{ and } f (\{ U_{n+ I}, U_m\}) = i_{n+l}\} $
    is an $\omega$-cover of $Y$.}
\end{enumerate}
For example, to obtain $\mathcal{U}_1$ and $i_1$, note that $\mathcal{V} = \mathcal{U}\setminus\{U_1\}$ is an $\omega$-cover of $X$. Setting $\mathcal{V}_j = \{U\in\mathcal{V}: f(\{U_1,U\} = j\}$ defines a partition
\[
  \mathcal{V} = \mathcal{V}_1\cup\cdots\cup\mathcal{V}_k
\]
of the $\omega$-cover $\mathcal{V}$ into finitely many parts. By Lemma \ref{lemma:OmegaisRamsey} at least one of these parts is an $\omega$-cover. Select a $i_1$ for which $\mathcal{V}_{i_1}$ is an $\omega$-cover, and put $\mathcal{U}_1 = \mathcal{V}_{i_1}$. Assume that the $\omega$-cover $\mathcal{U}_n$ and $i_n$ have been determined, and proceed in the same way to obtain the $\omega$-cover $\mathcal{U}_{n+1}$ and $i_{n+1}$ from $\mathcal{U}_n$, using the function $f$.
Observe that for each $n$ we have $\mathcal{U}_{n+1}\subset \mathcal{U}_n$. 

Next we define for $j\in\{1,\cdots,k\}$ the set $\mathcal{W}_j = \{U_n:i_n = j\}$. Then for each $n$ we have the partition
\[
  \mathcal{U}_n = (\mathcal{U}_n\cap\mathcal{W}_1) \cup \cdots \cup (\mathcal{U}_n\cap\mathcal{W}_k).
\]
Applying Lemma \ref{lemma:OmegaisRamsey} to each of these partitions we fix for each $n$ a $j_n\in\{1,\cdots,k\}$ for which $\mathcal{U}_n\cap\mathcal{W}_{j_n}$ is an $\omega$-cover.  Since for each $n$ we have $\mathcal{U}_{n+1}\subset \mathcal{U}_n$, we may assume that the selected $j_n$'s are all of the same value, say $j$. Thus, for each $n$, $\mathcal{U}_n\cap\mathcal{W}_j$ is an $\omega$-cover of $X$.

With the sequence of $\mathcal{U}_n\cap\mathcal{W}_j$ selected, define the following strategy, $F$, for ONE in the game $\gone(\Omega,\Omega_Y)$ played on $X$. ONE's first move is $F(\emptyset) = \mathcal{U}_1\cap\mathcal{W}_j$. When TWO plays $U_{n_1}\in F(\emptyset)$, ONE responds with $F(U_{n_1}) = \mathcal{U}_{n_1}\cap\mathcal{W}_j$. When TWO responds with a $U_{n_2}\in F(U_{n_1})$, ONE responds with $F(U_{n_1},U_{n_2}) = \mathcal{U}_{n_2}\cap\mathcal{W}_j$, and so on. By hypothesis $\sone(\Omega,\Omega_Y)$ holds. Since $X$ is $\sigma$-compact, Theorem \ref{thm:FinPowers2} implies that ONE does not have a winning strategy in the game $\gone(\Omega,\Omega_Y)$. Thus, the strategy $F$ just defined for player ONE is not a winning strategy in the game $\gone(\Omega,\Omega_Y)$. Choose an $F$-play
\[
  F(\emptyset), U_{n_1}, F(U_{n_1}), U_{n_2}, \cdots, U_{n_m}, F(U_{n_1},\cdots,U_{n_m}), U_{n_{m+1}},\cdots
\]
that is lost by ONE. Then the set $\mathcal{H} = \{U_{n_j}:j\in\mathbb{N}\}$ is an element of $\Omega_Y$ (that is, an $\omega$ cover of $Y$), and for all $u<v$ we have $f(\{U_{n_u},U_{n_v}\}) = j$. But then $\mathcal{H}\subseteq\mathcal{U}$ witnesses that $\Omega\rightarrow(\Omega_Y)^2_k$ holds for the function $f$. Since the $\omega$-cover $\mathcal{U}$ of $X$, and the function $f:\lbrack\mathcal{U}\rbrack^2\rightarrow\{1,\cdots,k\}$ were arbitrary, this completes the proof of the implication $(1)\Rightarrow (2)$.

{\flushleft${(2)\Rightarrow(1)}$} 
Let a sequence $(\mathcal{U}_n:n\in\mathbb{N})$ of $\omega$-covers of the space $X$ be given. By the $\sigma$-compactness of $X$ and Theorem \ref{Th:ArkhPytk} we may assume that each $\mathcal{U}_n$ is countable. Fix for each $n$ a repetitionfree enumeration of $\mathcal{U}_n$, say $(U^n_m:m\in\mathbb{N})$. Then define $\mathcal{U} = \{U^1_m\cap U^m_n:m,n\in\mathbb{N}\}$.  Then $\mathcal{U}$ is an $\omega$-cover of $X$.

Next define a function $f:\lbrack\mathcal{U}\rbrack^2\rightarrow\{0,1\}$ as follows:
\[
  f(\{U^1_m\cap U^m_n, U^1_k\cap U^k_{\ell}\}) = \left\{ \begin{tabular}{ll}
                                                                                          0 & if $m = k$\\
                                                                                          1 & otherwise
                                                                                          \end{tabular}
                                                                                          \right. 
\]
By the hypothesis that $\Omega\longrightarrow(\Omega_Y)^2_2$ holds, select a subset $\mathcal{S}$ of $\mathcal{U}$ and an $i\in\{0,1\}$ such that $\mathcal{S}$ is an $\omega$ cover of $Y$, and for any two $A, B\in\mathcal{S}$, we have $f(\{A,B\}) = i$.

If $i=0$, then there is a fixed $k$ such that each element of $\mathcal{V}$ is of the form $U^1_k\cap U^k_m$. It follows that in this case $Y\subseteq U^1_k$, contradicting the fact that $\mathcal{S}$ is an $\omega$-cover of $Y$. 

Therefore, $i=1$ and $\mathcal{V}$ is of the form $\{U^1_{m_k}\cap U^{m_k}_{n_k}:k\in\mathbb{N}\}$ where $m_k\neq m_{\ell}$ wheneve $k\neq \ell$. In this case choose elements $V_j\in\mathcal{U}_j$ so that $V_1 = U^1_{m_1}$, $V_{m_k} = U^{m_k}_{n_k}$ when $j=m_k$, and for all other values of $j$, $V_j\in\mathcal{U}_j$ are arbitrarily chosen. The set $\{V_j:j\in\naturals\}$ is an element of $\Omega_Y$.
%%%%%%%%%%%%%%%%%%%%%%%%%%%%%%%%%%%%%%%%%
\end{proof}
The proof of Theorem \ref{thm:FinPowers3} is a rewrite for uniform spaces of the proof of $(4)\Leftrightarrow(5)$ of Theorem 12 of \cite{MSFinPowers}, originally given for metric spaces.
We expect that there are also improvements to be made in the exponents appearing in the partition relation $\Omega\longrightarrow(\Omega_Y)^2_k$, but currently have no theoretical evidence for this.
\begin{problem}\label{problem:exponents}
Let $(X,\Psi)$ be a $\sigma$-compact uniformizable space. Is it true that if $\Omega \longrightarrow(\Omega_Y)^2_2$, then for all finite positive integers $m$ and $k$, $\Omega\longrightarrow(\Omega_Y)^m_k$.
\end{problem}
We shall later see that the answer to Problem \ref{problem:exponents} is ``yes" for the special situation when $Y=X$..

\section{Partition Relations and Rothberger's Covering Property $\sone(\open,\open)$}

In \cite{rothberger38} Rothberger pointed out that if a set $X$ of real numbers, endowed with the relative topology inherited from $\mathbb{R}$, satisfies the selection principle $\sone(\open,\open)$, then $X$ has the Borel covering property. As it is independent of \textsf{ZFC} whether the Borel covering property and the Rothberger covering property coincide in separable metric spaces, the converse implication is not provable. In \cite{rothberger41} Rothberger proved that the Continuum Hypothesis implies the existence of a set of real numbers that has Borel's covering property, but does not have Rothberger's covering property. 
Formally, the Rothberger covering property is stronger than the Borel covering property. 

The Rothberger covering property can be characterized in terms of Ramseyan properties that are generally stronger than the Ramseyan properties characterizing the Borel covering property. 
We now describe the known relation between Ramsey theoretic properties and the Rothberger covering property. The preservation of a covering property under finite powers is significantly reflected by the Ramseyan covering properties of the underlying space. Correspondingly the treatment below is divided into two parts according to the behavior of the covering property under finite powers.

From this point on we work with a topological space $(X,\tau)$, and assume that the subspace $Y$ considered before is equal to $X$. Our first goal is the following basic theorem:
\begin{theorem}\label{thm:rothberger} Let $(X,\tau)$ be a regular topological space. The following statements are equivalent:
\begin{enumerate}
\item{$X$ has the covering property $\sone(\open,\open)$.}
\item{ONE has no winning strategy in the game $\gone(\open,\open)$ played on $X$.}
\item{For each positive integer $k$, $\Omega\longrightarrow(\open)^2_k$ holds for $X$.}
\end{enumerate}
\end{theorem}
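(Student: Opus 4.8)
The equivalence $(1)\Leftrightarrow(2)$ is exactly Pawlikowski's Theorem \ref{thm:Pawlikowski}, which holds for every topological space, so nothing beyond the hypotheses is needed there. The plan for the rest is to prove $(2)\Rightarrow(3)$ and $(3)\Rightarrow(1)$ by transplanting the two halves of the proof of Theorem \ref{th:RealsRamsey} to the case $Y=X$; the one place where that proof leaned on $\sigma$-compactness must be replaced by an argument using only what $\sone(\open,\open)$ supplies, namely that $X$ is Lindel\"of.

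For $(2)\Rightarrow(3)$ I would copy the argument of $(1)\Rightarrow(2)$ in Theorem \ref{th:RealsRamsey} with $Y=X$. Fix an $\omega$-cover $\mathcal{U}$ of $X$ and a colouring $f\colon[\mathcal{U}]^2\to\{1,\dots,k\}$; after the reduction discussed below, take $\mathcal{U}$ countable and enumerate it as $(U_n:n\in\naturals)$. Using Lemma \ref{lemma:OmegaisRamsey} recursively, build a decreasing chain of $\omega$-covers $\mathcal{U}_1\supseteq\mathcal{U}_2\supseteq\cdots$ together with link colours $i_n$, where $\mathcal{U}_{n+1}$ collects the members of $\mathcal{U}_n$ of index $>n+1$ joined to $U_{n+1}$ in colour $i_{n+1}$. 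With $\mathcal{W}_j=\{U_n:i_n=j\}$, one further application of Lemma \ref{lemma:OmegaisRamsey} to the partitions $\mathcal{U}_n=\bigcup_i(\mathcal{U}_n\cap\mathcal{W}_i)$, together with the fact that the $\mathcal{U}_n$ decrease, produces a single $j$ for which $\mathcal{U}_n\cap\mathcal{W}_j$ is an $\omega$-cover for every $n$. Now let ONE play $F(\emptyset)=\mathcal{U}_1\cap\mathcal{W}_j$ and $F(U_{n_1},\dots,U_{n_m})=\mathcal{U}_{n_m}\cap\mathcal{W}_j$ in $\gone(\open,\open)$. By $(2)$ this is not a winning strategy, so some play is lost by ONE, and there TWO's moves form a $j$-homogeneous subfamily of $\mathcal{U}$ covering $X$, which is exactly the witness for $\Omega\to(\open)^2_k$.

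For $(3)\Rightarrow(1)$, by Theorem \ref{thm:OmegaEquiv} applied with $Y=X$ it suffices to verify $\sone(\Omega,\open)$, and for this I would follow $(2)\Rightarrow(1)$ of Theorem \ref{th:RealsRamsey}. Given $\omega$-covers $(\mathcal{U}_n:n\in\naturals)$, reduce each to a countable one, enumerate $\mathcal{U}_n=(U^n_m:m\in\naturals)$, set $\mathcal{U}=\{U^1_m\cap U^m_n:m,n\in\naturals\}$ (again an $\omega$-cover of $X$), and colour a pair $0$ when its two outer indices agree and $1$ otherwise. Apply $\Omega\to(\open)^2_2$: the colour-$0$ alternative forces $X\subseteq U^1_k$ for a single fixed $k$, whence a selection is immediate; the colour-$1$ alternative yields a cover of the form $\{U^1_{m_j}\cap U^{m_j}_{n_j}:j\in\naturals\}$ with the $m_j$ pairwise distinct, and reading off $U^1_{m_1}$ from $\mathcal{U}_1$ and $U^{m_j}_{n_j}$ from $\mathcal{U}_{m_j}$ is the required witness for $\sone(\Omega,\open)$.

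The hard part, common to both implications, is the reduction ``without loss of generality $\mathcal{U}$ (respectively each $\mathcal{U}_n$) is countable'': the recursion in the second paragraph and the diagonal family $\mathcal{U}=\{U^1_m\cap U^m_n\}$ both rest on an enumeration of the $\omega$-cover. In Theorem \ref{th:RealsRamsey} this was free, since $\sigma$-compactness makes every finite power Lindel\"of and Theorem \ref{Th:ArkhPytk} then applies. Here $\sone(\open,\open)$ gives only that $X$ is Lindel\"of, which produces countable subcovers but not countable $\omega$-subcovers, so the key step is to establish --- presumably this is where regularity enters --- that every $\omega$-cover of $X$ has a countable subset that is again an $\omega$-cover, or else to recast the Ramsey bookkeeping so as to run directly on uncountable $\omega$-covers. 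I expect this reduction, rather than the partition-relation machinery itself, to be the main obstacle; and in $(3)\Rightarrow(1)$ there is the extra wrinkle that the hypothesis is only the partition relations, so one must first extract from them enough of the Lindel\"of property to even get the reduction under way.
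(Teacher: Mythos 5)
Your overall architecture is exactly the paper's: the equivalence of (1) and (2) is Pawlikowski's Theorem \ref{thm:Pawlikowski}, and (2)$\Rightarrow$(3), (3)$\Rightarrow$(1) are obtained by rerunning the two halves of the proof of Theorem \ref{th:RealsRamsey} with $Y=X$, with hypothesis (2) standing in for the appeal to Theorem \ref{th:MSUniform}. Both transplanted arguments (the link-colour recursion plus the strategy $F$ for ONE, and the diagonal cover $\{U^1_m\cap U^m_n\}$ with the two-colouring, routed through Theorem \ref{thm:OmegaEquiv}) are reproduced correctly; this is precisely what the paper means by ``follows the argument in the proof of Theorem \ref{th:RealsRamsey}''.

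The step you flag --- ``we may assume the $\omega$-cover(s) are countable'' --- is indeed the one place where the transplant is not automatic, and you are right that it is not free: Theorem \ref{Th:ArkhPytk} needs all finite powers of $X$ to be Lindel\"of, and neither regularity nor $\sone(\open,\open)$ (which yields only that $X$ itself is Lindel\"of) supplies that, so you should not expect regularity to rescue the reduction. Be aware, though, that the paper's own proof does not address this point either; it silently inherits the countability reduction that was justified in Theorem \ref{th:RealsRamsey} by $\sigma$-compactness. The way the rest of the paper handles the same issue is the guide here: Theorem \ref{egprothberger} and Lemma \ref{lemma:subscriptup} carry the explicit hypothesis that every $\omega$-cover has a countable subset that is an $\omega$-cover, and the contexts in which Theorem \ref{thm:rothberger} is later invoked (finite powers of $\sigma$-compact spaces, sets of reals) satisfy it. So the clean way to complete your write-up is to add that hypothesis, under which your two arguments are finished and coincide with the paper's, rather than to hunt for a regularity argument or to try to run the Ramsey bookkeeping on an uncountable $\omega$-cover (the recursion and the $\mathcal{W}_j$ pigeonhole genuinely need an enumeration in order type $\omega$). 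Your ``extra wrinkle'' in (3)$\Rightarrow$(1) is of the same nature: the partition relation alone does not produce the countable reduction, and the paper offers no mechanism for extracting it there either.
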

\begin{proof}
The proof of the fact that (1) implies (2) is Theorem \ref{thm:Pawlikowski}, due to Pawlikowski, \cite{JP}. Note that this implication is stronger than the corresponding one proven in Theorem \ref{th:RealsRamsey} since in that theorem we assumed that the space $X$ is $\sigma$-compact. That (2) implies (3), and (3) implies (1) follows the argument in the proof of Theorem \ref{th:RealsRamsey}.  
\end{proof}

We now consider the corresponding equivalence for the case where we consider whether for a fixed finite power of $X$, that power has the Rothberger covering property. For convenience we introduce the following notation:
Let $(X,\tau)$ be a topological space and let $n$ be a positive integer. Let $\open_n$ denote the collection of open covers $\mathcal{U}$ of $X$ with the property that whenever $F\subset X$ has at most $n$ elements, then there is a set $U\in\mathcal{U}$ for which $F\subseteq U$. Note that when $(X,\tau)$ is a topological space such that for a positive integer $n$ the Tychonoff product space $X^n$ satisfies the property $\sone(\open,\open)$, then all smaller powers of $X$ also have this property. The reason is that the property $\sone(\open,\open)$ is preserved by homeomorphisms, a closed subset of a space inherits the property $\sone(\open,\open)$ from the ambient space, and when $m<n$ are positive integers, then the space $X^m$ embeds as a closed subspace of $X^n$. 

In the proof of Theorem \ref{thm:finitepower} we make use of the following lemma, the proof of which is left to the reader:
\begin{lemma}\label{lemma:omegapowers}
If $\mathcal{U}$ is an (open) $\omega$-cover for the space $X$, then $\{U^n:U\in\mathcal{U}\}$ is a $\omega$-cover for the space $X^n$.
\end{lemma}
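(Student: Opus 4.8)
The plan is to verify directly the two defining conditions of an $\omega$-cover for the family $\mathcal{V} := \{U^n : U \in \mathcal{U}\}$ on the product space $X^n$, having first observed that $\mathcal{V}$ consists of open sets: since each $U \in \mathcal{U}$ is open in $X$, the set $U^n = U \times \cdots \times U$ is open in the product topology on $X^n$.

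Next I would check that $X^n$ is not a member of $\mathcal{V}$. Because $\mathcal{U}$ is an $\omega$-cover of $X$, we have $X \notin \mathcal{U}$, so for each $U \in \mathcal{U}$ there is a point $x_U \in X \setminus U$; then the diagonal point $(x_U, \dots, x_U)$ lies in $X^n \setminus U^n$, so $U^n \neq X^n$. Hence no member of $\mathcal{V}$ equals $X^n$.

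The substantive step is the finite-subset condition. Let $F \subseteq X^n$ be finite, say $F = \{p_1, \dots, p_m\}$ with $p_i = (p_i(1), \dots, p_i(n))$, and form the finite set $G = \{p_i(j) : 1 \le i \le m,\ 1 \le j \le n\} \subseteq X$ of all coordinates occurring in points of $F$. Since $\mathcal{U}$ is an $\omega$-cover of $X$, choose $U \in \mathcal{U}$ with $G \subseteq U$. Then for each $i$ every coordinate of $p_i$ lies in $U$, so $p_i \in U^n$; thus $F \subseteq U^n \in \mathcal{V}$. Taking $F$ to be a singleton shows in particular that $\mathcal{V}$ covers $X^n$, so $\mathcal{V}$ is an $\omega$-cover of $X^n$.

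There is essentially no obstacle here; the only point requiring a moment's care is that ``collapsing'' the coordinates of a finite subset of $X^n$ into one finite subset $G$ of $X$ is precisely what lets us apply the $\omega$-cover property of $\mathcal{U}$ a single time to capture all of $F$ at once, and that $X^n$ cannot slip into $\mathcal{V}$ because a proper subset $U \subsetneq X$ yields a proper subset $U^n \subsetneq X^n$.
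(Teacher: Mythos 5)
Your proof is correct: the coordinate-collapsing argument (gathering all coordinates of a finite subset of $X^n$ into one finite subset of $X$ and applying the $\omega$-cover property of $\mathcal{U}$ once), together with the observation that $U\subsetneq X$ forces $U^n\subsetneq X^n$, is exactly the intended argument. The paper leaves this lemma as an exercise to the reader, so there is no proof to compare against, and your write-up fills that gap in the standard way.
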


\begin{theorem}\label{thm:finitepower} For a topological space $(X,\tau)$ and positive integer $n$, the following are equivalent:
\begin{enumerate}
\item{For each positive integer $k$, $\Omega \longrightarrow(\open_n)^2_k$ holds.}
\item{The $n$-th power of $X$ with the Tychonoff product topology satisfies the property $\sone(\open,\open)$.}
\end{enumerate}
\end{theorem}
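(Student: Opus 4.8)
The plan is to follow the same two-stage pattern used repeatedly in the paper: convert the partition relation into a statement about ONE having no winning strategy in a game, then convert that into the selection principle $\sone(\open,\open)$ for the power $X^n$. Since $Y=X$ here, the relevant technical backbone is Theorem \ref{thm:rothberger} (Pawlikowski's theorem), applied not to $X$ but to $X^n$.

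For the direction $(2)\Rightarrow(1)$, I would argue contrapositively and directly. Suppose $X^n$ satisfies $\sone(\open,\open)$; I must produce, for a given $\omega$-cover $\mathcal{U}$ of $X$ and a coloring $f:[\mathcal{U}]^2\to\{1,\dots,k\}$, a monochromatic $\mathcal{H}\subseteq\mathcal{U}$ that is a member of $\open_n$, i.e.\ an $n$-cover of $X$. First replace $\mathcal{U}$ by a countable $\omega$-subcover if possible — but note we are \emph{not} assuming $\sigma$-compactness here, so the Arkhangel'skii--Pytkeev reduction is unavailable; instead I would use Lemma \ref{lemma:omegapowers}, passing to the $\omega$-cover $\{U^n:U\in\mathcal{U}\}$ of $X^n$. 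Then mimic the construction in the proof of Theorem \ref{th:RealsRamsey}: recursively thin $\mathcal{U}$ using $f$ to build a decreasing sequence of $\omega$-covers $\mathcal{U}_1\supseteq\mathcal{U}_2\supseteq\cdots$ together with colors $i_n$, using Lemma \ref{lemma:OmegaisRamsey} at each step to keep an $\omega$-cover; stabilize the "second coordinate" colors via a further application of Lemma \ref{lemma:OmegaisRamsey} so that all $\mathcal{U}_n\cap\mathcal{W}_j$ are $\omega$-covers for a single $j$. From this data define a strategy $F$ for ONE in the game $\gone(\open(n),\open(n))$ on $X^n$ — at each inning ONE plays the open cover $\{U^n:U\in\mathcal{U}_{n_m}\cap\mathcal{W}_j\}$ of $X^n$. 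By Pawlikowski's theorem (Theorem \ref{thm:Pawlikowski}/\ref{thm:rothberger}) applied to $X^n$, since $X^n$ satisfies $\sone(\open,\open)$, $F$ is not winning, so there is an $F$-play lost by ONE; the sets $U_{n_j}$ selected by TWO form a monochromatic family (color $j$) whose $n$-th powers cover $X^n$, which is exactly the statement that $\{U_{n_j}:j\in\mathbb{N}\}$ is an $n$-cover of $X$, hence a member of $\open_n$. This gives $\Omega\to(\open_n)^2_k$.

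For $(1)\Rightarrow(2)$, assume $\Omega\to(\open_n)^2_k$ for all $k$; I want $\sone(\open,\open)$ for $X^n$. By Theorem \ref{thm:rothberger} applied to $X^n$ it suffices to show ONE has no winning strategy in $\gone(\open(n),\open(n))$, or equivalently (again by that theorem) that $X^n$ satisfies $\Omega(n)\to(\open(n))^2_k$ for all $k$. So the task reduces to: transfer the $n$-cover partition relation on $X$ up to the ordinary $\omega$-cover partition relation on $X^n$. Given an $\omega$-cover $\mathcal{W}$ of $X^n$ and a coloring $g:[\mathcal{W}]^2\to\{1,\dots,k\}$, apply Lemma \ref{lemma:omegapowers1} to extract an $\omega$-cover $\mathcal{V}$ of $X$ with $\{V^n:V\in\mathcal{V}\}$ an $\omega$-cover of $X^n$ refining $\mathcal{W}$; fix for each $V$ a witness $\varphi(V)\in\mathcal{W}$ with $V^n\subseteq\varphi(V)$, pull back $g$ along $\varphi$ to a coloring of $[\mathcal{V}]^2$ (taking care that $\varphi$ may not be injective — split off the pairs where $\varphi$ agrees, which contribute only a single element of $\mathcal{W}$ and hence cannot form an $\omega$-cover, so they can be absorbed by increasing $k$ by one and discarding that color class via Lemma \ref{lemma:OmegaisRamsey}-type reasoning). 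Apply $\Omega\to(\open_n)^2_{k+1}$ on $X$ to get a monochromatic $n$-cover $\mathcal{V}'\subseteq\mathcal{V}$; then $\{V^n:V\in\mathcal{V}'\}$ is an $\omega$-cover of $X^n$ (here one uses that an $n$-cover of $X$ yields, via $n$-th powers, a genuine $\omega$-cover of $X^n$ — this needs a small argument that every finite subset of $X^n$ is "captured" by finitely many coordinates and the $n$-cover property handles each; actually the clean statement is that an $n$-cover $\mathcal{A}$ of $X$ gives $\{A^n:A\in\mathcal{A}\}$ an $\omega$-cover of $X^n$ precisely because any finite $F\subseteq X^n$ projects to a set of at most $|F|$ points in $X$, so when $|F|\le n$ one $A$ works; for larger $F$ one takes finite unions — so strictly one may need $n$-covers for all sizes, which is why $\open_n$ as defined, capturing all $\le n$ element subsets, together with closure under finite unions in the $\omega$-cover of $X^n$, suffices). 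Its image $\varphi[\mathcal{V}']\subseteq\mathcal{W}$ is then monochromatic for $g$ and is an $\omega$-cover of $X^n$, establishing $\Omega(n)\to(\open(n))^2_k$.

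The main obstacle, and the point I would be most careful about, is the bookkeeping in the passage between $n$-covers of $X$ and $\omega$-covers of $X^n$: Lemma \ref{lemma:omegapowers} gives one direction cleanly, but the reverse requires knowing that $\{U^n:U\in\mathcal{U}\}$ being an $\omega$-cover of $X^n$ forces $\mathcal{U}$ to be at least an $n$-cover of $X$ — which it does, since an arbitrary $n$-element subset $\{x_1,\dots,x_n\}\subseteq X$ sits inside $X^n$ as the single point (or rather, its "diagonal-type" image) forcing some $U^n$, hence some $U$, to contain all of $x_1,\dots,x_n$ — and conversely an $n$-cover need not give a full $\omega$-cover of $X^n$ unless one also uses finite unions, which is legitimate inside the definition of $\Omega(n)$ only after the selection is made. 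The other delicate point is that $\sigma$-compactness is \emph{not} assumed, so unlike Theorem \ref{th:RealsRamsey} one cannot reduce to countable $\omega$-covers; fortunately Pawlikowski's theorem as stated in Theorem \ref{thm:rothberger} has no such hypothesis (only regularity, which is inherited by $X^n$), so the argument goes through at the level of arbitrary open covers without that reduction.
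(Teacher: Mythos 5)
Your direction $(1)\Rightarrow(2)$ works and is genuinely different from the paper's: you transfer the partition relation from $X$ up to $X^n$ (via Lemma \ref{lemma:omegapowers1} and a pulled-back coloring, with one extra color for pairs identified by the refinement map $\varphi$) and then invoke Theorem \ref{thm:rothberger} on $X^n$, whereas the paper converts hypothesis (1) into the selection principle $\sone(\Omega,\open_n)$ on $X$ and lifts the selections through Lemma \ref{lemma:omegapowers1} to obtain $\sone(\Omega,\open)$ for $X^n$ directly. Two small repairs are needed in your version: the $n$-th powers of a member of $\open_n$ form a \emph{cover} of $X^n$, not in general an $\omega$-cover of $X^n$, but a cover is all you need since your target is $\open(n)$; and the reason the extra color cannot be the homogeneous one is not that ``a single element of $\mathcal{W}$ cannot be an $\omega$-cover,'' but that constancy of $\varphi$ on a member of $\open_n$ would force its single value to contain every $V^n$, hence to equal $X^n$, which is impossible because $X^n\notin\mathcal{W}$. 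With these adjustments that direction is fine.

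The genuine gap is in $(2)\Rightarrow(1)$. You correctly note that the Arkhangel'skii--Pytkeev reduction to countable $\omega$-covers is unavailable here, but passing to $\{U^n:U\in\mathcal{U}\}$ does not substitute for it: the construction you then run --- the recursive thinning $\mathcal{U}_1\supseteq\mathcal{U}_2\supseteq\cdots$ with colors $i_n$, the stabilizing classes $\mathcal{W}_j=\{U_m:i_m=j\}$, and a strategy for ONE whose response to TWO's choice is indexed by the position of that choice in the cover --- presupposes a bijective enumeration $(U_m:m\in\mathbb{N})$ of $\mathcal{U}$, i.e.\ exactly the countability you just said you cannot assume. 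Without an enumeration the stabilization step (one fixed $j$ making every $\mathcal{U}_m\cap\mathcal{W}_j$ an $\omega$-cover) is not even defined, so this step fails as written. The paper does not re-run this machinery at all: since $X^n$ satisfies $\sone(\open,\open)$, Theorem \ref{thm:rothberger} applied to $X^n$ yields $\Omega\longrightarrow(\open)^2_k$ \emph{for $X^n$}; one then pushes $f$ forward to $g(\{U^n,V^n\})=f(\{U,V\})$ on the $\omega$-cover $\{U^n:U\in\mathcal{U}\}$ of $X^n$ (Lemma \ref{lemma:omegapowers}), extracts a $g$-monochromatic cover $\mathcal{W}$ of $X^n$, and observes that $\mathcal{A}=\{U\in\mathcal{U}:U^n\in\mathcal{W}\}$ is $f$-monochromatic and belongs to $\open_n$, since any $n$-element $F\subseteq X$ corresponds to a point of $X^n$ lying in some $U^n$. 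So the repair is simply to cite Theorem \ref{thm:rothberger} for $X^n$ in this direction too (as you already do in the other direction), rather than unfold its proof; whatever countability issues exist are then contained in that cited theorem, exactly as in the paper's own argument.
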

\begin{proof}
$(2)\Rightarrow(1):$ Fix a positive integer $n$ and an $\omega$-cover $\mathcal{U}$ of $X$. Let $f:\lbrack\mathcal{U}\rbrack^2 \longrightarrow \{1,\cdots,k\}$ be given.
By Lemma \ref{lemma:omegapowers} the set $\mathcal{V} = \{U^n:U\in\mathcal{U}\}$ is an $\omega$-cover of $X^n$. Since $X^n$ has the property $\sone(\open,\open)$, Theorem \ref{thm:finitepower} implies that for each positive integer $k$, $\Omega\longrightarrow(\open)^2_k$ holds for $X^n$.

Define, from the given $f$, a function $g:\lbrack \mathcal{V}\rbrack^2\longrightarrow \{1,\cdots,k\}$ so that
\[
  g(\{U^n,V^n\}) = f(\{U,V\}).
\]
Apply Theorem \ref{thm:rothberger}, and fix an $i\in\{1,\cdots,k\}$, and a subset $\mathcal{W}$ of $\mathcal{V}$ such that $g$ is constant on $\lbrack\mathcal{W}\rbrack^2$, and $\mathcal{W}$ is a cover of $X^n$. Then consider the set $\mathcal{A}  = \{U\in\mathcal{U}: U^n\in\mathcal{W}\}$. For a subset $F$ of $X$ such that $\vert F\vert = n$, say $F = \{y_1,\cdots,y_n\}$ consider the element $(y_1,\cdots,y_n)$ of $X^n$. For some $U\in \mathcal{A}$ we have $(x_1,\cdots,x_n) \in U^n$, from which it follows that $F\subseteq U$. Then $\mathcal{A}$ witnesses that $(1)$ holds for $X$.

{\flushleft{$(1)\Rightarrow(2):$}}  It suffices to show that the space $X^n$ satisfies the property $\sone(\Omega\,,\open)$. Thus, for each $m$ let $\mathcal{U}_m$ be an $\omega$-cover of $X^n$. For each $m$, applying Lemma \ref{lemma:omegapowers},  choose an $\omega$-cover $\mathcal{V}_m$ of $X$ such that for each $V\in\mathcal{V}_m$ there is a $U\in\mathcal{U}_m$ with $V^n \subseteq U$. Now applying hypothesis (1) to the sequence $(\mathcal{V}_n:n\in\mathbb{N})$ of $\omega$-covers of $X$, choose for each $m$ a $V_m\in\mathcal{V}_m$ such that there is for each subset $F\subset X$ with $\vert F\vert\le n$, there is an $m$ with $F\subseteq V_m$. Next, for each $m$ choose a $U_m\in\mathcal{U}_m$ such that $V_m^n\subseteq U_m$. Then $\{U_m:m\in\mathbb{N}\}$ is an open cover of $X^n$. For let $(x_1,\cdots,x_n)\in X^n$ be given. Then $F = \{x_1,\cdots,x_n\}\subset X$ has at most $n$ elements. Pick $m$ with $F\subseteq V_m$. Then $(x_1,\cdots,x_n)\in V_m^n\subseteq U_m$. This completes the proof of $(1)\Rightarrow(2)$.
\end{proof}

\section{A menu of Ramseyan statements.}

We now introduce several new notational aids to describe the intimate connection between versions of Borel's covering property and variations on Ramsey's theorem. Each notational aid is introduced in a numbered subsection. In the next section we will state a theorem, this paper's main exhibit of the intimate connections between Ramsey theory and a strengthened version of Borel's covering property.

\subsection*{The Ellentuck theorem and $\textsf{E}(\mathcal{A},\mathcal{B})$}  We describe the Ellentuck theorem in a slightly more abstract setting than the usual: 
For $A$ an abstract countably infinite set fix a bijective enumeration $(a_n:n\in\naturals)$ of $A$.
Define for $s$ and $T$ nonempty subsets of $A$:  
\[
  s< T \mbox{ if: }a_n\in s \mbox{ and }a_m\in T \Rightarrow n < m.
\]
With the relation $s< T$ defined, define the Ellentuck topology on $[A]^{\aleph_0}$ as follows: 
For $s\in\,[A]^{<\aleph_0}$ and for $B\in[A]^{\aleph_0}$ use $s< B$ to denote that $s=\emptyset$ or $\max(s) < \min(B)$. For $s < B$  define
$
  [s,B] = \{s\cup C\in \lbrack A\rbrack^{\aleph_0}: \, s < C\subseteq B\}.
$
The family $\{[s,B]:\, s\subset A \mbox{ finite and } s < B\in[A]^{\aleph_0}\}$ forms a basis for a topology on $[A]^{\aleph_0}$.  This is the \emph{Ellentuck topology} on $[A]^{\aleph_0}$ and was introduced in \cite{El} for the special case when $A = \mathbb{N}$, the set of positive integers. If a subset $\mathcal{X}$ of $[A]^{\aleph_0}$ has the topology inherited from $[A]^{\aleph_0}$ endowed with the Ellentuck topology, we speak of ``$\mathcal{X}$ with the Ellentuck topology".  For $B\subseteq A$ and for finite set $s\subseteq A$ the symbol $B|s$ denotes $\{a_n\in B: s< \{a_n\}\}$.

Recall that a subset $N$ of a topological space is \emph{nowhere dense} if there is for each nonempty open set $U$ of the space a nonempty open subset $V\subset U$ such that $N\cap V = \emptyset$. When $N$ is the union of countably many nowhere dense sets, it is said to be \emph{meager}. A subset of the form $(U\setminus M)\bigcup(M\setminus U)$ for some open set $U$ and some meager set $M$ of a topological space is said to have the \emph{Baire property}. In this context the main result in \cite{El} can now be stated as follows:
\begin{theorem}[Ellentuck]\label{galvinprikry} Let $A$ be a countably infinite set with a fixed enumeration defining the relation $<$ among subsets of $A$. 
For a set $R\subset [A]^{\aleph_0}$ the following are equivalent:
\begin{enumerate}
\item{$R$ has the Baire property in the Ellentuck topology.}
\item{For each finite set $s\subset A$ and for each infinite set $S\subset A$ with $s< S$ there is an infinite set $T\subset S$ such that
      either $[s,T]\subset R$, or else $[s,T]\cap R = \emptyset$.}
\end{enumerate}
\end{theorem}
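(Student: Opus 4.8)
Call a set $R\subseteq[A]^{\aleph_0}$ \emph{completely Ramsey} if it satisfies condition $(2)$, and \emph{completely Ramsey null} if moreover the set $T$ in $(2)$ can always be chosen with $[s,T]\cap R=\emptyset$. The strategy for $(1)\Rightarrow(2)$ is to show that the completely Ramsey sets form a $\sigma$-algebra containing all Ellentuck-open sets, and that the completely Ramsey null sets form a $\sigma$-ideal inside it that contains every Ellentuck-meager set; then every set of the form (open)$\,\triangle\,$(meager) — that is, every set with the Baire property — is completely Ramsey. The converse $(2)\Rightarrow(1)$ is short: given $R$ completely Ramsey, let $U$ be the union of all basic sets $[s,T]$ with $[s,T]\subseteq R$; then $U$ is open, and a routine argument with $(2)$ (take $T\subseteq S$ deciding whether $[s,T]\subseteq R$ or $[s,T]\cap R=\emptyset$, then compare stems) shows that every basic set $[s,S]$ contains a basic subset disjoint from $R\,\triangle\,U$, so $R\,\triangle\,U$ is nowhere dense and $R$ has the Baire property.

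The combinatorial core is a trio of lemmas valid for every $R$. Fix the terminology: a basic set $[s,S]$ (always with $s<S$) \emph{accepts} $R$ if $[s,S]\subseteq R$; $S$ \emph{rejects} $s$ if no infinite $T\subseteq S$ makes $[s,T]$ accept $R$; $S$ \emph{decides} $s$ if it accepts $s$ or rejects $s$. Then: (i) for every $s<S$ some infinite $T\subseteq S$ decides $s$ — if $S$ does not reject $s$, some $T\subseteq S$ accepts it; (ii) for every $s<S$ some infinite $T\subseteq S$ decides $s\cup\{n\}$ for every $n\in T$ with $s<\{n\}$ — built by a fusion, i.e. a decreasing tower $S\supseteq S_0\supseteq S_1\supseteq\cdots$ in which $S_k$ decides $s\cup\{m_k\}$ for $m_k=\min S_{k-1}$, with $T=\{m_0<m_1<\cdots\}$; (iii) for such a $T$, if $T$ rejects $s$ then $T$ rejects $s\cup\{n\}$ for all but finitely many $n\in T$ — otherwise infinitely many $n$ yield $s\cup\{n\}$ accepted, and amalgamating those witnesses along a diagonal subsequence produces a $[s,W]$ accepting $R$, contradicting that $T$ rejects $s$.

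Iterating (ii)--(iii) over all finite stems extending $s$ inside a single fused set yields: if $S$ rejects $s$, there is $T\subseteq S$ that rejects $s\cup t$ for every finite $t<T$. Combined with (i) this gives the \textbf{Galvin--Prikry lemma}: every Ellentuck-open $R$ is completely Ramsey — given $s<S$, take $T\subseteq S$ deciding $s$; if $T$ accepts we are done, and if $T$ rejects, refine to $T'$ rejecting every $s\cup t$, whereupon openness of $R$ forbids any point of $[s,T']$ from lying in $R$ (such a point would have a basic neighborhood inside $R$, contradicting rejection), so $[s,T']\cap R=\emptyset$. Next, closure of the completely Ramsey sets under complementation is immediate from the symmetry of $(2)$, and closure under countable unions is another fusion, merging the countably many instances of $(2)$ into one decreasing tower; the same fusion shows the completely Ramsey null sets are closed under countable unions, and a direct argument shows every nowhere-dense — hence, by the $\sigma$-ideal property, every meager — subset of $[A]^{\aleph_0}$ in the Ellentuck topology is completely Ramsey null. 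Assembling these facts proves $(1)\Rightarrow(2)$, and the easy converse above completes the equivalence.

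\textbf{Main obstacle.} The delicate part is the fusion bookkeeping: lemma (ii), the iteration producing a set rejecting all finite extensions of $s$, and the closure under countable unions all require simultaneously meeting countably many ``decide (or avoid) the stem $s\cup t$'' demands while keeping one infinite set alive, and one must track the order relation $s<T$ and the stems carefully so that the amalgamated witnesses genuinely sit inside the intended basic set. Lemma (iii) together with the open-set case — propagating ``reject'' from a stem to all of its finite extensions at once — is the step where an error is easiest to make; the remaining verifications (the $\sigma$-ideal properties and the stem-compatibility check in $(2)\Rightarrow(1)$) are routine once this machinery is in place.
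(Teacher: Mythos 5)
Your outline is the classical Ellentuck--Galvin--Prikry argument, and it is sound; but note that the paper itself gives no proof of Theorem \ref{galvinprikry} at all --- it quotes the result from \cite{El}, remarking only that the implication $(1)\Rightarrow(2)$ ``uses only the techniques of Galvin and Prikry \cite{G-P}''. The closest in-paper material is Section 7, where the same machinery is redone for the constrained statement $\egp(\Omega,\Omega)$ under the hypothesis $\sone(\Omega,\Omega)$: accept/reject/decide (Theorem \ref{decidedfin}), propagation of rejection to all finite extensions (Lemma \ref{openprecursor}), Ellentuck-open sets completely Ramsey (Theorem \ref{opencomplRamsey}), nowhere dense sets avoidable (Corollary \ref{nwdcomplramsey}), meager equals nowhere dense (Theorem \ref{meagerellentuck}), and finally Baire property implies completely Ramsey (Theorem \ref{ellentuck}). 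The structural difference is that every place where you run a free fusion inside $[A]^{\aleph_0}$, the paper must instead keep the diagonalized set an $\omega$-cover, so it replaces fusion by a strategy for ONE in $\gone(\Omega,\Omega)$ and uses that ONE has no winning strategy (Theorem \ref{coc3gone}); in your unconstrained setting fusion is always available, which is exactly why the classical theorem is an outright theorem rather than being equivalent to a selection principle. Your converse $(2)\Rightarrow(1)$ (take $U$ the union of the basic sets contained in $R$, note $U\subseteq R$, and show $R\setminus U$ is nowhere dense) is the standard easy direction and is fine.

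Two cautions on the forward direction as you organized it. First, for the decomposition $R=U\,\triangle\,M$ with $U$ open and $M$ meager you only need: the completely Ramsey sets are closed under complements and \emph{finite} unions (immediate, as in Lemmas \ref{complramseyunions} and \ref{complramseycomplements}), the completely Ramsey null sets form a $\sigma$-ideal, and nowhere dense sets are completely Ramsey null. The full claim that the completely Ramsey sets form a $\sigma$-algebra is stronger than what the theorem requires, and if you insist on proving closure under countable unions by a direct fusion you need one more step than your sketch suggests: after the stem-by-stem fusion producing $[s,T]$, the set $[s,T]\cap\bigcup_n R_n$ is only \emph{relatively open} (each of its points has a basic neighborhood, with stem an initial part of the point and breadth a tail of $T$, inside some $R_n$), and homogeneity is then obtained by applying the open case once more inside $[s,T]$; the fused set is not homogeneous by itself. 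Alternatively, and more cleanly, the $\sigma$-algebra property can simply be derived after the theorem, since the Baire-property sets form a $\sigma$-algebra. Second, in the meager case make sure the bookkeeping handles all pairs (stem, index of the nowhere dense set) at each finite stage, exactly the point you flag as the main obstacle; with that, the argument is complete.
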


Next we refine the statement of Ellentuck's theorem by placing more constraints on the parameters in the theorem. This refinement motivates the first in the list of upcoming notational aids for this section. For families $\mathcal{A}$ and $\mathcal{B}$ we now define a sequence of statements:
\begin{quote}
 $\egp(\mathcal{A},\mathcal{B})$: For each countably infinite $A\in\mathcal{A}$ and for each set $R\subset [A]^{\aleph_0}\cap\mathcal{B}$ the implication (1)$\Rightarrow$(2) holds, where:
\begin{enumerate}
\item{$R$ has the Baire property in the Ellentuck topology on $[A]^{\aleph_0}\cap\mathcal{B}$.}
\item{For each $S\subset A$ with $S\in\mathcal{A}$ and each finite subset $s$ of $A$, there is an infinite $B\subset S|s$ with $B\in\mathcal{B}$ such that $[s,B]\cap\mathcal{B}\subseteq R$ or $[s,B]\cap\mathcal{B}\cap R = \emptyset$.}
\end{enumerate}
\end{quote}
Thus, for a given countably infinite set $S$, ${\sf E}([S]^{\aleph_0},[S]^{\aleph_0})$ is Ellentuck's theorem.
We shall prove that for a topological space $(X,\tau)$ the statement $\sone(\Omega,\Omega)$ is equivalent to the statement $\egp(\Omega,\Omega)$.

\subsection*{The Galvin-Prikry Theorem  and $\textsf{G}(\mathcal{A},\mathcal{B})$}
Galvin and Prikry proved a precursor of  Theorem \ref{galvinprikry}: If $R$ is a Borel set in the topology inherited from $2^A$ via representing sets by their characteristic functions, then $R$ has property (2) in Theorem \ref{galvinprikry}. Silver and Mathias subsequently gave metamathematical proofs that analytic sets (in the $2^A$-topology) have this property. Theorem \ref{galvinprikry} at once yields all these prior results. The original papers \cite{El} and \cite{G-P} give a nice overview of these facts, and more. The proof of $(1)\Rightarrow(2)$ of Theorem \ref{galvinprikry} is nontrivial but uses only the techniques of Galvin and Prikry \cite{G-P}.

One might wonder to what extent the Galvin-Prikry Theorem, which is a consequence of Ellentuck's Theorem, adapts to the more constrained context in the definition of the notation $\textsf{E}(\mathcal{A},\mathcal{B})$. Towards considering this idea, we give a corresponding abstract formulation of the Galvin-Prikry theorem, and introduce the second notational aid:
\begin{quote}
 ${\sf GP}(\mathcal{A},\mathcal{B})$: For each countably infinite $A\in\mathcal{A}$ and each $R\subset [A]^{\aleph_0}\cap \mathcal{B}$ the implication $(1)\Rightarrow(2)$ holds:
\begin{enumerate}
\item{$R$ is open in the $2^A$ topology on $[A]^{\aleph_0}\cap\mathcal{B}$.}
\item{For each $S\in[A]^{\aleph_0}\cap\mathcal{A}$ there is a set $B\in[S]^{\aleph_0}\cap\mathcal{B}$ such that either $([B]^{\aleph_0}\cap\mathcal{B})\subseteq R$, or else $[B]^{\aleph_0}\cap\mathcal{B}\cap R = \emptyset$.} 
\end{enumerate}
\end{quote}
Thus, ${\sf GP}([\naturals]^{\aleph_0},[\naturals]^{\aleph_0})$ is part of the Galvin-Prikry theorem.

\subsection*{Galvin's generalization of Ramsey's theorem and $\textsf{FG}(\mathcal{A},\mathcal{B})$} An earlier generalization of Ramsey's Theorem, due to Fred Galvin, can also be formulated in the more general context as follows: 
\begin{definition} A subset $\mathcal{S}$ of $[A]^{<\aleph_0}$ is:
\begin{enumerate}
\item{dense if for each $B\in[A]^{\aleph_0}\cap \mathcal{A}$, $\mathcal{S}\cap[B]^{<\aleph_0} \neq \emptyset$.}
\item{thin if no element of $\mathcal{S}$ is an initial segment of another element of $\mathcal{S}$.}
\end{enumerate}
\end{definition}
The following is an abstract formulation of Galvin's generalization of Ramsey's Theorem, announced in \cite{Galvinnotices} and in \cite{G-P} derived from Theorem 1 there, and is the third notational aid introduced in this section:
\begin{quote} ${\sf FG}(\mathcal{A},\mathcal{B})$: 
For each countably infinite $A\in\mathcal{A}$ and for each dense set $\mathcal{S}\subset [A]^{<\aleph_0}$ there is a $B\in[A]^{\aleph_0}\cap\mathcal{B}$ such that each $C\in[B]^{\aleph_0}\cap\mathcal{B}$ has an initial segment in $\mathcal{S}$.
\end{quote}
In this notation Galvin's generalization of Ramsey's theorem reads that ${\sf FG}([\naturals]^{\aleph_0},[\naturals]^{\aleph_0})$.

\subsection*{The Nash-Williams theorem and $\textsf{NW}(\mathcal{A},\mathcal{B})$}
Nash-Williams also discovered a generalization of Ramsey's theorem. The following is an abstract formulation of Nash-Williams' theorem, and introduces the fourth notational aid:
\begin{quote}${\sf NW}(\mathcal{A},\mathcal{B})$:
For each countably infinite $A\in\mathcal{A}$ and for each thin family $\mathcal{T}\subset [A]^{<\aleph_0}$ and for each $n$, and each partition $\mathcal{T} = \mathcal{T}_1 \cup \mathcal{T}_2 \cup \cdots \cup \mathcal{T}_n$ there is a $B\in[A]^{\aleph_0}\cap \mathcal{B}$ and an $i\in\{1,\cdots,n\}$ such that $[B]^{<\aleph_0}\cap\mathcal{T} \subseteq \mathcal{T}_i$.
\end{quote} 
In this notation Nash-Williams' theorem reads that ${\sf NW}([\naturals]^{\aleph_0},[\naturals]^{\aleph_0})$.

\subsection*{Square Bracket Partition Relations}

Square bracket partition relations for cardinal numbers were introduced by Erd\"{o}s, Hajnal and Rado in Section 18 of the paper \cite{EHR}. This partition relation has been explored extensively for other combinatorial structures also, including ultrafilters. More remarks about the work on ultrafilters will be given later in the paper.  
To formulate the square bracket partition relation in sufficient generality for our anticipated application, we introduce the following, the fifth of the notational aids for this section.
\begin{definition}
The symbol
%\[
  $\mathcal{A} \longrightarrow \lbrack \mathcal{B} \rbrack^{m}_{k/\le \ell}$
denotes the statement that:
For each $A\in\mathcal{A}$ and for each function 
  $f:\lbrack A\rbrack^m\longrightarrow\{1,\cdots,k\}$
there is a $B\subseteq A$ such that $B\in\mathcal{B}$ such that $\vert\{f(\{x_1,\cdots,x_m\}): \{x_1\cdots,x_m\}\in \lbrack B\rbrack^m\}\vert\le \ell$.
\end{definition}
In the special case when $\ell = k-1$, we use the notation   $\mathcal{A} \longrightarrow \lbrack \mathcal{B} \rbrack^{m}_{k}$ instead of   $\mathcal{A} \longrightarrow \lbrack \mathcal{B} \rbrack^{m}_{k/\le k-1}$. In the special case when $\mathcal{A} = \mathcal{B} = \Omega$, the set of $\omega$-covers of a topological space, the following two results will be used.
\begin{lemma}\label{lemma:subscriptup} Let X be a topological space in which each $\omega$-cover has a countable subset that is an $\omega$-cover. If $\Omega \rightarrow\lbrack \Omega\rbrack^2_3$, then for
every positive integer $k$, $\Omega\rightarrow\lbrack\Omega\rbrack^{2}_{k/\le 2}$.
\end{lemma}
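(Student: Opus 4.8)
The plan is to prove the assertion by induction on $k$, with the hypothesis $\Omega\rightarrow\lbrack\Omega\rbrack^2_3$ serving as the engine at each step together with a color-amalgamation device. The statement $\Omega\rightarrow\lbrack\Omega\rbrack^2_{k/\le 2}$ is vacuous for $k\le 2$ (a coloring with values in a set of size at most two already has the required property, with $B=A$), and for $k=3$ it is exactly the hypothesis. So it suffices to show, for each $k\ge 3$, that $\Omega\rightarrow\lbrack\Omega\rbrack^2_{k/\le 2}$ implies $\Omega\rightarrow\lbrack\Omega\rbrack^2_{(k+1)/\le 2}$.

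For the inductive step I would fix an $\omega$-cover $\mathcal{U}$ of $X$ and a coloring $f:\lbrack\mathcal{U}\rbrack^2\rightarrow\{1,\cdots,k+1\}$, and define an auxiliary coloring $g:\lbrack\mathcal{U}\rbrack^2\rightarrow\{1,\cdots,k\}$ by amalgamating the last two colors: put $g(e)=f(e)$ when $f(e)\le k$, and $g(e)=k$ when $f(e)=k+1$. By the inductive hypothesis applied to $g$, there is a set $\mathcal{B}_1\subseteq\mathcal{U}$ with $\mathcal{B}_1\in\Omega$ (so $\mathcal{B}_1$ is again an $\omega$-cover of $X$) on which $g$ assumes at most two values.

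Then I would split into two cases. If the color $k$ is not among the values of $g$ on $\lbrack\mathcal{B}_1\rbrack^2$, then $f$ and $g$ agree on $\lbrack\mathcal{B}_1\rbrack^2$, so $f$ takes at most two values there and $\mathcal{B}_1$ is the desired $\omega$-subcover. Otherwise $g$ takes the value $k$ together with at most one further value $c$ on $\lbrack\mathcal{B}_1\rbrack^2$, and unwinding the amalgamation shows that the range of $f$ on $\lbrack\mathcal{B}_1\rbrack^2$ is contained in the set $\{c,k,k+1\}$, which has at most three elements. Relabeling this set as $\{1,2,3\}$ and invoking the hypothesis $\Omega\rightarrow\lbrack\Omega\rbrack^2_3$ for the $\omega$-cover $\mathcal{B}_1$ and the restriction of $f$ to $\lbrack\mathcal{B}_1\rbrack^2$, I obtain $\mathcal{B}_2\subseteq\mathcal{B}_1$ with $\mathcal{B}_2\in\Omega$ on which $f$ assumes at most two values. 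In either case $f$ has been reduced to at most two colors on an $\omega$-subcover, which completes the induction.

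The argument is mostly bookkeeping once the amalgamation idea is in place; the one point to watch is to confirm at each application that the family returned by the relations $\Omega\rightarrow\lbrack\Omega\rbrack^2_{k/\le 2}$ and $\Omega\rightarrow\lbrack\Omega\rbrack^2_3$ really is an $\omega$-cover of $X$, so that these relations may be re-applied to it, and this is immediate from the definition, where the witnessing set is required to lie in $\Omega$. The hypothesis that every $\omega$-cover of $X$ has a countable subset that is an $\omega$-cover is available throughout, but the combinatorial core of the reduction does not actually invoke it; it is stated here because it is the standing assumption under which the lemma is later used.
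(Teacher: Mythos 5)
Your argument is correct. The paper itself does not prove this lemma but simply cites Theorem 1 of \cite{mssquarebracket}; measured against that, your proof is a clean self-contained route: induct on $k$, amalgamate the last two colors to drop from $k+1$ to $k$ colors, extract an $\omega$-cover $\mathcal{B}_1$ on which the amalgamated coloring is at most two-valued, and then either the amalgamated color is absent (so $f$ itself is at most two-valued on $\lbrack\mathcal{B}_1\rbrack^2$) or $f$ ranges in a three-element set on $\lbrack\mathcal{B}_1\rbrack^2$, where one application of $\Omega\rightarrow\lbrack\Omega\rbrack^2_3$ to the $\omega$-cover $\mathcal{B}_1$ finishes the step. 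Each application of a partition relation returns a set lying in $\Omega$, so it may legitimately be fed back into the hypotheses, and a relabeling bijection handles the fact that the formal definition asks for colorings into $\{1,2,3\}$ rather than into an arbitrary three-element set. Your closing remark is also accurate: this reduction is purely combinatorial and never invokes the assumption that every $\omega$-cover has a countable $\omega$-subcover; that hypothesis is part of the standing framework (and is what gets used when the lemma is applied, e.g.\ in the proof of the implication $\Omega\rightarrow\lbrack\Omega\rbrack^2_3\Rightarrow\sone(\Omega,\Omega)$, where the covers are enumerated), not of the combinatorial core proved here.
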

Lemma \ref{lemma:subscriptup} is the previously proven Theorem 1 of \cite{mssquarebracket}, where a proof can be found.
There are several other partition relations, for example polarized partition relations, hybrids of the square bracket partition relation and the polarized partition relation, that we have not represented in this section. We will make some remarks in a separate section about some of the well-known Ramseyan theorems not featured in the main result. The reason for our limited choice is related to the length of this paper and time limitations rather than to the importance of the concepts and ideas.

\section{The Rothberger covering property in all finite powers}

We now focus on a specific strengthening of Borel's covering property, namely the Rothberger covering property in all finite powers. Much of the material in this section is proven in prior papers, including \cite{MSGalvinPrikry}. For the convenience of the reader we provide a proof of the implications depicted in Figure \ref{fig:MainThm}. 
In light of theorems appearing earlier in this paper, one can prove the following for the selection principle $\sone(\Omega,\Omega)$ the following:
\begin{theorem}\label{thm:RothRamsey} Let $(X,\tau)$ be a topological space. The following statements are equivalent:
\begin{enumerate}
\item{$(X,\tau)$ has the property $\sone(\Omega,\Omega)$}
\item{$\Omega\longrightarrow(\Omega)^2_2$ holds for $(X,\tau)$}
\end{enumerate}
\end{theorem}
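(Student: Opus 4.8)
The plan is to prove the two implications separately, drawing on the machinery already developed in the paper, in particular Theorem~\ref{thm:XisYRothberger} and the Ramseyan arguments in the proofs of Theorems~\ref{th:RealsRamsey} and~\ref{thm:FinPowers3}.

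\textbf{Direction $(1)\Rightarrow(2)$.} Assume $\sone(\Omega,\Omega)$. By Theorem~\ref{thm:XisYRothberger}, ONE has no winning strategy in the game $\gone(\Omega,\Omega)$. So I would mimic the colouring-plus-strategy argument from the proof of $(1)\Rightarrow(2)$ in Theorem~\ref{thm:FinPowers3}, but now with $Y=X$ and, crucially, \emph{without} assuming $\sigma$-compactness. Let $\mathcal{U}$ be an $\omega$-cover of $X$ and $f:[\mathcal{U}]^2\to\{1,2\}$ a colouring. The one place where $\sigma$-compactness entered the earlier proof was the appeal to the Arkhangel'skii--Pytkeev theorem (Theorem~\ref{Th:ArkhPytk}) to reduce to a countable $\omega$-cover, which is needed so that the recursive construction of the nested $\omega$-covers $\mathcal{U}_n$ and colour-indices $i_n$ can proceed one point at a time. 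The fix is to observe that the selection principle $\sone(\Omega,\Omega)$ (equivalently, the failure of ONE to have a winning strategy) already forces $X$ to be Lindel\"of in all finite powers: indeed $\sone(\Omega,\Omega)$ implies $\sone(\open,\open)$, which implies Lindel\"of, and by Theorem~\ref{thm:XisYRothberger}(1) every finite power $X^n$ has $\sone(\open,\open)$ hence is Lindel\"of; so Theorem~\ref{Th:ArkhPytk} applies and $\mathcal{U}$ may be taken countable. With that reduction in hand, enumerate $\mathcal{U}=(U_n:n\in\mathbb{N})$, build the $\subseteq$-decreasing sequence of $\omega$-covers $\mathcal{U}_n$ and indices $i_n$ exactly as before using Lemma~\ref{lemma:OmegaisRamsey}, pass to the homogeneous colour $j$ by pigeonhole on the $i_n$ again via Lemma~\ref{lemma:OmegaisRamsey}, define ONE's strategy $F$ by $F(U_{n_1},\dots,U_{n_m})=\mathcal{U}_{n_m}\cap\mathcal{W}_j$, and extract an $F$-play lost by ONE. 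The resulting set $\mathcal{H}$ is then an $\omega$-cover of $X$ that is $f$-homogeneous of colour $j$, witnessing $\Omega\to(\Omega)^2_2$.

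\textbf{Direction $(2)\Rightarrow(1)$.} Assume $\Omega\to(\Omega)^2_2$. First note that this hypothesis implies $X$ is Lindel\"of in all finite powers: from $\Omega\to(\Omega)^2_2$ one gets $\Omega\to(\Omega)^2_k$ for all $k$ by the standard iteration, and running the argument of $(2)\Rightarrow(1)$ in Theorem~\ref{th:RealsRamsey} (with $Y=X$) yields $\sone(\open,\open)$, and similarly the finite-power versions; alternatively one can cite Sakai's equivalence in Theorem~\ref{thm:XisYRothberger} once $\sone(\Omega,\Omega)$ is established. Then I would reproduce the argument of $(2)\Rightarrow(1)$ in Theorem~\ref{thm:FinPowers3} with $Y=X$: given a sequence $(\mathcal{U}_n:n\in\mathbb{N})$ of $\omega$-covers, reduce each to a countable $\omega$-cover by Theorem~\ref{Th:ArkhPytk} (now legitimate, since $X$ has all finite powers Lindel\"of), enumerate $\mathcal{U}_n=(U^n_m:m\in\mathbb{N})$, form the $\omega$-cover $\mathcal{U}=\{U^1_m\cap U^m_n:m,n\in\mathbb{N}\}$, define the colouring $f$ that assigns $0$ when the first superscript-index agrees and $1$ otherwise, and apply $\Omega\to(\Omega)^2_2$. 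Homogeneity in colour $0$ would force $X\subseteq U^1_k$ for a fixed $k$, contradicting that the homogeneous set is an $\omega$-cover; so colour $1$ holds, the homogeneous set has the form $\{U^1_{m_k}\cap U^{m_k}_{n_k}:k\in\mathbb{N}\}$ with distinct $m_k$, and selecting $V_1=U^1_{m_1}$, $V_{m_k}=U^{m_k}_{n_k}$, and arbitrary elements otherwise produces the required $\omega$-cover of $X$ witnessing $\sone(\Omega,\Omega)$.

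\textbf{Main obstacle.} The one delicate point is removing the $\sigma$-compactness hypothesis that was present in Theorems~\ref{thm:FinPowers3} and~\ref{th:RealsRamsey}; both the countability reduction (via Arkhangel'skii--Pytkeev) in the two directions and the invocation of the game equivalence rely on it there. The remedy, as sketched above, is that each of the two hypotheses $\sone(\Omega,\Omega)$ and $\Omega\to(\Omega)^2_2$ is itself strong enough to guarantee that $X$ is Lindel\"of in all finite powers (through Theorem~\ref{thm:XisYRothberger} and the $\sone(\open,\open)\Rightarrow$ Lindel\"of implication), so Theorem~\ref{Th:ArkhPytk} can be applied without any external compactness assumption, and Theorem~\ref{thm:XisYRothberger}(3) supplies the needed statement that ONE has no winning strategy in $\gone(\Omega,\Omega)$ in place of the $\sigma$-compact game equivalence used previously. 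Everything else is a transcription of the already-given arguments with $Y$ replaced by $X$.
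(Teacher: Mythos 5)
Your direction $(1)\Rightarrow(2)$ is sound and is essentially the intended route: from $\sone(\Omega,\Omega)$, Theorem \ref{thm:XisYRothberger} gives that all finite powers of $X$ satisfy $\sone(\open,\open)$, hence are Lindel\"of, so Theorem \ref{Th:ArkhPytk} legitimately reduces the given $\omega$-cover to a countable one, and the same theorem supplies the fact that ONE has no winning strategy in $\gone(\Omega,\Omega)$, after which the colouring-plus-strategy argument of Theorems \ref{th:RealsRamsey} and \ref{thm:FinPowers3} goes through verbatim with $Y=X$. Note that the paper itself does not write out a proof of Theorem \ref{thm:RothRamsey}; it defers to the cycle in Theorem \ref{egprothberger}, which is proved under the explicit hypothesis that every $\omega$-cover of $X$ has a countable subset that is an $\omega$-cover.

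That hypothesis is exactly where your direction $(2)\Rightarrow(1)$ has a genuine gap. You justify replacing each $\mathcal{U}_n$ by a countable $\omega$-cover by claiming that $\Omega\longrightarrow(\Omega)^2_2$ already implies that all finite powers of $X$ are Lindel\"of, and you propose to prove this by ``running the argument of $(2)\Rightarrow(1)$ in Theorem \ref{th:RealsRamsey} with $Y=X$.'' But that argument \emph{begins} with the countable reduction (there licensed by $\sigma$-compactness plus Theorem \ref{Th:ArkhPytk}): the whole construction $\mathcal{U}=\{U^1_m\cap U^m_n:m,n\in\mathbb{N}\}$ and the colouring by agreement of the first index make essential use of enumerations of the covers by $\mathbb{N}$, so it cannot be run before one knows that every $\omega$-cover has a countable $\omega$-subcover. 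Your derivation of the Lindel\"of property from the partition relation is therefore circular, and neither the paper nor your sketch contains an independent argument that $\Omega\longrightarrow(\Omega)^2_2$ alone (for an arbitrary Hausdorff space) yields countable $\omega$-subcovers; merely iterating to $\Omega\longrightarrow(\Omega)^2_k$ does not help, since Lemma \ref{lemma:OmegaisRamsey}-type consequences hold in every space. To repair the proof you must either supply a genuine argument that $\Omega\longrightarrow(\Omega)^2_2$ forces each $\omega$-cover to have a countable $\omega$-subcover (equivalently, all finite powers Lindel\"of), or else add that countability hypothesis to the statement, as the paper in effect does in Theorem \ref{egprothberger}; as written, the implication $(2)\Rightarrow(1)$ is not established.
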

Instead of giving a direct proof of the equivalence of the statements in Theorem \ref{thm:RothRamsey}, we follow a more roundabout way to illustrate the connection with several significant strengthenings of the classical Ramsey Theorem. Here is the statement of the target theorem:

\begin{theorem}\label{egprothberger} For a topological space $(X,\tau)$ in which each $\omega$ cover has a countable subset that is an $\omega$-cover, the following statements are equivalent:
\begin{enumerate}
\item{$\sone(\Omega,\Omega)$.}
\item{$\egp(\Omega,\Omega)$.}
\item{${\sf GP}(\Omega,\Omega)$.}
\item{${\sf FG}(\Omega,\Omega)$.}
\item{${\sf NW}(\Omega,\Omega)$.}
\item{For all $n$ and $k$, $\Omega\rightarrow(\Omega)^n_k$.}
\item{For all $n$ and $k$, $\Omega\rightarrow\lbrack \Omega\rbrack^n_{k/\le 2}$.}
\item{$\Omega\rightarrow\lbrack\Omega\rbrack^2_3$.}
\item{$\Omega\rightarrow(\Omega,4)^3$.} 
\item{$\Omega\rightarrow(\Omega)^2_2$.}
\item{$\Omega\rightarrow(\Omega,\open)^2$.}
\end{enumerate}
\end{theorem}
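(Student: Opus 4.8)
The plan is to prove the eleven statements equivalent by running one cyclic chain of implications, padded by a few trivial ``weakening'' edges, so that the resulting implication digraph is strongly connected; equivalently, everything is reduced to $\sone(\Omega,\Omega)$. The genuine content sits at two places only: promoting $\sone(\Omega,\Omega)$ all the way up to the topological Ramsey statement $\egp(\Omega,\Omega)$, and recovering $\sone(\Omega,\Omega)$ from the weakest partition relations in the list. Everything strictly between these is an ``abstract'' rendering of a classical implication among Ramsey-type theorems, and the standing hypothesis — every $\omega$-cover has a countable $\omega$-subcover — is precisely what legitimizes those renderings: it lets one replace an arbitrary $A\in\Omega$ by a countable one, fix a bijective enumeration of it, and then treat the lattice of $\omega$-subcovers of $A$ just as one treats $[\mathbb{N}]^{\aleph_0}$ with the Ellentuck topology.

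\textbf{The upward arc} $(1)\Rightarrow(2)\Rightarrow(3)\Rightarrow(4)\Rightarrow(5)\Rightarrow(6)$ copies, link by link, the classical chain ``Ellentuck $\Rightarrow$ Galvin--Prikry $\Rightarrow$ Galvin's theorem $\Rightarrow$ Nash--Williams $\Rightarrow$ Ramsey''. For $(2)\Rightarrow(3)$: a set open in the $2^{A}$-topology has the Baire property in the finer Ellentuck topology, and conclusion (2) of $\egp$ specialized to $s=\emptyset$ is exactly conclusion (2) of ${\sf GP}$. For $(3)\Rightarrow(4)$: given a dense $\mathcal{S}\subseteq[A]^{<\aleph_{0}}$, apply ${\sf GP}(\Omega,\Omega)$ to the ($2^{A}$-open) set $R$ of those $C\in[A]^{\aleph_{0}}\cap\Omega$ having an initial segment in $\mathcal{S}$; the homogeneous $B$ produced lies in $\Omega$, so density forbids the ``$\cap R=\emptyset$'' alternative. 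For $(4)\Rightarrow(5)$: the usual reduction of an $n$-colouring of the thin family $\mathcal{T}$ to a succession of $2$-colourings, each fed into ${\sf FG}(\Omega,\Omega)$, after first disposing of the case that $\mathcal{T}$ is non-dense below some $\omega$-subcover. For $(5)\Rightarrow(6)$: $[A]^{n}$ is itself a thin family, so partitioning it by $f$ and invoking ${\sf NW}(\Omega,\Omega)$ returns an $\omega$-subcover $B$ with $[B]^{n}$ monochromatic. The substantial step here is $(1)\Rightarrow(2)$: this is a genuine Ellentuck-type theorem for the ``space'' of countable $\omega$-subcovers of a fixed $\omega$-cover, which I would prove by a Galvin--Prikry fusion in which, at each stage, $\sone(\Omega,\Omega)$ — used through the game $\gone(\Omega,\Omega)$ (Theorem~\ref{thm:XisYRothberger}), or through the $Y=X$ instance of the apparatus behind Theorem~\ref{thm:FinPowers3} — supplies the shrinking that decides whether a basic set $[s,B]$ meets $R$. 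I expect this to be the main obstacle, the delicate point being to keep every intermediate cover inside $\Omega$ throughout the fusion. Alternatively one can prove $(1)\Rightarrow(6)$ directly — $n=2$ via the game as in Theorem~\ref{thm:FinPowers3}, higher $n$ by the classical diagonal induction together with the dimension-one pigeonhole Lemma~\ref{lemma:OmegaisRamsey} and $\omega$-cover bookkeeping — and then derive $(2)$ from $(6)$ by verifying the axioms of an abstract topological Ramsey space; this is essentially the route of \cite{MSGalvinPrikry}.

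\textbf{The trivial weakenings and the square-bracket items.} The edges $(6)\Rightarrow(7)$ (one colour, hence $\le 2$), $(7)\Rightarrow(8)$ (the instance $n=2$, $k=3$, since $\Omega\rightarrow[\Omega]^{2}_{3}$ abbreviates $\Omega\rightarrow[\Omega]^{2}_{3/\le 2}$), $(6)\Rightarrow(9)$ (the instance $n=3$, $k=2$, plus the triviality that an $\omega$-cover has at least four members), $(6)\Rightarrow(10)$ (the instance $n=2$, $k=2$), and $(10)\Rightarrow(11)$ ($\Omega\subseteq\open$, so a colour-$2$ monochromatic $\omega$-cover is in particular an open cover) are immediate. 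To splice $(7),(8),(9)$ back into the cycle I would combine Lemma~\ref{lemma:subscriptup} — which, under the standing hypothesis, upgrades $\Omega\rightarrow[\Omega]^{2}_{3}$ to $\Omega\rightarrow[\Omega]^{2}_{k/\le 2}$ for all $k$ — with the order-theoretic stepping arguments of \cite{mssquarebracket} relating $\Omega\rightarrow[\Omega]^{2}_{3}$ and $\Omega\rightarrow(\Omega,4)^{3}$, so that $(8)$ and $(9)$ can each be seen to yield $(1)$.

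\textbf{Closing the loop.} The decisive return implication is $(11)\Rightarrow(1)$, and it mirrors the proof of $(2)\Rightarrow(1)$ of Theorem~\ref{thm:FinPowers3}, except that one must choose the two colour classes so as to exploit the asymmetry in $\Omega\rightarrow(\Omega,\open)^{2}$. Given $\omega$-covers $(\mathcal{U}_{n})$ of $X$, pass (via the standing hypothesis) to countable ones with repetition-free enumerations $(U^{n}_{m})_{m}$, form the $\omega$-cover $\mathcal{U}=\{U^{1}_{m}\cap U^{m}_{n}:m,n\in\mathbb{N}\}$ of $X$, and colour $\{U^{1}_{m}\cap U^{m}_{n},\,U^{1}_{k}\cap U^{k}_{\ell}\}$ by $1$ if $m\neq k$ and by $2$ if $m=k$. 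Applying $(11)$: the colour-$2$ alternative is impossible, since a colour-$2$-homogeneous \emph{open} cover $B_{2}$ of $X$ has all members contained in a single $U^{1}_{k_{0}}$, forcing $U^{1}_{k_{0}}=X$ and contradicting $\mathcal{U}_{1}\in\Omega$; hence the colour-$1$ alternative holds, giving an $\omega$-cover of $X$ whose members have pairwise distinct first indices, and this $\omega$-cover yields the required $\sone(\Omega,\Omega)$-selector exactly as in the proof of Theorem~\ref{thm:FinPowers3}. Since $(10)\Rightarrow(11)$ is trivial this also settles $(10)\Rightarrow(1)$ ($(10)$ being in fact just the $k=2$ instance of Theorem~\ref{thm:FinPowers3}(2) with $Y=X$). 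With the edges above, the eleven-vertex digraph is strongly connected, which completes the proof. After $(1)\Rightarrow(2)$, I expect $(11)\Rightarrow(1)$ — and, behind the citations, the square-bracket stepping for $(8)$ and $(9)$ — to be where whatever remaining subtlety lies.
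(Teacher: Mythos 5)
Your proposal follows essentially the same route as the paper: the same cycle $(1)\Rightarrow(2)\Rightarrow(3)\Rightarrow(4)\Rightarrow(5)\Rightarrow(6)$ with $(1)\Rightarrow(2)$ done by the Galvin--Prikry accept/reject fusion powered by the game $\gone(\Omega,\Omega)$, the same trivial weakenings among $(6)$--$(11)$, the same coloring of the intersections $U^1_m\cap U^m_n$ for $(11)\Rightarrow(1)$, and Lemma \ref{lemma:subscriptup} for the square-bracket return, so the approach is the paper's own. The only divergence is one of bookkeeping and of what is written out versus cited: the paper ties $(9)$ into the loop by proving $(9)\Rightarrow(10)$ directly (a Baumgartner--Taylor-style argument from \cite{BT}) and gives in full the five-coloring case analysis behind $(8)\Rightarrow(1)$, whereas you defer these stepping arguments to the literature (and attribute the $(\Omega,4)^3$ step to \cite{mssquarebracket}, though it belongs to \cite{BT}/\cite{msrothbramsey}); the cited arguments are exactly the ones the paper reproduces, so this is a matter of completeness rather than a gap.
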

 The theorem's proof is organized into several subsections, each treating a specific implication, as depicted in Figure \ref{fig:MainThm}, of Theorem \ref{egprothberger}.

 \SetVertexNormal[Shape      = circle,
                 LineWidth  = 2pt]
\SetUpEdge[lw         = 1.5pt,
           color      = black,
           labelcolor = white,
           labeltext  = red,
           labelstyle = {sloped,draw,text=blue}]

 \tikzset{EdgeStyle/.style={->}}
           
 \begin{figure}[h]
\begin{center}
\begin{tikzpicture}
   \Vertex[x=0 ,y=3]{1}
   \Vertex[x=2.2 ,y=2.2]{2}
   \Vertex[x=3,y=0]{3}
   \Vertex[x=2.2 ,y=-2.2]{4}
   \Vertex[x=0 ,y=-3]{5}
   \Vertex[x=-2.2 ,y=-2.2]{6}
   \Vertex[x=-3 ,y=0]{7}
   \Vertex[x=-2.2 ,y=2.2]{8}
   \Vertex[x=-3.8 ,y=-1.4]{9}
   \Vertex[x=-4.0 ,y=1.5]{10}
   \Vertex[x=-1.5 ,y=3.7]{11}
   
   \Edge[](1)(2)
   \Edge[](2)(3)
   \Edge[](3)(4)
   \Edge[](4)(5)
   \Edge[](5)(6)
   \Edge[](6)(7)
   \Edge[](7)(8)
   \Edge[](8)(1)
   \Edge[](6)(9)
   \Edge[](9)(10)
   \Edge[](10)(11)
   \Edge[](11)(1)
\end{tikzpicture}
\end{center} \caption{The directed edges between two labeled vertices denote the statements between which direct implications are proved in the paper. It is possible to arrange the proof so that the corresponding directed graph denoting the proved implications has a single closed path. }\label{fig:MainThm}
 \end{figure}

\subsection{A proof of the implication $\sone(\Omega,\Omega)\,\Rightarrow\,\egp(\Omega,\Omega)$:}

Assume that $X$ has the property $\sone(\Omega,\Omega)$. Fix a countable $\omega$-cover $A$ of $X$ and fix a set $R\subset [A]^{\aleph_0}\cap\Omega$. 
Also fix a bijective enumeration $(a_n:n\in\naturals)$ of the set $A$. The relation $<$ among subsets of $A$ is interpreted in terms of this enumeration. Recall that sets of the form $[s,C] = \{D: s<C \mbox{ and }s \subset D \subseteq s\cup C\}$ constitute a basis for the Ellentuck topology on $[A]^{\aleph_0}$.

We now introduce terminology from the paper \cite{G-P}:
\begin{definition} For a finite set $s\subset A$ and for $B\in[A|s]^{\aleph_0}\cap\Omega$:
\begin{enumerate}
\item{$B$ accepts $s$ if $[s,B]\cap\Omega\subseteq R$.}
\item{$B$ rejects $s$ if no $C\in[B]^{\aleph_0}\cap\Omega$ accepts $s$.}
\end{enumerate}
\end{definition}
Lemma \ref{acceptreject} will be used without special reference:
\begin{lemma}\label{acceptreject} Let a finite set $s\subset A$ and a set $B\in[A|s]^{\aleph_0}\cap\Omega$ be given:
\begin{enumerate}
\item{$B$ accepts $s$ if, and only if, each $C\in[B]^{\aleph_0}\cap\Omega$ accepts $s$.}
\item{$B$ rejects $s$ if, and only if, each $C\in[B]^{\aleph_0}\cap\Omega$ rejects $s$.}
\end{enumerate}
\end{lemma}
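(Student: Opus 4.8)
\textbf{Plan for the proof of Lemma \ref{acceptreject}.}
The plan is to follow the Galvin--Prikry machinery, but taking care throughout to relativize to $\Omega$, i.e.\ to the subspace $[A]^{\aleph_0}\cap\Omega$ with the Ellentuck topology, rather than to all of $[A]^{\aleph_0}$. The two statements are nearly formal consequences of the \emph{definitions} of ``accepts'' and ``rejects'', so I expect the proof to be short; the main point is to be scrupulous about where the hypothesis $B\in[A|s]^{\aleph_0}\cap\Omega$ is used and about the fact that $\Omega$ is closed downward under passing to infinite subsets in the relevant sense.

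\textbf{Part (1).} First I would prove the forward direction of (1). Suppose $B$ accepts $s$, so $[s,B]\cap\Omega\subseteq R$. Let $C\in[B]^{\aleph_0}\cap\Omega$ be arbitrary; I must show $[s,C]\cap\Omega\subseteq R$. This is immediate from $[s,C]\subseteq[s,B]$ whenever $C\subseteq B$, since then $[s,C]\cap\Omega\subseteq[s,B]\cap\Omega\subseteq R$. The converse direction is the trivial one: $B\in[B]^{\aleph_0}\cap\Omega$ (using that $B$ itself is in $\Omega$, which is part of the standing assumption $B\in[A|s]^{\aleph_0}\cap\Omega$), so if every $C\in[B]^{\aleph_0}\cap\Omega$ accepts $s$ then in particular $B$ does. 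I would also record, for use in Part (2), the monotonicity remark that if $B$ accepts $s$ and $C\in[B]^{\aleph_0}\cap\Omega$, then $C$ accepts $s$ — which is exactly what the forward direction just established.

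\textbf{Part (2).} The forward direction here is again essentially definitional but requires one small observation: suppose $B$ rejects $s$, meaning no $C\in[B]^{\aleph_0}\cap\Omega$ accepts $s$; I must show that every $C\in[B]^{\aleph_0}\cap\Omega$ rejects $s$, i.e.\ for such a $C$, no $D\in[C]^{\aleph_0}\cap\Omega$ accepts $s$. But $[C]^{\aleph_0}\cap\Omega\subseteq[B]^{\aleph_0}\cap\Omega$ since $C\subseteq B$, so if some $D\in[C]^{\aleph_0}\cap\Omega$ accepted $s$ it would be an element of $[B]^{\aleph_0}\cap\Omega$ accepting $s$, contradicting that $B$ rejects $s$. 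The converse direction of (2) is once more trivial via $B\in[B]^{\aleph_0}\cap\Omega$.

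\textbf{Anticipated obstacle.} There is no deep obstacle; the only thing that could go wrong is a failure of the closure property ``$C\subseteq B$ and $C$ infinite $\Rightarrow$ $[s,C]\subseteq[s,B]$'' together with the implicit requirement that the objects being quantified over genuinely lie in $\Omega$. The first is pure set theory. For the second, I would note once and for all at the start that when $B\in\Omega$ and $C\in[B]^{\aleph_0}$, it need \emph{not} follow that $C\in\Omega$ — an infinite subcollection of an $\omega$-cover need not itself be an $\omega$-cover — which is precisely why every quantifier in the definitions and in the lemma is over $[\,\cdot\,]^{\aleph_0}\cap\Omega$ and not over $[\,\cdot\,]^{\aleph_0}$. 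Provided one respects that restriction uniformly, the four implications are as above, so the ``hard part'' is really just bookkeeping: making sure the relativization to $\Omega$ is carried consistently, since an unrelativized reading of any clause would break the argument.
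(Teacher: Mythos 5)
Your proof is correct, and it is exactly the argument the paper has in mind: the lemma is stated there without proof (it is "used without special reference") precisely because, as you show, both parts follow immediately from the inclusions $[s,C]\subseteq[s,B]$ and $[C]^{\aleph_0}\cap\Omega\subseteq[B]^{\aleph_0}\cap\Omega$ for $C\in[B]^{\aleph_0}\cap\Omega$, together with $B\in[B]^{\aleph_0}\cap\Omega$ for the trivial directions. Your care about relativizing all quantifiers to $\Omega$ is the right bookkeeping and matches the paper's definitions.
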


\begin{lemma}\label{decideoneset} For each finite set $s\subset A$, there is a $B\in[A|s]^{\aleph_0}\cap \Omega$ such that $B$ accepts $s$ or $B$ rejects $s$.
\end{lemma}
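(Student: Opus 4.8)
The plan is to argue by contradiction, following the classical Galvin--Prikry dichotomy but carried out inside the family $\Omega$ of $\omega$-covers and using the hypothesis $\sone(\Omega,\Omega)$ in place of the usual diagonalization over $[A]^{\aleph_0}$. Suppose, for a fixed finite $s\subset A$, that no $B\in[A|s]^{\aleph_0}\cap\Omega$ accepts $s$ and none rejects $s$. By Lemma \ref{acceptreject}(2), the failure of every $B$ to reject $s$ means: for every $B\in[A|s]^{\aleph_0}\cap\Omega$ there is a $C\in[B]^{\aleph_0}\cap\Omega$ that accepts $s$. In other words, the sets that accept $s$ are ``dense'' below $A|s$ in the $\Omega$-ordered tree. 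The goal is to manufacture from this density a single $B$ that accepts $s$, contradicting the first assumption.

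First I would set up a fusion-type recursion. Enumerate $A|s$ as $(b_n:n\in\naturals)$. Using the density just noted, recursively build a decreasing sequence $C_0\supseteq C_1\supseteq\cdots$ of members of $[A|s]^{\aleph_0}\cap\Omega$ together with an increasing sequence of ``stems'': at stage $n$, for each finite set $t$ with $s<t$ and $t\subseteq\{b_0,\dots,b_{n-1}\}$, shrink the current set so that (invoking density) some member of $[C_n]^{\aleph_0}\cap\Omega$ accepts $s\cup t$ whenever possible, and otherwise record that $s\cup t$ cannot be accepted; arrange $b_n\in C_n$ so the diagonal set $B=\{b_n:n\in\naturals\}$ is infinite. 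The delicate point is that $B$ must itself be an $\omega$-cover: this is exactly where $\sone(\Omega,\Omega)$ (equivalently, by Theorem \ref{thm:RothRamsey} and the surrounding results, the fact that in such a space one can make infinitely many selections from $\omega$-covers and still retain an $\omega$-cover) is used --- the selections $b_n\in C_n$ are engineered as a single selection from the sequence of $\omega$-covers $(C_n:n\in\naturals)$, so that $\{b_n:n\in\naturals\}\in\Omega$. Here one must be careful that shrinking to honor the countably many constraints $t$ does not destroy membership in $\Omega$ at any finite stage (each $C_n$ stays an $\omega$-cover by taking, at each step, a countable $\omega$-subcover guaranteed by the hypothesis that each $\omega$-cover of $X$ has a countable $\omega$-subcover, and then an appropriate infinite $\Omega$-subset).

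With such a $B\in[A|s]^{\aleph_0}\cap\Omega$ in hand, I would show $B$ accepts $s$, i.e.\ $[s,B]\cap\Omega\subseteq R$. Take any $D\in[s,B]\cap\Omega$, so $D=s\cup C$ with $s<C\subseteq B$ and $C\in\Omega$ (or at least $D\in\Omega$). By construction, every finite initial segment $t$ of $C$ was considered at some stage, and because $B$ does not reject $s$ and the recursion preserved the acceptance option whenever it existed, one propagates acceptance along $t$: the key combinatorial fact is that if $B$ rejects $s$ it rejects $s\cup\{b\}$ for all but finitely many $b\in B$ (the standard ``if $B$ rejects $s$ then $B$ rejects $s\cup\{n\}$ for cofinitely many $n$'' lemma, proved here exactly as in \cite{G-P} but with $[B]^{\aleph_0}$ replaced by $[B]^{\aleph_0}\cap\Omega$, legitimate since tails of an $\omega$-cover are still $\omega$-covers). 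Since $B$ does not reject $s$, running this down the branch of $C$ forces $C$ itself into $R$, so $D\in R$. Hence $B$ accepts $s$, the desired contradiction.

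The main obstacle I anticipate is the fusion step: one must simultaneously (a) satisfy infinitely many acceptance/rejection constraints indexed by finite subsets of $A|s$, (b) keep every intermediate set an $\omega$-cover, and (c) ensure the final diagonal set is infinite \emph{and} an $\omega$-cover. Point (b)--(c) is where the topological hypothesis genuinely enters --- in the bare Galvin--Prikry setting one freely passes to infinite subsets, but here ``infinite subset of an $\omega$-cover'' need not be an $\omega$-cover, so the shrinking must be done along $\omega$-subcovers (available by the countable-$\omega$-subcover hypothesis) and the final diagonalization must be realized as a $\sone(\Omega,\Omega)$-style selection rather than a naive diagonal intersection. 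Once the bookkeeping for the fusion is correctly phrased so that it outputs a member of $\Omega$, the rest is the classical argument transcribed verbatim into the relativized setting.
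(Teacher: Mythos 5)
You have missed that this lemma is immediate from the definition of rejection; no recursion, no fusion, and no use of $\sone(\Omega,\Omega)$ is needed. By definition, a set $B\in[A|s]^{\aleph_0}\cap\Omega$ rejects $s$ exactly when no $C\in[B]^{\aleph_0}\cap\Omega$ accepts $s$. So consider $A|s$ itself (which lies in $[A|s]^{\aleph_0}\cap\Omega$, since deleting finitely many members of an $\omega$-cover leaves an $\omega$-cover). Either $A|s$ rejects $s$, and we are done, or $A|s$ does not reject $s$, which by the definition of rejection means precisely that some $C\in[A|s]^{\aleph_0}\cap\Omega$ accepts $s$; that $C$ is the required $B$. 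This two-line dichotomy is exactly the paper's proof. Note that your own restatement of the hypothesis already contains this witness: you observe that ``for every $B\in[A|s]^{\aleph_0}\cap\Omega$ there is a $C\in[B]^{\aleph_0}\cap\Omega$ that accepts $s$,'' but any such $C$ is itself an element of $[A|s]^{\aleph_0}\cap\Omega$ that accepts $s$, flatly contradicting your standing assumption that no member of $[A|s]^{\aleph_0}\cap\Omega$ accepts $s$. There is nothing left to build.

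Beyond being unnecessary, the fusion argument you outline is not actually carried out: the step you yourself flag as delicate --- shrinking through countably many constraints while keeping every intermediate set in $\Omega$ and making the diagonal set an $\omega$-cover --- is exactly the kind of argument the paper postpones to later results (Theorem \ref{decidedfin} and Lemma \ref{openprecursor}), where it is done via strategies for ONE in $\gone(\Omega,\Omega)$ rather than by a naive diagonal. Also, the ``cofinitely many $b$'' rejection-propagation fact you invoke is not the right statement in this relativized setting; the correct analogue (Lemma \ref{acceptstrongreject}) asserts that the relevant set of $u$'s is an $\omega$-cover, not a cofinite subset. As submitted, your proposal replaces a definitional observation with an incomplete transcription of the hard part of the Galvin--Prikry machinery, so it does not constitute a proof of this lemma.
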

$\pf$ If $A|s$ does not reject $s$, choose a $B\in[A|s]^{\aleph_0}\cap\Omega$ accepting $s$. $\epf$

\begin{lemma}\label{acceptstrongreject} Let $t\subset A$ be a finite set. Let $B\in[A]^{\aleph_0}\cap\Omega$ be such that for each finite set $s\subset (t\cup B)$, $B|s$ accepts $s$ or $B|s$ rejects $s$. If $B|t$ rejects $t$ then $C =\{u\in B: B|(t\cup\{u\}) \mbox{ rejects } t\cup\{u\}\}$ is a member of $\Omega$. 
\end{lemma}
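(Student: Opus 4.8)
\medskip

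The plan is to prove that $C$ is an $\omega$-cover by contradiction. Since $C\subseteq B$ and $B\in\Omega$ we have $X\notin C$, so the only way $C$ can fail to be an $\omega$-cover is that some finite set $F\subseteq X$ lies in no member of $C$; fix such an $F$. From this $F$ I will build an infinite subset $B^{*}\subseteq B|t$ which is an $\omega$-cover and which \emph{accepts} $t$. Since $B^{*}\in[B|t]^{\aleph_0}\cap\Omega$, this directly contradicts the hypothesis that $B|t$ rejects $t$, completing the proof.

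First I would record the elementary fact that an $\omega$-cover remains an $\omega$-cover after deleting finitely many of its members: given $\mathcal{U}\in\Omega$ and $U_{0}\in\mathcal{U}$, fix $x\in X\setminus U_{0}$, and for any finite $G\subseteq X$ apply the $\omega$-cover property of $\mathcal{U}$ to $G\cup\{x\}$ to obtain a member distinct from $U_{0}$ that contains $G$. Consequently $B|t$ and each $B|(t\cup\{u\})$ with $u\in B$ are $\omega$-covers lying, respectively, in $[A|t]^{\aleph_0}\cap\Omega$ and $[A|(t\cup\{u\})]^{\aleph_0}\cap\Omega$, so all the ``accepts''/``rejects'' assertions invoked below are meaningful. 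Now set
\[
  B^{*}=\{\,u\in B|t:\ F\subseteq u\,\}.
\]
I claim $B^{*}\in[A|t]^{\aleph_0}\cap\Omega$: given finite $G\subseteq X$, apply the $\omega$-cover property of $B|t$ to the finite set $F\cup G$ to get $u\in B|t$ with $F\cup G\subseteq u$; then $u\in B^{*}$ and $G\subseteq u$. (An $\omega$-cover is automatically infinite — pick for each member a point outside it — so $B^{*}$ is infinite.)

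Next, for each $u\in B^{*}$ we have $F\subseteq u$, hence $u\notin C$, i.e.\ $B|(t\cup\{u\})$ does not reject $t\cup\{u\}$; by the hypothesis of the lemma it therefore \emph{accepts} $t\cup\{u\}$, so $[t\cup\{u\},\,B|(t\cup\{u\})]\cap\Omega\subseteq R$. To conclude that $B^{*}$ accepts $t$, I take an arbitrary $D\in[t,B^{*}]\cap\Omega$, write $D=t\cup E$ with $t<E\subseteq B^{*}$ and $E$ infinite, and let $u$ be the $<$-least element of $E$. Since $u=\min E$ and $t<E$, one checks that $t\cup\{u\}<E\setminus\{u\}$ and $E\setminus\{u\}\subseteq B|(t\cup\{u\})$, so $D=(t\cup\{u\})\cup(E\setminus\{u\})\in[t\cup\{u\},\,B|(t\cup\{u\})]\cap\Omega\subseteq R$. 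Hence $[t,B^{*}]\cap\Omega\subseteq R$, i.e.\ $B^{*}$ accepts $t$, contradicting that $B|t$ rejects $t$.

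The step I expect to be the main obstacle is the final one: it requires careful bookkeeping with the order relation $<$, namely verifying that the hypothetical element $D$ of $[t,B^{*}]\cap\Omega$ actually sits inside the basic Ellentuck neighbourhood $[t\cup\{u\},\,B|(t\cup\{u\})]$ for the correct $u$ (the choice $u=\min E$ is what makes this work), which is precisely what licenses the use of acceptance of $t\cup\{u\}$. The only genuine departure from the classical Galvin--Prikry combinatorial forcing argument is that every set occurring in the proof must be kept inside $\Omega$ rather than merely inside $[A]^{\aleph_0}$, and this is exactly the role of the preliminary observation that $\omega$-covers are preserved under deletion of finitely many members.
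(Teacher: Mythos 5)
Your argument is correct and is essentially the paper's own proof: both proceed by contradiction, extract from $B$ an $\omega$-cover of elements beyond $t$ that all lie outside $C$ (so that, by the dichotomy hypothesis, each such $u$ gives acceptance of $t\cup\{u\}$), and then use exactly the decomposition of a basic neighbourhood $[t,\cdot]$ according to the $<$-least element beyond $t$ to conclude that this $\omega$-cover accepts $t$, contradicting that $B|t$ rejects $t$. The only cosmetic difference is how that auxiliary $\omega$-cover is produced: the paper takes $B\setminus C$ (implicitly invoking the partition property of $\omega$-covers, Lemma \ref{lemma:OmegaisRamsey}), whereas you take the members of $B|t$ containing a finite witness $F$ to the failure of $C\in\Omega$, which verifies the same fact directly.
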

$\pf$
Suppose not. Then $D = t\cup(B\setminus C)\in\Omega$, and for each $u\in D|t$, $B|(t\cup\{u\})$ accepts $t\cup\{u\}$. Thus for each $u\in D|t$, $D|(t\cup\{u\})$ accepts $t\cup\{u\}$. This means that $[t,D|t] = \cup_{u\in D}[t\cup\{u\},D|(t\cup\{u\})]\subseteq R$, and so $D|t$ accepts $t$. This contradicts Lemma \ref{acceptreject} (2) since $D\in [B]^{\aleph_0}\cap\Omega$ and $B|t$ rejects $t$.
$\epf$

\subsection*{$\omega$-covers accepting or rejecting all finite subsets.}

Recall that
\begin{theorem}\label{coc3gone} For a topological space $Y$ the following are equivalent:
\begin{enumerate}
\item{$Y$ has property $\sone(\Omega,\Omega)$.}
\item{ONE has no winning strategy in $\gone(\Omega,\Omega)$.}
\end{enumerate}
\end{theorem}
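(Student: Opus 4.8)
\emph{Proof proposal.} The plan is to recognize that Theorem \ref{coc3gone} is exactly the equivalence of statements (2) and (3) in Theorem \ref{thm:XisYRothberger}, and to assemble a short argument using only results already established above — in particular only the Sakai equivalence (1)$\Leftrightarrow$(2) of Theorem \ref{thm:XisYRothberger}, so that no circularity is introduced. The implication $(2)\Rightarrow(1)$ is an instance of the general remark from Section 2: assign to ONE in $\gone(\Omega,\Omega)$ the strategy that plays $\mathcal{U}_n$ in inning $n$; since ONE has no winning strategy this particular strategy is not winning, and an ONE-play it produces that is lost by ONE yields a sequence of selections witnessing $\sone(\Omega,\Omega)$ for the given sequence $(\mathcal{U}_n:n\in\mathbb{N})$.

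For $(1)\Rightarrow(2)$, fix a strategy $F$ for ONE in $\gone(\Omega,\Omega)$ on $Y$. Imitating the ``$G$ from $F$'' construction in the proof of Theorem \ref{thm:FinPowers2} (Figure \ref{fig:GfromF}), I would define a strategy $G$ for ONE in the game $\gone(\open_{\sum_n Y^n},\open_{\sum_n Y^n})$ on the topological sum $\sum_n Y^n$ by
\[
  G(\emptyset) = \{U^k:\; k\in\mathbb{N},\; U\in F(\emptyset)\},
\]
and recursively $G(T_1,\dots,T_m) = \{U^k:\; k\in\mathbb{N},\; U\in F(U_1,\dots,U_m)\}$, where for each $i$ the set $U_i\in F(U_1,\dots,U_{i-1})$ is chosen so that $T_i = U_i^{k_i}$ for some $k_i$. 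Each such $G(\cdots)$ is genuinely an open cover of $\sum_n Y^n$ precisely because the corresponding $F(\cdots)$ is an $\omega$-cover of $Y$ (a point of $Y^p$ lies in $U^p$ as soon as $U$ contains the finite subset of $Y$ it enumerates). Now, by the Sakai equivalence (1)$\Leftrightarrow$(2) of Theorem \ref{thm:XisYRothberger}, the hypothesis $\sone(\Omega,\Omega)$ makes every finite power $Y^n$ a space with the property $\sone(\open,\open)$; Pawlikowski's Theorem \ref{thm:Pawlikowski} then gives that ONE has no winning strategy in $\gone(\open,\open)$ on $Y^n$; and Lemma \ref{lemma:OneStr} upgrades this (with $X_n = Y_n = Y^n$) to: ONE has no winning strategy in $\gone(\open_{\sum_n Y^n},\open_{\sum_n Y^n})$.

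Hence $G$ is not winning, so there is a $G$-play $G(\emptyset),T_1,G(T_1),T_2,\dots$ lost by ONE, i.e. with $\{T_m:m\in\mathbb{N}\}$ a cover of $\sum_n Y^n$. Writing $T_m = U_m^{k_m}$ with $U_m\in F(U_1,\dots,U_{m-1})$ as in the definition of $G$, the associated $F$-play $F(\emptyset),U_1,F(U_1),U_2,\dots$ of $\gone(\Omega,\Omega)$ on $Y$ has the property that $\{U_m:m\in\mathbb{N}\}$ is an $\omega$-cover of $Y$: given a finite $F\subseteq Y$ with enumeration $(x_1,\dots,x_p)$, this point of $Y^p\subseteq\sum_n Y^n$ lies in some $T_m = U_m^{k_m}$, which forces $k_m = p$ and $F\subseteq U_m$; this is exactly the computation carried out in the proof of $(1)\Rightarrow(2)$ of Theorem \ref{thm:FinPowers1}. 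Therefore the $F$-play is lost by ONE, so $F$ is not winning. I expect the only delicate point to be the recursive bookkeeping in defining $G$ — keeping track, for each response $T_i$ of TWO, of a legitimate antecedent move $U_i$ on ONE's side of the $Y$-game so that the two plays remain in lockstep — but this is routine and is already spelled out in the proof of Theorem \ref{thm:FinPowers2}, so no substantive obstacle remains once that reduction is made. (A more self-contained route, bypassing $\sum_n Y^n$, would first use $\sone(\Omega,\Omega)$ on constant sequences to reduce to countable $\omega$-covers and then run a tree-of-partial-plays argument; there the hard part would instead be assembling the level-by-level applications of $\sone(\Omega,\Omega)$ into a single coherent branch of that tree.)
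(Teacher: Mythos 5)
Your proposal is correct, and it is worth noting how it sits relative to the paper: the paper itself offers no proof of Theorem \ref{coc3gone} — it is recalled as the equivalence (2)$\Leftrightarrow$(3) of Theorem \ref{thm:XisYRothberger} and attributed to Theorem 2 of \cite{coc3} — so what you have done is reconstruct a proof from the paper's own toolkit. Your $(2)\Rightarrow(1)$ direction is the standard Section 2 remark, and your $(1)\Rightarrow(2)$ direction is exactly the strategy-transfer technique the paper uses for Theorem \ref{thm:FinPowers2}: pass from a strategy $F$ of ONE in $\gone(\Omega,\Omega)$ on $Y$ to a strategy $G$ of ONE in $\gone(\open_{\sum_n Y^n},\open_{\sum_n Y^n})$ on the sum of finite powers, defeat $G$, and read off a defeat of $F$ via the observation (as in Theorem \ref{thm:FinPowers1}) that a cover of $\sum_n Y^n$ by sets of the form $U^k$ pulls back to an $\omega$-cover of $Y$. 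The one genuinely new ingredient you supply, and it is the right one, is the replacement of Theorem \ref{th:MSUniform} (which needs $\sigma$-compactness) by Sakai's equivalence (1)$\Leftrightarrow$(2) of Theorem \ref{thm:XisYRothberger} together with Pawlikowski's Theorem \ref{thm:Pawlikowski} to get that ONE has no winning strategy in $\gone(\open,\open)$ on each $Y^n$, after which Lemma \ref{lemma:OneStr} (with $X_n=Y_n=Y^n$) handles the sum; this keeps the argument valid for arbitrary topological spaces and, as you note, avoids circularity since the (2)$\Leftrightarrow$(3) part of Theorem \ref{thm:XisYRothberger} is never invoked. The bookkeeping issues you flag (choosing, for each response $T_i$ of TWO, an antecedent $U_i\in F(U_1,\dots,U_{i-1})$ with $T_i=U_i^{k_i}$, which is always possible since $T_i$ lies in $G(T_1,\dots,T_{i-1})$, and noting $Y\notin\{U_m:m\in\mathbb{N}\}$ because each $U_m$ comes from an $\omega$-cover) are indeed routine and handled just as in the paper's proof of Theorem \ref{thm:FinPowers2}; a slight further shortcut would be to observe that the sum $\sum_n Y^n$ is itself Rothberger (a countable union of Rothberger spaces) and apply Pawlikowski once to the sum, dispensing with Lemma \ref{lemma:OneStr}, but your route through that lemma is equally sound.
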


\begin{theorem}\label{decidedfin} If $Y$ has property $\sone(\Omega,\Omega)$, then for each finite set $t\subset A$ and for each $B\in[A|t]^{\aleph_0}\cap\Omega$ there is a $C\in[B]^{\aleph_0}\cap\Omega$ such that for each finite set $s\subset t\cup C$, $C|s$ accepts $s$ or $C|s$ rejects $s$.
\end{theorem}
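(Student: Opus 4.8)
The plan is to replace the diagonal fusion of the classical Galvin--Prikry argument (unavailable here since $\Omega$ need not be closed under countable intersections) by the game characterization of Theorem~\ref{coc3gone}. First I would record a small auxiliary fact: if $\mathcal{U}$ is an $\omega$-cover of $X$ and $\mathcal{F}\subseteq\mathcal{U}$ is finite, then $\mathcal{U}\setminus\mathcal{F}$ is again an $\omega$-cover --- given finite $G\subseteq X$, adjoin to $G$ one point of $X\setminus U$ for each $U\in\mathcal{F}$ (possible since $X\notin\mathcal{U}$), cover the enlargement by some $V\in\mathcal{U}$, and observe $V\notin\mathcal{F}$. In particular every tail $D|s$ of an $\omega$-cover $D$ (with $s$ finite, so $D\setminus(D|s)$ is finite) is an $\omega$-cover, so the predicates ``$D|s$ accepts $s$'' and ``$D|s$ rejects $s$'' always make sense. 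I would also note the trivial relativization of Lemma~\ref{decideoneset}: for any finite $s\subseteq A$ and any $B\in[A|s]^{\aleph_0}\cap\Omega$ there is $C\in[B]^{\aleph_0}\cap\Omega$ deciding $s$ (if $B$ does not reject $s$, some $C\in[B]^{\aleph_0}\cap\Omega$ accepts $s$; otherwise $B$ itself witnesses rejection), and that by Lemma~\ref{acceptreject} a decision of $s$ is inherited by every member of $[C]^{\aleph_0}\cap\Omega$. Iterating over the finitely many subsets of a fixed finite $W\subseteq A$ then gives: for every $\omega$-cover $B$ with $\min(B)>\max(W)$ there is $B'\in[B]^{\aleph_0}\cap\Omega$ with $\min(B')>\max(W)$ such that $B'$ decides every finite $s\subseteq W$.

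Next I would build a strategy $F$ for ONE in $\gone(\Omega,\Omega)$. For ONE's opening move put $F(\emptyset)=B_0$, where $B_0\in[B]^{\aleph_0}\cap\Omega$ decides every finite $s\subseteq t$; this exists by the iteration above because $B\in[A|t]^{\aleph_0}$. When $c_1,\dots,c_k$ is a legal partial play (so $c_i\in F(c_1,\dots,c_{i-1})$ for each $i$), put $B_{k-1}=F(c_1,\dots,c_{k-1})$ and let $F(c_1,\dots,c_k)=B_k\in[\{c\in B_{k-1}:c>c_k\}]^{\aleph_0}\cap\Omega$ be an $\omega$-cover deciding every finite $s\subseteq t\cup\{c_1,\dots,c_k\}$, again furnished by the iteration (note $\min(B_k)>c_k\geq\max(s)$ for every such $s$). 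This $F$ is a legitimate strategy for ONE, defined by recursion on the length of play. By Theorem~\ref{coc3gone} it is not winning, so there is an $F$-play $F(\emptyset),c_1,F(c_1),c_2,\dots$ lost by ONE, i.e.\ $C:=\{c_n:n\in\naturals\}\in\Omega$. Since $c_{k+1}\in B_k\subseteq\{c\in B_{k-1}:c>c_k\}$, we have $c_1<c_2<\cdots$ in the fixed enumeration of $A$, $C\subseteq B$, hence $C\in[B]^{\aleph_0}\cap\Omega$; moreover $C\subseteq B_0$ and $\{c_{k+1},c_{k+2},\dots\}\subseteq B_k$ for every $k$.

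Finally I would verify that $C|s$ decides $s$ for every finite $s\subseteq t\cup C$. If $s\cap C=\emptyset$ then $s\subseteq t$ and $C|s=C$ (all members of $C$ lie after $t$); since $B_0$ decides $s$ and $C\in[B_0]^{\aleph_0}\cap\Omega$, Lemma~\ref{acceptreject} transfers the decision to $C=C|s$. If $s\cap C\neq\emptyset$, set $c_k=\max(s\cap C)$; then $s\subseteq t\cup\{c_1,\dots,c_k\}$ with $\max(s)=c_k$, so $B_k$ decides $s$, while $C|s=\{c_{k+1},c_{k+2},\dots\}$ is a tail of $C$ --- hence an $\omega$-cover --- contained in $B_k$ with $\min(C|s)=c_{k+1}>c_k=\max(s)$; thus $C|s\in[B_k]^{\aleph_0}\cap\Omega$ and Lemma~\ref{acceptreject} transfers the decision from $B_k$ to $C|s$. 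The genuine obstacle is precisely this replacement of fusion by the game: one must arrange ONE's $k$-th move to depend only on $c_1,\dots,c_k$ and to decide \emph{all} finite subsets of $t\cup\{c_1,\dots,c_k\}$, so that a single set $C$ produced by a play that defeats $F$ serves --- either as $C$ itself or as one of its tails --- for every relevant $s$ simultaneously; the bookkeeping above is arranged so this is automatic.
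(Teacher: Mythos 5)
Your proof is correct and follows essentially the same route as the paper's: build a strategy for ONE in $\gone(\Omega,\Omega)$ whose moves, chosen by iterating Lemma \ref{decideoneset} (with Lemma \ref{acceptreject} preserving earlier decisions), decide all finite subsets of $t$ together with the responses played so far, then invoke Theorem \ref{coc3gone} to get a play defeating this strategy and transfer the decisions to $C$ or to its tails $C|s$. Your explicit checks that tails of $\omega$-covers remain $\omega$-covers and that the decisions pass down via Lemma \ref{acceptreject} are exactly the (implicit) bookkeeping of the paper's argument.
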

$\pf$ Let $t$ and $B\in[A|t]^{\aleph_0}\cap\Omega_Y$ be given. Define a strategy $\sigma$ for ONE of $\gone(\Omega,\Omega)$ as follows:

Enumerate the set of all subsets of $t$ as $\{t_1,\cdots,t_n\}$. Using Lemma \ref{decideoneset} recursively choose $B_1\supset B_2\supset\cdots \supset B_n$ in $[B]^{\aleph_0}\cap \Omega$ such that for each $i$, $B_i$ accepts $t_i$ or $B_i$ rejects $t_i$. Then define:
\[
  \sigma(\emptyset) = B_n.
\]

If TWO now chooses $T_1\in\sigma(\emptyset)$ then use Lemma \ref{decideoneset} in the same way to choose 
\[
  \sigma(T_1) \in[\sigma(\emptyset)|\{T_1\}]^{\aleph_0}\cap\Omega
\]
such that for each set $F\subset t\cup\{T_1\}$, $\sigma(T_1)$ accepts $F$, or rejects $F$.

When TWO responds with $T_2\in F(T_1)$, enumerate the subsets of $t\cup \{T_1, T_2\}$ as $(t_1,\cdots,t_n)$ say, and choose by Lemma \ref{decideoneset} sets $B_1,\cdots B_n\in[\sigma(T_1)|\{T_2\}]^{\aleph_0}\cap \Omega$ such that $B_j$ accepts $t_j$ or $B_j$ rejects $t_j$ for $1\le j\le n$ and $B_j\subset B_{j-1}$. Finally put 
\[
  \sigma(T_1,T_2) = B_n.
\]
Note that for each finite subset $F$ of $t\cup\{T_1, T_2\}$, $\sigma(T_1,T_2)$ accepts $F$ or rejects it.

It is clear how player ONE's strategy is defined. By Theorem \ref{coc3gone} $\sigma$ is not a winning strategy for ONE. Consider a $\sigma$-play lost by ONE, say
\[
  \sigma(\emptyset), \, T_1,\, \sigma(T_1),\, T_2,\, \sigma(T_1,T_2),\, \cdots,\, T_n,\, \sigma(T_1,\cdots,T_n),\, \cdots
\]
Then $C = t\cup \{T_n:\, n\in\naturals\} \subset B$ is an element of $\Omega$.

We claim that for each finite subset $s$ of $t\cup C$, $C|s$ accepts $s$ or $C|s$ rejects $s$. For consider such a $s$. If $s\subseteq t$, then as $C\subset F(\emptyset)$ and $F(\emptyset)$ accepts or rejects $s$, also $C$ does. If $s\not\subseteq t$, then put $n = \max\{m:T_m\in s\}$. Then $s$ is a subset of $t\cup\{T_1,\cdots,T_n\}$, so that $s$ is accepted or rejected by $\sigma(T_1,\cdots,T_n)$. But $C|s \subseteq \sigma(T_1,\cdots,T_n)$, and so $C|s$ accepts or rejects $s$. 
$\epf$

\subsection*{Completely Ramsey sets}
 
The subset $R$ of $[A]^{\aleph_0}\cap \Omega$ is said to be \emph{completely Ramsey} if there is for each finite set $s\subset A$ and for each $B\in[A|s]^{\aleph_0}\cap\Omega$ a set $C\in[B]^{\aleph_0}\cap\Omega$ such that 
\begin{enumerate}
\item{either $([s,C]\cap\Omega)\subseteq R$,}
\item{or else $([s,C]\cap\Omega)\cap R = \emptyset$.}
\end{enumerate}

\begin{lemma}\label{complramseyunions} If $R$ and $S$ are completely Ramsey subsets of $[A]^{\aleph_0}\cap \Omega$, then so is $R\bigcup S$.
\end{lemma}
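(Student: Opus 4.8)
The plan is to reduce the claim about $R \cup S$ to the "accepts or rejects" machinery already developed, by iterating the completely Ramsey property for $R$ and $S$ in turn. Fix a finite set $s \subset A$ and a set $B \in [A|s]^{\aleph_0} \cap \Omega$. First I would apply the fact that $R$ is completely Ramsey to $s$ and $B$, obtaining $C_0 \in [B]^{\aleph_0} \cap \Omega$ which decides $R$ on $[s, C_0]$ — that is, either $([s,C_0]\cap\Omega) \subseteq R$, or $([s,C_0]\cap\Omega)\cap R = \emptyset$. Then I would apply the fact that $S$ is completely Ramsey to the same $s$ and to $C_0$, obtaining $C \in [C_0]^{\aleph_0}\cap\Omega$ which decides $S$ on $[s,C]$. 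Since $C \subseteq C_0$, we have $[s,C]\cap\Omega \subseteq [s,C_0]\cap\Omega$, so $C$ still decides $R$ on $[s,C]$ as well.

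\textbf{Combining the two decisions.} With $C$ deciding both $R$ and $S$ on $[s,C]\cap\Omega$, there are four cases. If $([s,C]\cap\Omega)\cap R = \emptyset$ and $([s,C]\cap\Omega)\cap S = \emptyset$, then $([s,C]\cap\Omega)\cap(R\cup S) = \emptyset$, so alternative (2) of the definition holds for $C$. In every other case at least one of $R$, $S$ contains all of $[s,C]\cap\Omega$, hence $R \cup S \supseteq [s,C]\cap\Omega$, so alternative (1) holds for $C$. In all cases the single set $C \in [B]^{\aleph_0}\cap\Omega$ witnesses that $R\cup S$ is decided on $[s,C]$. Since $s$ and $B$ were arbitrary, $R \cup S$ is completely Ramsey.

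\textbf{The one point needing care} is the monotonicity step: I must be sure that shrinking from $C_0$ to $C \subseteq C_0$ (with the same stem $s$, and with $s < C \subseteq s < C_0$ inherited since $C \subseteq C_0 \subseteq A|s$) does not destroy the decision already made for $R$. This is immediate because $[s,C] = \{s\cup D : s < D \subseteq C\} \subseteq \{s\cup D : s < D \subseteq C_0\} = [s,C_0]$, so whichever of the two alternatives held for $C_0$ continues to hold, verbatim, with $C_0$ replaced by $C$. No appeal to $\sone(\Omega,\Omega)$ or to the earlier "accepts/rejects" lemmas is actually needed for this particular lemma — it is a purely formal consequence of the definition of "completely Ramsey" together with the nesting of basic open sets in the Ellentuck topology; the role of the hypothesis that each $\omega$-cover has a countable subcover is only to guarantee that the sets $C_0$, $C$ produced by the completely Ramsey property can indeed be taken inside $\Omega$, which is already built into the definition. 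So I anticipate no real obstacle; the proof is short, and the main thing to get right is bookkeeping of the stem $s$ and the inclusion $[s,C]\cap\Omega \subseteq [s,C_0]\cap\Omega$.
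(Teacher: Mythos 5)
Your proof is correct and follows essentially the same route as the paper: apply the completely Ramsey property of $R$ to $(s,B)$, shrink the resulting set by applying the property of $S$, and use the monotonicity $[s,C]\cap\Omega\subseteq[s,C_0]\cap\Omega$ to combine the two decisions. The paper merely short-circuits when the first alternative for $R$ already holds, whereas you carry out both applications and a four-case check; the content is the same.
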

$\pf$ Let a finite set $s\subset A$ and $B\in[A|s]^{\aleph_0}\cap\Omega$ be given. Since $R$ is completely Ramsey, choose $C\in[B]^{\aleph_0}\cap\Omega$ such that $([s,C]\cap\Omega\subset R$, or $([s,C]\cap\Omega)\cap R = \emptyset$. If the former hold we are done. In the latter case, since $S$ is completely Ramsey, choose $D\in[C]^{\aleph_0}\cap\Omega$ such that $([s,D]\cap\Omega)\subseteq S$, or $([s,D]\cap\Omega)\cap S = \emptyset$. In either case the proof is complete. $\epf$

The following Lemma is obviously true.
\begin{lemma}\label{complramseycomplements} If $R$ is completely Ramsey, then so is $([A]^{\aleph_0}\cap\Omega)\setminus R$.
\end{lemma}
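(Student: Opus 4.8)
The plan is to observe that the very same witness $C$ that the completely Ramsey property of $R$ produces also works, unchanged, for the complement $R' := ([A]^{\aleph_0}\cap\Omega)\setminus R$; the only thing to check is a set-theoretic bookkeeping fact about where the basic Ellentuck pieces live.

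First I would unwind the definitions. Given a finite set $s\subset A$ and a $B\in[A|s]^{\aleph_0}\cap\Omega$, apply the hypothesis that $R$ is completely Ramsey to obtain a $C\in[B]^{\aleph_0}\cap\Omega$ with either (i) $[s,C]\cap\Omega\subseteq R$, or (ii) $([s,C]\cap\Omega)\cap R=\emptyset$. Next I would record the elementary containment $[s,C]\cap\Omega\subseteq[A]^{\aleph_0}\cap\Omega$: indeed $C\subseteq A|s\subseteq A$, so every member of $[s,C]$ is an infinite subset of $A$, and intersecting with $\Omega$ only shrinks this further. This containment is the entire (trivial) content of the lemma, since it is what converts ``not an element of $R$'' into ``an element of $R'$'' for the sets in play.

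Then I would split into the two cases and verify that $C$ witnesses the completely Ramsey property for $R'$. In case (i), since every element of $[s,C]\cap\Omega$ lies in $R$, we get $([s,C]\cap\Omega)\cap R'=\emptyset$. In case (ii), every element of $[s,C]\cap\Omega$ lies in $[A]^{\aleph_0}\cap\Omega$ but not in $R$, hence lies in $R'$, so $[s,C]\cap\Omega\subseteq R'$. In both cases the required dichotomy holds with the same $C$, and since $s$ and $B$ were arbitrary this shows $R'$ is completely Ramsey.

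I do not anticipate any real obstacle here — this is the ``obviously true'' closure-under-complement statement, parallel to (but even simpler than) Lemma~\ref{complramseyunions}. The only place one could stumble is forgetting to check that $[s,C]\cap\Omega$ sits inside the ambient set $[A]^{\aleph_0}\cap\Omega$, which is needed so that the complement is taken with respect to the right universe; once that is noted, the argument is a one-line case analysis.
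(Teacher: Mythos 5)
Your proof is correct and is exactly the verification the paper has in mind: the paper simply declares the lemma "obviously true," and your two-case argument (reusing the same witness $C$ and noting that the dichotomy in the definition is symmetric under complementation within $[A]^{\aleph_0}\cap\Omega$) is the standard way to spell that out. Nothing further is needed.
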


\begin{corollary}\label{intersections} If $R$ and $S$ are completely Ramsey subsets of $[A]^{\aleph_0}\cap \Omega$, then so is $R\bigcap S$.
\end{corollary}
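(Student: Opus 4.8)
The plan is to derive this purely formally from the two immediately preceding results, Lemma \ref{complramseyunions} and Lemma \ref{complramseycomplements}, via De Morgan's law. The point is that the family of completely Ramsey subsets of $[A]^{\aleph_0}\cap\Omega$ has now been shown to be closed under finite unions and under complementation (relative to the ground set $[A]^{\aleph_0}\cap\Omega$), and any such family of subsets of a fixed set is automatically closed under finite intersections.

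Concretely, I would argue as follows. Write $Z = [A]^{\aleph_0}\cap\Omega$ for the ground set. Given completely Ramsey sets $R, S\subseteq Z$, apply Lemma \ref{complramseycomplements} to each to see that $Z\setminus R$ and $Z\setminus S$ are completely Ramsey. Then apply Lemma \ref{complramseyunions} to conclude that $(Z\setminus R)\cup(Z\setminus S)$ is completely Ramsey. Finally apply Lemma \ref{complramseycomplements} once more to the set $(Z\setminus R)\cup(Z\setminus S)$, and observe that its complement in $Z$ is exactly $R\cap S$, since $Z\setminus\big((Z\setminus R)\cup(Z\setminus S)\big) = (Z\setminus(Z\setminus R))\cap(Z\setminus(Z\setminus S)) = R\cap S$. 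Hence $R\cap S$ is completely Ramsey.

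There is essentially no obstacle here: the only thing to be careful about is that complementation throughout is taken relative to the fixed ground set $[A]^{\aleph_0}\cap\Omega$ (not relative to $[A]^{\aleph_0}$), so that the De Morgan identity is literally valid and Lemma \ref{complramseycomplements} applies verbatim at each step. Since the statement is explicitly flagged in the excerpt as following from the preceding two lemmas, a two-line proof of this kind is exactly what is expected, and no induction or further combinatorics on the Ellentuck topology is needed.
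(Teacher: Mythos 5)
Your proposal is correct and matches the paper's own argument, which proves the corollary by exactly the same route: Lemmas \ref{complramseyunions} and \ref{complramseycomplements} combined with De Morgan's laws. Your explicit care that complements are taken relative to $[A]^{\aleph_0}\cap\Omega$ is precisely the right reading of Lemma \ref{complramseycomplements}, so nothing further is needed.
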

$\pf$ Lemmas \ref{complramseyunions} and \ref{complramseycomplements}, and De Morgan's laws.$\epf$

\subsection*{Open sets in the Ellentuck topology}

We are still subject to the hypothesis that $X$ satisfies $\sone(\Omega,\Omega)$. 
\begin{lemma}\label{openprecursor} For each finite set $t\subset A$ and for each  $B\in[A|t]^{\aleph_0}\cap\Omega$ such that for each finite subset $F$ of $t\cup B$,  $B|F$ accepts, or rejects $F$ the following holds: For each finite set $s\subset t\cup B$ such that $B|s$ rejects $s$, there is a $C\in[B|s]^{\aleph_0}\cap\Omega$ such that for each finite set $F\subset C$, $C|F$ rejects $s\cup F$.
\end{lemma}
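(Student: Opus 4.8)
The plan is to run the Galvin--Prikry fusion argument, but with the usual ``shrink to a cofinite set'' step replaced by a single strategy for ONE in $\gone(\Omega,\Omega)$, exactly as in the proof of Theorem \ref{decidedfin}; recall that throughout this subsection $X$ satisfies $\sone(\Omega,\Omega)$ and $A$ is a fixed countable $\omega$-cover of $X$. I would start with three routine observations. First, deleting finitely many members from an $\omega$-cover leaves an $\omega$-cover: given finite $F\subseteq X$ and members $U_1,\dots,U_k$ to be deleted, enlarge $F$ by one point outside each $U_i$ (possible since no $U_i$ is $X$) and cover the enlarged finite set by some member of the remaining family. Hence $B|s\in\Omega$, and $D|\{u\}\in\Omega$ whenever $D\in\Omega$. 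Second, by the hypothesis of the lemma together with Lemma \ref{acceptreject}, every $D\in[B]^{\aleph_0}\cap\Omega$ has the property that $D|F$ accepts $F$ or rejects $F$ for each finite $F\subseteq t\cup B$ (pass to $\Omega$-subsets of $B|F$). Third, for finite $F\subseteq C\subseteq B|s$ one has $C|F=C|(s\cup F)$, so it is enough to produce $C\in[B|s]^{\aleph_0}\cap\Omega$ with $C|(s\cup F)$ rejecting $s\cup F$ for every finite $F\subseteq C$.

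Next I would build such a $C$ as the set of TWO's moves in a play of $\gone(\Omega,\Omega)$ against a strategy $\sigma$ for ONE. The strategy maintains, after TWO has produced $u_1<\dots<u_{n-1}$ (all exceeding $u_{n-1}$ and $\max s$, each $u_i$ taken from ONE's $i$-th move), a set $D_{n-1}\in[B|s]^{\aleph_0}\cap\Omega$ all of whose members follow $u_{n-1}$, such that $D_{n-1}$ rejects $s\cup F$ for every $F\subseteq\{u_1,\dots,u_{n-1}\}$; the base case $D_0:=B|s$ is given by hypothesis. ONE's move in inning $n$ is obtained from $D_{n-1}$ by an iterated use of Lemma \ref{acceptstrongreject}: enumerate the subsets of $\{u_1,\dots,u_{n-1}\}$ as $F_1,\dots,F_{2^{n-1}}$, put $D^{(0)}:=D_{n-1}$, and for $j=1,\dots,2^{n-1}$ apply Lemma \ref{acceptstrongreject} with its ``$t$'' equal to $s\cup F_j$ and its ``$B$'' equal to $D^{(j-1)}$ --- whose hypotheses hold by the second observation and since $D^{(j-1)}\subseteq D_{n-1}$ rejects $s\cup F_j$ --- to obtain
\[
  D^{(j)}:=\{v\in D^{(j-1)}:D^{(j-1)}|((s\cup F_j)\cup\{v\})\text{ rejects }(s\cup F_j)\cup\{v\}\}\in\Omega .
\]
ONE plays $\sigma(u_1,\dots,u_{n-1}):=D^{(2^{n-1})}$; when TWO answers with $u_n\in D^{(2^{n-1})}$, set $D_n:=D^{(2^{n-1})}|\{u_n\}$. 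A short check (using $D^{(2^{n-1})}\subseteq D^{(j)}$ and Lemma \ref{acceptreject}(2)) shows $D_n$ again satisfies the invariant: for $F\subseteq\{u_1,\dots,u_{n-1}\}$ because $D_n\subseteq D_{n-1}$ rejects $s\cup F$, and for $F=F_j\cup\{u_n\}$ because $D_n\subseteq D^{(j-1)}|((s\cup F_j)\cup\{u_n\})$, which rejects $(s\cup F_j)\cup\{u_n\}$ since $u_n\in D^{(j)}$.

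Since $X$ has $\sone(\Omega,\Omega)$, Theorem \ref{coc3gone} says $\sigma$ is not winning for ONE; I would fix a $\sigma$-play lost by ONE and let $C=\{u_n:n\in\naturals\}$. Then $C$ is infinite, $C\subseteq B|s$, and ``ONE lost'' means $C\in\Omega$, so $C\in[B|s]^{\aleph_0}\cap\Omega$. Given finite $F\subseteq C$: if $F=\emptyset$ then $C$ rejects $s$ by Lemma \ref{acceptreject}(2), as $C\subseteq B|s$ which rejects $s$; otherwise write $F=F_j\cup\{u_m\}$ with $u_m=\max F$ and $F_j\subseteq\{u_1,\dots,u_{m-1}\}$, and note $C|(s\cup F)=\{u_{m+1},u_{m+2},\dots\}$ is an $\Omega$-subset of the set $D^{(j-1)}|((s\cup F_j)\cup\{u_m\})$ produced at inning $m$, which rejects $s\cup F$; hence by Lemma \ref{acceptreject}(2), $C|(s\cup F)=C|F$ rejects $s\cup F$, as wanted.

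The one genuine obstacle is that $\Omega$ is not closed under finite intersections, so one cannot simply intersect the ``rejecting'' sets Lemma \ref{acceptstrongreject} returns for the various $F_j$. The device that bypasses this is the iterated (not parallel) application of Lemma \ref{acceptstrongreject}, which works precisely because ``rejects'' is inherited by $\Omega$-subsets (Lemma \ref{acceptreject}(2)); and to guarantee that the process terminates with an infinite, genuinely $\omega$-covering $C$, the construction is routed through a strategy for ONE and closed out via Theorem \ref{coc3gone}, rather than through a bare recursion. The remaining verifications --- that the invariant survives the restrictions $D\mapsto D|\{u\}$, including for subsets $F\subseteq s$ --- are routine consequences of the first two observations.
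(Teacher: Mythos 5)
Your proof is correct and takes essentially the same route as the paper: ONE's strategy is shrunk at each inning via Lemma \ref{acceptstrongreject} so that $s\cup F$ stays rejected for every $F$ among the elements chosen so far, and a $\sigma$-play lost by ONE (available by Theorem \ref{coc3gone}) yields the desired $C$, with the final rejection check done through the maximal element of $F$ exactly as in the paper. Your explicit iteration of Lemma \ref{acceptstrongreject} over the subsets $F_j$ merely spells out the finitely many shrinkings that the paper's definition of $\sigma(T_1,\dots,T_n)$ performs in a single displayed step.
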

$\pf$ Fix $B$ and $s$ as in the hypotheses. Define a strategy $\sigma$ for ONE in $\gone(\Omega,\Omega)$ as follows: 
By Lemma \ref{acceptstrongreject} 
\[
  \sigma(\emptyset) =\{U\in B:\,s<\{U\}\mbox{ and $B|\{U\}$ rejects }s\cup\{U\}\} \in\Omega_X.
\]
 Notice that $\sigma(\emptyset)$ accepts or rejects each of its finite subsets, it rejects $s$, and for each $U\in \sigma(\emptyset)$, $\sigma(\emptyset)|\{U\}$ rejects $s\cup\{U\}$.

If TWO now chooses $T_1\in \sigma(\emptyset)$, then by  Lemma \ref{acceptstrongreject} 
\[
  \sigma(T_1)=\{U\in \sigma(\emptyset)\setminus\{T_1\}: \sigma(\emptyset)|F \mbox{ rejects }s\cup F \mbox{ for each finite }F\subset\{T_1,U\}\}
\]
 is in $\Omega$. As before, $\sigma(T_1)$ accepts or rejects each of its finite subsets, and for any $U\in \sigma(T_1)$, for each finite subset $F$ of $\{U,T_1\}$, $\sigma(T_1)|F$ rejects $s\cup F$.  

If next TWO chooses $T_2\in \sigma(T_1)$, then by Lemma \ref{acceptstrongreject}
\[
  \sigma(T_1,T_2) = \{U\in \sigma(T_1)\setminus\{T_2\}: \sigma(T_1)|F \mbox{ rejects } s\cup F \mbox{ for any finite } F\subset \{T_1,T_2,U\}\}
\]
is an element of $\Omega$.

Continuing in this way we define a strategy $\sigma$ for ONE in $\gone(\Omega,\Omega)$. Since $X$ satisfies $\sone(\Omega,\Omega)$, $\sigma$ is not a winning strategy for ONE. Consider a $\sigma$-play lost by ONE, say:
\[
 \sigma(\emptyset), \, T_1,\, \sigma(T_1),\, T_2,\, \sigma(T_1,T_2),\, T_3,\, \sigma(T_1,T_2,T_3),\, \cdots
\]
Put $C = \{T_n:n\in\naturals\}$. Then $C\in [B|s]^{\aleph_0}\cap\Omega$. We claim that for each finite set $F\subset C$, $C|F$ rejects $s\cup F$. 

For choose a finite set $F\subset C$. Then $F\cap s = \emptyset$. Fix $n=\max\{m:T_m\in F\}$. Then $C|F \subset \sigma(T_1,\cdots,T_n)$, and the latter rejects $s\cup F$ for all finite subsets $F$ of $\{T_1,\cdots,T_n\}$. Thus $C|F$ rejects $s\cup F$.$\epf$ 

\begin{theorem}\label{opencomplRamsey} If $X$ has property $\sone(\Omega,\Omega)$, then every open subset of $[A]^{\aleph_0}\cap\Omega$ is completely Ramsey.
\end{theorem}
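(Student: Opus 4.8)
The plan is to carry out the Galvin--Prikry argument ``open implies completely Ramsey,'' now relativised to the (subspace) Ellentuck topology on $[A]^{\aleph_0}\cap\Omega$, using the two consequences of $\sone(\Omega,\Omega)$ already established: Theorem~\ref{decidedfin} (every $B\in[A|t]^{\aleph_0}\cap\Omega$ can be thinned to a $C\in[B]^{\aleph_0}\cap\Omega$ which accepts or rejects each finite subset of $t\cup C$) and Lemma~\ref{openprecursor} (from a $B_0$ that decides all its finite subsets and rejects $s$ one obtains $C\in[B_0]^{\aleph_0}\cap\Omega$ with $C|F$ rejecting $s\cup F$ for every finite $F\subseteq C$). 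So I would fix an Ellentuck-open $R\subseteq[A]^{\aleph_0}\cap\Omega$, a finite $s\subset A$, and $B\in[A|s]^{\aleph_0}\cap\Omega$, and aim to produce $C\in[B]^{\aleph_0}\cap\Omega$ with $[s,C]\cap\Omega\subseteq R$ or with $[s,C]\cap\Omega\cap R=\emptyset$.

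First I would isolate one elementary fact not stated above: if $\mathcal{U}\in\Omega$ and $\mathcal{F}\subseteq\mathcal{U}$ is finite, then $\mathcal{U}\setminus\mathcal{F}\in\Omega$. Indeed, given a finite $G\subseteq X$, choose for each $U\in\mathcal{F}$ a point $x_U\in X\setminus U$ (possible since $X\notin\mathcal{U}$) and apply the $\omega$-cover property of $\mathcal{U}$ to the finite set $G\cup\{x_U:U\in\mathcal{F}\}$; its witness lies in $\mathcal{U}\setminus\mathcal{F}$ and contains $G$. Consequently, for $D\in[A]^{\aleph_0}\cap\Omega$ and any finite initial segment $t$ of $D$ we have $D\setminus t\in\Omega$, and for $C\in\Omega$ and finite $F$ we have $C|F\in\Omega$. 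I would also record the description of the Ellentuck basis at a point $D$: the basic neighbourhoods of $D$ are exactly the sets $[t,E]$ with $t$ a finite initial segment of $D$, $t<E$ and $D\setminus t\subseteq E$; hence $[t,\,D\setminus t]\subseteq[t,E]$ for every such neighbourhood, and if $D\in[t_0,E_0]$ then also $[\,t_0\cup s,\ D\setminus(t_0\cup s)\,]\subseteq[t_0,E_0]$.

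With this in place, the main argument would run as follows. By Theorem~\ref{decidedfin} applied with $t=s$, choose $B_0\in[B]^{\aleph_0}\cap\Omega$ that accepts or rejects every finite subset of $s\cup B_0$; since $s<B_0$ we have $B_0|s=B_0$, so either $B_0$ accepts $s$---and then $C:=B_0$ gives $[s,C]\cap\Omega\subseteq R$, the first alternative---or $B_0$ rejects $s$. In the rejecting case, Lemma~\ref{openprecursor} applied with $t=s$ and $B=B_0$ yields $C\in[B_0]^{\aleph_0}\cap\Omega\subseteq[B]^{\aleph_0}\cap\Omega$ such that $C|F$ rejects $s\cup F$ for every finite $F\subseteq C$, and the claim to prove is $[s,C]\cap\Omega\cap R=\emptyset$. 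Supposing otherwise, pick $D\in[s,C]\cap\Omega\cap R$, say $D=s\cup D'$ with $s<D'\subseteq C$. Since $R$ is open there is a basic Ellentuck neighbourhood $[t_0,E_0]$ of $D$ with $[t_0,E_0]\cap\Omega\subseteq R$; put $t=t_0\cup s$, a finite initial segment of $D$ with $s\subseteq t$, so by the basis remarks $[t,\,D\setminus t]\cap\Omega\subseteq R$. Set $F=t\setminus s$; then $F$ is a finite initial segment of $D'$, so $F\subseteq C$, and $D\setminus t=D'\setminus F\subseteq C|F$, while $D\setminus t\in\Omega$ by the elementary fact. But $[t,\,D\setminus t]=[\,s\cup F,\ D\setminus t\,]$, so $[t,\,D\setminus t]\cap\Omega\subseteq R$ says precisely that $D\setminus t$ accepts $s\cup F$; thus $D\setminus t\in[C|F]^{\aleph_0}\cap\Omega$ accepts $s\cup F$, contradicting that $C|F$ rejects $s\cup F$. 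This proves the claim, and with it the theorem.

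I expect the only delicate point to be the bookkeeping in the last step: one must verify that the basic neighbourhood produced by openness can be replaced by one of the special shape $[\,s\cup F,\ D\setminus t\,]$ with $s\cup F$ a finite initial segment of $D$ extending $s$, and that the tail $D\setminus t$ both falls inside $C|F$ and remains an $\omega$-cover---it is exactly these two facts that make ``$D\setminus t$ accepts $s\cup F$'' both meaningful and in conflict with ``$C|F$ rejects $s\cup F$.'' Everything else is a direct assembly of Theorem~\ref{decidedfin} and Lemma~\ref{openprecursor}.
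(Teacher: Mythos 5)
Your proof is correct and takes essentially the same route as the paper's: apply Theorem \ref{decidedfin} to decide $s$, settle the accepting case at once, and in the rejecting case combine Lemma \ref{openprecursor} with a basic Ellentuck neighbourhood inside the open set $R$ to exhibit a tail $D\setminus t\in[C|F]^{\aleph_0}\cap\Omega$ accepting $s\cup F$, contradicting that $C|F$ rejects $s\cup F$. The only difference is that you spell out the bookkeeping the paper leaves implicit, namely that an $\omega$-cover remains an $\omega$-cover after deleting finitely many members and that the given neighbourhood can be shrunk to one of the special form $[\,s\cup F,\ D\setminus t\,]$.
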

$\pf$ Let $R\subset[A]^{\aleph_0}\cap\Omega$ be open in this subspace. Consider a finite set $s\subset A$ and a $B \in [A|s]^{\aleph_0}\cap\Omega$. Since $(X,d)\models\sone(\Omega,\Omega)$, choose by Theorem \ref{decidedfin} a $C\in[B]^{\aleph_0}\cap\Omega$ such that for each finite set $F\subset(s\cup C)$, $C|F$ accepts or rejects $F$.

If $C$ accepts $s$ then we have $[s,C]\cap\Omega\subseteq R$, and we are done. Thus, assume that $C$ does not accept $s$. Then $C$ rejects $s$, and we choose by Lemma \ref{openprecursor} a $D\in[C|s]^{\aleph_0}\cap\Omega$ such that for each finite subset $F$ of $D$, $D|F$ rejects $s\cup F$.

We claim that $([s,D]\cap\Omega)\cap R = \emptyset$. For suppose not. Choose $E\in[s,D] \cap\Omega \cap R$. Since $R$ is open, choose an Ellentuck neighborhood of $E$ contained in $R$, say $[t,K]\cap \Omega$. Then we have $s\subset E \subset s\cup D$ and $t\subset E \subset t\cup K$. But then $s\cup t\subset E \subset t \cup K$ and $[s\cup t,K|s]\subset R$, whence also $[s\cup t,E|(s\cup t)]\subset R$. But then $E|(s\cup t)$ accepts $s\cup t$ where $t$ is a finite subset of $s\cup D$, and $E|(s\cup t)\subset D|t$, and $D|t$ rejects $s\cup t$, a contradiction. $\epf$

\subsection*{Meager subsets in the Ellentuck topology}

If the subset $R$ of $[A]^{\aleph_0}\cap\Omega$ is nowhere dense in the topology, then for each $B\in[A]^{\aleph_0}\cap\Omega$ and for each finite set $s\subset A$, $B|s$ rejects $s$. We now examine the meager subsets of $[A]^{\aleph_0}\cap \Omega$.

\begin{lemma}\label{nwdfin} If $R$ is nowhere dense, then there is for each $B\in[A]^{\aleph_0}\cap\Omega$ and each finite set $t\subset A$ a set $C\in[B|t]^{\aleph_0}\cap\Omega$ such that for each finite set $s\subset t\cup C$, $C|s$ rejects $s$.
\end{lemma}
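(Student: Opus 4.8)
The plan is to read Lemma~\ref{nwdfin} as an almost immediate consequence of the remark stated just before it, the only care being to check that the accept/reject vocabulary really applies to the sets involved. First I would record the ``no acceptance'' principle underlying that remark: when $R$ is nowhere dense, no $B\in[A|s]^{\aleph_0}\cap\Omega$ accepts any finite $s\subset A$. Indeed, $s\cup B$ witnesses that the basic Ellentuck-open set $[s,B]\cap\Omega$ is nonempty: $s\cup B\in[s,B]$, it is countably infinite, and it is an $\omega$-cover because it contains the $\omega$-cover $B$ and adjoining finitely many sets (none of them equal to $X$, since $s\subseteq A$ and $X\notin A$) to an $\omega$-cover again yields an $\omega$-cover. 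Hence $[s,B]\cap\Omega$ is a nonempty open subset of $[A]^{\aleph_0}\cap\Omega$, so it cannot be contained in the nowhere dense set $R$; that is, $B$ does not accept $s$, and so in particular no $C\in[B]^{\aleph_0}\cap\Omega$ accepts $s$, i.e.\ $B|s$ rejects $s$.

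Next I would note the elementary bookkeeping fact that the operation $B\mapsto B|u$ (for $u$ finite) preserves membership in $[A]^{\aleph_0}\cap\Omega$: the set $B|u=\{a_n\in B:\,u<\{a_n\}\}$ is obtained from $B$ by deleting only the finitely many members whose index is at most the largest index occurring in $u$, and deleting finitely many members of an $\omega$-cover leaves an $\omega$-cover (given finite $F\subseteq X$, enlarge $F$ by one point outside each deleted member and cover the enlargement by some member of the $\omega$-cover, which is then necessarily among the surviving members). Thus $B|u$ is again a countably infinite $\omega$-cover, and clearly $B|u\subseteq A|u$.

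With these two observations in hand the lemma falls out: given $B\in[A]^{\aleph_0}\cap\Omega$ and a finite $t\subset A$, put $C=B|t$. By the bookkeeping step $C\in[B|t]^{\aleph_0}\cap\Omega$ and $C\in[A]^{\aleph_0}\cap\Omega$. For any finite $s\subset t\cup C$ we have $s\subset A$, and $C|s\in[A|s]^{\aleph_0}\cap\Omega$, so the ``no acceptance'' principle (applied to the nowhere dense $R$) shows no element of $[C|s]^{\aleph_0}\cap\Omega$ accepts $s$, i.e.\ $C|s$ rejects $s$. (If one prefers not to reprove the preceding remark, one may instead invoke Theorem~\ref{decidedfin} to obtain a $C\in[B|t]^{\aleph_0}\cap\Omega$ deciding every finite $s\subset t\cup C$, and then use the same nonemptiness argument to rule out acceptance; this produces the same conclusion.) There is essentially no genuine obstacle here: the entire content sits in the remark that precedes the lemma, and the only points that deserve an explicit line are the nonemptiness of the relevant basic Ellentuck neighborhoods and the stability of the class of $\omega$-covers under the truncation $B\mapsto B|u$.
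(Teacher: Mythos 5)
Your proof is correct and follows the same route as the paper, whose entire argument is the observation that nowhere density of $R$ forbids any acceptance, so every $\omega$-cover contained in $A$ rejects every finite subset of $A$, and one may take $C=B|t$. You merely make explicit the two details the paper leaves implicit: that $[s,D]\cap\Omega$ is a nonempty basic open set (via $s\cup D$), and that the truncation $B\mapsto B|u$ stays in $[A]^{\aleph_0}\cap\Omega$.
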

$\pf$ Since $R$ is nowhere dense, no $\omega$-cover contained in $A$ can accept a finite set. Thus each $\omega$-cover contained in $A$ rejects each finite subset of $A$. $\epf$

\begin{lemma}\label{clnwdcomplramsey} Assume $\sone(\Omega,\Omega)$. 
If $R$ is a closed nowhere dense subset of $[A]^{\aleph_0}\cap\Omega$ then there is for each finite subset $s\subset A$ and for each $B\in[A|s]^{\aleph_0}\cap\Omega$ a $C\in[B]^{\aleph_0}\cap\Omega$ such that $[s,C]\cap R = \emptyset$. 
\end{lemma}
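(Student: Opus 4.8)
The plan is to obtain this lemma as a short consequence of Theorem~\ref{opencomplRamsey}, applied not to $R$ but to its complement. Since $R$ is closed in the subspace $[A]^{\aleph_0}\cap\Omega$, the set $R^{c}:=([A]^{\aleph_0}\cap\Omega)\setminus R$ is open in that subspace, and the standing hypothesis $\sone(\Omega,\Omega)$ lets us invoke Theorem~\ref{opencomplRamsey} to conclude that $R^{c}$ is completely Ramsey.

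Given the finite set $s\subset A$ and the set $B\in[A|s]^{\aleph_0}\cap\Omega$ from the statement, I would apply the complete Ramsey property of $R^{c}$ to $s$ and $B$, producing a $C\in[B]^{\aleph_0}\cap\Omega$ for which either $[s,C]\cap\Omega\subseteq R^{c}$ or $[s,C]\cap\Omega\cap R^{c}=\emptyset$. The second alternative says $[s,C]\cap\Omega\subseteq R$, i.e. $C$ accepts $s$. But $C\subseteq B\subseteq A|s$, so $C|s=C$, and the remark preceding Lemma~\ref{nwdfin} — that a nowhere dense $R$ is rejected by every member of $[A]^{\aleph_0}\cap\Omega$ at every finite stage — gives that $C$ rejects $s$, hence in particular does not accept $s$; this rules the second alternative out. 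Therefore $[s,C]\cap\Omega\cap R=\emptyset$, and since every element of $R$ already lies in $[A]^{\aleph_0}\cap\Omega$ this is the same as $[s,C]\cap R=\emptyset$, so $C$ is the witness demanded by the conclusion.

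I do not expect a genuine obstacle here: once Theorem~\ref{opencomplRamsey} is in hand the argument is essentially bookkeeping, and the only points needing attention are keeping the $|s$-operation straight (so that the ``accepts''/``rejects'' vocabulary, which is defined only for sets of the legal form $[A|s]^{\aleph_0}\cap\Omega$, is used where it applies) and the harmless identity $[s,C]\cap R=[s,C]\cap\Omega\cap R$, valid because $R\subseteq[A]^{\aleph_0}\cap\Omega$. What is essential is passing to the complement: $R$ is only assumed closed, not open, so Theorem~\ref{opencomplRamsey} cannot be applied to $R$ directly, and a purely ``hands-on'' attempt built from Lemma~\ref{nwdfin} alone (produce $C$ with $C|F$ rejecting $s\cup F$ for all finite $F\subset C$, then try to contradict $[s,C]\cap\Omega\cap R\neq\emptyset$) does not close, because membership of a single $E$ in a closed set yields no Ellentuck neighbourhood inside $R$. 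Hence the complement trick above is the natural route.
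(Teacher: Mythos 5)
Your proof is correct and is essentially the paper's own argument: the paper notes that $R$ is the complement of an open set, invokes Theorem \ref{opencomplRamsey} together with Lemma \ref{complramseycomplements} to conclude $R$ is completely Ramsey, and then uses the nowhere-density remark preceding Lemma \ref{nwdfin} to exclude the alternative $[s,C]\cap\Omega\subseteq R$, exactly as you do. The only difference is cosmetic: you apply Theorem \ref{opencomplRamsey} directly to $R^{c}$ and read off the same dichotomy, thereby inlining Lemma \ref{complramseycomplements}.
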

$\pf$ First, note that closed nowhere dense subsets are complements of open dense sets. By Theorem \ref{opencomplRamsey}, each open set is completely Ramsey. By Lemma \ref{complramseycomplements} each closed, nowhere dense set is completely Ramsey. By Lemma \ref{nwdfin} the rest of the statement follows.
$\epf$

By taking closures, the preceding lemma implies:
\begin{corollary}\label{nwdcomplramsey} Assume $\sone(\Omega,\Omega)$. If $R$ is a nowhere dense subset of $[A]^{\aleph_0}\cap\Omega$ then there is for each finite subset $s\subset A$ and for each $B\in[A|s]^{\aleph_0}\cap\Omega$ a $C\in[B]^{\aleph_0}\cap\Omega$ such that $[s,C]\cap R = \emptyset$. 
\end{corollary}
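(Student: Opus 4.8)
The plan is to deduce this immediately from Lemma \ref{clnwdcomplramsey} by passing to the topological closure, so that the only content is a routine reduction.

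First I would record the purely topological observation that if $R\subset[A]^{\aleph_0}\cap\Omega$ is nowhere dense in the subspace topology inherited from the Ellentuck topology, then its closure $\overline{R}$ in that same subspace is closed and nowhere dense. Indeed, $\overline{R}$ is closed by definition, and a set is nowhere dense precisely when its closure has empty interior; since $\overline{\overline{R}}=\overline{R}$, we get $\mathrm{int}(\overline{R})=\mathrm{int}(\overline{\overline{R}})=\emptyset$, so $\overline{R}$ is again nowhere dense. No selection-principle hypothesis is needed for this step.

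Next, with $\sone(\Omega,\Omega)$ in force, I would apply Lemma \ref{clnwdcomplramsey} to the closed nowhere dense set $\overline{R}$. For the given finite set $s\subset A$ and the given $B\in[A|s]^{\aleph_0}\cap\Omega$ this yields a $C\in[B]^{\aleph_0}\cap\Omega$ with $[s,C]\cap\overline{R}=\emptyset$. Finally, since $R\subseteq\overline{R}$, we conclude $[s,C]\cap R\subseteq[s,C]\cap\overline{R}=\emptyset$, which is exactly the assertion.

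There is no genuine obstacle here; the single point that deserves a word of care is that the closure must be formed inside the subspace $[A]^{\aleph_0}\cap\Omega$ (rather than inside $[A]^{\aleph_0}$ with the full Ellentuck topology), so that ``nowhere dense'' is preserved and Lemma \ref{clnwdcomplramsey} applies verbatim.
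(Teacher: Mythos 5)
Your argument is correct and is exactly the route the paper takes: the corollary is stated as following from Lemma \ref{clnwdcomplramsey} ``by taking closures,'' i.e.\ replace $R$ by its closure in $[A]^{\aleph_0}\cap\Omega$, which is closed and nowhere dense, apply the lemma, and use $R\subseteq\overline{R}$. Your added remark that the closure is taken inside the subspace $[A]^{\aleph_0}\cap\Omega$ is a reasonable point of care, but nothing further is needed.
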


And now we prove:
\begin{theorem}\label{meagerellentuck} Assume $\sone(\Omega,\Omega)$. For a subset $N$ of $[A]^{\aleph_0}\cap\Omega$ the following are equivalent:
\begin{enumerate}
\item{$N$ is nowhere dense.}
\item{$N$ is meager.}
\end{enumerate}
\end{theorem}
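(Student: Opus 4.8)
\textbf{The implication (1)$\Rightarrow$(2)} is immediate: a nowhere dense set is the union of a single nowhere dense set, hence meager. So the content is \textbf{(2)$\Rightarrow$(1)}, and here is the plan. Write $N=\bigcup_{n\in\naturals}N_n$ with each $N_n$ nowhere dense. Fix an arbitrary nonempty basic open set $[s,B]\cap\Omega$, where $s\subset A$ is finite and $B\in[A|s]^{\aleph_0}\cap\Omega$. I will produce a set $C\in[B]^{\aleph_0}\cap\Omega$ such that $[s,C]\cap\Omega$ is a nonempty open subset of $[s,B]\cap\Omega$ (it contains $s\cup C$) that is disjoint from $N$. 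Since any open set disjoint from a set is disjoint from its closure, and since the sets $[s,B]\cap\Omega$ form a basis for the Ellentuck topology on $[A]^{\aleph_0}\cap\Omega$, this proves $N$ is nowhere dense.

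The set $C$ will be obtained by a fusion of Galvin--Prikry type, packaged as a strategy for ONE in the game $\gone(\Omega,\Omega)$; this packaging is precisely the device that forces the fused object to be a genuine $\omega$-cover (exactly as in the proofs of Theorems \ref{decidedfin} and \ref{openprecursor}). Define a strategy $\sigma$ for ONE in $\gone(\Omega,\Omega)$ as follows. For ONE's first move, apply Corollary \ref{nwdcomplramsey} (legitimate since $N_0$ is nowhere dense and $\sone(\Omega,\Omega)$ is assumed) to shrink $B$ to $B^{(0)}\in[B]^{\aleph_0}\cap\Omega$ with $[s,B^{(0)}]\cap N_0=\emptyset$, and set $\sigma(\emptyset)=B^{(0)}$. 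Inductively, once TWO has answered $a_1\in\sigma(\emptyset),\dots,a_k\in\sigma(a_1,\dots,a_{k-1})=B^{(k-1)}$, first restrict $B^{(k-1)}$ to the members lying strictly above $a_k$ (still an $\omega$-cover, since deleting finitely many members from an $\omega$-cover leaves an $\omega$-cover), and then run once through the finitely many subsets $u$ of $\{a_1,\dots,a_k\}$, at each step applying Corollary \ref{nwdcomplramsey} to $N_k$ with parameter $s\cup u$, to arrive at $B^{(k)}=\sigma(a_1,\dots,a_k)\in[A]^{\aleph_0}\cap\Omega$ with $[s\cup u,B^{(k)}]\cap N_k=\emptyset$ for every $u\subseteq\{a_1,\dots,a_k\}$.

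By the hypothesis $\sone(\Omega,\Omega)$ and Theorem \ref{coc3gone}, ONE has no winning strategy, so there is a $\sigma$-play $\sigma(\emptyset),a_1,\sigma(a_1),a_2,\dots$ that ONE loses; that is, $C:=\{a_k:k\in\naturals\}$ is an $\omega$-cover, and by construction $C\subseteq B$ with $s<C$. To finish I verify $[s,C]\cap N_k=\emptyset$ for every $k$. Suppose $E=s\cup D\in[s,C]$ and $E\in N_k$; then $D\subseteq C$ is infinite (as $E\in[A]^{\aleph_0}$). Split $D=u\cup D'$ with $u=D\cap\{a_1,\dots,a_k\}$ and $D'=D\setminus\{a_1,\dots,a_k\}\subseteq\{a_{k+1},a_{k+2},\dots\}$. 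Since $a_j\in B^{(j-1)}\subseteq B^{(k)}$ for each $j\ge k+1$, we get $D'\subseteq B^{(k)}$; and since every $a_j$ exceeds $\max(s)$ while for $j\ge k+1$ also $a_j>a_k\ge\max(u)$, we have $s\cup u<D'$. Hence $E=(s\cup u)\cup D'\in[s\cup u,B^{(k)}]$, contradicting $[s\cup u,B^{(k)}]\cap N_k=\emptyset$.

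The step I expect to be the real obstacle is the one just circumvented: in this setting one cannot simply take a ``diagonal'' of the decreasing $\omega$-covers produced by Corollary \ref{nwdcomplramsey}, because such a diagonal need not be an $\omega$-cover. Rewriting the fusion as a strategy for ONE and invoking Theorem \ref{coc3gone} is exactly what supplies a limit set $C\in\Omega$; everything else is the standard accept/reject bookkeeping for which subsets $u\subseteq\{a_1,\dots,a_k\}$ must be processed at stage $k$.
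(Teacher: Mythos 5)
Your proof is correct and is essentially the paper's own argument: decompose $N$ into nowhere dense pieces, turn repeated applications of Corollary \ref{nwdcomplramsey} into a strategy for ONE in $\gone(\Omega,\Omega)$, invoke Theorem \ref{coc3gone} to extract a losing play whose moves form an $\omega$-cover $C$, and verify that $[s,C]\cap\Omega$ misses $N$. The only difference is that you make explicit the bookkeeping the paper leaves implicit: at stage $k$ you shrink through all subsets $u\subseteq\{a_1,\dots,a_k\}$ so that $[s\cup u,B^{(k)}]\cap N_k=\emptyset$, which is exactly the property the final verification (and the paper's own claim about $[s\cup F,\sigma(T_1,\dots,T_k)]\cap N_k$) actually requires, and it also lets you dispense with the paper's normalization that the sets $N_n$ be increasing.
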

$\pf$ We must show that (2)$\Rightarrow$(1). Thus, assume that $N$ is meager and write $N=\bigcup_{n\in\naturals}N_n$, where for each $n$ we have $N_n\subseteq N_{n+1}$, and $N_n$ is nowhere dense in $[A]^{\aleph_0}\cap \Omega$. Consider any basic open set $[s,B]$ of $[A]^{\aleph_0}\cap\Omega$. 
Define a strategy $\sigma$ for ONE in the game $\gone(\Omega,\Omega)$ as follows: 

Since $N_1$ is nowhere dense, choose by Corollary \ref{nwdcomplramsey} an $O_1\in[B]^{\aleph_0}\cap \Omega$ with $[s,O_1]\cap N_1 = \emptyset$. Define $\sigma(\emptyset) = O_1$. 

When TWO chooses $T_1\in \sigma(\emptyset)$ choose by Corollary \ref{nwdcomplramsey} an $O_2\in[\sigma(\emptyset)|\{T_1\}]^{\aleph_0}\cap\Omega_X$ with $[s,O_2]\cap N_2 = \emptyset$, and define $\sigma(T_1) = O_2$.

Now when TWO chooses $T_2\in \sigma(T_1)$, find by Corollary \ref{nwdcomplramsey} an $O_3\in[\sigma(T_1)|\{T_2\}]^{\aleph_0}\cap\Omega_X$ with $[s,O_3]\cap N_3 = \emptyset$, and define $\sigma(T_1,T_2) = O_3$.

It is clear how to define ONE's strategy $\sigma$. By Theorem \ref{coc3gone} $F$ is not a winning strategy for ONE. Consider a play
\[
  \sigma(\emptyset),\, T_1,\, \sigma(T_1),\, T_2,\, \cdots,\, \sigma(T_1,\cdots,T_n),\, T_{n+1},\, \cdots
\] 
lost by ONE. Put $C = \{T_n:n\in\naturals\}$. Then $C\in[B]^{\aleph_0}\cap\Omega$. Observe that by the definition of $\sigma$ we have for each $k$ and each finite set $F\subset \{T_1,\cdots,T_k\}$ that $[s\cup F,\sigma(T_1,\cdots,T_k)]\cap N_k = \emptyset$.\\
{\flushleft{\bf Claim:}} $[s,C]\cap N = \emptyset$. \\
For suppose that instead $[s,C]\cap N \neq \emptyset$. Choose $V\in [s,C]\cap N$, and then choose $m$ so that $V\in N_m$. Choose the least $k>m$ with $T_k\in V| s$. This is possible because $s$ is finite. Observe also that $s\subseteq V\subseteq s\cup C = s\cup\{T_j:j\in\naturals\}$. Put $F = V\cap \{T_1,\cdots,T_k\}$. Thus we have that $[s\cup F,V|F]\cap N_k \neq \emptyset$, which contradicts the fact that  $V|F\subset \sigma(T_1,\cdots,T_k)$, and $[s\cup F,\sigma(T_1,\cdots,T_k)]\cap N_k = \emptyset$. This completes the proof of the claim.
$\epf$

Using Lemmas \ref{complramseyunions} and \ref{complramseycomplements} and Corollary \ref{intersections} we have:
\begin{theorem}\label{ellentuck} Suppose $X$ satisfies $\sone(\Omega,\Omega)$. Then for each $A\in\Omega$, every subset of $[A]^{\aleph_0}\cap\Omega$ which has the Baire property is completely Ramsey.
\end{theorem}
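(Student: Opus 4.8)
The plan is to deduce Theorem~\ref{ellentuck} from the classical structure theorem for sets with the Baire property, combined with the closure properties of completely Ramsey sets already assembled above. First I would recall that a subset $R$ of the subspace $[A]^{\aleph_0}\cap\Omega$ has the Baire property exactly when there is a set $U$ open in the Ellentuck subspace topology on $[A]^{\aleph_0}\cap\Omega$ and a meager set $M\subseteq[A]^{\aleph_0}\cap\Omega$ such that $R = (U\setminus M)\cup(M\setminus U)$; equivalently, $M := R\,\triangle\,U$ is meager in $[A]^{\aleph_0}\cap\Omega$. Fix such $U$ and $M$.

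The second step upgrades the two pieces to completely Ramsey sets. Since $X$ satisfies $\sone(\Omega,\Omega)$, Theorem~\ref{meagerellentuck} tells us that $M$, being meager in $[A]^{\aleph_0}\cap\Omega$, is in fact nowhere dense there; Corollary~\ref{nwdcomplramsey} then provides, for each finite $s\subset A$ and each $B\in[A|s]^{\aleph_0}\cap\Omega$, a set $C\in[B]^{\aleph_0}\cap\Omega$ with $[s,C]\cap M = \emptyset$, i.e.\ $M$ is completely Ramsey (always witnessed by the empty-intersection alternative). On the other side, $U$ is open, so it is completely Ramsey by Theorem~\ref{opencomplRamsey}.

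The third step is purely Boolean bookkeeping. By Lemma~\ref{complramseycomplements}, the complements $([A]^{\aleph_0}\cap\Omega)\setminus U$ and $([A]^{\aleph_0}\cap\Omega)\setminus M$ are completely Ramsey. Hence $U\setminus M = U\cap\bigl(([A]^{\aleph_0}\cap\Omega)\setminus M\bigr)$ and $M\setminus U = M\cap\bigl(([A]^{\aleph_0}\cap\Omega)\setminus U\bigr)$ are completely Ramsey by Corollary~\ref{intersections}, and finally $R = (U\setminus M)\cup(M\setminus U)$ is completely Ramsey by Lemma~\ref{complramseyunions}. As $A\in\Omega$ was arbitrary, the theorem follows.

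I do not expect a real obstacle here: essentially all of the work has been front-loaded into the preceding lemmas (open $\Rightarrow$ completely Ramsey, meager $=$ nowhere dense, nowhere dense $\Rightarrow$ completely Ramsey, and closure of the completely Ramsey sets under finite unions, complements, and intersections), so this theorem is a short assembly. The only point deserving a line of care is bookkeeping about the ambient space: "Baire property", "meager", and "nowhere dense" must all be read relative to the subspace $[A]^{\aleph_0}\cap\Omega$, which is precisely the setting of Theorem~\ref{meagerellentuck} and Corollary~\ref{nwdcomplramsey}, so the pieces fit together without any adjustment.
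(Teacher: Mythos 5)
Your proposal is correct and matches the paper's intended argument: the paper proves Theorem~\ref{ellentuck} exactly by assembling Theorem~\ref{opencomplRamsey}, Theorem~\ref{meagerellentuck} with Corollary~\ref{nwdcomplramsey}, and the closure properties in Lemmas~\ref{complramseyunions}, \ref{complramseycomplements} and Corollary~\ref{intersections} applied to the decomposition $R=(U\setminus M)\cup(M\setminus U)$. Your write-up just makes explicit the Boolean bookkeeping that the paper leaves to the reader.
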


\subsection{A proof of the implication $\egp(\Omega,\Omega)\,\Rightarrow\,\textsf{GP}(\Omega,\Omega)$:}

Note that a set open in the $2^{\naturals}$ topology is also open in the Ellentuck topology. The implication $(2)\Rightarrow(3)$ of Theorem \ref{egprothberger} follows from this remark. 

\subsection{A proof of the implication $\textsf{GP}(\Omega,\Omega)\,\Rightarrow\,\textsf{FG}(\Omega,\Omega)$:}

\begin{lemma}\label{densetoinitsegm} Assume ${\sf GP}(\Omega,\Omega)$. Then ${\sf FG}(\Omega,\Omega)$ holds.
\end{lemma}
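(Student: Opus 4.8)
The plan is to adapt the classical derivation of Galvin's partition theorem from the Galvin--Prikry theorem to the relativized families $\mathcal{A}=\mathcal{B}=\Omega$. Fix a countably infinite $A\in\Omega$ with its enumeration $(a_n:n\in\naturals)$ and a dense family $\mathcal{S}\subseteq[A]^{<\aleph_0}$. The key idea is to encode $\mathcal{S}$ as the set
\[
  R=\{C\in[A]^{\aleph_0}\cap\Omega:\ \text{the set of the first }|s|\text{ elements of }C\text{ equals some }s\in\mathcal{S}\},
\]
that is, the set of those $C$ having an initial segment (in the enumeration order of $A$) lying in $\mathcal{S}$, and then to feed $R$ into ${\sf GP}(\Omega,\Omega)$.

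First I would verify that $R$ is open in the $2^A$ topology on $[A]^{\aleph_0}\cap\Omega$: for a fixed finite $s\subset A$, letting $N$ be the largest index occurring among elements of $s$, the condition ``$s$ is an initial segment of $D$'' is just $D\cap\{a_0,\dots,a_N\}=s$, which depends on only finitely many coordinates; hence $\{D:D\cap\{a_0,\dots,a_N\}=s\}$ is clopen in $2^A$, and $R$ is the union over $s\in\mathcal{S}$ of the intersections of these sets with $[A]^{\aleph_0}\cap\Omega$, so $R$ is relatively open. Applying ${\sf GP}(\Omega,\Omega)$ to this $A$, this $R$, and the parameter $S=A\in[A]^{\aleph_0}\cap\Omega$, I obtain a $B\in[A]^{\aleph_0}\cap\Omega$ with either $[B]^{\aleph_0}\cap\Omega\subseteq R$ or $([B]^{\aleph_0}\cap\Omega)\cap R=\emptyset$.

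Next I would rule out the second alternative, which is where density enters. Since $\mathcal{S}$ is dense and $B\in[A]^{\aleph_0}\cap\Omega$, pick $t\in\mathcal{S}\cap[B]^{<\aleph_0}$ and set $C=t\cup(B|t)$. Then $C$ is an infinite subset of $B$ having $t$ as an initial segment, so $C\in R$; moreover $C$ differs from $B$ by only finitely many members, and removing finitely many sets from an $\omega$-cover $\mathcal{U}$ (which by definition omits $X$) leaves an $\omega$-cover, since for each finite $F\subseteq X$ the subfamily $\{U\in\mathcal{U}:F\subseteq U\}$ is infinite; hence $C\in[B]^{\aleph_0}\cap\Omega$, contradicting $([B]^{\aleph_0}\cap\Omega)\cap R=\emptyset$. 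Therefore $[B]^{\aleph_0}\cap\Omega\subseteq R$, which says exactly that every $C\in[B]^{\aleph_0}\cap\Omega$ has an initial segment in $\mathcal{S}$; thus $B$ witnesses ${\sf FG}(\Omega,\Omega)$ for $A$ and $\mathcal{S}$.

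Once the encoding is set up the argument is essentially bookkeeping; the one genuinely $\Omega$-specific point, and the step I expect to need the most care, is checking that the truncation $C=t\cup(B|t)$ of the $\omega$-cover $B$ is still an $\omega$-cover. This is the ``finite-modification'' fact already used implicitly in the preceding lemmas, and it is precisely what makes the relativization to $\Omega$ (rather than to an arbitrary $[A]^{\aleph_0}$) go through.
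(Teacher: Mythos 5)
Your proposal is correct and follows essentially the same route as the paper: encode the dense family $\mathcal{S}$ as the $2^A$-open set of infinite $\omega$-cover subsets of $A$ having an initial segment in $\mathcal{S}$, apply ${\sf GP}(\Omega,\Omega)$, and rule out the disjointness alternative using density. Your explicit check that the truncation $t\cup(B|t)$ of an $\omega$-cover is still an $\omega$-cover is exactly the step the paper leaves implicit, and it is valid.
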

$\pf$ Let $\mathcal{S}\subset[A]^{<\aleph_0}$ be dense and define $\mathcal{I}$ to be the set $\{D\in[A]^{\aleph_0}\cap\Omega_X:\, D \mbox{ has an initial segment in }\mathcal{S}\}$. Then we have: 
\[
  \mathcal{I} = \cup\{[s,D|s]:s\in \mathcal{S},\, D\in[A]^{\aleph_0}\cap\Omega_X \mbox{ and s an initial segment of }D\}
\] 
is a $2^{\naturals}$-open subset of $[A]^{\aleph_0}\cap\Omega$. Choose a $B\in[A]^{\aleph_0}\cap\Omega$ such that $[B]^{\aleph_0}\cap\Omega\subset\mathcal{I}$, or $[B]^{\aleph_0}\cap\Omega\cap\mathcal{I} = \emptyset$. But the second alternative implies the contradiction that $[B]^{<\aleph_0}\cap\mathcal{S} = \emptyset$.  It follows that the first alternative holds. $\epf$

\subsection{A proof of the implication $\textsf{FG}(\Omega,\Omega)\,\Rightarrow\,\textsf{NW}(\Omega,\Omega)$:}

\begin{theorem}\label{thintohomogeneous} Assume ${\sf FG}(\Omega,\Omega)$. Then ${\sf NW}(\Omega,\Omega)$ holds. 
\end{theorem}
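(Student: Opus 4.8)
The plan is to carry out, inside the family $\Omega$ of $\omega$-covers, the classical deduction of the Nash--Williams partition theorem from Galvin's theorem ${\sf FG}(\Omega,\Omega)$. First I would record the one elementary fact about $\omega$-covers that makes the restriction to $\Omega$ harmless: \emph{if $\mathcal{U}$ is an $\omega$-cover of $X$ and $\mathcal{F}\subseteq\mathcal{U}$ is finite, then $\mathcal{U}\setminus\mathcal{F}$ is again an $\omega$-cover.} It suffices to delete one member $U_0$; since $X\notin\mathcal{U}$ we have $U_0\neq X$, so fix $x_0\in X\setminus U_0$; then for every finite $F\subseteq X$ any $U\in\mathcal{U}$ with $F\cup\{x_0\}\subseteq U$ satisfies $x_0\in U$, hence $U\neq U_0$. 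Consequently, if $B\in[A]^{\aleph_0}\cap\Omega$ and $t\subseteq B$ is finite, then $C_t := t\cup(B|t)$ is obtained from $B$ by deleting the finitely many members of $B$ whose index does not exceed $\max(t)$, so $C_t\in[B]^{\aleph_0}\cap\Omega$, and by construction $t$ is an initial segment of $C_t$.

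With this available I would first prove ${\sf NW}(\Omega,\Omega)$ in the case $n=2$. Let $A\in\Omega$ be countably infinite, $\mathcal{T}\subseteq[A]^{<\aleph_0}$ thin, and $\mathcal{T}=\mathcal{T}_1\cup\mathcal{T}_2$. If $\mathcal{T}_1$ is \emph{not} dense (in the sense of the definition preceding ${\sf FG}$, with the ``$\mathcal{A}$'' slot equal to $\Omega$), there is $B\in[A]^{\aleph_0}\cap\Omega$ with $[B]^{<\aleph_0}\cap\mathcal{T}_1=\emptyset$, whence $[B]^{<\aleph_0}\cap\mathcal{T}\subseteq\mathcal{T}_2$ and we are done with $i=2$. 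If $\mathcal{T}_1$ is dense, apply ${\sf FG}(\Omega,\Omega)$ with $\mathcal{S}=\mathcal{T}_1$ to get $B\in[A]^{\aleph_0}\cap\Omega$ such that every $C\in[B]^{\aleph_0}\cap\Omega$ has an initial segment in $\mathcal{T}_1$; the claim is that then $[B]^{<\aleph_0}\cap\mathcal{T}\subseteq\mathcal{T}_1$. Suppose $t\in[B]^{<\aleph_0}\cap\mathcal{T}$ and $t\notin\mathcal{T}_1$. Form $C_t\in[B]^{\aleph_0}\cap\Omega$ as above; it has an initial segment $u\in\mathcal{T}_1\subseteq\mathcal{T}$. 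Both $t$ and $u$ are finite initial segments of $C_t$, and the finite initial segments of a set enumerated in order type $\omega$ form a chain under inclusion, so one of $t,u$ is an initial segment of the other; since $t,u\in\mathcal{T}$ and $\mathcal{T}$ is thin this forces $t=u\in\mathcal{T}_1$, a contradiction. Hence $i=1$ works.

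Then I would deduce the general case by induction on $n$, the cases $n\le 2$ being settled. Given a thin $\mathcal{T}\subseteq[A]^{<\aleph_0}$ and a partition $\mathcal{T}=\mathcal{T}_1\cup\cdots\cup\mathcal{T}_n$, apply the case $n=2$ to $\mathcal{T}=\mathcal{T}_1\cup(\mathcal{T}_2\cup\cdots\cup\mathcal{T}_n)$: either we obtain $B\in[A]^{\aleph_0}\cap\Omega$ with $[B]^{<\aleph_0}\cap\mathcal{T}\subseteq\mathcal{T}_1$ and are done, or we obtain such a $B$ with $[B]^{<\aleph_0}\cap\mathcal{T}\subseteq\mathcal{T}_2\cup\cdots\cup\mathcal{T}_n$. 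In the latter case $\mathcal{T}':=\mathcal{T}\cap[B]^{<\aleph_0}$ is a thin subfamily of $[B]^{<\aleph_0}$ partitioned by the $n-1$ sets $\mathcal{T}_j\cap\mathcal{T}'$ for $2\le j\le n$; since $B\in\Omega$ is countably infinite, the inductive hypothesis (applied with $A$ replaced by $B$) yields $B'\in[B]^{\aleph_0}\cap\Omega$ and $i\in\{2,\dots,n\}$ with $[B']^{<\aleph_0}\cap\mathcal{T}'\subseteq\mathcal{T}_i$, and then $[B']^{<\aleph_0}\cap\mathcal{T}=[B']^{<\aleph_0}\cap\mathcal{T}'\subseteq\mathcal{T}_i$, as required.

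The step I expect to be the main obstacle is the one genuinely non-bookkeeping point: the stability of the class of $\omega$-covers under deletion of finitely many members, and the resulting verification that $C_t\in\Omega$. This is precisely what permits the Galvin-type argument to be run with $\mathcal{A}=\mathcal{B}=\Omega$ in place of $[\naturals]^{\aleph_0}$; everything else — the dichotomy on whether $\mathcal{T}_1$ is dense, the use of thinness to collapse two comparable initial segments to one, and the induction on $n$ — is routine, and mirrors the classical Nash--Williams--from--Galvin argument.
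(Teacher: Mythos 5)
Your argument is correct and is essentially the paper's own proof: the same dichotomy on whether $\mathcal{T}_1$ is dense, the same use of ${\sf FG}(\Omega,\Omega)$ with $\mathcal{S}=\mathcal{T}_1$, and the same thinness argument applied to the two comparable initial segments of $D=s\cup(B|s)$. The only differences are that you spell out two details the paper leaves implicit — that deleting finitely many members of an $\omega$-cover leaves an $\omega$-cover (so $D\in\Omega$), and the routine induction reducing $n$ colors to the case $n=2$, which the paper dispatches with ``we may assume $n=2$.''
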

$\pf$ Fix a thin family $\mathcal{T}\subset [A]^{<\aleph_0}$ and positive integer $n$, and a partition $\mathcal{T} = \mathcal{T}_1 \cup \mathcal{T}_2 \cup \cdots \cup \mathcal{T}_n$. 
We may assume $n=2$. If $\mathcal{T}_1$ is not dense, we can choose $B\in[A]^{\aleph_0}\cap \Omega$ such that $[B]^{<\aleph_0}\cap\mathcal{T} \subseteq \mathcal{T}_2$. Thus, assume $\mathcal{T}_1$ is dense. Choose, by the hypothesis, 
a $B\in[A]^{\aleph_0}\cap\Omega$ such that for each $C\in[B]^{\aleph_0}\cap\Omega$, some initial segment of $C$ is in $\mathcal{T}_1$.

Consider any $s\in\mathcal{T}\cap[B]^{<\aleph_0}$, and put $D = s\cup (B|s)$. Then $s$ is an initial segment of $D$, and $D\in[B]^{\aleph_0}\cap\Omega$, and so some initial segment of $D$, say $t$, is in $\mathcal{T}_1$. Since both $t$ and $s$ are initial segments of $D$ and are both in $\mathcal{T}$, and since $\mathcal{T}$ is thin, we have $s=t$, and so $s\in\mathcal{T}_1$. Consequently we have $[B]^{<\aleph_0}\cap\mathcal{T}\subseteq \mathcal{T}_1$.
$\epf$

\subsection{A proof of the implication $\textsf{NW}(\Omega,\Omega)\,\Rightarrow\,(\forall n)(\forall k)(\Omega\longrightarrow (\Omega)^n_k)$}

\begin{theorem}\label{ramsey}  Assume that ${\sf NW}(\Omega,\Omega)$ holds. Then: For each $n$ and $k$ we have $\Omega\rightarrow(\Omega)^n_k$.
\end{theorem}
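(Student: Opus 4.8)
The plan is to obtain $\Omega\rightarrow(\Omega)^n_k$ as the standard deduction of Ramsey's theorem from the Nash--Williams theorem, carried out inside the lattice of $\omega$-covers of $X$ rather than inside $[\naturals]^{<\aleph_0}$. The one structural observation that drives everything is that, for a countably infinite set $A$ and a positive integer $n$, the family $[A]^n$ is a \emph{thin} subfamily of $[A]^{<\aleph_0}$: any two distinct members of $[A]^n$ have the same cardinality $n$, so neither can be a proper initial segment of the other.

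First I would reduce to a countably infinite $\omega$-cover. Let $A\in\Omega$ be arbitrary and let $f:[A]^n\rightarrow\{1,\dots,k\}$ be given. Using the standing hypothesis of Theorem \ref{egprothberger} (every $\omega$-cover of $X$ has a countable subset that is an $\omega$-cover), fix a countably infinite $A'\subseteq A$ with $A'\in\Omega$. Since any $B\subseteq A'$ that is an $\omega$-cover and on which $f$ is constant also works as a witness for the original pair $(A,f)$, it suffices to prove the partition relation for the countable $\omega$-cover $A'$.

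Next I would put $\mathcal{T}=[A']^n$, which is thin by the observation above, and partition it as $\mathcal{T}=\mathcal{T}_1\cup\cdots\cup\mathcal{T}_k$ with $\mathcal{T}_i=\{s\in[A']^n:\; f(s)=i\}$. Applying ${\sf NW}(\Omega,\Omega)$ to the countably infinite $\omega$-cover $A'$, the thin family $\mathcal{T}$, and this partition yields a $B\in[A']^{\aleph_0}\cap\Omega$ and an $i\in\{1,\dots,k\}$ with $[B]^{<\aleph_0}\cap\mathcal{T}\subseteq\mathcal{T}_i$. Because $B\subseteq A'$, we have $[B]^{<\aleph_0}\cap[A']^n=[B]^n$, hence $[B]^n\subseteq\mathcal{T}_i$, i.e.\ $f$ is constant of value $i$ on $[B]^n$; and $B$ is an $\omega$-cover of $X$. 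This is precisely a witness for $\Omega\rightarrow(\Omega)^n_k$, and since $n$, $k$, $A$, and $f$ were arbitrary, the theorem follows.

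I do not expect a genuine obstacle in this implication; the only things needing (brief) care are the verification that $[A']^n$ is thin and the routine passage to a countable $\omega$-subcover. In particular, none of the earlier game-theoretic or Ellentuck-topology machinery is needed here — this is the direct ``Nash--Williams $\Rightarrow$ Ramsey'' step, transported to $\omega$-covers.
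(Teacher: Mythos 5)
Your proof is correct and is exactly the paper's argument: the paper likewise takes a countable $A\in\Omega$, sets $\mathcal{T}=[A]^n$, notes that $\mathcal{T}$ is thin, and applies ${\sf NW}(\Omega,\Omega)$ to the partition induced by the coloring. You have merely written out the routine details (passage to a countable $\omega$-subcover and the identification $[B]^{<\aleph_0}\cap[A']^n=[B]^n$) that the paper leaves implicit.
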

$\pf$ Let $A\in\Omega$ be countable. Let positive integers $n$ and $k$ be given. Put $\mathcal{T}=[A]^n$. Then $\mathcal{T}$ is thin. Apply the hypothesis.
$\epf$

\subsection{A proof of the implication $(\forall n)(\forall k)(\Omega\longrightarrow (\Omega)^n_k\,\Rightarrow\,(\forall n)(\forall k)(\Omega\longrightarrow \lbrack\Omega\rbrack^n_{k/\le 2}$)}

Examination of the definitions reveals that this implication is evidently true.

\subsection{A proof of the implication $\Omega\longrightarrow \lbrack\Omega\rbrack^2_3\,\Rightarrow\,\sone(\Omega,\Omega)$}

\begin{proof}
Assume that $\Omega\longrightarrow \lbrack\Omega\rbrack^2_3$ holds. By Lemma \ref{lemma:subscriptup}  $\Omega \longrightarrow \lbrack \Omega\rbrack^2_{5/\le2}$ holds. 
Let $(\mathcal{U}_n: n\in\mathbb{N})$ be a sequence of $\omega$-covers of $X$. By hypothesis, each $\omega$-cover of $X$ has a countable subset which is still an $\omega$-cover of $X$. Thus we may assume that each of the $\mathcal{U}_n$s is countable. For each $n$, enumerate $\mathcal{U}_n$ bijectively as
$(U^n_m: m\in\mathbb{N})$. Define a new $\omega$-cover $\mathcal{V}$ by $\{U^1_k \cap U^k_n\cap U^n_{\ell}: k, n, \ell\in\mathbb{N}\} \setminus \{\emptyset\}$.

For each element $V$ of $\mathcal{V}$ choose a representation $V = U^1_k\cap U^k_n \cap U^n_{\ell}$. The term $U^k_n$ will be called the middle term of the representation, while the term $U^n_{\ell}$ will be called the end-term of the representation. Define a partition $f : \lbrack \mathcal{V}\rbrack^2 \longrightarrow \{0, 1, 2, 3, 4\}$ as follows: (in the displayed formula we are assuming that the two arguments of $f$ are
listed so that $(k_1, n_1)$ lexicographically precedes, or is equal to $(k_2, n_2)$).
\[
  f(\{U^1_{k_1}\cap U^{k_1}_{n_1}\cap U^{n_1}_{\ell_1},U^1_{k_2}\cap U^{k_2}_{n_2}\cap U^{n_2}_{\ell_2} \}) = \left\{\begin{array}{lcl}
0 & \mbox{if} & k_1 = k_2 \mbox{ and } n_1 = n_2, \\
1 & \mbox{ if } & k_1 = k_2 \mbox{ and } n_1 < n_2, \\
2 & \mbox{ if } & k_1 < k_2 \mbox{ and } n_1 < n_2, \\
3 & \mbox{ if } & k_1 < k_2 \mbox{ and } n_1 > n_2, \\
4 & \mbox{ if } & k_1 < k_2 \mbox{ and } n_1 = n_2
\end{array}\right.
\]
Choose an $\omega$-cover $\mathcal{W} \subseteq \mathcal{V}$ on which f is at most two-valued. List $\mathcal{W}$ as
$(U^1_{k_1}\cap U^{k_1}_{n_1}\cap U^{n_1}_{\ell_1}, \; U^1_{k_2}\cap U^{k_2}_{n_2}\cap U^{n_2}_{\ell_2},\;\cdots)$ according to the lexicographic order of the triples $(k_i,n_i,\ell_i)$ which occur in the chosen
representations of elements of $\mathcal{W}$. There are four main cases to be considered.

{\flushleft\underline{Case 1: $f$ does not have the values $0$ or $1$ on $\lbrack\mathcal{W}\rbrack^2$.}}  In this case we have $k_1 < k_2 <  \cdots < k_n < \cdots$
and so if we set $V_{k_i} = U^{k_i}_{n_i}$, and for $m\not\in\{k_i: i = 1,2,3,\cdots\}$ we choose $V_m \in \mathcal{U}_m$ arbitrarily, then the sequence $(V_m : m = 1, 2, 3,\cdots)$ constitutes an $\omega$-cover of $X$
and for each m we have $V_m\in \mathcal{U}_m$.

{\flushleft\underline{Case 2: $\{0,\;1\}\cap f\lbrack\lbrack\mathcal{W}\rbrack^2\rbrack = \{1\}$}.}

{\flushleft{\underline{Subcase 2.1: $f\lbrack\lbrack W\rbrack^2\rbrack \subseteq \{1, 2\}$}}}. In this case, we see that $n_i \neq n_j$ whenever $i \neq j$. But then $\mathcal{W}$ refines the sequence $(U^{n_i}_{\ell_i}: i = 1, 2, 3, \cdots)$, whence the latter constitutes an $\omega$-cover of $X$. This sequence can then be augmented to one of the form $(U_n:n < \omega)$ where for each $n$, $U_n \in \mathcal{U}_n$, and still $\{U_n:n\in\naturals\}$ is an $\omega$-cover of $X$.\\

{\flushleft{\underline{Subcase 2.2: $f\lbrack\lbrack W\rbrack^2\rbrack \subseteq \{1, 3\}$}}}.
This case doesn't occur. To see this, first observe that 3 cannot be attained with only finitely many different $k$'s, since then $\mathcal{W}$ would be a refinement of a finite subset
of $\mathcal{U}_1$, and thus not an $\omega$-cover. But for Subcase 2.2, value 3 also cannot be attained with infinitely many $k$'s. To see this, list the subscripts of the $U^1_k$'s occurring in the representations of elements of $\mathcal{W}$ monotonically, say
\[
   k_1 \le k_2 \le \cdots \le k_j < \cdots.
\]
If 3 occurs with infinitely many $k$'s, then it happens infinitely often that $k_j < k_{j+1}$. Define $i_1 = 1$ and $i_{n+1} > i_n$, to be minimal such that $k_{i_{n+1}} > k_{i_n}$. Look at the
subset $\{U^1_{k_{ i_j}}\cap U^{k_{i_j}}_{n_{i_j}} \cap U^{n_{i_j}}_{\ell_{i_j}}:1, 2, 3, \cdots \}$ of $\mathcal{W}$.. It is homogeneous of color 3, and thus we find
$n_{i_l} > n_{i_2} > \cdots > n_{i_j} > \cdots$, 
an infinite descending sequence of natural numbers, a contradiction.

{\flushleft{\underline{Subcase 2.3: $f\lbrack\lbrack W\rbrack^2\rbrack \subseteq \{1, 4\}$}}}.
If the value 4 is taken with only finitely many k's,
then $\mathcal{W}$ would be a refinement of a finite subset of $\mathcal{U}_1$, and thus not an $\omega$-cover of
$X$. If the value 1 is taken with only finitely many k's, then all but finitely many
of the middle terms $U^{k_i}_{n_i}$ could be assigned to distinct ones of the covers $\mathcal{U}_n$, and
would constitute an $\omega$-cover, in which case we would be done. Thus we must treat
the case where each of 1 and 4 is attained with infinitely many $k$'s. Then
\[
  (k_1, n_1 ), (k_2, n_2), \cdots, (k_r, n_r), \cdots
\]
  forms (in the lexicographic order) a strictly increasing sequence such that for each
$i$, either $k_i < k_{i+l}$ and $n_i = n_{i+1}$ (in which case $f(\{U^1_{k_{i}}\cap U^{k_{i}}_{n_{i}} \cap U^{n_{i}}_{\ell_{i}},U^1_{k_{i+1}}\cap U^{k_{i+1}}_{n_{i+1}} \cap U^{n_{i+1}}_{\ell_{i}+1},\}) = 4$), or else $k_i = k_{i+1}$ and $n_i < n_{i+1}$ (in which case $f(\{U^1_{k_{i}}\cap U^{k_{i}}_{n_{i}} \cap U^{n_{i}}_{\ell_{i}},U^1_{k_{i+1}}\cap U^{k_{i+1}}_{n_{i+1}} \cap U^{n_{i+1}}_{\ell_{i}+1},\}) = 1$). 
  
  Consider the two sequences
\[
  (U^{k_i}_{n_i}: i = 1, 2,3,\cdots \mbox{ and } k_{i-1} < k_i \mbox{ or } k_i < k_{i+1})
\]  
which consists of certain middle terms of the three-set intersections composing the elements of $\mathcal{W}$, and
\[
  (U^{n_i}_{\ell_i}: i = 1, 2,3,\cdots \mbox{ and } n_{i-1} < n_i \mbox{ or } n_i < n_{i+1})
\]  
which consists of certain end terms of the three-set intersections composing the elements of $\mathcal{W}$. Since $\mathcal{W}$ refines the totality of sets belonging to these two sequences,
these two sequences constitute an $\omega$-cover of $X$. For each $m$ the set $\{i: k_i = m \mbox{ or } n_i = m\}$ has at most four elements.

Thus this $\omega$-cover can be partitioned into four-element sets each of which could be assigned to distinct terms of the original sequence $(\mathcal{U}_n: n < \omega)$. Being an
$\omega$-cover, we can find a new $\omega$-cover by selecting one term per each of these four-element sets (i.e., $\Omega\longrightarrow(\Omega)^1_4$ holds). In this way we find a selector for the original
sequence of $\omega$-covers in such a way that the selector is also an $\omega$-cover.

{\flushleft{\underline{ Case 3. $\{0, 1\} \cap f\lbrack\lbrack \mathcal{W}\rbrack^2\rbrack = \{0\}$ .}}}

{\flushleft{\underline {Subcase3 .1. $f\lbrack\lbrack\mathcal{W}\rbrack^2\rbrack \subseteq \{0,2\}$.}}}

If the value 2 occurs at only a finite number of distinct $k$'s, then $\mathcal{W}$ is a refinement of a finite subset of $\mathcal{U}_1$, and thus not an $\omega$-cover of $X$. So, the value $2$ is achieved at
infinitely many $k$'s. Each time it is achieved and only then, both $k_i$ and $n_i$ increase in value. Let $i_l < i_2 < \cdots$ be such that
\begin{itemize}
\item{$i_l = 1$,}
\item{for each $j$, if $i_j \le t < i_{j+1}$, then $k_{i_j} = k_t$ and $n_{i_j} = n_t$,}
\item{for each $j$, $k_{i_j} < k_{i_{j +1}}$, and}
\item{$\{(k_{I_j} ,n_{i_j}): j = 1, 2,3, \cdots\} =\{(k_j ,n_j.): j = 1, 2,3, \cdots\}$}
\end{itemize}
For each $j$ put $V_{k_{i_j}} = U^{k_{i_j}}_{n_{i_j}}$ and for $n$ not in $\{k_{i_j}: j = 1, 2, 3,\cdots\}$, choose $V_n$ from $\mathcal{U}_n$ arbitrarily. 
Then for each $n$, $V_n \in \mathcal{U}_n$, and $\mathcal{W}$ is a refinement of $\{V_n: n = 1, 2, 3, \cdots\}$, and thus the latter is an $\omega$-cover of $X$.

{\flushleft\underline{Subcase3 .2. $f\lbrack\lbrack \mathcal{W}]\rbrack^2\rbrack\subseteq \{0,3\}$.}} 
The value 3 can be attained at only a finite number of distinct k's, lest we have an infinite descending sequence of natural numbers.
But then $\mathcal{W}$ is a refinement of a finite subset of $\mathcal{U}_1$, and so not an $\omega$-cover of $X$. It follows that this case doesn't occur. 

{\flushleft\underline{Subcase 3.3. $f\lbrack\lbrack \mathcal{W}\rbrack^2\rbrack \subseteq \{0, 4\}$.}}
 The value 4 must be attained at an infinite number
of distinct $k$'s, else $\mathcal{W}$ would be a refinement of a finite subset of $\mathcal{U}_1$, hence not an $\omega$-cover of $X$. But then an argument as in Subcase 3.1 shows that there is a
sequence $(U_n: n \in\mathbb{N})$ such that for each $n$ $U_n \in \mathcal{U}_n$, and $\{U_n: n \in\mathbb{N}\}$ is an $\omega$-cover of $X$.
 
{\flushleft\underline{Case 4. $f\lbrack\lbrack \mathcal{W}\rbrack^2\rbrack \subseteq \{0, 1\}$.}}
Then there is a fixed $k$ such that each element of $\mathcal{W}$ is a subset of $U^1_k$. Since
$X \neq U^1_k$ it follows that $\mathcal{W}$ doesn't even cover $X$. Consequently, this case doesn't
occur.
\end{proof}

\subsection{A proof of the implication $\Omega\longrightarrow (\Omega,4)^3_2\,\Rightarrow\,(\Omega\longrightarrow (\Omega)^2_2$}

The proof of this implication is modeled after the proof of (iii)$\Rightarrow$(i) of Theorem 2.1 of \cite{BT}. Let $\mathcal{U}$ be a countable $\omega$-cover of $X$, enumerated bijectively as $(U_n:n\in\naturals)$. Assume that the partition relation 
$\Omega\longrightarrow (\Omega,4)^3_2$ holds, and let a function $f:\lbrack\mathcal{U}\rbrack^2\longrightarrow\{1,\;2\}$ be given.

Define a new function $g:\lbrack\mathcal{U}\rbrack^3\longrightarrow\{1,2\}$ as follows: Let $\{U_{n_1},\; U_{n_2},\; U_{n_3}\}$ be an element of $\lbrack\mathcal{U}\rbrack^3$, written so that $n_1<n_2<n_3$. Define
\[
  g(\{U_{n_1},\;U_{n_2},\;U_{n_3}\}) = \left\{
                                                               \begin{tabular}{ll}
                                                                 2 & if $f(\{U_{n_1},U_{n_2}\}) = 1$ and $f(\{U_{n_2},U_{n_3}\}) = 2$\\
                                                                 1 & otherwise
                                                              \end{tabular}
                                                              \right.
\]
Now apply the partition relation $\Omega\longrightarrow (\Omega,4)^3_2$ to the function $g$.
{\flushleft{\underline{Claim 1:} There is no four-element subset of $\mathcal{U}$ on whose triples $g$ has value $2$.}}

For consider a 4-tuple $\mathcal{F} = \{U_{n_1},\; U_{n_2},\; U_{n_3},\; U_{n_4}\}$ with $n_1<n_2<n_3<n_4$ and suppose that $g$ has value $2$ at each three element subset of the set $\mathcal{F}$. Then, considering the subset $\{U_{n_1},\;U_{n_2},\; U_{n_3}\}$ we have $f(\{U_{n_1},\;U_{n_2}\}) = 1$ and $f(\{U_{n_2},\;U_{n_3}\}) = 2$, and considering $\{U_{n_2},\; U_{n_3},\; U_{n_4}\}$, we have $f(\{U_{n_2},\;U_{n_3}\}) = 1$ and $f(\{U_{n_3},\;U_{n_4}\}) = 2$, a contradiction. This completes the proof of Claim 1. 

With Claim 1 established, we find a subset $\mathcal{V}$ of $\mathcal{U}$ such that $\mathcal{V}$ is an $\omega$-cover, and $f$ is constant of value $1$ on $\lbrack\mathcal{V}\rbrack^3$. If $f$ is constant of value $2$ on $\lbrack\mathcal{V}\rbrack^2$, there is nothing more to prove. Thus, assume that there is a two-element subset, say $\{U_{n_1},\;U_{n_2}\}$ with $n_1<n_2$, of $\mathcal{V}$ for which $f(\{U_{n_1},\;U_{n_2}\}) =1$. Then the set $\mathcal{W} = \{U_n\in\mathcal{V}:n>n_2\}$ is still an $\omega$-cover of $X$.

{\flushleft{\underline{Claim 2:} $f$ has value $1$ on the set $\lbrack\mathcal{W}\rbrack^2$.}}
For if not, choose a two-element set $\{U_k,\;U_m\}\subset\mathcal{W}$ with $k<m$ and $f(\{U_k,\;U_m\})=2$. By the construction of $\mathcal{W}$ we have $n_1 < n_2 < k < m$. Also, we have $f(\{U_{n_1},U_{n_2}\}) = 1$ and $f(\{U_k,\;U_m\}) = 2$.
If $f(\{U_{n_2},U_k\}) = 1$, then by definition of the function $g$ we have  $g(\{U_{n_2},\;U_m,\;U_k\}) = 2$, contradicting the fact that $g$ has value $1$ on $\lbrack\mathcal{V}\rbrack^3$. Thus, we must have $f(\{U_{n_2},U_k\}) = 2$. But in this case we have $f(\{U_{n_1},\;U_{n_2}\}) = 1$, so that by definition $g(\{U_{n_1},\;U_{n_2},\;U_k\}) = 2$ also contradicting the fact that $g$ has value $1$ on $\lbrack\mathcal{V}\rbrack^3$. These contradictions follow from assuming that $f$ is not constant of value $1$ on the set $\lbrack\mathcal{W}\rbrack^2$. This completes the proof of Claim 2, and of the implication $\Omega\longrightarrow (\Omega,4)^3_2\,\Rightarrow\,(\Omega\longrightarrow (\Omega)^2_2$.

\subsection{A proof of the implication $\Omega\longrightarrow (\Omega)^2_2\,\Rightarrow\,(\Omega\longrightarrow (\Omega,\open)^2$}
 
 Since $\Omega$ is a subset of $\open$, this implication holds.

\subsection{A proof of the implication $\Omega\longrightarrow (\Omega,\open)^2_2\,\Rightarrow\,\sone(\Omega,\Omega)$}

Let a sequence $(\mathcal{U}_n:n\in\naturals)$ of $\omega$-covers of the space $X$ be given. We may assume that each $\mathcal{U}_n$ is countable, and enumerate it bijectively as $(U^n_m:m\in\naturals)$. Then define the $\omega$-cover
$\mathcal{V} = \{U^1_m\cap U^m_k: 1<m,\; k\in\naturals\}\setminus\{\emptyset\}$.  Also, for each element of $\mathcal{V}$ fix a specific choice of $k$ and $m$ to represent the element as $U^1_k\cap U^k_m$.

Define a function $f:\lbrack\mathcal{V}\rbrack^2\rightarrow\{0,\; 1\}$ so that
\[
   f(\{U^1_{k_1}\cap U^{k_1}_{m_1}, U^1_{k_2}\cap U^{k_2}_{m_2}\}) = \left\{
                                                                                                                           \begin{tabular}{ll}
                                                                                                                              $0$ & if $k_1\neq k_2$\\
                                                                                                                              $1$ & otherwise
                                                                                                                            \end{tabular}
                                                                                                                            \right.
\]
Applying the hypothesis that the partition relation $\Omega\longrightarrow (\Omega,\open)^2_2$ holds, choose a subset $\mathcal{W}$ of $\mathcal{V}$ such that $f$ is constant on $\lbrack\mathcal{W}\rbrack^2$, and $\mathcal{W}$ witnesses this partition relation for $f$. Assume that $f$ does not have value $0$ on $\lbrack\mathcal{W}\rbrack^2$, and thus that $\mathcal{W}$ is an open cover of $X$ but not necessarily an $\omega$-cover. Then $f$ has value $1$ on $\lbrack\mathcal{W}\rbrack^2$. Then for any $U^1_{k_1}\cap U^{k_1}_{m_1} \neq U^1_{k_2}\cap U^{k_2}_{m_2}$ in the set $\mathcal{W}$ we have $k_1 = k_2$, and thus all elements of $\mathcal{W}$ are subsets of one and the same $U^1_k$. But $U^1_k$ being an element of an $\omega$-cover of $X$ is a proper subset of $X$, and thus $\mathcal{W}$ is not a cover of $X$, a contradiction. Consequently $f$ has value $0$ on $\lbrack\mathcal{W}\rbrack^2$, and $\mathcal{W}$ is an $\omega$-cover of $X$. By the definition of the function $f$ we also find that if $U^1_{k_1}\cap U^{k_1}_{m_1}$ and $U^1_{k_2}\cap U^{k_2}_{m_2}$ are two distinct elements of $\mathcal{W}$, then $k_1\neq k_2$. 

For each $k$ such that $U^1_k\cap U^k_m$, with indices as in the originally chosen representation, appears in $\mathcal{W}$, pick $V_k = U^k_m\in\mathcal{U}_k$. For all other $k$ choose $V_k\in\mathcal{U}_k$ arbitrarily. Then the set $\{V_k:k\in\naturals\}$ witnesses the statement $\sone(\Omega,\Omega)$ for the given sequence $(\mathcal{U}_n:n\in\naturals)$ of $\omega$-covers of $X$.

The proof of Theorem \ref{egprothberger} is complete. $\Box$

\section{The Rothberger covering property for $\sigma$-compact spaces.}

As shown in Theorem \ref{th:MSUniform}, in the case of a $\sigma$-compact uniform space $(X,\Psi)$, for a subset $Y$ of $X$ the selection principles $\sone(\open_{\Psi},\open_Y)$ and $\sone(\open,\open_Y)$ are equivalent. In the case when $Y=X$, the latter is the Rothberger covering property. In this section we focus on this scenario: The topological space $(X,\tau)$ is $\sigma$-compact and has the covering property $\sone(\open,\open)$.
 According to Theorem \ref{th:MSUniform} for such a space, ONE has no winning strategy in the game $\gone(\open,\open)$. In fact, much stronger conclusions - see Theorem \ref{th:sigmacptRothb} -  can be drawn. 
 \begin{lemma}\label{th:cptRothbScattered}
 If $(X,\tau)$ is a compact $\textsf{T}_2$-space which has the property $\sone(\open,\open)$, then the space has an isolated point.
 \end{lemma}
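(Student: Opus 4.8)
The plan is to argue by contradiction: suppose $(X,\tau)$ is compact Hausdorff with $\sone(\open,\open)$ but has no isolated point, i.e. $X$ is crowded (dense-in-itself). The goal is to build a sequence of open covers witnessing the failure of $\sone(\open,\open)$. The key intuition is that in a crowded compact Hausdorff space one can repeatedly split the space (or a piece of it) into two disjoint nonempty open-ish pieces, encoding a binary-tree structure; along a branch of the tree the pieces shrink, and a single selection per level can only ``follow'' countably many branches, whereas the space will look like it has a copy of $2^{\naturals}$ worth of points to cover. More concretely, I would construct a Cantor scheme $\{V_s : s \in 2^{<\naturals}\}$ of nonempty open sets with $\overline{V_{s^\frown 0}}, \overline{V_{s^\frown 1}} \subseteq V_s$ and $\overline{V_{s^\frown 0}} \cap \overline{V_{s^\frown 1}} = \emptyset$, using regularity (a compact Hausdorff space is regular) together with crowdedness to guarantee each $V_s$ contains two points and hence, by Hausdorffness and regularity, two such disjoint closed-neighborhood pieces. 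Then by compactness $K = \bigcap_n \bigcup_{s \in 2^n} \overline{V_s}$ is nonempty and in fact each branch $f \in 2^{\naturals}$ gives a nonempty $\bigcap_n \overline{V_{f\restriction n}}$, so $K$ surjects onto (indeed is indexed by) $2^{\naturals}$.

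Next I would define, for each $n$, the open cover $\mathcal{O}_n$ of $X$ as follows: take $\mathcal{O}_n = \{W_s : s \in 2^n\} \cup \{X \setminus K_n\}$ where $K_n = \bigcup_{s\in 2^n}\overline{V_s}$ and $W_s$ is an open set with $\overline{V_s}\subseteq W_s$ chosen so that the $W_s$ (for $s$ of the same length) are still pairwise disjoint — possible by normality/regularity of the compact Hausdorff space applied to the finitely many disjoint compact sets $\overline{V_s}$. Thus $\mathcal{O}_n$ is an open cover of $X$ in which the only members meeting $K$ are the $W_s$, $s \in 2^n$, and each such $W_s$ meets $K$ only in the ``cone'' of branches extending $s$. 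Now apply $\sone(\open,\open)$ to the sequence $(\mathcal{O}_n : n \in \naturals)$: we get a selection $T_n \in \mathcal{O}_n$ with $\{T_n : n\}$ covering $X$. For each $n$ either $T_n = X\setminus K_n$ (covers no point of $K$) or $T_n = W_{s_n}$ for a unique $s_n \in 2^n$, which covers only branches through $s_n$. The set of branches through some $s_n$ is a countable union of cones, hence either all of $2^{\naturals}$ fails to be covered... — here is the crux.

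The main obstacle is making the counting work: a countable union of basic clopen cones $[s_n]$ in $2^{\naturals}$ \emph{can} cover all of $2^{\naturals}$ (e.g. $[0],[1]$ already do). To defeat this I would not take $\mathcal{O}_n$ supported on level $n$ of the whole tree, but diagonalize: fix a bijection-style bookkeeping so that $\mathcal{O}_n$ refines only finitely many cones at deeper and deeper levels, forcing any single selection $T_n$ to miss a nonempty closed piece that is then ``protected'' at all later stages by a nested-shrinking argument — essentially running the Baire-category / diagonalization argument by hand against the selector. Equivalently, one can phrase it via the game: build a \emph{strategy} for ONE in $\gone(\open,\open)$ on the Cantor scheme (ONE plays covers that force TWO, at stage $n$, to commit to one of two sub-cones of his stage-$(n-1)$ commitment, and the intersection of TWO's nested commitments is a nonempty point of $X$ never covered), so ONE wins — contradicting, via Theorem~\ref{thm:Pawlikowski} (Pawlikowski) or Theorem~\ref{th:MSUniform}, that $\sone(\open,\open)$ holds. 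I expect the clean writeup to go through the game: define ONE's strategy on the Cantor scheme, verify it is winning because TWO's moves single out one node per level and the branch so determined has a point of $X$ that lies outside every open set TWO ever chose (each of TWO's chosen open sets was, by construction, disjoint from $\overline{V_t}$ for the node $t$ one level below TWO's commitment at that stage). The Hausdorff and regularity hypotheses are used exactly to keep the shrinking closed pieces disjoint and nonempty, and crowdedness is used to keep the binary splitting going forever; compactness guarantees the branch intersection is nonempty.
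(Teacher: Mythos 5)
Your final, game-theoretic version is exactly the paper's argument: build a binary scheme of open sets with pairwise disjoint closures, have ONE play at each stage the two-element cover consisting of the complements of the two closed pieces below TWO's current commitment, and use compactness of the nested closures to produce a point missed by every set TWO chose, so ONE has a winning strategy in $\gone(\open,\open)$, contradicting $\sone(\open,\open)$ via Pawlikowski's theorem. You were also right to discard your initial level-by-level selection-counting attempt, for precisely the reason you identify.
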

\begin{proof}
For assume the contrary. Let $(X,\tau)$ be a compact space which has no isolated points. We show that ONE has a winning strategy in the game $\gone(\open,\open)$, implying by Pawlikowski's Theorem that $(X,\tau)$ does not have the property $\sone(\open,\open)$. 

Define a strategy $F$ for ONE in the game $\gone(\open,\open)$ on $X$ as follows:
SInce $X$ has no isolated points, it is infinite. Choose two distinct points $x_0$ and $_1$ from $X$. As $X$ is compact and $\textsf{T}_2$, choose open sets $U_0$ and $U_1$ such that $x_0\in U_0$ and $x_1\in U_1$, and the closures of $U_0$ and $U_1$ are disjoint. ONE's first move is
\[
  F(\emptyset) = \{X\setminus\overline{U}_0,\; X\setminus \overline{U}_1\}.
\]
When TWO chooses $T_1\in F(\emptyset)$, say $T_1 = X\setminus\overline{U}_{i_1}$, then as $X$ has no isolated points, the nonempty open set $U_{i_1}$ has no isolated points. Thus choose two distinct points $x_{i_1,0}$ and $x_{i_1,1}$ in $U_{i_1}$
and apply the fact that $X$ is compact $\textsf{T}_2$ again to find two open sets $U_{i_1,0}$ and $U_{i_1,1}$ for which their closures are disjoint subsets of $U_{i_1}$, and $x_{i_1,0}\in U_{i_1,0}$ and $x_{i_1,1}\in U_{i_1,1}$. ONE's response is
\[
  F(T_1) = \{X\setminus\overline{U}_{i_1,0},\; X\setminus \overline{U}_{i_1,1}\}
\]

When TWO chooses $T_2\in F(T_1)$, say $T_2 = X\setminus\overline{U}_{i_1,i_2}$, then as $X$ has no isolated points, the nonempty open set $U_{i_1,i_2}$ has no isolated points. Thus choose two distinct points $x_{i_1,i_2,0}$ and $x_{i_1,i_2,1}$ in $U_{i_1,i_2}$
and apply the fact that $X$ is compact $\textsf{T}_2$ again to find two open sets $U_{i_1,i_2,0}$ and $U_{i_1,i_2,1}$ for which their closures are disjoint subsets of $U_{i_1,i_2}$, and $x_{i_1,i_2,0}\in U_{i_1,i_2,0}$ and $x_{i_1,i_2,1}\in U_{i_1,i_2,1}$. ONE's response is
\[
  F(T_1,T_2) = \{X\setminus\overline{U}_{i_1,i_2,0},\; X\setminus \overline{U}_{i_1,i_2,1}\}
\]

When TWO chooses $T_3\in F(T_1,\;T_2)$, say $T_3 = X\setminus\overline{U}_{i_1,i_2,i_3}$, then as $X$ has no isolated points, the nonempty open set $U_{i_1,i_2,i_3}$ has no isolated points. Thus choose two distinct points $x_{i_1,i_2,i_3,0}$ and $x_{i_1,i_2,i_3,1}$ in $U_{i_1,i_2,i_3}$
and apply the fact that $X$ is compact $\textsf{T}_2$ again to find two open sets $U_{i_1,i_2,i_3,0}$ and $U_{i_1,i_2,i_3,1}$ for which their closures are disjoint subsets of $U_{i_1,i_2,i_3}$, and $x_{i_1,i_2,i_3,0}\in U_{i_1,i_2,i_3,0}$ and $x_{i_1,i_2,i_3,1}\in U_{i_1,i_2,i_3,1}$. ONE's response is
\[
  F(T_1,T_2,T_3) = \{X\setminus\overline{U}_{i_1,i_2,i_3,0},\; X\setminus \overline{U}_{i_1,i_2,i_3,1}\}
\]
and so on.

Consider an $F$-play
\[
  F(\emptyset),\; T_1,\; F(T_1),\; T_2,\; \cdots.\; F(T_1,\cdots,T_n),\; T_{n+1},\cdots
\]
of the game $\gone(\open,\open)$ on $X$. From the definition of $F$ there is for each $n$ a nonempty open subset $U_{i_1,\cdots,i_n}$ of $X$ such that 
\begin{enumerate}
\item{$T_n = X\setminus\overline{U}_{i_1,\cdots,i_n}$}
\item{ the set $\overline{U}_{i_1,\cdots,i_n}$ is a nonempty subset of $U_{i_1,\cdots,i_{n-1}}$, }
\item{and $U_{i_1,\cdots,i_n}$ is an open set containing the point $x_{i_1,\cdots,i_n}$ of $X$.}
\end{enumerate}
Since $(X,\tau)$ is compact, the sequence $(\overline{U}_{i_1,\cdots,i_n}:n\in\mathbb{N})$ as a descending sequence of nonempty compact subsets of $X$ has a nonempty intersection. Let the point $y$ be a member of this intersection. Then for each $n$, $y$ is not a member of $T_n$. It follows that ONE wins the $F$ play, and as we considered an arbitrary $F$ play, $F$ is a winning strategy for ONE in the game $\gone(\open,\open)$ on $X$. By Pawlikowski's Theorem, $(X,\tau)$ does not have the selection property $\sone(\open,\open)$.
\end{proof}

\begin{lemma}\label{lemma:scattered1}
If $(X,\tau)$ is a compact space with property $\sone(\open,\open)$, then each nonempty subset has an isolated point. 
\end{lemma}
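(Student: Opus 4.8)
The plan is to deduce this from Lemma \ref{th:cptRothbScattered} by passing to closures. Suppose, toward a contradiction, that $A$ is a nonempty subset of $X$ with no isolated point in its subspace topology. The first step is to verify that the closure $\overline{A}$ of $A$ in $X$ is likewise crowded, i.e.\ has no isolated point: if some $x$ were isolated in $\overline{A}$, fix an open set $U\subseteq X$ with $U\cap\overline{A}=\{x\}$; since $x\in\overline{A}$, the neighborhood $U$ of $x$ must meet $A$, so $U\cap A\neq\emptyset$, which forces $x\in A$ and $U\cap A=\{x\}$, making $x$ isolated in $A$ -- a contradiction.

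Next I would observe that $\overline{A}$, being a closed subspace of the compact space $X$, is itself compact; it is Hausdorff because it is a subspace of the Hausdorff space $X$ (our blanket assumption); and it satisfies $\sone(\open,\open)$ because that property passes to closed subspaces, a fact already recalled earlier in the paper. Hence $\overline{A}$ is a nonempty compact $\textsf{T}_2$-space with the Rothberger covering property and no isolated point.

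Applying Lemma \ref{th:cptRothbScattered} to $\overline{A}$ then produces an isolated point of $\overline{A}$, contradicting the previous paragraph. Therefore no nonempty subset of $X$ can be crowded, which is exactly the assertion that every nonempty subset of $X$ has an isolated point.

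I do not anticipate a genuine obstacle here: the argument is a short reduction, and the only place that calls for a moment's care is the first step, the elementary point-set verification that the closure of a set without isolated points again has no isolated point.
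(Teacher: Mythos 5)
Your proposal is correct and is essentially the paper's own argument: pass to the closure of the putative crowded set, note it is compact, Hausdorff, inherits $\sone(\open,\open)$ as a closed subspace, and has no isolated points, contradicting Lemma \ref{th:cptRothbScattered}. Your additional point-set verification that the closure of a crowded set is crowded is a correct fleshing-out of a step the paper leaves implicit.
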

\begin{proof}
If not, suppose that $(X,\tau)$ is a compact space with property $\sone(\open,\open)$ and let $U$ be a nonempty set with no isolated points. Then also $\overline{U}$ has no isolated points, but is a compact space with the property $\sone(\open,\open)$, contradicting Theorem \ref{th:cptRothbScattered}.
\end{proof}

A topological space is scattered if each nonempty set has an isolated point. Thus, compact spaces with property $\sone(\open,\open)$ are scattered. For a topological space let $\mathcal{I}(X)$ denote the set of isolated points of $X$. Then we define the Cantor-Bendixson sequence of $X$ as follows:
\begin{itemize}
\item{ $X^0 = X$}
\item{For an ordinal $\alpha$, $X^{\alpha+1} = X\setminus X^{\alpha}$.}
\item{For a limit ordinal $\alpha$, $X^{\alpha} = \bigcap_{\beta<\alpha}X^{\beta}$}
\end{itemize}
Observe that $(X,\tau)$ is a scattered space if there is an ordinal $\alpha$ for which $X^{\alpha} = \emptyset$. The least value of $\alpha$ for which $X^{\alpha}=\emptyset$ is said to be the scattered height of $X$ and is denoted $ht(X)$. One can show that of $(X,\tau)$ is a compact scattered space, then $ht(X)$ is a successor ordinal.

\begin{lemma}\label{lemma:scattered2}
A compact scattered space has the property $\sone(\open,\open)$.
\end{lemma}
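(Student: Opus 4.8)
The plan is to prove Lemma \ref{lemma:scattered2} by induction on the scattered height $ht(X)$ of the compact scattered space $X$, using as the engine of the induction the following observation: if $X$ is compact and scattered with $ht(X) = \alpha + 1$, then the top Cantor--Bendixson level $X^{\alpha}$ is a finite, discrete, closed subset of $X$. (Here I use the indexing of the Cantor--Bendixson sequence as set up just before the statement; note that with $ht(X)$ a successor $\alpha+1$, the level $X^{\alpha}$ is a nonempty compact space all of whose points are isolated, hence finite.) The base case $ht(X) = 1$ means $X$ itself is finite and discrete, so every open cover has a finite subcover consisting of singletons, and $\sone(\open,\open)$ is trivial: given a sequence $(\open_n : n \in \naturals)$ of open covers, enumerate $X = \{p_1, \dots, p_m\}$ and for $n \le m$ pick $T_n \in \open_n$ with $p_n \in T_n$, for $n > m$ pick $T_n \in \open_n$ arbitrarily.

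For the inductive step, suppose the lemma holds for all compact scattered spaces of height $< \alpha+1$, and let $X$ be compact scattered with $ht(X) = \alpha+1$, so $F := X^{\alpha}$ is finite, say $F = \{p_1, \dots, p_m\}$. Let $(\open_n : n \in \naturals)$ be a sequence of open covers of $X$. Partition $\naturals$ into $m+1$ infinite pieces $S_1, \dots, S_m, S_{m+1}$. First, for each $i \le m$ and each $n \in S_i$, choose $T_n \in \open_n$ with $p_i \in T_n$; then $\bigcup_{i \le m}\{T_n : n \in S_i\}$ is an open set containing $F$. The key point now is that $K := X \setminus \bigcup\{T_n : n \in S_1 \cup \dots \cup S_m\}$ is a closed subset of the compact scattered space $X$ missing $F = X^{\alpha}$; hence $K$ is itself compact and scattered, and since $K \cap X^\alpha = \emptyset$ one checks by transfinite induction on the Cantor--Bendixson levels that $K^\beta \subseteq X^\beta$ for all $\beta$, so $K^\alpha = \emptyset$ and $ht(K) \le \alpha < \alpha + 1$. (Strictly, I should verify $K^\beta \subseteq X^\beta$: an isolated point of a closed subspace relative to the subspace need not be isolated in $X$, but the standard fact is that for $K$ closed in $X$ one has $K \cap X^\beta \supseteq K^\beta$ by induction, because a point of $K$ that survives to $K^\beta$ is a limit of points of $K \subseteq X$ surviving to lower levels; I will spell this out.) Thus $K$ is a compact scattered space of smaller height, and by the inductive hypothesis $K$ satisfies $\sone(\open,\open)$.

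Now apply $\sone(\open,\open)$ for $K$ to the sequence $(\open_n|_K : n \in S_{m+1})$ of open covers of $K$ (restricting the open sets of $X$ to $K$, or equivalently noting each $\open_n$ is still an open cover of the subspace $K$): obtain for each $n \in S_{m+1}$ a set $T_n \in \open_n$ such that $\{T_n : n \in S_{m+1}\}$ covers $K$. Then $\{T_n : n \in \naturals\}$ covers $X$: every point of $X$ is either in $\bigcup\{T_n : n \in S_1 \cup \dots \cup S_m\}$ or, failing that, lies in $K$ and hence in some $T_n$ with $n \in S_{m+1}$. This exhibits the required selector and completes the induction.

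The main obstacle I anticipate is the verification that the "residual" set $K = X \setminus \bigcup\{T_n : n \in \bigcup_{i\le m} S_i\}$ genuinely has strictly smaller scattered height than $X$ — i.e. the inequality $K^\beta \subseteq X^\beta$ for closed $K$. This is the one place where the argument is not purely formal game/selection bookkeeping and requires a careful transfinite induction invoking the fact that $F = X^\alpha$ was removed, so that $K^\alpha = \emptyset$. A secondary, smaller point to get right is the claim that for a compact scattered $X$ the top level $X^{ht(X)-1}$ is finite; this follows because it is a compact space in which every point is isolated (any point surviving to the top level is not a limit of the — now empty — next level), hence discrete and compact, hence finite. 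Both of these are standard facts about compact scattered spaces, and once they are in place the selection argument goes through routinely by splitting $\naturals$ into finitely many pieces to "capture" the finite top level and one more piece to recurse on the strictly lower-height closed remainder.
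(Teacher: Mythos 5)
Your argument is correct, and although it shares the paper's overall template (induction on the scattered height, spending finitely many of the given covers to swallow a finite distinguished set of points and then recursing on the closed remainder with the remaining covers), the distinguished set you use differs from the paper's, and the difference is substantive. The paper anchors the induction at the set of isolated points of $X$, asserting that this set is finite because $X$ is compact and that covering it leaves a compact scattered space of strictly smaller height; but a compact scattered space can have infinitely many isolated points (a convergent sequence already does), and even when all isolated points are covered, the closed remainder can retain the entire top Cantor--Bendixson level and hence have the same height when $ht(X)$ is infinite. You instead capture the top level $X^{\alpha}$, where $ht(X)=\alpha+1$, which genuinely is finite (it is closed in the compact space $X$ and relatively discrete), and your verification that $K^{\beta}\subseteq X^{\beta}$ for the closed remainder $K$ gives $K^{\alpha}=\emptyset$, hence $ht(K)\le\alpha$, which is exactly what makes the induction hypothesis applicable. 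So your route buys a correctly justified height-reduction step where the paper's version, as written, does not supply one. Two minor remarks: splitting $\naturals$ into $m+1$ infinite pieces is harmless but unnecessary --- it suffices to spend $\open_1,\dots,\open_m$ on the $m$ points of $X^{\alpha}$ and the covers $\open_n$ with $n>m$ on $K$, as the paper does with its finite set --- and in the degenerate case $K=\emptyset$ the remaining selections can simply be made arbitrarily.
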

\begin{proof}
We prove the lemma by induction on the ordinal $\alpha$ for which $ht(X)=\alpha+1$.
When $\alpha = 0$, then $X$ is a discrete finite set, and there is nothing to prove.

Assume that the lemma is proven for all compact scattered spaces $(X,\tau)$ with $ht(X) = \beta+1$ where $\beta<\alpha$. Now consider a compact scattered space $(X,\tau)$ with $ht(X) = \alpha+1$.  Then $X$ has a finite set of isolated points, being compact. Let $\{x_1,\cdots,x_n\}$ be the set of isolated points of $X$. Let $(\mathcal{U}_n:n\in\mathbb{N})$ be a given sequence of open covers of $X$. For $1\le i\le n$ choose $U_i\in\mathcal{U}_i$ so that $x_i\in {U}_i$. If $X = \cup\{U_i:i\le n\}$, we are done. Else, the subspace $Y = X\setminus \cup\{U_i:i\le n\}$ is a compact scattered space and $ht(Y) < \alpha+1$. Thus, by the induction hypothesis, $Y$ has the property $\sone(\open,\open)$. Applying this property to the sequence $(\mathcal{U}_m:n<m \mbox{ and }m\in\mathbb{N})$, we find for each $m$ with $n<m$ a set $U_m\in\mathcal{U}_m$ such that $Y$ is covered by the sets $U_i,\; n<i$. But then the selection $(U_j:j\in\mathbb{N})$ witnesses for teh given sequence of covers that $(X,\tau)$ has the property $\sone(\open,\open)$.
\end{proof}

Lemmas \ref{lemma:scattered1} and \ref{lemma:scattered2} prove that a compact $T_2$-space is scattered if, and only if, it has the property $\sone(\open,\open)$. 

\begin{lemma}\label{lemma:scattered3}
For a compact $T_2$ space $(X,\tau)$ the following statements are equivalent:
\begin{enumerate}
\item{$(X,\tau)$ has the property $\sone(\open,\open)$}
\item{ TWO has a winning strategy in the game $\gone(\open,\open)$}
\end{enumerate}
\end{lemma}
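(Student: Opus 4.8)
The implication $(2)\Rightarrow(1)$ is immediate: if TWO has a winning strategy in $\gone(\open,\open)$, then in particular ONE has no winning strategy, and TWO winning some play for every strategy of ONE forces $\sone(\open,\open)$ (this is the standard observation recorded right after Definition \ref{def:Game}, applied with $\mathcal A=\mathcal B=\open$; one also notes trivially that a winning strategy for TWO witnesses the selection principle directly by feeding it the given sequence of covers). So the work is all in $(1)\Rightarrow(2)$. By Lemmas \ref{lemma:scattered1} and \ref{lemma:scattered2}, $(X,\tau)$ having $\sone(\open,\open)$ is equivalent to $(X,\tau)$ being compact scattered, so I plan to describe a winning strategy for TWO directly from the Cantor--Bendixson analysis.

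\textbf{Main idea.} Since $X$ is compact scattered, $ht(X)=\alpha+1$ for some ordinal $\alpha$, and the strategy will be designed to make the ``Cantor--Bendixson rank of the uncovered part'' strictly decrease at every inning. Concretely, I would have TWO maintain, after inning $n$, a closed set $K_n\subseteq X$ consisting of the points not yet covered by TWO's choices $T_1,\dots,T_n$, together with the invariant that $K_n$ is compact scattered of strictly smaller scattered height than $K_{n-1}$ (with $K_0=X$). In inning $n$, ONE plays an open cover $\mathcal O_n$ of $X$; the set $K_{n-1}$ is compact, hence has only finitely many isolated points, and in fact its top Cantor--Bendixson level $K_{n-1}^{\,ht(K_{n-1})-1}$ is a nonempty finite set $\{y_1,\dots,y_{j}\}$ (here one uses that the height of a nonempty compact scattered space is a successor, as noted in the excerpt). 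Pick finitely many members of $\mathcal O_n$ covering $\{y_1,\dots,y_j\}$, and — since TWO may only play one set — first thin: actually I would instead have ONE's covers refined so TWO plays a single $T_n\in\mathcal O_n$ with $\{y_1,\dots,y_j\}\subseteq T_n$, which is possible only if $\mathcal O_n$ is an $\omega$-cover, which it need not be. To fix this, the cleaner route is to use a partition $\mathbb N=\bigsqcup_{k} S_k$ into finitely-... no: the number of top-level points can grow. Instead, the robust fix is: TWO maintains that $K_n$ is compact scattered and at each stage covers \emph{just one} isolated point of $K_{n-1}^{(ht-1)}$, so that after finitely many innings all of that finite top level is covered and the height drops; transfinite bookkeeping over the well-ordered height $\alpha+1$ then terminates the process.

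\textbf{Carrying it out.} Formally, I would set up TWO's strategy to track a pair $(K_n,\beta_n)$ where $\beta_n=ht(K_n)$, together with a ``to-do list'' $L_n$ which is the finite set of still-uncovered points in the current top CB-level of $K_n$; in inning $n$ TWO plays some $T_n\in\mathcal O_n$ containing one chosen point $p\in L_n$ (possible since $\mathcal O_n$ covers $X$), removes from the to-do list all points of $L_n$ that happen to land in $T_n$, sets $K_n=X\setminus\bigcup_{i\le n}T_i$, and — when the to-do list empties — resets it to the (finite) top CB-level of the new, lower-height $K_n$. The key claims to verify are: (a) $K_n$ is always closed in $X$ hence compact, and scattered, with $ht(K_n)\le\beta_{n-1}$, with strict inequality once its top level is exhausted; (b) the to-do list is refilled only finitely often before the height strictly drops, because a compact scattered space has only finitely many points at its top CB-level; (c) because the heights $\beta_n$ form a non-increasing sequence of ordinals, they are eventually constant, and by (b) that forces $K_n=\emptyset$ for some $n$, i.e. $\{T_i:i\le n\}$ already covers $X$, so $\{T_i:i\in\mathbb N\}$ is a cover and TWO has won.

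\textbf{Expected main obstacle.} The delicate point is (b) combined with termination: one must argue that the ordinal height genuinely drops after finitely many innings and cannot ``stall'' forever at a fixed height, and that the closed set $K_n=X\setminus\bigcup_{i\le n}T_i$ really does have its top Cantor--Bendixson level stripped off by the removals rather than merely shrunk. This is where compactness is essential — it guarantees the top CB-level of each $K_{n-1}$ is finite, so only finitely many innings are spent at each height value — and where one must be careful that removing an open set from a scattered compact space yields a compact scattered space of no larger height, with the points of the former top level that remain lying strictly below that level (they become isolated in, or removed from, the smaller space). Once these finiteness-and-rank bookkeeping facts are pinned down, the descending-ordinal argument closes the proof; I expect no analytic subtlety beyond this, since all the hard topological input is already packaged in Lemmas \ref{th:cptRothbScattered}--\ref{lemma:scattered2}.
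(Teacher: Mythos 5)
Your plan is correct, and at its core it is the same ``peel off a finite critical set and recurse on a lower-height compact scattered remainder'' idea as the paper's, but the execution differs in two ways worth recording. The paper proves $(1)\Rightarrow(2)$ by transfinite induction on the scattered height: in the successor step TWO spends the first finitely many innings covering what the paper describes as the finite set of isolated points of $X$, and then follows an inductively supplied winning strategy on the leftover closed subspace, whose height is smaller. You instead run one explicit strategy with rank bookkeeping: keep the closed set $K_n$ of points not yet covered, spend one inning per point of its top Cantor--Bendixson level, and terminate because a non-increasing sequence of ordinals cannot strictly decrease infinitely often. Two consequences of your packaging: (i) your finite target set is the top level $K^{(\beta)}$, which is closed and discrete in a compact space and hence genuinely finite, whereas the set of isolated points of a compact scattered space need not be finite (consider $\omega+1$); so your version quietly repairs the phrase ``its set of isolated points is finite'' used in the successor steps of Lemmas \ref{lemma:scattered2} and \ref{lemma:scattered3}, and the computation $K'^{(\beta)}\subseteq K^{(\beta)}\cap K'=\emptyset$ for the closed remainder $K'$ is exactly the right justification for the height drop; (ii) your ordinal-descent argument avoids composing TWO's play with a strategy on a subspace (restricting ONE's covers to $Y$ and lifting TWO's answers back), at the cost of the invariant maintenance you outline, all of which is routine given that derived sets of closed subspaces sit inside those of the ambient space and that the height of a nonempty compact scattered space is a successor. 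Your treatment of $(2)\Rightarrow(1)$ is the same as the paper's standard remark.
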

\begin{proof}
We must show that (1) implies (2). Thus, assume that $(X,\tau)$ has the property $\sone(\open,\open)$. By Lemma \ref{lemma:scattered1} the space $(X,\tau)$ is scattered, and has scattered height $ht(X) = \alpha+1$ for some ordinal $\alpha$. We prove the Lemma by induction on $\alpha$.

When $\alpha=0$ then $X$ is a finite set, and TWO's winning strategy is to in each inning cover a previously uncovered point. Thus, assume that $\alpha>0$ and that for each $\beta<\alpha$ it has been proven that for any compact $T_2$ space of scattered height $\beta+1$, TWO has a winning strategy in the game $\gone(\open,\open)$. Now let $(X,\tau)$ be a compact scattered space of height $\alpha+1$. Since $X$ has isolated points and is compact, its set of isolated points is finite, say the set of isolated points of $X$ is $\{x_1,\cdots,x_n\}$. TWO's winning strategy on $X$ proceeds as follows. During inning $i$ for $i\le n$, TWO chooses an element $T_i$ of ONE's open cover $\mathcal{U}_i$ such that $x_i\in T_i$. Then after the first $n$ innings passed, the space $Y = X\setminus \cup\{T_i:i\le n\}$ is compact, scattered, and of height $\beta+1<\alpha+1$. By the induction hypothesis TWO has a winning strategy $F_Y$ in the game $\gone(\open,\open)$ on $Y$. For the remainder of the game on $X$, TWO follows the winning strategy $F_Y$ on $Y$.
\end{proof}

\begin{theorem}\label{th:sigmacptRothb}
Consider a $\sigma$-compact topological space $(X,\tau)$. The following statements are equivalent:
\begin{enumerate}
\item{$(X,\tau)$ has the property $\sone(\open,\open)$}
\item{TWO has a winning strategy in the game $\gone(\open,\open)$}
\end{enumerate}
\end{theorem}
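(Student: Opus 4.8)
The plan is to prove the two implications separately. The direction $(2)\Rightarrow(1)$ is immediate: if TWO has a winning strategy $G$ in $\gone(\open,\open)$ on $X$, then given an arbitrary sequence $(\mathcal{O}_n:n\in\naturals)$ of open covers of $X$, let ONE play $\mathcal{O}_n$ in the $n$-th inning and let TWO respond according to $G$; the resulting play is won by TWO, so the responses $(T_n:n\in\naturals)$ satisfy $T_n\in\mathcal{O}_n$ and $\{T_n:n\in\naturals\}\in\open$, which is exactly a witness for $\sone(\open,\open)$. (Alternatively, TWO having a winning strategy forces ONE not to have one, whence $\sone(\open,\open)$ follows from the elementary observation after Definition \ref{def:Game}, or from Theorem \ref{thm:Pawlikowski}.) So the real content is $(1)\Rightarrow(2)$.

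For $(1)\Rightarrow(2)$, assume $X$ is $\sigma$-compact with $\sone(\open,\open)$, and write $X=\bigcup_{n\in\naturals}K_n$ with each $K_n$ compact; since $X$ is Hausdorff, each $K_n$ is closed in $X$. First I would record that $\sone(\open,\open)$ is inherited by closed subspaces: given a closed $K\subseteq X$ and open covers $(\mathcal{U}_m:m\in\naturals)$ of $K$ by sets open in $X$, the families $\mathcal{V}_m=\mathcal{U}_m\cup\{X\setminus K\}$ are open covers of $X$, and applying $\sone(\open,\open)$ on $X$ gives $W_m\in\mathcal{V}_m$ with $\{W_m:m\in\naturals\}$ covering $X$; since each point of $K$ lies in some $W_m\neq X\setminus K$, that $W_m$ lies in $\mathcal{U}_m$, and replacing the remaining $W_m$ by arbitrary elements of $\mathcal{U}_m$ yields a selection covering $K$. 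Thus each $K_n$ is a compact $\textsf{T}_2$ space with property $\sone(\open,\open)$, so by Lemma \ref{lemma:scattered3} TWO has a winning strategy $F_n$ in the game $\gone(\open,\open)$ played on $K_n$.

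The remaining and main step is to amalgamate the $F_n$ into a single winning strategy $F$ for TWO on $X$. Fix a partition $\naturals=\bigsqcup_{n\in\naturals}S_n$ into infinitely many infinite sets, and for an open cover $\mathcal{O}$ of $X$ write $\mathcal{O}\!\restriction\!K$ for $\{U\cap K:U\in\mathcal{O}\}$, an open cover of $K$. The strategy $F$ will play, in the innings indexed by $S_n$, according to $F_n$ "transported to $K_n$": when ONE plays $\mathcal{O}_m$ with $m\in S_n$, TWO feeds $F_n$ the history made up of the traces on $K_n$ of the earlier $S_n$-innings and of the current $\mathcal{O}_m\!\restriction\!K_n$, obtains a response of the form $U\cap K_n$ with $U\in\mathcal{O}_m$, fixes such a $U$, and plays $T_m=U$. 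Then for each $n$ the subsequence of the resulting $F$-play indexed by $S_n$ is a legitimate play of $\gone(\open,\open)$ on $K_n$ in which TWO followed $F_n$, hence is won by TWO, so $K_n\subseteq\bigcup_{m\in S_n}(T_m\cap K_n)\subseteq\bigcup_{m\in S_n}T_m$; consequently $X=\bigcup_n K_n\subseteq\bigcup_{m\in\naturals}T_m$, i.e.\ TWO wins every $F$-play, so $F$ is winning.

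I expect the main obstacle to be organizational rather than conceptual: one must keep the indexing straight so that each $F_n$ is supplied \emph{exactly} its own history (the $S_n$-subsequence of traces), verify that the traces $\mathcal{O}_m\!\restriction\!K_n$ are genuine open covers of $K_n$ (clear, since $\bigcup\mathcal{O}_m=X\supseteq K_n$), and confirm that the recursive construction makes TWO's actual moves project to the $F_n$-play being tracked. Everything else is routine, and no new topological input beyond Lemma \ref{lemma:scattered3} and closed-hereditariness of $\sone(\open,\open)$ is needed.
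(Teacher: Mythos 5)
Your proposal is correct and follows essentially the same route as the paper: decompose $X$ into compact pieces, obtain TWO's winning strategies on each piece from Lemma \ref{lemma:scattered3} (using closed-hereditariness of $\sone(\open,\open)$, which you spell out and the paper only cites), and amalgamate them over a partition of $\naturals$ into infinitely many infinite sets by playing the traces on each $K_n$. The only cosmetic difference is that the paper fixes a well-ordering $\prec$ of the open sets to make TWO's lift of $F_n$'s response canonical, where you simply fix some $U\in\mathcal{O}_m$ with the prescribed trace.
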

\begin{proof}
We must prove that (1) implies (2). Fix a well-ordering $\prec$ of the family of open subsets of $X$. Since $X$ is $\sigma$-compact, write $X$ as an increasing union of countably many subsets $X_n$, $n\in\mathbb{N}$, where each $X_n$ is compact. By Lemma \ref{lemma:scattered3}, for each $n$ TWO has a winning strategy $F_n$ in the game $\gone(\open,\open)$ played on $X_n$. We now define a strategy for TWO in the game on $X$, utilizing the strategies $F_n$. First, write $\mathbb{N}$ as a union of countably many pairwise disjoint infinite sets, $S_n$, $n\in \mathbb{N}$. For each $n$ enumerate $S_n$ as $(s^n_k:k\in\mathbb{N})$ so that for $k<\ell$, $s^n_k<s^n_{\ell}$. Now define TWO's strategy $F$ for the game $\gone(\open,\open)$ on $X$ as follows:

Given inning $k$, and the first $k$ moves $O_1,\cdots,O_k$ of ONE, first find $n$ so that $k\in S_n$, and then identify $\ell$ so that $k = s^n_{\ell}$. Then define $F(O_1,\cdots,O_k)$ as follows:
For $1\le i\le \ell$ define $\mathcal{V}_i$ to be the set
$\{U\cap X_n: U\in O_{s^n_i}\}$, and then compute
\[
  T = F_n(O_{s^n_1},\cdots,O_{s^n_{\ell}}),
\] 
TWO's response in the game $\gone(\open_{X_n},\open_{X_n})$ using the winning strategy $F_n$. Then choose the $\prec$-first $U\in O_k$ for which $U\cap X_n = T$. Declare $F(O_1,\cdots,O_k) = U$.

We claim that $F$ as defined here is a winning strategy for TWO in the game $\gone(\open,\open)$ played on $X$. For consider any $F$-play, say
\[
  O_1, F(O_1), O_2, F(O_1,O_2), \cdots, O_k, F(O_1,\cdots O_k),\cdots
\]
Consider a point $x\in X$, and choose the smallest $n$ with $x\in X_n$. Recall that $S_n$ is enumerated as $s^n_1 < s^n_2 < \cdots < s^n_{\ell} < \cdots$. Consider the subsequence
\[
  O_{s^n_1}, F(O_1,\cdots,O_{s^n_1}), O_{s^n_2}, F(O_1,\cdots,O_{s^n_2}), \cdots, O_{s^n_k}, F(O_1,\cdots,O_{s^n_k}), \cdots
\]
of the given $F$-play.

In then notation above used in the definition of $F$ we have $\mathcal{V}_i = \{U\cap X_n: U\in O_{s^n_i}\}$ is for each $i$ an open cover of $X_n$, and $X_n\cap F(O_1,\cdots,O_{s^n_i}) = F_n(\mathcal{V}_1,\cdots,\mathcal{V}_{i})$ for each$i$. Since
\[
 \mathcal{V}_1, F_n(\mathcal{V}_1), \mathcal{V}_2, F_n(\mathcal{V}_1,\mathcal{V}_2), \cdots
\]
is an $F_n$ play of the game $\gone(\open_{X_n},\open_{X_n})$ on $X_n$, and $F_n$ is a winning strategy for TWO in this game, we find that for some $k$ the point $x$ is an element of $F(O_1,\cdots,O_{s^n_k})$.
\end{proof}

For $\sigma$-compact spaces the fact that TWO has a winning strategy in the game $\gone(\open,\open)$ is in in turn equivalent to a strong Ramseyan partition relation on the $\omega$-covers of the space. Towards formulating this stronger property we introduce yet another class of open covers for topological spaces, first studied by Gerlits and Nagy in \cite{GN}. 
An open cover $\mathcal{U}$ of a topological space $(X,\tau)$ is said to be a $\gamma$-cover if it is infinite and for each $x\in X$, for all but finitely many $U\in\mathcal{U}$ it is the case that $x\in U$. The symbol $\Gamma$ denotes the set of all $\gamma$-covers of $X$.
In Theorem 1 of \cite{GN} the authors prove
\begin{theorem}[Gerlis and Nagy] \label{th:GNTWO} For a $\textsf{T}_{3\frac{1}{2}}$-space $(X,\tau)$ the following are equivalent:
\begin{enumerate}
\item{TWO has a winning strategy in the game $\gone(\open,\open)$ on $X$}
\item{TWO has a winning strategy in the game $\gone(\Omega,\Gamma)$ on $X$}
\end{enumerate}
\end{theorem}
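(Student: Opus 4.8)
The plan is to prove the two implications separately, in each case transferring a winning strategy for TWO from one game to the other by running a ``shadow play'' of the target game in the background. The two games are not trivially comparable: in $\gone(\open,\open)$ player ONE may play any open cover but TWO only has to produce a cover, whereas in $\gone(\Omega,\Gamma)$ player ONE is confined to $\omega$-covers but TWO must produce a $\gamma$-cover. So each direction simultaneously makes ONE's moves less (or more) general and must upgrade a mere cover to a $\gamma$-cover (or extract a cover from a $\gamma$-covering strategy), and hence each implication needs a genuine construction; the recurring technical device will be a \emph{fusion}, i.e.\ a recursion along a tree of partial plays. (Gerlits and Nagy's own route runs through the Fr\'echet--Urysohn property of $C_p(X)$, but that machinery is outside the present excerpt, so the self-contained route below is preferable here.)

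For $(2)\Rightarrow(1)$, fix a winning strategy $\tau$ for TWO in $\gone(\Omega,\Gamma)$. Since TWO wins, ONE has no winning strategy, so $\sone(\Omega,\Gamma)$ holds, hence $\sone(\Omega,\Omega)$ holds; by Theorem~\ref{thm:XisYRothberger} every finite power of $X$ then has the Rothberger property and is therefore Lindel\"of, so by Theorem~\ref{Th:ArkhPytk} every $\omega$-cover of $X$ contains a countable $\omega$-subcover, and $X$ itself is Lindel\"of. (In particular ONE has no winning strategy in $\gone(\open,\open)$ by Theorem~\ref{thm:rothberger}, but we need the stronger conclusion that TWO \emph{wins}.) Now let ONE play open covers $\mathcal{U}_1,\mathcal{U}_2,\dots$ in a run of $\gone(\open,\open)$. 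Discarding the trivial case in which some $\mathcal{U}_n$ has a finite subcover, we may assume each $\mathcal{U}_n$ is countable and that the family $\mathcal{W}_n$ of all finite unions of members of $\mathcal{U}_n$ which are proper subsets of $X$ is an $\omega$-cover of $X$. TWO maintains the shadow play of $\gone(\Omega,\Gamma)$ in which ONE plays $\mathcal{W}_1,\mathcal{W}_2,\dots$ and TWO follows $\tau$ (legitimate, since by the time TWO must move in inning $n$ the cover $\mathcal{W}_n$ has been determined), obtaining $\tau$-answers $G_n=U^n_{i^n_1}\cup\cdots\cup U^n_{i^n_{j_n}}\in\mathcal{W}_n$ for which $\{G_n:n\in\mathbb{N}\}$ is a $\gamma$-cover of $X$. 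The remaining step --- and the main obstacle --- is to convert this into an actual play of $\gone(\open,\open)$, that is, to choose a single member of each $\mathcal{U}_n$ so that the chosen members still cover $X$; I would do this by a fusion in which TWO spreads the finitely many summands of each $G_k$ over a block of later innings and, using that each point of $X$ lies in \emph{cofinitely many} of the $G_k$, re-applies $\tau$ to auxiliary $\omega$-covers to absorb the summands it was forced to drop. It is precisely the strength of ``$\gamma$-cover'' over ``cover'' that leaves enough room for this redistribution to succeed.

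For $(1)\Rightarrow(2)$, fix a winning strategy $\sigma$ for TWO in $\gone(\open,\open)$ and let ONE play $\omega$-covers $\mathcal{W}_1,\mathcal{W}_2,\dots$ in a run of $\gone(\Omega,\Gamma)$. Since each $\mathcal{W}_n$ is in particular an open cover, a single shadow play of $\sigma$ against $\mathcal{W}_1,\mathcal{W}_2,\dots$ immediately yields selections $W_n\in\mathcal{W}_n$ with $\bigcup_nW_n=X$; the difficulty is that this is only a cover, not a $\gamma$-cover. To upgrade it I would run not one but a tree of shadow plays of $\sigma$: at a node where TWO has already committed to $W_1,\dots,W_k$, the remaining covers $\mathcal{W}_{k+1},\mathcal{W}_{k+2},\dots$ are still $\omega$-covers, and TWO exploits this by feeding $\sigma$ modified open covers (for instance the covers whose members are $W_1\cup\cdots\cup W_k\cup W$ with $W\in\mathcal{W}_j$, or the ``intersection'' $\omega$-covers built from $\mathcal{W}_1,\dots,\mathcal{W}_j$, whose members swallow all prescribed finite sets) so arranged that, along the branch TWO eventually follows, each point of $X$ is captured from some level onwards and never lost again; a countable-$\omega$-subcover reduction (again from Theorem~\ref{Th:ArkhPytk}, available since TWO-winning forces all finite powers Rothberger) keeps the tree of manageable width. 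Making this bookkeeping converge --- so that the branch exists and its selections form a $\gamma$-cover --- is the heart of the argument and the place I expect to do the real work; the $\textsf{T}_{3\frac{1}{2}}$ hypothesis is used here, guaranteeing that $X$ is uniformizable (cf.\ Sections~2 and~3) and hence carries enough well-behaved covers for the fusion to be carried out, and when $X$ is in addition $\sigma$-compact one recovers the sharper conclusions of Theorem~\ref{th:sigmacptRothb}.
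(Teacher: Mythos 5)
A point of orientation first: the paper does not prove this statement at all --- it is quoted as Theorem 1 of Gerlits and Nagy \cite{GN} --- so there is no internal argument to measure yours against, and your proposal must stand on its own. As written it does not, because in both directions the decisive step is announced rather than carried out. In $(2)\Rightarrow(1)$ the passage to the $\omega$-covers $\mathcal{W}_n$ of proper finite unions is standard and fine, but converting the $\gamma$-cover $\{G_n:n\in\mathbb{N}\}$ into a legal run of $\gone(\open,\open)$ is precisely the content of the implication, and the mechanism you indicate --- ``spread the finitely many summands of each $G_k$ over a block of later innings'' --- is not available: the summands of $G_k$ are members of $\mathcal{U}_k$, whereas in any inning $m>k$ TWO is obliged to choose from $\mathcal{U}_m$, so those summands can never be played later. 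Some genuinely different device is needed (for instance running the shadow strategy along a tree indexed by the possible finite unions, or passing through Galvin's point-open duality), and ``re-applying $\tau$ to auxiliary $\omega$-covers'' only gestures at it. Moreover, the case you discard as trivial --- some $\mathcal{U}_n$ has a finite subcover --- is not trivial in the strategic setting: TWO still selects only one member of $\mathcal{U}_n$ and cannot recover the remaining members of that finite subcover in later innings. A compact non-scattered space illustrates the danger: there every move of ONE admits a finite subcover, yet TWO has no winning strategy in $\gone(\open,\open)$ (Lemmas \ref{lemma:scattered1} and \ref{lemma:scattered3}), so this case has to be absorbed into the bookkeeping, not set aside.

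In $(1)\Rightarrow(2)$ the gap is even wider: the entire difficulty is to upgrade the covers produced by $\sigma$ to a $\gamma$-cover, i.e.\ to arrange that each point lies in all but finitely many of TWO's selections, and your description (``a tree of shadow plays \dots{} so arranged that each point is captured from some level onwards and never lost again'') restates the goal rather than supplying a construction. Note that the non-strategic analogue is not available --- $\sone(\Omega,\Omega)$ does not in general yield $\sone(\Omega,\Gamma)$ --- so the existence of a winning strategy must be exploited in some essential, explicit way, and nothing in the sketch shows where that happens. The closing appeal to $\textsf{T}_{3\frac{1}{2}}$ as ``guaranteeing enough well-behaved covers for the fusion'' is likewise unsubstantiated; in Gerlits--Nagy the complete-regularity hypothesis enters through the function space $C_p(X)$, not through uniformities. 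In sum, the proposal is a reasonable plan whose two hard steps are left open, and the one concrete mechanism it does propose (redistributing the summands of $G_k$ over later innings) would fail as written.
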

It is evident that when TWO has a winning strategy in the game $\gone(\Omega,\Gamma)$, then ONE does not have a winning strategy in that game, whence in turn $\sone(\Omega,\Gamma)$ holds. 

The main theorem of this section is as follows:
\begin{theorem}\label{th:gammasets} For a $\sigma$-compact space $(X,\tau)$ the following are equivalent:
\begin{enumerate}
\item{$(X,\tau)$ has the property $\sone(\open,\open)$.}
\item{TWO has a winning strategy in the game $\gone(\open,\open)$.}
\item{$(X,\tau)$ has the property $\sone(\Omega,\Gamma)$.}
\item{For all $m$ and $k$ the partition relation $\Omega\rightarrow(\Gamma)^m_k$ holds.}
\end{enumerate}
\end{theorem}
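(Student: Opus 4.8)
The plan is to prove the theorem by closing the cycle of implications $(1)\Rightarrow(2)\Rightarrow(3)\Rightarrow(4)\Rightarrow(1)$, in each case quoting machinery already developed in the paper. The equivalence $(1)\Leftrightarrow(2)$ requires nothing new: it is exactly Theorem \ref{th:sigmacptRothb}, which says that for a $\sigma$-compact space the property $\sone(\open,\open)$ is equivalent to TWO having a winning strategy in $\gone(\open,\open)$. For $(2)\Rightarrow(3)$, I would feed the winning strategy of TWO in $\gone(\open,\open)$ into Theorem \ref{th:GNTWO} (Gerlits--Nagy), obtaining a winning strategy for TWO in $\gone(\Omega,\Gamma)$; since TWO winning a game forbids ONE from winning it, and, by the general remark from Section 2, ONE having no winning strategy in $\gone(\mathcal{A},\mathcal{B})$ forces $\sone(\mathcal{A},\mathcal{B})$, this yields $\sone(\Omega,\Gamma)$, i.e. $(3)$. (This is precisely the chain of implications flagged immediately after Theorem \ref{th:GNTWO}.)

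For $(3)\Rightarrow(4)$ I would first extract two bookkeeping consequences of $\sone(\Omega,\Gamma)$. Whenever a $\gamma$-cover is obtained as a selection out of a sequence of $\omega$-covers, its members are proper subsets of $X$ (each lies in some $\omega$-cover, which by definition omits $X$), and a $\gamma$-cover has every finite subset of $X$ inside all but finitely many of its members; hence such a $\gamma$-cover is an $\omega$-cover, so $\sone(\Omega,\Gamma)$ implies $\sone(\Omega,\Omega)$. Because $X$ is $\sigma$-compact, every $\omega$-cover of $X$ has a countable $\omega$-subcover (Theorem \ref{Th:ArkhPytk}), so Theorem \ref{egprothberger} applies and turns $\sone(\Omega,\Omega)$ into the full family of partition relations $\Omega\rightarrow(\Omega)^n_k$, all $n,k$. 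The second consequence: applying $\sone(\Omega,\Gamma)$ to a constant sequence $(\mathcal{B},\mathcal{B},\dots)$ shows that every $\omega$-cover $\mathcal{B}$ of $X$ contains a $\gamma$-subcover. Now, given $m$, $k$, an $\omega$-cover $\mathcal{U}$, and a colouring $f\colon[\mathcal{U}]^m\to\{1,\dots,k\}$, I would apply $\Omega\rightarrow(\Omega)^m_k$ to get an $\omega$-subcover $\mathcal{B}\subseteq\mathcal{U}$ on which $f$ is constant, then shrink $\mathcal{B}$ to a $\gamma$-subcover $\mathcal{C}\subseteq\mathcal{B}$; since $[\mathcal{C}]^m\subseteq[\mathcal{B}]^m$ and $\mathcal{C}\in\Gamma$, this witnesses $\Omega\rightarrow(\Gamma)^m_k$.

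For $(4)\Rightarrow(1)$ I would use only the instance $m=k=2$: given an $\omega$-cover $\mathcal{U}$ and $f\colon[\mathcal{U}]^2\to\{1,2\}$, $\Omega\rightarrow(\Gamma)^2_2$ produces $\mathcal{B}\subseteq\mathcal{U}$ with $\mathcal{B}\in\Gamma$ and $f$ constant on $[\mathcal{B}]^2$; as in the previous paragraph $\mathcal{B}$ is then an $\omega$-cover, so $\Omega\rightarrow(\Omega)^2_2$ holds, whence $\sone(\Omega,\Omega)$ by Theorem \ref{thm:RothRamsey}; since an $\omega$-cover is an open cover, this gives $\sone(\Omega,\open)$, which by Theorem \ref{thm:OmegaEquiv} (taken with $Y=X$, so that $\open_X=\open$) is just $\sone(\open,\open)$. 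The one point that calls for care is $(2)\Rightarrow(3)$: it invokes Theorem \ref{th:GNTWO}, whose hypothesis is the $\textsf{T}_{3\frac{1}{2}}$ separation axiom, so one must be content that the $\sigma$-compact Hausdorff spaces in play are Tychonoff (the relevant reductions land in compact Hausdorff, hence Tychonoff, pieces via the scattered decomposition of Lemmas \ref{lemma:scattered1}--\ref{lemma:scattered3}). The remaining steps are either direct citations or straightforward combinatorial manipulations of $\omega$- and $\gamma$-covers, so I expect no other difficulty.
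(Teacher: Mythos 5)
Your proposal is correct, and its overall architecture --- the cycle $(1)\Rightarrow(2)\Rightarrow(3)\Rightarrow(4)\Rightarrow(1)$ with Theorem \ref{th:sigmacptRothb} for $(1)\Leftrightarrow(2)$ and Theorem \ref{th:GNTWO} plus the standard game-to-selection remark for $(2)\Rightarrow(3)$ --- is exactly the paper's. Where you diverge is in the last two implications. For $(3)\Rightarrow(4)$ the paper is lighter: it applies $\sone(\Omega,\Gamma)$ once to extract a countable $\gamma$-subcover $\mathcal{V}\subseteq\mathcal{U}$, observes that every infinite subset of a $\gamma$-cover is again a $\gamma$-cover, and then invokes only the classical Ramsey theorem (Theorem \ref{Th:Ramsey}) on $f\restriction[\mathcal{V}]^m$; no appeal to Theorem \ref{egprothberger} is needed there. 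You instead first deduce $\sone(\Omega,\Omega)$ from $\sone(\Omega,\Gamma)$, run the heavy equivalence Theorem \ref{egprothberger} to get $\Omega\rightarrow(\Omega)^m_k$, and then shrink the homogeneous $\omega$-subcover to a $\gamma$-subcover by a second application of $\sone(\Omega,\Gamma)$; this works (the shrinking step is sound since $[\mathcal{C}]^m\subseteq[\mathcal{B}]^m$ and the selected $\gamma$-cover, consisting of members of an $\omega$-cover, is itself an $\omega$-cover), but it imports far more machinery than the paper's one-line use of Ramsey's theorem, and the closure of $\Gamma$ under infinite subsets is the cleaner structural fact to exploit. For $(4)\Rightarrow(1)$ the two arguments are again parallel (both note $\Gamma$-homogeneous subfamilies of an $\omega$-cover are $\omega$-covers), the paper passing through Theorem \ref{egprothberger} and Theorem \ref{thm:XisYRothberger}, you through the minimal instance $\Omega\rightarrow(\Omega)^2_2$, Theorem \ref{thm:RothRamsey} and Theorem \ref{thm:OmegaEquiv}; either citation chain is legitimate. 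Finally, the $\textsf{T}_{3\frac{1}{2}}$ caveat you raise for $(2)\Rightarrow(3)$ is real but applies verbatim to the paper's own proof, which cites Theorem \ref{th:GNTWO} without comment; your suggested repair via the scattered/compact decomposition is only a sketch (the Gerlits--Nagy equivalence is about the game on $X$ itself, not on its compact pieces), so if you want to address it you would need a genuine argument that the spaces in question are Tychonoff, not just that their compact subspaces are.
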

\begin{proof}
The equivalence of (1) and (2) is proven in Theorem \ref{th:sigmacptRothb}. That (2) implies (3) follows from Theorem \ref{th:GNTWO}. To see that (3) implies (4), let $\mathcal{U}$ be an $\omega$-cover of $X$. We may assume that $\mathcal{U}$ is countable. Let a function $f:\lbrack\mathcal{U}\rbrack^m\rightarrow\{1,\cdots,k\}$ be given. By (3) let $\mathcal{V}\subset\mathcal{U}$ be such that $\mathcal{V}$ is a $\gamma$-cover of $X$. Note that any infinite subset of $\mathcal{V}$ is also a $\gamma$-cover of $X$. Now apply Ramsey's Theorem to the coloring $f$ restricted to the set of $m$-tuples of $\mathcal{V}$. We find an infinite subset $\mathcal{W}$ of $\mathcal{V}$, and an $i\in\{1,\cdots,k\}$, such that on $\lbrack\mathcal{W}\rbrack^m$ the function $f$ is constant and has value $i$. Finally, to see that (4) implies (1), note that since $\Gamma\subset\Omega$, (4) implies that for all $m$ and $k$ the partition relation $\Omega\rightarrow(\Omega)^m_k$ holds. But then by Theorem \ref{egprothberger} the space $(X,\tau)$ has the property $\sone(\Omega,\Omega)$, and thus by Theorem \ref{thm:XisYRothberger} has the property $\sone(\open,\open)$.
\end{proof}
To see that the hypothesis of being a compact space with the property $\sone(\open,\open)$ is not ``trivial", note the following: Metrizable spaces that are compact and have property $\sone(\open,\open)$ are necessarily countable. But for each infinite cardinal number $\kappa$ there is a $\textsf{T}_2$ topological space of cardinality $\kappa$ that is $\sigma$-compact and has the property $\sone(\open,\open)$. The following examples of this phenomenon are based on a result of Corson (Proposition 4 of \cite{Corson}), and were pointed out in \cite{MSUniform}: 
\begin{theorem} [Corson]\label{th:Corson}
If $\{X_i:i\in I\}$ is a family of $\sigma$-compact topological groups where for each $i$ the identity element of $X_i$ is $e_i$, then the group
\[
  G = \{f\in\prod_{i\in I}X_i:\vert\{j\in I:f(j)\neq e_j\}\vert <\aleph_0\}
\]
is $\sigma$-compact.
\end{theorem}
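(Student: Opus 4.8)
The statement to prove is Corson's theorem: the $\Sigma$-product-like subgroup $G$ of finite-support elements in a product of $\sigma$-compact topological groups is itself $\sigma$-compact. Here is my plan.

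\medskip

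The plan is to exhibit $G$ explicitly as a countable union of compact sets. First I would fix, for each $i \in I$, a presentation $X_i = \bigcup_{n\in\naturals} K^i_n$ with each $K^i_n$ compact, $e_i \in K^i_n$, and $K^i_n \subseteq K^i_{n+1}$ (the assumption $e_i\in K^i_n$ is harmless since we may replace $K^i_n$ by $K^i_n\cup\{e_i\}$). The key indexing device is this: an element $f\in G$ is determined by its finite support $S = \{j : f(j)\neq e_j\}$ together with the values $f(j)$ for $j\in S$. So for a finite set $S\subseteq I$ and a natural number $n$, define
\[
  C(S,n) = \{f\in G : f(j) = e_j \text{ for } j\notin S,\ f(j)\in K^j_n \text{ for } j\in S\}.
\]
Then $C(S,n)$ is, via the obvious bijection with $\prod_{j\in S}K^j_n$ (a finite product of compact sets), homeomorphic to a compact space in the subspace topology inherited from $\prod_{i\in I}X_i$; here I would invoke the Tychonoff theorem for the finite product and check that the natural embedding of $\prod_{j\in S}K^j_n$ into the product space, with all other coordinates fixed at $e_j$, is a homeomorphism onto $C(S,n)$ — this is routine since projections are continuous and a continuous bijection from a compact space to a Hausdorff space is a homeomorphism. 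Finally $G = \bigcup\{C(S,n) : S\in[I]^{<\aleph_0},\ n\in\naturals\}$, because any $f\in G$ has finite support $S$ and, the support being finite, there is a single $n$ with $f(j)\in K^j_n$ for all $j\in S$ (take $n$ to be the maximum of the finitely many indices needed, using that the $K^j_n$ increase with $n$).

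\medskip

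The main obstacle is that the index family $\{C(S,n)\}$ is indexed by $[I]^{<\aleph_0}\times\naturals$, which is uncountable when $I$ is uncountable, so this is \emph{a priori} only a representation of $G$ as an uncountable union of compact sets, not a witness of $\sigma$-compactness. To repair this I would reorganize by the \emph{size} of the support rather than the support itself: for $r,n\in\naturals$ set
\[
  D(r,n) = \bigcup\{C(S,n) : S\in[I]^{\le r}\},
\]
so that $G = \bigcup_{r,n\in\naturals}D(r,n)$ is a genuinely countable union, and it remains to check each $D(r,n)$ is compact. For this I would note that $D(r,n)$ is the continuous image of a compact space: consider the compact space $(K_n)^{r}$ where $K_n$ is the one-point-union-style compact set obtained as follows — actually the clean way is to observe $D(r,n)$ is closed in $\prod_i X_i$ and contained in the compact set $\prod_i K^{(i)}$ where $K^{(i)} = K^{i}_n$ if we... no: the issue is that membership in $D(r,n)$ imposes "$f(j)=e_j$ for all but $\le r$ coordinates", which is not a closed condition. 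Instead I would realize $D(r,n)$ as the image of $\big(\bigsqcup_{i\in I}K^i_n\big)^{r}$ — where $\bigsqcup$ denotes the one-point union identifying all the basepoints $e_i$, made compact as a quotient — under the multiplication-and-placement map sending $(y_1,\dots,y_r)$ (each $y_s$ lying in some coordinate $i_s$) to the function with those values; continuity follows from continuity of the group operations and of coordinate insertions, and the domain is compact as a finite product of the compact quotient $\bigsqcup_i K^i_n$. Hence $D(r,n)$ is compact, completing the proof.

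\medskip

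So the steps in order: (1) set up the increasing compact exhaustions $K^i_n$ with basepoint $e_i$; (2) define $C(S,n)$ and verify each is compact and homeomorphic to a finite product $\prod_{j\in S}K^j_n$; (3) check $G = \bigcup_{S,n}C(S,n)$; (4) regroup into $D(r,n) = \bigcup_{|S|\le r}C(S,n)$ to get a \emph{countable} union $G = \bigcup_{r,n}D(r,n)$; (5) prove each $D(r,n)$ is compact, as the continuous image of the compact space $(\,\bigsqcup_{i\in I}K^i_n\,)^r$ — the one-point union of the $K^i_n$ over their common basepoint — under the natural placement/multiplication map. Step (5), verifying compactness of the one-point union and continuity of the placement map into the product topology, is the delicate point and the one I would write out most carefully; everything else is bookkeeping with the product topology and the group operations.
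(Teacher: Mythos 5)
Your decomposition $G=\bigcup_{r,n}D(r,n)$ with $D(r,n)=\bigcup\{C(S,n):|S|\le r\}$ is the right skeleton, but step (5), where all the content sits, rests on a false claim: the one-point union of the $K^i_n$ (the quotient of the disjoint union identifying all the basepoints $e_i$) is \emph{not} compact once infinitely many $K^i_n$ have more than one point. Indeed, for each such $j$ fix a proper open neighborhood $V_j$ of $e_j$ in $K^j_n$, and for each $i\in I$ let $W_i$ be the image in the quotient of all of copy $i$ together with $V_j$ inside every other copy $j$; each $W_i$ is open in the quotient topology and $\{W_i:i\in I\}$ covers the wedge, but a finite union $W_{i_1}\cup\cdots\cup W_{i_k}$ meets a copy $j\notin\{i_1,\dots,i_k\}$ only in $V_j$, so there is no finite subcover (take, e.g., every $X_i=\integers$ and $K^i_n=\{-n,\dots,n\}$ with $I$ infinite). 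So you cannot get compactness of $D(r,n)$ as a continuous image of a power of that quotient, and the proof as written has a genuine gap.

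The irony is that the route you explicitly considered and discarded is the one that works, and your reason for discarding it is mistaken: the condition ``$f(j)=e_j$ for all but at most $r$ coordinates'' \emph{is} a closed condition in $\prod_{i\in I}X_i$. Its complement is open: if $g$ has $r+1$ coordinates $j_1,\dots,j_{r+1}$ with $g(j_s)\neq e_{j_s}$, choose (using that the groups are Hausdorff, the paper's blanket assumption) open sets $U_s\ni g(j_s)$ with $e_{j_s}\notin U_s$; then the basic open set $\{f: f(j_s)\in U_s \mbox{ for } 1\le s\le r+1\}$ is a neighborhood of $g$ all of whose members have more than $r$ nonidentity coordinates. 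Consequently $D(r,n)=\{f:\vert\{j:f(j)\neq e_j\}\vert\le r\}\cap\prod_{i\in I}K^i_n$ is a closed subset of the product $\prod_{i\in I}K^i_n$, which is compact by Tychonoff's theorem (here is where your normalization $e_i\in K^i_n$ is used), hence $D(r,n)$ is compact, and $G=\bigcup_{r,n\in\naturals}D(r,n)$ witnesses $\sigma$-compactness. With step (5) replaced by this observation your argument is complete; note that the paper itself states the theorem with a citation to Corson and gives no proof to compare against.
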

\begin{proposition}\label{prop:sigmacompactRothberger}
For each infinite cardinal number $\kappa$ there is a $\textsf{T}_{3\frac{1}{2}}$ $\sigma$-compact topological group of cardinality $\kappa$ which has the property $\sone(\open,\open)$.
\end{proposition}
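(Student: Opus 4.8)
The plan is to realize the required group as the ``finite-support power'' of a finite group. Fix the two-element group $H = \integers_2 = \{0,1\}$ with the discrete topology; being finite, $H$ is a compact, scattered, $\textsf{T}_{3\frac{1}{2}}$ topological group. Let $I$ be a set with $\vert I\vert = \kappa$ and put
\[
  G \;=\; \Bigl\{f\in H^I : \bigl\vert\{i\in I : f(i)\neq 0\}\bigr\vert < \aleph_0\Bigr\},
\]
a subgroup of the topological group $H^I$, equipped with the subspace topology. Then $G$ is a topological group, and since $H^I$ is a product of $\textsf{T}_{3\frac{1}{2}}$ spaces and both products and subspaces of $\textsf{T}_{3\frac{1}{2}}$ spaces are again $\textsf{T}_{3\frac{1}{2}}$, the group $G$ is $\textsf{T}_{3\frac{1}{2}}$. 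By Theorem \ref{th:Corson} the group $G$ is $\sigma$-compact. For infinite $\kappa$ one has $G = \bigcup\{H^S : S\in\lbrack I\rbrack^{<\aleph_0}\}$ (identifying a function with its restriction to its support), and since $\vert\lbrack I\rbrack^{<\aleph_0}\vert = \kappa$ while each $H^S$ is finite, $\vert G\vert\le\kappa$; the injection sending $i\in I$ to the function taking value $1$ at $i$ and $0$ elsewhere gives $\vert G\vert\ge\kappa$, so $\vert G\vert = \kappa$. What remains is to prove that $G$ has the property $\sone(\open,\open)$.

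For this I would use the explicit $\sigma$-compact decomposition $G = \bigcup_{n\in\naturals}C_n$, where
\[
  C_n \;=\; \Bigl\{f\in H^I : \bigl\vert\{i\in I : f(i)\neq 0\}\bigr\vert \le n\Bigr\}.
\]
Each $C_n$ is a closed subset of the compact Hausdorff space $H^I$ (its complement, the set of functions with at least $n+1$ nonzero coordinates, is open), hence $C_n$ is compact and $\textsf{T}_2$. The key step is that each $C_n$ is scattered, proved by induction on $n$ via an identification of the isolated points of $C_n$: a function $f\in C_n$ whose support has size exactly $n$ is isolated in $C_n$, since the basic neighbourhood of $f$ in $H^I$ fixing those $n$ support-coordinates to the values of $f$ meets $C_n$ only in $\{f\}$; whereas any $f\in C_n$ with support of size $<n$ is non-isolated, since, $I$ being infinite, every basic neighbourhood of $f$ still contains a point of $C_n$ with strictly larger support. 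Hence the Cantor--Bendixson derivative satisfies $C_n^{(1)} = C_{n-1}$, and as $C_0$ is a single point we obtain $ht(C_n) = n+1$. So each $C_n$ is a compact scattered $\textsf{T}_2$ space, and by Lemma \ref{lemma:scattered2} each $C_n$ has the property $\sone(\open,\open)$.

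Finally I would assemble the pieces using the routine fact that $\sone(\open,\open)$ is preserved under countable unions of subspaces: fix a partition $\naturals = \bigsqcup_{n\in\naturals}S_n$ into infinitely many infinite sets; given a sequence $(\mathcal{U}_m:m\in\naturals)$ of open covers of $G$, for each $n$ apply the property $\sone(\open,\open)$ of $C_n$ to the subsequence $(\mathcal{U}_m:m\in S_n)$ restricted to $C_n$, obtaining $U_m\in\mathcal{U}_m$ for $m\in S_n$ with $\{U_m:m\in S_n\}$ a cover of $C_n$; then $\{U_m:m\in\naturals\}$ covers $\bigcup_nC_n = G$, so $G\models\sone(\open,\open)$. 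The main obstacle is the scatteredness claim for the $C_n$, i.e.\ the computation $C_n^{(1)} = C_{n-1}$; it is worth noting that $G$ itself is \emph{not} scattered when $\kappa$ is infinite (it has no isolated points at all), so the detour through the compact scattered pieces $C_n$ is genuinely needed rather than a direct scatteredness argument on $G$, consistent with Lemma \ref{lemma:scattered1}, which forbids a non-scattered \emph{compact} space from having $\sone(\open,\open)$.
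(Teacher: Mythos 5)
Your proof is correct, but it follows a genuinely different route from the paper's. The paper keeps the summands $X_i=\integers$, obtains $\sigma$-compactness from Corson's Theorem \ref{th:Corson}, and first proves only the group-theoretic property $\sone(\open_{nbd},\open)$: a sequence of basic neighborhoods of the identity involves only countably many coordinates $C$, the projected subgroup $G_C$ is countable and hence trivially has $\sone(\open_{nbd},\open_{G_C})$, and the chosen translates lift back to $G$; the upgrade from $\sone(\open_{nbd},\open)$ to $\sone(\open,\open)$ is then delegated to Theorem \ref{th:MSUniform}. You instead take $\integers_2$ summands, exhibit the explicit compact pieces $C_n\subseteq H^I$, compute $C_n^{(1)}=C_{n-1}$ so that each $C_n$ is a compact scattered space of finite height (your identification of the isolated points is correct, and it also makes the appeal to Corson redundant, since the $C_n$ themselves witness $\sigma$-compactness), then quote Lemma \ref{lemma:scattered2} and finish with the standard splitting of $\naturals$ into infinitely many infinite index sets. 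What each approach buys: yours stays entirely within the elementary toolkit of Section 8, avoids the uniform-space machinery behind Theorem \ref{th:MSUniform}, and, via Lemma \ref{lemma:scattered3} and Theorem \ref{th:sigmacptRothb}, immediately yields the stronger conclusion that TWO has a winning strategy in $\gone(\open,\open)$ on $G$; the paper's countable-projection argument, on the other hand, does not need the compact pieces to be scattered of finite rank and so applies verbatim to finite-support products of arbitrary countable $\sigma$-compact summands such as $\integers$. One small caution: your pieces have infinitely many isolated points (e.g.\ $C_1$ is the one-point compactification of a discrete set of size $\kappa$), so in citing Lemma \ref{lemma:scattered2} you are relying on its statement -- the classical fact that compact scattered $\textsf{T}_2$ spaces are Rothberger -- rather than on the finiteness of the set of isolated points asserted in the inductive argument given for it in the paper; this does not affect the correctness of your proof.
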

\begin{proof}
Let cardinal number $\kappa$ be given. In Theorem \ref{th:Corson} take $I$ to be $\kappa$ and for each $i\in I$ take $X_i$ to be the additive group of integers, $\integers$. Now observe that the group $G$ as in Corson's Theorem in fact has the property $\sone(\open,\open)$. To see this take for each $n$ a neighborhood $U_n$ of the identity element of $G$. We may assume that each $U_n$ is a basic open set, and thus that there is a finite set $F_n\subseteq I$ such that $U_n$ is of the form $\{f\in G:(\forall i\in F_n)(f(i)= 0)\}$.
 
The set $C = \bigcup_{n<\omega}F_n$ is a countable subset of $I$ and thus the subgroup $G_C = \{f\lceil_C:f \in G\}$ of the group $\prod_{i\in C}X_i$ is a countable set. Thus,  the property $\sone(\open_{nbd},\open_{G_C})$ evidently holds. For each $n$ choose a $g_n\in G_C$ such that $G_C\subseteq \bigcup_{n<\omega}g_n*U_n\lceil_C$. For each $n$ choose $f_n\in G$ with $f_n\lceil_C = g_n$. Then it follows that $G\subseteq \bigcup_{n<\omega}f_n*U_n$. Thus $G$ satisfies the property $\sone(\open_{nbd},\open)$. As $G$ is a $\sigma$-compact Theorem \ref{th:MSUniform} implies that $G$ satisfies the property $\sone(\open,\open)$. 
\end{proof}

Theorem 14 of  \cite{MSUniform} gave an \emph{ad hoc} argument that TWO has a winning strategy in the game $\gone(\open,\open)$ in these specific examples. Theorem \ref{th:sigmacptRothb} generalizes that prior result. 

It is further worth noting that the existence of a winning strategy for TWO in the game $\gone(\open,\open)$ on an uncountable $\textsf{T}_{3\frac{1}{2}}$ space does not imply that the space is $\sigma$-compact. Thus is illustrated using the following example of Comfort and Ross (\cite{CR}, Example 3.2). Consider the set 
  $G:=\{f\in\,^{\omega_1}2:\vert\{\alpha: f(\alpha)\neq 0\}\vert<\aleph_0\}$ endowed with the operation $\oplus$ of coordinate-wise addition modulo $2$. Then $(G,\oplus)$ is an Abelian group. 
Endow $G$ with the {\sf G}$_{\delta}$ topology. In \cite{CR} it is proven that the group $(G,\oplus)$ endowed with this $\textsf{G}_{\delta}$ topology is Lindel\"of 
and thus ${\sf T}_4$. 

For convenience we review a few facts about so-called $P$-spaces - see for example Section 5 of \cite{GilmannHenriksen}. A topological space is said to be a $P$-space if each ${\sf G}_{\delta}$ set is open. Every subspace of a $P$-space is a $P$-space. Every countably infinite subspace of a ${\sf T}_2$ $P$-space is closed and discrete. It follows that a compact $P$-space is finite, and thus a $\sigma$-compact $P$-space is countable. Thus, no uncountable $P$-space is a closed subspace of a $\sigma$-compact ${\sf T}_2$-space. If a topological group $(G,*)$ is a Lindel\"of $P$-space then it is has the property $\sone(\open_{nbd},\open)$ in a strong sense: Let $(U_n:n<\omega)$ be a sequence of neighborhoods for the identity. Then $U = \bigcap_{n<\omega}U_n$ is a neighborhood for the identity. Since the group is Lindel\"of, fix a sequence $(x_n:n<\omega)$ of elements of the group such that $x_n*U,\, n<\omega$ covers the group. Then the sequence $(x_n*U_n:n<\omega)$ witnesses that the uncountable group $(G,\oplus)$ has the property $\sone(\open_{nbd},\open)$. Moreover this group is not contained as a closed subspace of any $\sigma$-compact group.

Theorem 2.3 of Comfort in \cite{Comfort} implies the following generalization of the example just given:
\begin{theorem}[Comfort]\label{Comfortcountable}
Let $(G_i,*_i)$, $i\in I$, be a family  of countable topological groups. Endow the product $\prod_{i\in I}G_i$ with the ${\sf G}_{\delta}$ topology. Then the subgroup
\[
  G:=\{f\in\prod_{i\in I}G_i: \vert\{j\in I:f(j)\neq {\sf id}_j\}\vert<\aleph_0\}
\]
is a Lindel\"of $P$-group.
\end{theorem}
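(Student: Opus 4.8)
The plan is to prove the two assertions separately: that $G$ is a $P$-group is essentially formal, while Lindel\"ofness is the substance. Throughout write $e_i := {\sf id}_i$ for the identity of $G_i$, and $\mathrm{supp}(f) = \{i\in I : f(i)\neq e_i\}$; I will assume, as in all the applications in this section, that the $G_i$ are Hausdorff (equivalently $\textsf{T}_1$).

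First, $G$ is a $P$-group. For any space $Z$ its ${\sf G}_{\delta}$-modification $Z_{\delta}$ is a $P$-space: if $x\in\bigcap_{n}O_{n}$ with each $O_{n}$ open in $Z_{\delta}$, pick for each $n$ a basic set $B_{n}$ (a countable intersection of $Z$-open sets) with $x\in B_{n}\subseteq O_{n}$; then $\bigcap_{n}B_{n}$ is again a ${\sf G}_{\delta}$ subset of $Z$, hence open in $Z_{\delta}$, and $x\in\bigcap_{n}B_{n}\subseteq\bigcap_{n}O_{n}$. The same style of argument shows that if $Z$ is a topological group then so is $Z_{\delta}$: for continuity of the product at $(z,w)$, given ${\sf G}_{\delta}$-open $W=\bigcap_{n}U_{n}\ni zw$ with the $U_{n}$ open in $Z$, ordinary continuity gives $Z$-open $A_{n}\ni z$, $B_{n}\ni w$ with $A_{n}B_{n}\subseteq U_{n}$, and then $(\bigcap_{n}A_{n})(\bigcap_{n}B_{n})\subseteq W$; continuity of inversion is immediate. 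Since $G$ is a subgroup of $(\prod_{i\in I}G_{i})_{\delta}$, and a subspace of a $P$-space is a $P$-space, $G$ is a $P$-group.

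Next, $G$ is Lindel\"of. Put $H_{n}=\{f\in G:|\mathrm{supp}(f)|\le n\}$ for $n\in\naturals\cup\{0\}$. Since every element of $G$ has finite support, $G=\bigcup_{n}H_{n}$, a countable union, and a countable union of Lindel\"of subspaces is Lindel\"of, so it suffices to prove each $H_{n}$ Lindel\"of; I would do this by induction on $n$, the case $n=0$ being trivial. The one topological input is that a ${\sf G}_{\delta}$-neighborhood of the identity in a product constrains only countably many coordinates: if $W$ is ${\sf G}_{\delta}$-open in $P:=\prod_{i\in I}G_{i}$ with $e\in W$, then $W$ contains a countable intersection $\bigcap_{m}B_{m}$ of basic product-open neighborhoods of $e$, and letting $C$ be the (countable) union of the finite coordinate sets restricted by the $B_{m}$ we get $W\supseteq N_{C}:=\{f\in P:\mathrm{supp}(f)\cap C=\emptyset\}$. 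Now fix $n\ge 1$, assume $H_{n-1}$ is Lindel\"of, and let $\mathcal{V}$ be an open cover of $H_{n}$. Choose $V_{0}\in\mathcal{V}$ with $e\in V_{0}$; writing $V_{0}=W_{0}\cap H_{n}$ with $W_{0}$ ${\sf G}_{\delta}$-open in $P$, the input gives a countable $C_{0}\subseteq I$ with $V_{0}\supseteq N_{C_{0}}\cap H_{n}$, so $V_{0}$ covers $\{f\in H_{n}:\mathrm{supp}(f)\cap C_{0}=\emptyset\}$ and it remains to cover $R:=\bigcup_{i\in C_{0}}R_{i}$, where $R_{i}:=\{f\in H_{n}:f(i)\neq e_{i}\}$. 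For $i\in C_{0}$ and $a\in G_{i}\setminus\{e_{i}\}$, left translation in $P_{\delta}$ by the element supported on $\{i\}$ with value $a$ is a homeomorphism carrying $H_{n-1}\cap\pi_{i}^{-1}(\{e_{i}\})$ onto $\{f\in H_{n}:f(i)=a\}$; the former is a closed subspace of the Lindel\"of space $H_{n-1}$ (closed since $\{e_{i}\}$ is closed in $G_{i}$), hence Lindel\"of, so $\{f\in H_{n}:f(i)=a\}$ is Lindel\"of. Therefore $R_{i}=\bigcup_{a\in G_{i}\setminus\{e_{i}\}}\{f\in H_{n}:f(i)=a\}$ is a countable union of Lindel\"of subspaces, hence Lindel\"of, and so is $R=\bigcup_{i\in C_{0}}R_{i}$; adjoining to $V_{0}$ a countable subfamily of $\mathcal{V}$ covering $R$ gives a countable subcover of $H_{n}$, completing the induction.

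The $P$-group part is just bookkeeping with ${\sf G}_{\delta}$-sets, so the main obstacle is the Lindel\"ofness, and the single idea that drives it is the one highlighted above: a ${\sf G}_{\delta}$-neighborhood of the identity kills all but countably many coordinates, which, combined with the grading $G=\bigcup_{n}H_{n}$, lets the induction peel off one coordinate at a time and reduce $H_{n}$ to countably many translated closed copies of $H_{n-1}$. Once that is set up, the only real care needed is getting the support bookkeeping and the nested countable unions exactly right.
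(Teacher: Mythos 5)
Your proof is correct, but it is worth pointing out that the paper itself contains no argument for this statement: it is stated as a quoted consequence of Theorem 2.3 of Comfort's paper on generalized weak topological sums, generalizing the Comfort--Ross example discussed just before it. So your contribution is a self-contained direct proof where the paper has only a citation. Your $P$-group part is the routine verification that the ${\sf G}_{\delta}$-modification of a topological group is again a topological group and a $P$-space, and that these properties pass to subgroups/subspaces. The Lindel\"of part is where the substance lies, and your argument is sound: grading $G=\bigcup_n H_n$ by support size, using the key observation that a ${\sf G}_{\delta}$-neighborhood of the identity in the product absorbs every point whose support misses a suitable countable set $C_0$, and then peeling off the exceptional set $R=\bigcup_{i\in C_0}R_i$ via translation homeomorphisms onto closed subspaces of $H_{n-1}$; countability of the factors $G_i$ enters exactly once, when $R_i$ is written as a countable union over $a\in G_i\setminus\{e_i\}$, which is the right place for that hypothesis to appear. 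The extra assumption you flag --- that each $G_i$ is $\textsf{T}_1$, so that $\pi_i^{-1}(\{e_i\})$ is closed and the translated copies of $H_{n-1}\cap\pi_i^{-1}(\{e_i\})$ are Lindel\"of --- is harmless: it is subsumed by the paper's blanket Hausdorff convention, and even the ``$\textsf{T}_0$'' examples the paper extracts from this theorem are unaffected since a $\textsf{T}_0$ topological group is automatically Hausdorff. What your route buys is independence from Comfort's machinery and an explicit view of where each hypothesis is used; what the paper's citation buys is brevity and access to Comfort's more general framework.
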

In particular, for each uncountable cardinal number $\kappa$ there is a ${\sf T}_0$ Lindel\"of $P$ group of cardinality $\kappa$.
Now Galvin proved a result - see the Lemma in Section 2 of \cite{GN} - that implies that if a space is a Lindel\"of P-space then has the property $\sone(\open,\open)$. Thus, Lindel\"of $P$ groups satisfy the stronger selection principle $\sone(\open,\open)$. 
\begin{proposition}\label{twowinsrothberger} For each uncountable cardinal $\kappa$ there is a ${\sf T}_0$ Lindel\"of $P$-group of cardinality $\kappa$ such that TWO has a winning strategy in $\gone(\open,\open)$.
\end{proposition}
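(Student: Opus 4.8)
The plan is to combine Comfort's Theorem \ref{Comfortcountable} with the fact that a Lindelöf $P$-group has the Rothberger property $\sone(\open,\open)$ (via Galvin's lemma, as noted just above) and then invoke the characterisation of $\sone(\open,\open)$ by a winning strategy for TWO that holds in this setting. First I would fix an uncountable cardinal $\kappa$ and apply Theorem \ref{Comfortcountable} with $I = \kappa$ and each $G_i$ equal to the two-element group $\integers/2\integers$ (or any fixed countable group), obtaining the Lindelöf $P$-group $G = \{f\in\prod_{i\in\kappa}G_i : \vert\{j: f(j)\neq {\sf id}_j\}\vert < \aleph_0\}$, which is ${\sf T}_0$ and has cardinality $\kappa$. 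By the remark preceding the proposition, $G$ has the property $\sone(\open,\open)$, and being Lindelöf it is ${\sf T}_{3\frac12}$ (indeed ${\sf T}_4$).

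Next I would upgrade $\sone(\open,\open)$ to a winning strategy for TWO. The cleanest route is to use Theorem \ref{th:GNTWO} of Gerlits and Nagy together with Galvin's lemma: Galvin's result gives more than $\sone(\open,\open)$ for Lindelöf $P$-spaces — it gives that TWO has a winning strategy in $\gone(\open,\open)$ directly. Concretely, in a Lindelöf $P$-space, given ONE's moves $\mathcal{U}_1,\mathcal{U}_2,\dots$ (open covers), TWO can at stage $n$ intersect the first $n$ of ONE's covers: since the space is a $P$-space, TWO has access to a strategy that "diagonalises" by always choosing, for a suitably chosen point, an element of $\mathcal{U}_n$ containing it, using the fact that countable intersections of open sets are open. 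I would spell this out as follows: well-order the open sets of $G$; since $G$ is Lindelöf, at each stage TWO can maintain a countable "residual" set of points still needing to be covered, and the $P$-space property lets TWO pass to a ${\sf G}_\delta$ neighbourhood that is still open, guaranteeing that after $\omega$ innings the selected sets form a cover. This is exactly the content of the Lemma in Section 2 of \cite{GN} that Galvin proved, which the text already invokes.

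The main obstacle is making the leap from "$\sone(\open,\open)$ holds" to "TWO has a winning strategy" rigorous in this non-$\sigma$-compact context, since Theorem \ref{th:sigmacptRothb} (our general tool for that implication) requires $\sigma$-compactness, which these $P$-groups emphatically lack. The resolution is that one does not route through Theorem \ref{th:sigmacptRothb} at all: Galvin's lemma for Lindelöf $P$-spaces yields the winning strategy for TWO \emph{directly}, bypassing the selection principle. So the proof is essentially an assembly: (i) Comfort's theorem produces a ${\sf T}_0$ Lindelöf $P$-group of cardinality $\kappa$; (ii) Galvin's lemma, already cited in the paragraph preceding the proposition, gives TWO a winning strategy in $\gone(\open,\open)$ on any Lindelöf $P$-space, in particular on this group. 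I would therefore keep the proof short, citing Theorem \ref{Comfortcountable} and Galvin's lemma (the Lemma in Section 2 of \cite{GN}), and remark that unlike the examples obtained from Corson's Theorem \ref{th:Corson}, these witnesses are not embeddable as closed subspaces of any $\sigma$-compact ${\sf T}_2$-space, so Theorem \ref{th:sigmacptRothb} genuinely does not apply and a separate argument is needed.
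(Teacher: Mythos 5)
Your construction of the witnesses is the same as the paper's (Comfort's Theorem \ref{Comfortcountable} with $I=\kappa$ and countable factors), and you are right that Theorem \ref{th:sigmacptRothb} is unavailable here since these groups are nowhere near $\sigma$-compact. But the crucial step --- upgrading from a covering property to a \emph{winning strategy for TWO} in $\gone(\open,\open)$ --- is exactly where your argument has a genuine gap. The Lemma of Galvin in Section 2 of \cite{GN} is invoked in the paper only to conclude that Lindel\"of $P$-spaces satisfy $\sone(\open,\open)$ (in fact it yields the $\gamma$-property); it is not a statement about TWO having a winning strategy, and no such implication can be formal: the $\gamma$-property, and a fortiori $\sone(\open,\open)$, never implies that TWO wins $\gone(\open,\open)$ in general (consistently there are uncountable $\gamma$-sets of reals, and on an uncountable set of reals TWO has no winning strategy in this game). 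So asserting that ``Galvin's lemma gives the winning strategy for TWO directly'' transfers the entire content of the proposition to an unproved claim. Your sketch does not repair this: after TWO's single selection in an inning, the uncovered residual is a closed subspace of cardinality $\kappa$, not a countable set of points, so there is no countable list of ``points still needing to be covered'' to maintain; and the recursion you gesture at (pass to the trace on the residual, use that countable intersections of open sets are open) has no well-founded rank in an arbitrary Lindel\"of $P$-space, so nothing guarantees that a fixed point is ever covered after $\omega$ innings.

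What the paper actually relies on is an argument specific to these Comfort groups, namely Proposition 6 of \cite{MSUniform}, already flagged in Example \ref{ex:MSUniformNotOptimal}: there TWO's winning strategy is built from the structure of the group --- every neighborhood of the identity contains an open subgroup determined by countably many coordinates and of countable index, and every element of the $\sigma$-product has finite support --- so a recursive coset-splitting strategy, with the usual scheduling of tasks over a partition of $\naturals$ into infinitely many infinite sets, terminates on each point after finitely many relevant steps because each failure to be covered forces a new coordinate of the (finite) support. To make your proof correct you should either reproduce such a structure-specific strategy or cite \cite{MSUniform}, Proposition 6, rather than Galvin's lemma, for the game-theoretic half of the statement.
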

Thus for the examples based on Comfort's work cited above, none is homeomorphic to a closed subspace of a $\sigma$-compact space, yet TWO has a winning strategy in the game $\gone(\open,\open)$. By arguments as in the proof of Theorem \ref{th:gammasets}, this class of examples also satisfies the partition relation $\Omega\rightarrow(\Gamma)^m_k$ for all natural numbers $m$ and $k$.

\section{A question about the polarized partition relation}

The polarized partition relation was studied by Rado in \cite{Rado}, and formally introduced by Erd\"os and Rado in Section 9 of \cite{ER}. Generalizing their notion we define the following: 
Let $\mathcal{A}_1$, $\mathcal{A}_2$ $\mathcal{B}_{i,1}$ and $\mathcal{B}_{i,2}$ be families of sets for $1\le i\le n$.

\begin{definition} The symbol
 \[
   \left( {\begin{array}{c}
   \mathcal{A}_1 \\
   \mathcal{A}_2 \\
  \end{array} } \right) 
  \longrightarrow
   \left( {\begin{array}{ccc}
   \mathcal{B}_{1,1} & \cdots & \mathcal{B}_{n,1} \\
   \mathcal{B}_{1,2} & \cdots & \mathcal{B}_{n,2} \\
  \end{array} } \right)^{1,1} 
\]
denotes the statement that for each $A_1\in\mathcal{A}_1$ and each $A_2\in\mathcal{A}_2$, and for each function $f:A_1\times A_2 \rightarrow\{1,\cdots,n\}$ there exists an $i$ with $1\le i\le n$, and sets $B_{i,1}\in \mathcal{B}_{i,1}$ and $B_{i,2}\in\mathcal{B}_{i,2}$ such that $B_{i,1}\subseteq A_1$ and $B_{i,2}\subseteq A_2$, and $f$ has the value $i$ on the set $B_{i,1}\times B_{i,2}$.
\end{definition}

First observe that by Theorem 47 of \cite{ER}, for each infinite cardinal number $\kappa$ the negative relation
 \[
   \left( {\begin{array}{c}
   \kappa \\
   \kappa \\
  \end{array} } \right) 
  \not\longrightarrow
   \left( {\begin{array}{cc}
   \kappa & 1 \\
   1 & \kappa\\
  \end{array} } \right)^{1,1} 
\]
holds. This in turn implies that for infinite topological spaces $(X,\tau)$ the relation
 \[
   \left( {\begin{array}{c}
   \Omega \\
   \Omega \\
  \end{array} } \right) 
  \not\longrightarrow
   \left( {\begin{array}{cc}
   \Omega & 1 \\
   1 & \Omega\\
  \end{array} } \right)^{1,1} 
\]
holds. Thus, we relax the polarized partition relation to a square bracket version, analogous to the weakening of the ordinary partition relation.

%%%%%%%%%%%%%%%%%%%%%%%%%%%%%%%%%%%%%%%%%%%%%%%%%%%%%%%%%%
%%%%%%%%%%%%%%%%%%%%%%%%%%%%%%%%%%%%%%%%%%%%%%%%%%%%%%%%%%

\begin{definition} Let $\mathcal{A}_1$, $\mathcal{A}_2$, $\mathcal{B}_1$ and $\mathcal{B}_2$ be families of sets. The symbol
\[
     \left( {\begin{array}{c}
   \mathcal{A}_1 \\
   \mathcal{A}_2 \\
  \end{array} } \right) 
  \longrightarrow
   \left[ 
   \begin{array}{c}
   \mathcal{B}_{1}  \\
   \mathcal{B}_{2}  \\
  \end{array}  
  \right]^{1,1}_{k/\le \ell} 
\]
denotes the statement that for each $A_1\in\mathcal{A}_1$ and $A_2\in\mathcal{A}_2$, and for each function $f:A_1\times A_2\rightarrow\{1,\cdots,k\}$ there are sets $B_1\subseteq A_1$ and $B_2\subseteq A_2$ such that $B_1\in\mathcal{B}_1$, $B_2\in\mathcal{B}_2$, and $\vert\{f(b_1,b_2):b_1\in B_1 \mbox{ and }b_2\in B_2\}\vert\le \ell$.
\end{definition}

%%%%%%%%%%%%%%%%%%%%%%%%%%%%%%%%%%%%%%%%%%%%%%%%%%%%%%%%%%
%%%%%%%%%%%%%%%%%%%%%%%%%%%%%%%%%%%%%%%%%%%%%%%%%%%%%%%%%%
%%%%%%%%%%%%%%%%%%%%%%%%%%%%%%%%%%%%%%%%%%%%%%%%%%%%%%%%%%
It is not clear if there is a version of the square bracket polarized partition relation $\left(\begin{array}{c} \Omega\\ \Omega \end{array}\right) \longrightarrow \left [ \begin{array}{c}\Omega\\ \Omega \end{array}\right]^{1,1}_{k/\le \ell}$ that characterizes whether a space has the property $\sone(\Omega,\Omega)$. We conjecture:
\begin{conjecture}\label{conj:polarizedconjecture}
If a topological space $(X,\tau)$ satisfies $\left(\begin{array}{c} \Omega\\ \Omega \end{array}\right) \longrightarrow \left [ \begin{array}{c}\Omega\\ \Omega \end{array}\right]^{1,1}_{k/<3}$ for any positive integer $k$, then $(X,\tau)$ has the property $\sone(\Omega,\Omega)$.
\end{conjecture}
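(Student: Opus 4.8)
The plan is to deduce the conjecture's conclusion from the already‑established machinery in Theorem~\ref{egprothberger}, in particular its item (8), the partition relation $\Omega\rightarrow\lbrack\Omega\rbrack^2_3$. So the goal is: \emph{assuming $X$ satisfies the square bracket polarized relation $\left(\begin{array}{c}\Omega\\ \Omega\end{array}\right)\longrightarrow\left[\begin{array}{c}\Omega\\ \Omega\end{array}\right]^{1,1}_{k/<3}$ for every $k$, show $\Omega\rightarrow\lbrack\Omega\rbrack^2_{3}$}. Note one should first check that the polarized hypothesis forces every $\omega$-cover to have a countable $\omega$-subcover (or restrict to that class, as the conjecture implicitly does via the eventual appeal to Theorem~\ref{egprothberger}); in fact a single instance $\left(\begin{array}{c}\Omega\\ \Omega\end{array}\right)\longrightarrow\left[\begin{array}{c}\Omega\\ \Omega\end{array}\right]^{1,1}_{2/<3}$ applied to a trivial coloring $f\equiv 1$ already produces, for arbitrary $\omega$-covers $A_1=A_2$, an $\omega$-subcover $B_1$, which is harmless; the genuine Lindel\"of‑in‑finite‑powers reduction must be obtained separately, or simply added as a standing hypothesis exactly as in Theorem~\ref{egprothberger}.

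The core step is the reduction of an ordinary $2$-coloring of pairs from an $\omega$-cover to a coloring of a product. First I would fix a countable $\omega$-cover $\mathcal{U}$ enumerated bijectively as $(U_n:n\in\naturals)$ and a coloring $f:\lbrack\mathcal{U}\rbrack^2\to\{1,2,3\}$ (the three‑value case being the target of item~(8)). Set $A_1=A_2=\mathcal{U}$ and define $g:\mathcal{U}\times\mathcal{U}\to\{1,2,3,4\}$ by letting $g(U_m,U_n)$ record $f(\{U_m,U_n\})$ when $m<n$, a fresh symbol (say $4$) when $m=n$, and the ``mirror'' value when $m>n$; the precise bookkeeping mirrors the product‑coloring trick used in the proof of Theorem~\ref{th:RealsRamsey} and in the $\Omega\rightarrow\lbrack\Omega\rbrack^2_3\Rightarrow\sone(\Omega,\Omega)$ subsection. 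Apply the hypothesis with $k=4$ to obtain $B_1,B_2\in\Omega$, subfamilies of $\mathcal{U}$, on which $g$ takes at most two values on $B_1\times B_2$. The delicate point is that $B_1$ and $B_2$ need not coincide, so one must intersect or interleave: pass to $B=B_1\cap B_2$ if that is still an $\omega$-cover, or, when intersections of $\omega$-covers fail to stay $\omega$-covers, instead thin along a common enumeration so that for the resulting family, membership in $B_1$ and in $B_2$ is controlled simultaneously on pairs. On that common set the diagonal value $4$ of $g$ is realized (take $b_1=b_2$), so only one non‑diagonal value of $g$ survives on $B\times B$; unwinding the mirror construction, this forces $f$ to be at most $2$-valued on $\lbrack B\rbrack^2$, i.e.\ $\Omega\rightarrow\lbrack\Omega\rbrack^2_{3/\le 2}$, which is exactly item~(7) for $n=2$, $k=3$, hence item~(8).

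From there the conclusion is immediate: item~(8) of Theorem~\ref{egprothberger} is equivalent to $\sone(\Omega,\Omega)$, which is item~(1). So the remaining work is entirely the combinatorial reduction described above, plus the routine verification that the auxiliary family $B$ one produces is genuinely an $\omega$-cover (using that finitely many finite ``bad'' sets cannot destroy the $\omega$-cover property, as in Lemma~\ref{lemma:OmegaisRamsey}).

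The main obstacle I anticipate is precisely the \emph{asymmetry} $B_1\neq B_2$: the polarized relation hands back a rectangle, not a square, and to recover an ordinary partition statement one needs the two sides to interact on the diagonal. If $\omega$-covers were closed under intersection this would be trivial, but they are not in general, so the honest fix is to do the whole argument with a fixed enumeration and to choose $B_1,B_2$ as (the images of) infinite subsets $I_1,I_2\subseteq\naturals$, then replace them by $I_1\cap I_2$ after first arranging (by a preliminary diagonal thinning of $\mathcal{U}$, harmless since we may replace $\mathcal{U}$ by any $\omega$-subcover) that every cofinite‑indexed subfamily of $\mathcal{U}$ is still an $\omega$-cover — a property one can secure by working instead with the auxiliary $\omega$-cover $\mathcal{V}=\{U^1_m\cap U^m_n:m,n\in\naturals\}$ exactly as in the proofs of Theorems~\ref{th:RealsRamsey} and~\ref{thm:FinPowers3}. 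I would expect the write‑up to essentially be a fusion of that $\mathcal{V}$-construction with the rectangle‑to‑square passage, and the only place requiring genuine care is confirming that the two‑value restriction of $g$ on $B_1\times B_2$, once diagonalized, really pins $f$ down to two values rather than three on the common refinement.
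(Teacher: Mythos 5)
The statement you are asked about is left as a \emph{conjecture} in the paper precisely because the step your argument glosses over is the open obstruction. Your reduction hinges on turning the rectangle $B_1\times B_2$ returned by the polarized relation into a square: you propose to pass to $B_1\cap B_2$ (or a ``common enumeration'' thinning) and then exploit the diagonal value of $g$ to pin $f$ down on $\lbrack B\rbrack^2$. Nothing in the hypothesis gives you any control over that intersection: the polarized square-bracket relation is perfectly compatible with the two homogeneous sides being disjoint, and in fact Theorem \ref{th:polarizesplit} shows the relation yields $\textsf{Split}(\Omega,\Omega)$, i.e.\ it is natural (and in the paper's own argument, deliberate) to apply it to \emph{disjoint} $\omega$-covers, in which case no diagonal pair $(b,b)$ lies in $B_1\times B_2$ at all and the value $4$ is never forced. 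Your suggested ``preliminary diagonal thinning'' of $\mathcal{U}$ is not an argument: there is no reason the index sets $I_1,I_2$ produced by the relation have infinite intersection, let alone that the corresponding subfamily is an $\omega$-cover, and the relation cannot be re-applied to repair this since each application may again hand back an interleaved, disjoint pair. Note also that if your diagonal trick did go through, it would give that $f$ is \emph{constant} (one value, not two) off the diagonal of $B\times B$, which is a signal that the rectangle-to-square passage is where the content, and the gap, lies.

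This is exactly the point where the paper itself gets stuck: in the proof of Theorem \ref{th:polarizedimpl} the case analysis for the coloring built from $\{U^1_k\cap U^k_n\}$ resolves every configuration except $\mathrm{range}(f)=\{4,8\}$, the configuration in which the two sides of the rectangle interleave strictly monotonically; in that case one can only extract finite selections from each $\mathcal{U}_k$, yielding $\sfin(\Omega,\Omega)$ rather than $\sone(\Omega,\Omega)$. So the implication you want to prove is genuinely open, and a correct proof would have to supply a new idea for handling precisely the rectangle-versus-square asymmetry rather than assuming it away.
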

We give a complete analysis in the proof of Theorem \ref{th:polarizedimpl}  to indicate to the reader the single case of the analysis which does not support the conjecture. We believe that this one ``counter" case may be a shortcoming of the proof technique. The following concept, and corresponding facts about its relation to the conjecture are important.  
Space $X$ satisfies property $\textsf{Split}(\Omega, \Omega)$ if there is for each $\omega$-cover $\mathcal{U}$ of $X$ two $\omega$-covers $\mathcal{B}_1$ and $\mathcal{B}_2$ such that $\mathcal{B}_1 \cap \mathcal{B}_2 = \emptyset$ and $\mathcal{B}_1 \cup \mathcal{B}_2 = \mathcal{U}$. This property was introduced in [19] and studied further in [15].

\begin{theorem}\label{th:polarizesplit} Let $X$ be a space which has the Lindel\"of covering property in all finite powers. If for  $X$ the polarized partition relation $\left(\begin{array}{c} \Omega\\ \Omega \end{array}\right) \longrightarrow \left [ \begin{array}{c}\Omega\\ \Omega \end{array}\right]^{1,1}_{k/<3}$ holds, then $X$ has property
$\textsf{Split} (\Omega, \Omega)$.
\end{theorem}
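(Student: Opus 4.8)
The plan is to derive $\textsf{Split}(\Omega,\Omega)$ from the square-bracket polarized partition relation $\left(\begin{array}{c}\Omega\\\Omega\end{array}\right)\longrightarrow\left[\begin{array}{c}\Omega\\\Omega\end{array}\right]^{1,1}_{k/<3}$ by engineering, from a single $\omega$-cover $\mathcal{U}$, two index sets and a coloring whose ``$\le 2$ values on a product'' conclusion forces the desired splitting. First I would fix an $\omega$-cover $\mathcal{U}$ of $X$. By the Arkhangel'skii--Pytkeev theorem (Theorem \ref{Th:ArkhPytk}), since $X$ is Lindel\"of in all finite powers, we may assume $\mathcal{U}$ is countable; enumerate it bijectively as $(U_n:n\in\naturals)$. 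The natural move is to take $A_1=A_2=\mathcal{U}$ (both are in $\Omega$) and define $f:\mathcal{U}\times\mathcal{U}\to\{1,2,3\}$ using the enumeration order, for instance $f(U_m,U_n)=1$ if $m<n$, $f(U_m,U_n)=2$ if $m>n$, and $f(U_m,U_n)=3$ if $m=n$. Applying the hypothesis with $k=3$, $\ell=2$ yields $\omega$-covers $\mathcal{B}_1,\mathcal{B}_2\subseteq\mathcal{U}$ such that $f$ takes at most two values on $\mathcal{B}_1\times\mathcal{B}_2$.

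Next I would analyze which pair of values survives on $\mathcal{B}_1\times\mathcal{B}_2$. The value $3$ occurs on $\mathcal{B}_1\times\mathcal{B}_2$ precisely when $\mathcal{B}_1\cap\mathcal{B}_2\neq\emptyset$. If $3$ is one of the two surviving values, then the other surviving value is $1$ or $2$, which (since $\mathcal{B}_1,\mathcal{B}_2$ are infinite and each is an $\omega$-cover, hence unbounded in the enumeration) cannot be avoided unless $\mathcal{B}_1$ and $\mathcal{B}_2$ are ``comparable'' in a way I would exploit: e.g. if values $\{1,3\}$ survive, then no element of $\mathcal{B}_1$ has index strictly greater than an element of $\mathcal{B}_2$, forcing $\mathcal{B}_1$ to be bounded in the enumeration, contradicting that $\mathcal{B}_1$ is an infinite $\omega$-cover with a repetition-free enumeration. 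The remaining principal case is that the two surviving values are exactly $\{1,2\}$, i.e. value $3$ is avoided, meaning $\mathcal{B}_1\cap\mathcal{B}_2=\emptyset$. From here I would set $\mathcal{C}_1=\mathcal{B}_1$, $\mathcal{C}_2=\mathcal{U}\setminus\mathcal{B}_1$; since $\mathcal{B}_2\subseteq\mathcal{U}\setminus\mathcal{B}_1$ is an $\omega$-cover, $\mathcal{C}_2\supseteq\mathcal{B}_2$ is also an $\omega$-cover, and $\mathcal{C}_1\sqcup\mathcal{C}_2=\mathcal{U}$ gives the required partition into two disjoint $\omega$-covers. Doing this for an arbitrary $\mathcal{U}$ establishes $\textsf{Split}(\Omega,\Omega)$.

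The main obstacle—and, I expect, the ``one counter case'' the authors allude to in the surrounding text—is precisely handling the subcase where the two surviving values include $3$ together with exactly one of $1,2$ in a configuration that does \emph{not} immediately contradict $\mathcal{B}_i$ being an $\omega$-cover: if $\mathcal{B}_1\cap\mathcal{B}_2\neq\emptyset$ but the off-diagonal value obtained is controlled, one does not directly get two \emph{disjoint} $\omega$-covers. I would try to circumvent this by first passing to disjoint infinite subsets: reindex so that $A_1$ and $A_2$ are realized via a fixed partition $\naturals=S_1\sqcup S_2$ into infinite pieces, setting $A_j=\{U_n:n\in S_j\}$ (both still $\omega$-covers, disjoint by construction), and running the coloring on $A_1\times A_2$ instead. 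With $A_1\cap A_2=\emptyset$ forced from the outset, the value $3$ (``$m=n$'') never appears, so the $\le 2$-value conclusion already lives among $\{1,2\}$, and the monotonicity analysis of the preceding paragraph applies cleanly to extract disjoint $\omega$-covers whose union, after absorbing the complement as above, is $\mathcal{U}$. If even this reindexing leaves a residual case—e.g. the partition relation forcing $B_1,B_2$ whose union fails to exhaust $\mathcal{U}$—then, as the authors indicate, that residual case is the shortcoming of the technique and would be recorded as such rather than resolved here.
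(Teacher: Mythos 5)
Your first two paragraphs are a complete proof, and it is essentially the paper's own argument: the same order coloring on $\mathcal{U}\times\mathcal{U}$ (diagonal / below / above), the same application of the relation with $k=3$, and the same observation that the diagonal value cannot survive because both selected families, being $\omega$-covers, are infinite (the paper phrases this as: a diagonal pair plus elements of larger index in each family forces all three values; your version, that $\{1,3\}$ or $\{2,3\}$ would force one family to be bounded in the bijective enumeration and hence finite, is equivalent). Your absorption step $\mathcal{C}_2=\mathcal{U}\setminus\mathcal{B}_1$ correctly supplies the exact partition required by the definition of $\textsf{Split}(\Omega,\Omega)$, which the paper leaves implicit.

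The worry in your final paragraph is unfounded, and you should delete it rather than record a ``residual case'': the boundedness argument you already gave disposes of every value set containing the diagonal color, so there is no configuration with $\mathcal{B}_1\cap\mathcal{B}_2\neq\emptyset$ that survives. Moreover, the proposed workaround there is not legitimate: for a fixed partition $\naturals=S_1\sqcup S_2$ the families $A_j=\{U_n:n\in S_j\}$ need not be $\omega$-covers at all --- that an $\omega$-cover \emph{can} be split into two $\omega$-covers is precisely the conclusion being proved, so the hypothesis cannot be applied to such $A_1,A_2$. Finally, the ``one exceptional case'' mentioned in the surrounding text refers to the case $\mathrm{range}(f)=\{4,8\}$ in the proof of Theorem \ref{th:polarizedimpl}, not to Theorem \ref{th:polarizesplit}; the present theorem has no such loose end.
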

\begin{proof}
Let $\mathcal{U}$ be an $\omega$-cover of $X$ . We may assume that it is countable. Enumerate it bijectively as $(U_m : m \in \mathbb{N} )$. Then define $f :\mathcal{U} \times \mathcal{U} \rightarrow \{0,1,2\}$ so that
\[
f(U_m,U_n) = \left\{
         \begin{tabular}{ll}
           0  & i f $m = n$\\
           1 & if $m<n$ \\
           2 & if $m > n$
          \end{tabular}
          \right.
\]
Apply the polarized relation to find $\omega$-covers $\mathcal{V}$ and $\mathcal{W}$ contained in $\mathcal{U}$ such that on $\mathcal{V} \times \mathcal{W}$  $f$ is two-valued. We show that $\mathcal{V}$ and $\mathcal{W}$ are disjoint
by showing that $0$ is not a value of $f$. Suppose on the contrary that $0$ is a value of $f$ on $\mathcal{V} \times \mathcal{W}$. Choose $n$ such that $(U_n,U_n) \in  \mathcal{V} \times \mathcal{W}$. Since $\mathcal{V}$ and $\mathcal{W}$ are $\omega$-covers they are infinite. Thus there are $U_m\in \mathcal{V}$ with $m > n$ and $U_k \in \mathcal{W}$ with $k > $n. This implies that $1$ and $2$ are also values of $f$, so that $f$ takes three distinct values on $\mathcal{V} \times \mathcal{W}$ instead of just two.
\end{proof}

\begin{theorem}\label{th:polarizedimpl}
Let $X$ be a space. Of the following statements, each implies the next.
\begin{enumerate}
\item{$\sone(\Omega,\Omega)$}
\item{For each positive integer $k$, $\left({\begin{array}{c}
   \Omega \\
   \Omega \\
  \end{array}}\right) 
  \longrightarrow
   \left[ 
   \begin{array}{c}
   \Omega  \\
   \Omega  \\
  \end{array}  
  \right]^{1,1}_{k/< 3} 
$}
\item{$\sfin(\Omega,\Omega)$}
\end{enumerate}
\end{theorem}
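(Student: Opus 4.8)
The plan is to prove the two implications $(1)\Rightarrow(2)$ and $(2)\Rightarrow(3)$ separately, since each is quite different in flavor.

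For $(1)\Rightarrow(2)$, I would argue directly from $\sone(\Omega,\Omega)$. Suppose $\mathcal{U}\in\Omega$ and $\mathcal{V}\in\Omega$ are $\omega$-covers of $X$ and $f:\mathcal{U}\times\mathcal{V}\to\{1,\dots,k\}$ is a coloring. The idea is to build the two $\omega$-subcovers by a diagonalization driven by the selection principle. First I would reduce to the countable case: since $\sone(\Omega,\Omega)$ together with the statements quoted above (Theorem \ref{thm:RothRamsey}, Theorem \ref{egprothberger}) give that $X$ has many Ramseyan properties, but more to the point $\sone(\Omega,\Omega)$ itself implies $\sone(\open,\open)$ in all finite powers (Theorem \ref{thm:XisYRothberger}), hence $X^n$ is Lindel\"of for all $n$, hence by Theorem \ref{Th:ArkhPytk} every $\omega$-cover of $X$ has a countable $\omega$-subcover. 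So enumerate $\mathcal{U}=(U_m:m\in\naturals)$ and $\mathcal{V}=(V_m:m\in\naturals)$. Now fix $V_1$ and look at the coloring $f(\cdot,V_1):\mathcal{U}\to\{1,\dots,k\}$; this partitions $\mathcal{U}$ into $k$ pieces, and by Lemma \ref{lemma:OmegaisRamsey} one piece $\mathcal{U}^{(1)}$ is an $\omega$-cover; pass to it, then do the same with $V_2$ inside $\mathcal{U}^{(1)}$, and so on, getting a decreasing chain $\mathcal{U}^{(1)}\supseteq\mathcal{U}^{(2)}\supseteq\cdots$ of $\omega$-covers and values $i_1,i_2,\dots$ such that $f(U,V_j)=i_j$ for all $U\in\mathcal{U}^{(j)}$. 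By pigeonhole on the $i_j$ (as in the proof of Theorem \ref{th:RealsRamsey}) — actually more carefully: set $\mathcal{W}_t=\{V_j: i_j=t\}$, so $\mathcal{V}=\mathcal{W}_1\cup\cdots\cup\mathcal{W}_k$ and by Lemma \ref{lemma:OmegaisRamsey} some $\mathcal{W}_{t_0}$ is an $\omega$-cover. Now for each $j$ with $V_j\in\mathcal{W}_{t_0}$ apply $\sone(\Omega,\Omega)$ (via the associated non-winning-strategy reformulation Theorem \ref{coc3gone}, exactly as in Theorem \ref{th:RealsRamsey}) to select from the $\mathcal{U}^{(j)}$'s an $\omega$-cover $\mathcal{V}'\subseteq\mathcal{U}$: ONE's strategy is to play $\mathcal{U}^{(j)}$ at the step corresponding to the $j$-th element of $\mathcal{W}_{t_0}$; since this is not winning there is a play lost by ONE producing $\mathcal{V}'=\{U_{n_j}:j\}\in\Omega$ with $U_{n_j}\in\mathcal{U}^{(j_{\text{-th such}})}$. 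Then on $\mathcal{V}'\times\mathcal{W}_{t_0}$ the coloring $f$ takes only the value $t_0$, so certainly $\le 3$ (in fact $1$) value, which is more than enough for $\le 2$ values; this gives $(2)$.

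For $(2)\Rightarrow(3)$, I would use Theorem \ref{th:polarizesplit}, which says that (under the all-finite-powers-Lindel\"of hypothesis — which again holds here, or one includes it) the polarized relation in $(2)$ implies $\textsf{Split}(\Omega,\Omega)$. Then I would prove $\textsf{Split}(\Omega,\Omega)\Rightarrow\sfin(\Omega,\Omega)$ by a straightforward diagonalization: given $(\mathcal{U}_n:n\in\naturals)$ of $\omega$-covers, partition $\naturals=\bigsqcup_n S_n$ into infinite pieces, and for each $n$ repeatedly split $\mathcal{U}_n$. More precisely, to witness $\sfin(\Omega,\Omega)$ it suffices to choose finite $\mathcal{F}_n\subseteq\mathcal{U}_n$ with $\bigcup_n\mathcal{F}_n\in\Omega$; using $\textsf{Split}(\Omega,\Omega)$ one can, for a fixed $\mathcal{U}_n$, split it into two $\omega$-covers, split one of those again, etc., producing infinitely many pairwise disjoint $\omega$-subcovers $\mathcal{U}_n=\bigsqcup_k \mathcal{U}_n^k$ (each $\mathcal{U}_n^k\in\Omega$), and then for indices $m\in S_n$ one assigns $\mathcal{U}_n^{\,\text{(position of }m\text{ in }S_n)}$; from each such $\omega$-cover one can pick a single finite subset adequate to handle the $m$-th finite subset of $X$ in some fixed cofinal listing — but since $X$ need not be countable here I would instead argue as follows. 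Actually the cleanest route: $\textsf{Split}(\Omega,\Omega)$ for a space with Lindel\"of finite powers is known to imply $\sfin(\Omega,\Omega)$ essentially because from an $\omega$-cover one extracts, by iterated splitting, $\aleph_0$ many disjoint $\omega$-subcovers, hence $\mathcal{U}_n$ being countable (Theorem \ref{Th:ArkhPytk}) one distributes these among the slots $m\in S_n$; then given any finite $F\subseteq X$, pick $n$, and within $S_n$ find a slot $m$ such that the corresponding $\omega$-subcover has a member containing $F$, and put that single member into $\mathcal{F}_m$ (together with finitely many other members handling the earlier demands of a bookkeeping scheme); the union $\bigcup_m\mathcal{F}_m$ then $\omega$-covers $X$. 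I would present this bookkeeping carefully as a single induction.

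\textbf{Main obstacle.} I expect the genuinely delicate point to be the $(1)\Rightarrow(2)$ step — specifically making the two-layered selection (first the Ramsey-type pigeonholing over the $\mathcal{V}$-coordinate, then the $\sone$-driven selection over the $\mathcal{U}$-coordinate) fit together so that the \emph{same} subset $\mathcal{V}'\subseteq\mathcal{U}$ works against \emph{all} of $\mathcal{W}_{t_0}$ simultaneously; the decreasing-chain device $\mathcal{U}^{(1)}\supseteq\mathcal{U}^{(2)}\supseteq\cdots$ handles exactly this, but one must be careful that the $\sone$-selection respects the chain (ONE plays the appropriate $\mathcal{U}^{(j)}$, and a lost play, being a cover, automatically lands inside the nested sets in the right order). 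The $(2)\Rightarrow(3)$ step is comparatively routine once Theorem \ref{th:polarizesplit} is invoked, the only care being the bookkeeping in the iterated splitting when $X$ is uncountable, which the Lindel\"of-in-finite-powers hypothesis renders harmless.
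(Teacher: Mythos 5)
Your step $(1)\Rightarrow(2)$ has a genuine gap at exactly the point you flagged. With the chain $\mathcal{U}^{(1)}\supseteq\mathcal{U}^{(2)}\supseteq\cdots$ and $f(U,V_j)=i_j$ for $U\in\mathcal{U}^{(j)}$, the $r$-th set $W_r$ extracted from the non-winning strategy lies in $\mathcal{U}^{(j_r)}$, so the nesting gives $f(W_r,V_{j_s})=t_0$ only when $j_s\le j_r$, i.e.\ $s\le r$; for $s>r$ the value $f(W_r,V_{j_s})$ is completely uncontrolled, so your assertion that $f$ is constant of value $t_0$ on $\mathcal{V}'\times\mathcal{W}_{t_0}$ does not follow, and not even the needed ``at most two values'' follows. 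In fact a monochromatic rectangle is unobtainable in general: enumerate a countable $\omega$-cover bijectively and colour $(U_m,V_n)$ by whether $m\le n$ or $m>n$; any rectangle with both sides infinite realizes both colours. That is precisely why the statement is a square-bracket relation with $k/{<}3$, and why the paper's proof has a second round that your argument omits: after the first chain (on one coordinate) and the first selection, it builds a second decreasing chain on the other coordinate relative to the already selected sets, pigeonholes a second colour $j$, and makes a second game-driven selection with index constraints, ending with a rectangle on which $f$ takes only the two values $i$ and $j$.

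Your step $(2)\Rightarrow(3)$ also does not work as proposed. The paper uses $\textsf{Split}(\Omega,\Omega)$ (via Theorem \ref{th:polarizesplit}) only to produce two \emph{disjoint} $\omega$-subcovers $\mathcal{A},\mathcal{B}$ of the auxiliary cover $\mathcal{V}=\{U^1_k\cap U^k_n\}$; it then applies hypothesis (2) a \emph{second} time, to a nine-valued colouring of $\mathcal{A}\times\mathcal{B}$ comparing index pairs, and a case analysis yields either an $\sone$-selector or, in the one exceptional case $\mathrm{range}(f)=\{4,8\}$, only finite selectors --- which is exactly why the conclusion is $\sfin(\Omega,\Omega)$ rather than $\sone(\Omega,\Omega)$. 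Your substitute claim that $\textsf{Split}(\Omega,\Omega)$ alone implies $\sfin(\Omega,\Omega)$ by bookkeeping is unsubstantiated: the bookkeeping needs a countable list of ``demands'' (finite subsets of $X$), which exists only when $X$ is countable, and the split pieces of $\mathcal{U}_n$ are subfamilies of $\mathcal{U}_n$ while the finite sets $\mathcal{F}_m$ for $m\in S_n$ must be chosen from $\mathcal{U}_m$, so your assignment selects from the wrong covers. Moreover the implication is not provable at all: every set of reals of cardinality less than the reaping number $\mathfrak{r}$ satisfies $\textsf{Split}(\Omega,\Omega)$ (reap the family $\{\{n:F\subseteq U_n\}:F\in[X]^{<\aleph_0}\}$ attached to a countable $\omega$-subcover), whereas a dominating set of reals of cardinality $\mathfrak{d}$ fails the Menger property and hence $\sfin(\Omega,\Omega)$; since $\mathfrak{d}<\mathfrak{r}$ is consistent, $\textsf{Split}(\Omega,\Omega)$ does not imply $\sfin(\Omega,\Omega)$, so the second application of the polarized hypothesis in the paper's argument cannot be dispensed with.
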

\begin{proof} 
{\flushleft\underline{$(1)\Rightarrow(2)$: }}  Let $\mathcal{U}$ and $\mathcal{V}$ be $\omega$-covers of $X$. Since $X$ has property $\sone(\Omega, \Omega)$ we may assume that each of these is countable. Enumerate $\mathcal{U}$ bijectively as
$(U_n: n \in\mathbb{N})$. Let $k$ be a positive integer and let $f: \mathcal{U} \times \mathcal{V} \rightarrow \{1,. . . , k\} $ be given.

Choose a chain of subsets of $\mathcal{V}$ as follows: Put $\mathcal{V}_{0} = \mathcal{V}$ and for $0 \le n < \omega$ choose $i_{n+1} \in \{1,. . . , k\}$ such that 
\[
 \mathcal{V}_{n+1} = \{ V \in \mathcal{V}_n: f(U_{n+1},V) = i_{n+1}\} \in \Omega
\]
Since $\mathcal{U}$ is an $\omega$-cover, fix an $i \in \{1,\cdots , k\}$ and $n_1 < n_2 < \cdots$ such that $\{U_{n_1}, U_{n_2}, \cdots \} \in \Omega$ and for each $j$, $i_{n_j} = i$. Apply property $\sone(\Omega, \Omega)$
to the sequence $(\mathcal{V}_{n_1}, \mathcal{V}_{n2}, \cdots, \mathcal{V}_{n_k}, \cdots )$ of $\omega$-covers of $X$ and select for
each $m$ a $V_m \in \mathcal{V}_{n_m}$, such that $\{V_m : m \in \mathbb{N}\}$  is an $\omega$-cover of
$X$ . We may assume that this enumeration is bijective. At this stage we have that if $j \le m$, then $f(U_{n_j},V_m ) = i$.

Put $\mathcal{U}_{-1} = \{U_{n_l}, U_{n_2},\cdots \}$ and choose for each $m$ a $j_{m+l} \in \{ 1, \cdots, k\}$ such that $\mathcal{U}_m = \{U_{n_r} \in \mathcal{U}_m : f(U_{n_r}, V_{m+l}) = j_{m+l} \} \in \Omega$. Fix a $j$ and
$m_l < m_2 <\cdots $ such that for each $r$ $j_{m_r} = j$, and $\{V_{m_1}, V_{m_2},\cdots\}$ is an
$\omega$-cover of $X$ . Next we apply $\sone(\Omega, \Omega)$ to the sequence $(\mathcal{U}_{m_]}, \mathcal{U}_{m_2}, \cdots )$
of $\omega$-covers of $X$, as follows: From $\mathcal{U}_{m_t}$ choose $U_{n_{h_t}}$, such that for each $t$,
$m_t < h_t < h_{t+l}$ and $\{U_{n_{h_t}} : t \in\mathbb{N}\}$ is an $\omega$-cover of $X$ . One way
of seeing that this can be done is as follows. In [21] it was shown that $X$
has property $\sone(\Omega, \Omega)$ if, and only if, ONE has no winning strategy in
the following game, denoted $\gone (\Omega, \Omega)$: In the $n$-th inning ONE chooses
an $\omega$-cover $O_n$, of $X$ ; TWO responds by selecting $T_n \in O_n$. A play
$(O_1,T_1 ,\cdots , O_n,T_n ,\cdots. )$ is won by TWO if $\{T_I,T_2, T_3,\cdots \}$ is an $\omega$-cover
of $X$ ; otherwise, ONE wins.

Now consider the strategy which calls on ONE to play in the $t$-th inning the set of elements of $\mathcal{U}_{m_t}$ of the form $U_{n_h}$, where $h$ exceeds
$\max \{m_1,\cdots, m_t\}$ as well as the maximum of all subscripts of elements
chosen thus far by TWO. Since this is not a winning strategy, there is
a play according to it which is won by TWO. Such a play provides us
with a sequence of $U_{n_{h_t}}$'s as above.
But then on $\{U_{n_{h_l}}, U_{n_{h_2}},\cdots \} \times \{V_{m_1},V_{m_2}, \cdots \}$ the function $f$ takes
only the two values, $i$ and $j$ .

{\flushleft\underline{$(2)\Rightarrow(3)$: }} Let $(\mathcal{U}_n : n \in \naturals )$ be a sequence of $\omega$-covers of $X$. We may assume each $\mathcal{U}_n$ is countable and enumerate it as $(U^n_m : m \in \naturals )$.
Define $\mathcal{V}$ to be the non-empty intersections of the form $U^1_k\cap U^k_n$. Then $\mathcal{V}$ is an $\omega$--cover of $X$ . For each element of $\mathcal{V}$ choose a specific representation
of the form $U^1_k \cap U^k_n$. Define a coloring $f : \mathcal{V} \times \mathcal{V} \rightarrow \{0,1,2,3,4,5,6,7,8\}$ such that
\[
  f((U^1_{k_1}\cap U^{k_1}_{n_1}, U^1_{k_2}\cap U^{k_2}_{n_2})) = \left\{
         \begin{tabular}{lll}
                 0 & if & $k_1 = k_2$ and $n_1 = n_2$\\
                 1 & if & $k_1 = k_2$ and $n_1 < n_2$\\
                 2 & if & $k_l = k_2$ and  $n_1 > n_2$\\
                 3 & if & $k_1 <  k_2$ and $n_1 = n_2$\\
                 4 & if & $k_1 <  k_2$ and $n_1 < n_2$\\
                 5 & if & $k_1 < k_2$  and $n_1 > n_2$\\
                 6 & if & $k_l > k_2$ and $n_1 = n_2$\\
                 7 & if & $k_1 > k_2$ and $n_1 < n_2$\\
                 8 & if & $k_1 > k_2$ and $n_1 > n_2$.
     \end{tabular}
     \right.
\]
Applying the partition relation to this $f$ we find $\omega$-covers $\mathcal{K}$ and $\mathcal{L}$, subsets of $\mathcal{V}$ , such that $f$ has no more than two values on $\mathcal{K} \times \mathcal{L}$. First, note that $f$ cannot have all its values in $\{0,1,2\}$. For in this case there is a fixed $k$ such that all elements of $\mathcal{K}$ and of $\mathcal{L}$ are subset of $U^1_k$, contradicting the fact that $\mathcal{K}$ and $\mathcal{L}$ are $\omega$-covers.

By Theorem \ref{th:polarizesplit} choose $\omega$-covers $\mathcal{A}$ and $\mathcal{B}$ such that $\mathcal{A}\cup\mathcal{B}\subset\mathcal{V}$ and $\mathcal{A}\cap\mathcal{B} = \emptyset$, and consider the function $f$ restricted to $\mathcal{A}\times\mathcal{B}$.
By the hypothesis choose $\omega$-covers $\mathcal{K}\subset\mathcal{A}$ and $\mathcal{L}\subset\mathcal{B}$ such that $f$ has an image set of at most two values on the set $\mathcal{K}\times\mathcal{L}$.
First note that on $\mathcal{K}\times\mathcal{L}$ the function $f$ does not have value $0$ as $\mathcal{K}$ and $\mathcal{L}$ are disjoint. Secondly, by a prior remark, the range of $f$ cannot be in $\{0,1,2\}$.

The rest of the argument consists of an extensive case analysis starting from a hypothesis like $range(f)\subseteq \{a,b\}$ where $a$ and $b$ are elements of $\{0,1,\cdots,8\}$. In all cases, except one, the conclusion is that either that case is not possible, or else that case produces a witness that $\sone(\Omega,\Omega)$ holds for the given sequence $(\mathcal{U}_n:n\in\naturals)$ of $\omega$-covers of the space $X$. We leave the examination of these cases to the reader. The one exceptional case is the case when $range(f) = \{4,8\}$:

Observe that in this case, for any $(U^1_{k_1}\cap U^{k_1}_{n_1},\; U^1_{k_2}\cap U^{k_2}_{n_2}) \in \mathcal{K}\times\mathcal{L}$ it is the case that either $k_1<k_2$ and $n_1<n_2$, or else $k_1>k_2$ and $n_1>n_2$. Thus, if we define
$M = \{k:U^1_k\cap U^k_n\in\mathcal{K}\}$ and $N=\{k\in\naturals:U^1_{k_2}\cap U^{k_2}_{n_2}\in\mathcal{L}\}$, then $M$ and $N$ are disjoint. Moreover, both $M$ and $N$ are infinite. Enumerate $N$ in increasing order as $(\ell_1,\;\ell_2,\; \cdots,\ell_m,\cdots)$. Moreover, for each $k$ with a $U^1_k\cap U^k_n\in\mathcal{L}$, let $n(k)$ denote the least such $n$.

Now proceed to choose for each $\mathcal{U}_k$ a finite subset $\mathcal{F}_k$ as follows:

For $k\in M$ with. $k\le \ell_1$ choose $U^k_n\in\mathcal{U}_k$ such that $n\le n(\ell_1)$ and let $\mathcal{F}_k$ be the (finite) set of such chosen elements of $\mathcal{U}_k$. In general, for $k\in M$ such that $\ell_{j}<k\le \ell_{j+1}$ choose $U^k_n\in\mathcal{U}_k$ such that $n\le n(\ell_{j+1})$, and let $\mathcal{F}_k$ be the (finite) set of such chosen elements of $\mathcal{U}_k$. 
For $k$ not in $M$, let $\mathcal{F}_k$ consist of any single element of $\mathcal{U}_k$.

We claim that the sequence $(\mathcal{F}_k:k\in\naturals)$ witnesses $\sfin(\Omega,\Omega)$ for the given sequence of $\omega$-covers of $X$. The reason is that for any $U^1_{k_1}\cap U^{k_1}_{n_1} \in\mathcal{K}$, the set $U^{k_1}_{n_1}$ is an element of $\mathcal{F}_{k_1}$, and thus $\mathcal{K}$ is a refinement of $\bigcup\{\mathcal{F}_k:k\in\naturals\}$, and the latter is an $\omega$-cover of $X$ since $\mathcal{K}$ is.
\end{proof}

As suggested by the conjecture above, we expect that in fact $\sone(\Omega,\Omega)$ is a consequence of the polarized square bracket relation, and most likely a different approach to a proof is needed.

%%%%%%%%%%%%%%%%%%%%%%%%%%%%%%%%%%%%%%%%%%%%%%%%%%%%%%%%%%
%%%%%%%%%%%%%%%%%%%%%%%%%%%%%%%%%%%%%%%%%%%%%%%%%%%%%%%%%%
%%%%%%%%%%%%%%%%%%%%%%%%%%%%%%%%%%%%%%%%%%%%%%%%%%%%%%%%%%
%%%%%%%%%%%%%%%%%%%%%%%%%%%%%%%%%%%%%%%%%%%%%%%%%%%%%%%%%%
%%%%%%%%%%%%%%%%%%%%%%%%%%%%%%%%%%%%%%%%%%%%%%%%%%%%%%%%%%

%%%%%%%%%%%%%%%%%%%%%%%%%%%%%%%%%%%%%%%%%%%%%%%%%%%%%%%%%%
%%%%%%%%%%%%%%%%%%%%%%%%%%%%%%%%%%%%%%%%%%%%%%%%%%%%%%%%%%
%%%%%%%%%%%%%%%%%%%%%%%%%%%%%%%%%%%%%%%%%%%%%%%%%%%%%%%%%%
%%%%%%%%%%%%%%%%%%%%%%%%%%%%%%%%%%%%%%%%%%%%%%%%%%%%%%%%%%
%%%%%%%%%%%%%%%%%%%%%%%%%%%%%%%%%%%%%%%%%%%%%%%%%%%%%%%%%%
%%%%%%%%%%%%%%%%%%%%%%%%%%%%%%%%%%%%%%%%%%%%%%%%%%%%%%%%%%
%%%%%%%%%%%%%%%%%%%%%%%%%%%%%%%%%%%%%%%%%%%%%%%%%%%%%%%%%%
%%%%%%%%%%%%%%%%%%%%%%%%%%%%%%%%%%%%%%%%%%%%%%%%%%%%%%%%%%
%%%%%%%%%%%%%%%%%%%%%%%%%%%%%%%%%%%%%%%%%%%%%%%%%%%%%%%%%%
%%%%%%%%%%%%%%%%%%%%%%%%%%%%%%%%%%%%%%%%%%%%%%%%%%%%%%%%%%
%%%%%%%%%%%%%%%%%%%%%%%%%%%%%%%%%%%%%%%%%%%%%%%%%%%%%%%%%%

\section{Remarks}

The results above are given for the family $\mathcal{A} = \Omega$ of $\omega$-covers of certain topological spaces. A study of the proofs will reveal that several of these equivalences also hold for several other families $\mathcal{A}$. The main requirements on a family $\mathcal{A}$ are that each element of $\mathcal{A}$ has a countable subset in $\mathcal{A}$, that for each $k$ $\mathcal{A}\rightarrow (\mathcal{A})^1_k$ holds, and that $\sone(\mathcal{A},\mathcal{A})$ is equivalent to ONE not having a winning strategy in $\gone(\mathcal{A},\mathcal{A})$, and that this is equivalent to the Ramseyan statement $\mathcal{A}\rightarrow(\mathcal{A})^2_2$.

As a non-topological example, consider the following: A collection $\mathcal{C}$ of subsets of a set $S$ is said to be a \emph{combinatorial} $\omega$-\emph{cover} of $S$ if $S\not\in\mathcal{C}$, but for each finite subset $F$ of $S$ there is a $C\in\mathcal{C}$ with $F\subseteq C$. For an infinite cardinal number $\kappa$ let $\Omega_{\kappa}$ be the set of \emph{countable} combinatorial $\omega$-covers of $\kappa$. Let ${\sf cov}(\mathcal{M})$ be the least infinite cardinal number $\kappa$ such that the real line is a union of $\kappa$ first category sets. By the Baire Category Theorem ${\sf cov}(\mathcal{M})$ is uncountable. 
Using the techniques of this paper one can prove:
\begin{theorem}\label{cardinality} For an infinite cardinal number $\kappa$ the following are equivalent:
\begin{enumerate}
\item{$\kappa<{\sf cov}(\mathcal{M})$.}
\item{$\sone(\Omega_{\kappa},\Omega_{\kappa})$.}
\item{$\egp(\Omega_{\kappa},\Omega_{\kappa})$.}
\item{${\sf GP}(\Omega_{\kappa},\Omega_{\kappa})$.}
\item{${\sf FG}(\Omega_{\kappa},\Omega_{\kappa})$.}
\item{${\sf NW}(\Omega_{\kappa},\Omega_{\kappa})$.}
\item{For all positive integers $n$ and $k$, $\Omega_{\kappa}\rightarrow(\Omega_{\kappa})^n_k$.}
\end{enumerate}
\end{theorem}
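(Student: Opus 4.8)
The plan is to reduce this entirely to the already-established equivalences of Theorem \ref{egprothberger}, applied not to a topological space but to the abstract family $\mathcal{A} = \Omega_\kappa$. The Remarks paragraph preceding the theorem already isolates the three structural facts that drive all of Theorem \ref{egprothberger}: (a) every member of $\mathcal{A}$ has a countable member of $\mathcal{A}$ as a subset; (b) for each $k$, $\mathcal{A}\rightarrow(\mathcal{A})^1_k$ holds; and (c) $\sone(\mathcal{A},\mathcal{A})$ is equivalent to ONE having no winning strategy in $\gone(\mathcal{A},\mathcal{A})$. For $\mathcal{A} = \Omega_\kappa$, fact (a) is immediate since the members are countable by definition; fact (b) is the purely combinatorial content of Lemma \ref{lemma:OmegaisRamsey}, whose proof (pick a finite $F_i$ not covered by a piece $\mathcal{U}_i$ and take $F = \bigcup F_i$) uses nothing topological and goes through verbatim for combinatorial $\omega$-covers; and fact (c) is the combinatorial analogue of Theorem \ref{coc3gone}, which is cited in the excerpt as Theorem 2 of \cite{coc3} and again holds for any family closed under the relevant operations. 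Granting these, the first key step is to observe that the proofs of all the implications $(2)\Leftrightarrow(3)\Leftrightarrow\cdots\Leftrightarrow(7)$ in the present theorem are literal transcriptions of the corresponding implications in Theorem \ref{egprothberger} with ``$\omega$-cover of $X$'' replaced everywhere by ``countable combinatorial $\omega$-cover of $\kappa$,'' so nothing new needs to be proved there.

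Thus the real mathematical content is the equivalence $(1)\Leftrightarrow(2)$, i.e. $\kappa < {\sf cov}(\mathcal{M})$ iff $\sone(\Omega_\kappa,\Omega_\kappa)$ holds. First I would handle $(2)\Rightarrow(1)$ by contraposition: if $\kappa \ge {\sf cov}(\mathcal{M})$, I would exhibit a sequence $(\mathcal{U}_n:n\in\naturals)$ of countable combinatorial $\omega$-covers of $\kappa$ admitting no selector that is again an $\omega$-cover. The natural candidate is to index by a family of $\kappa$ meager sets covering $\reals$ (equivalently, a ${\sf cov}(\mathcal{M})$-sized family witnessing that $\reals$ is a union of first-category sets), and to build the $\mathcal{U}_n$ from a countable base of the meager ideal so that a successful $\sone$-selection would yield a single meager set covering all the witnesses, a contradiction. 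For $(1)\Rightarrow(2)$, the standard fact I would invoke is that ${\sf cov}(\mathcal{M})$ is exactly the least cardinality of a family of functions in $\naturals^\naturals$ not ``covered'' by a single infinitely-equal function (the characterization of ${\sf cov}(\mathcal{M})$ via the relation $=^\infty$ on $\naturals^\naturals$ due to Bartoszyński). Given a sequence $(\mathcal{U}_n:n\in\naturals)$ of countable combinatorial $\omega$-covers of $\kappa$, I would code, for each $\alpha<\kappa$, the set of ``good'' enumeration-positions into a point of a suitable Polish space and use $\kappa < {\sf cov}(\mathcal{M})$ to find a single generic-like selector meeting all of them; this is precisely the kind of argument already familiar from the proof that ${\sf cov}(\mathcal{M})$ is the critical cardinal for $\sone(\Omega,\Omega)$ on the rationals.

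The main obstacle will be this $(1)\Rightarrow(2)$ direction: one must phrase the combinatorics of simultaneously selecting from countably many combinatorial $\omega$-covers of an uncountable set in a way that matches one of the known Bartoszyński-style characterizations of ${\sf cov}(\mathcal{M})$, and the bookkeeping that reduces ``select from $\mathcal{U}_n$ for every $n$, forming an $\omega$-cover'' to ``one function eventually-equal-infinitely-often to each of $\kappa$ given functions'' needs to be set up carefully so that the finite subsets of $\kappa$ that must be captured are all handled. Everything else — the seven-way equivalence among $(2)$–$(7)$ — is bookkeeping: I would state explicitly that one rereads the proofs of the implications in Theorem \ref{egprothberger} (specifically the chain running through $\egp$, ${\sf GP}$, ${\sf FG}$, ${\sf NW}$, and $\Omega\rightarrow(\Omega)^n_k$) substituting $\Omega_\kappa$ for $\Omega$, checking at each use of Lemma \ref{lemma:OmegaisRamsey}, Theorem \ref{coc3gone}, and the Arkhangel'skii–Pytkeev-type reduction to countable covers that the hypothesis being used is one of (a)–(c) above, all of which $\Omega_\kappa$ satisfies.
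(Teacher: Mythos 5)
Your overall architecture is the one the paper intends (the paper itself offers no detailed proof, saying only that the theorem follows ``using the techniques of this paper''): items (2)--(7) are to be obtained by rerunning the cycle of Theorem \ref{egprothberger} with $\Omega_{\kappa}$ in place of $\Omega$, and the genuinely new content is the link with ${\sf cov}(\mathcal{M})$, which the paper would in effect import from \cite{MSBct} and which you propose to prove directly. Your facts (a) and (b) are indeed verbatim transcriptions, and your $(1)\Rightarrow(2)$ via the infinitely-equal characterization of ${\sf cov}(\mathcal{M})$, applied to a family indexed by the \emph{finite subsets} of $\kappa$, is sound.

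The genuine gap is your fact (c). You assert that the equivalence of $\sone(\Omega_{\kappa},\Omega_{\kappa})$ with ONE having no winning strategy in $\gone(\Omega_{\kappa},\Omega_{\kappa})$ ``holds for any family closed under the relevant operations'' by analogy with Theorem \ref{coc3gone}. It does not transcribe: the proof of Theorem \ref{coc3gone} (Theorem 2 of \cite{coc3}) goes through Sakai's finite-power characterization and Pawlikowski's theorem for topological $\omega$-covers, machinery with no counterpart for the family of \emph{countable} combinatorial $\omega$-covers of an uncountable discrete set; and this game equivalence is not cosmetic, since it is exactly what drives the step from (2) to (3) (Theorem \ref{decidedfin}, Lemma \ref{openprecursor} and Theorem \ref{meagerellentuck} all invoke the failure of ONE's strategies). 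Each instance in the Remarks section (ultrafilters, $\Omega_{\bf o}$, $\mathcal{D}_{\Omega}$, $\mathcal{K}$) requires its own proof of this fact, and for $\Omega_{\kappa}$ the natural proof runs through statement (1) itself: if $\kappa<{\sf cov}(\mathcal{M})$ and $F$ is a strategy for ONE, the tree of positions is countable (ONE's moves are countable covers), for each finite $E\subset\kappa$ the positions in which TWO has already selected a set containing $E$ are dense in that tree, and since ${\sf cov}(\mathcal{M})$ is the least cardinal at which Martin's Axiom for countable posets fails (equivalently, one can argue with an infinitely-equal function against the countably many covers occurring along the tree), one obtains an $F$-play lost by ONE. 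So the cycle should be arranged as $(1)\Rightarrow(\mbox{ONE has no winning strategy})\Rightarrow(3)\Rightarrow\cdots\Rightarrow(7)\Rightarrow(2)\Rightarrow(1)$, rather than claiming the game equivalence for free. Finally, in your $(2)\Rightarrow(1)$ sketch, ``a countable base of the meager ideal'' does not exist; the clean routes are either Bartoszy\'nski's family of size ${\sf cov}(\mathcal{M})$ admitting no common infinitely-equal function, or transferring a set of reals of cardinality ${\sf cov}(\mathcal{M})$ that fails $\sone(\Omega,\Omega)$ (its countable topological $\omega$-covers become combinatorial ones under a bijection), together with the easy monotonicity of $\sone(\Omega_{\lambda},\Omega_{\lambda})$ in $\lambda$.
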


Here are three more, among many, topological examples.
For a topological space $X$ and an element $x\in X$, define $\Omega_x = \{A\subset X\setminus\{x\}:\, x\in\overline{A}\}$. According to \cite{Sakai} $X$ has  
strong countable fan tightness at $x$ if the selection principle $\sone(\Omega_x,\Omega_x)$ holds. Consider for a Tychonoff space $X$ the subspace of the Tychonoff product $\Pi_{x\in X}\mathbb{R}$ consisting of the continuous functions from $X$ to $\mathbb{R}$. The symbol ${\sf C}_p(X)$ denotes this subspace with the inherited topology. Since ${\sf C}_p(X)$ is homogeneous, the truth of $\sone(\Omega_f,\Omega_f)$ at some point $f$ implies the truth of $\sone(\Omega_f,\Omega_f)$ at any point $f$. Thus we may confine attention to $\Omega_{\bf o}$, where ${\bf o}$ is the function which is zero on $X$. Using the techniques above one can prove:
\begin{theorem}\label{strfantight} For a Tychonoff space $X$ the following are equivalent for ${\sf C}_p(X)$:
\begin{enumerate}
\item{$\sone(\Omega_{\bf o},\Omega_{\bf o})$.}
\item{$\egp(\Omega_{\bf o},\Omega_{\bf o})$.}
\item{${\sf GP}(\Omega_{\bf o},\Omega_{\bf o})$.}
\item{${\sf FG}(\Omega_{\bf o},\Omega_{\bf o})$.}
\item{${\sf NW}(\Omega_{\bf o},\Omega_{\bf o})$.}
\item{For all $n$ and $k$, $\Omega_{\bf o}\rightarrow(\Omega_{\bf o})^n_k$.}
\end{enumerate}
\end{theorem}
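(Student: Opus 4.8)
The plan is to reduce Theorem \ref{strfantight} to Theorem \ref{egprothberger} by verifying that the family $\Omega_{\bf o}$ of point-$\omega$-covers in ${\sf C}_p(X)$ satisfies the three structural properties flagged in the Remarks section: (i) every element of $\Omega_{\bf o}$ has a countable subset in $\Omega_{\bf o}$; (ii) for each $k$, $\Omega_{\bf o}\rightarrow(\Omega_{\bf o})^1_k$ holds; and (iii) $\sone(\Omega_{\bf o},\Omega_{\bf o})$ is equivalent to ONE having no winning strategy in $\gone(\Omega_{\bf o},\Omega_{\bf o})$, which in turn is equivalent to $\Omega_{\bf o}\rightarrow(\Omega_{\bf o})^2_2$. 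Once these are in hand, the chain of implications proved for $\Omega$ in Theorem \ref{egprothberger} transfers verbatim, since every argument there used only these abstract features of $\Omega$ together with the Ellentuck/Galvin--Prikry/Nash--Williams machinery, which is stated purely combinatorially in terms of an ambient family $\mathcal{A}$ and a target family $\mathcal{B}$.

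First I would establish (i). The key observation is that $\Omega_{\bf o}$ is, after a standard translation, the family of combinatorial $\omega$-covers of a countable set: by a classical result of Arhangel'skii (also used implicitly via \cite{Sakai}), a basic neighborhood of ${\bf o}$ in ${\sf C}_p(X)$ is determined by a finite subset of $X$ and a rational $\varepsilon$, and membership of $x\in\overline A$ is witnessed on such neighborhoods; more to the point, strong countable fan tightness of ${\sf C}_p(X)$ at ${\bf o}$ is equivalent to the statement that $X$ satisfies $\sone(\Omega,\Omega)$ in Sakai's sense, and under that correspondence the relevant $\omega$-covers may always be taken countable because $X$ has all finite powers Lindel\"of whenever ${\sf C}_p(X)$ has countable fan tightness (another result from \cite{Sakai}). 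Alternatively one argues directly: if $A\in\Omega_{\bf o}$, enumerate a countable local base $(B_n:n\in\naturals)$ at ${\bf o}$ and pick $a_n\in A\cap B_n$; then $\{a_n:n\in\naturals\}\in\Omega_{\bf o}$. This gives (i) cleanly.

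For (ii), $\Omega_{\bf o}\rightarrow(\Omega_{\bf o})^1_k$ is the exact analogue of Lemma \ref{lemma:OmegaisRamsey}: if $A\in\Omega_{\bf o}$ is partitioned into $k$ pieces $A_1,\dots,A_k$ and no piece lies in $\Omega_{\bf o}$, then for each $i$ there is a neighborhood $V_i$ of ${\bf o}$ with $A_i\cap V_i=\emptyset$; then $V=\bigcap_i V_i$ is a neighborhood of ${\bf o}$ disjoint from $A=\bigcup_i A_i$, contradicting ${\bf o}\in\overline A$. For (iii), the equivalence of $\sone(\Omega_{\bf o},\Omega_{\bf o})$ with ONE having no winning strategy in $\gone(\Omega_{\bf o},\Omega_{\bf o})$ is the $\Omega_x$-analogue of Theorem \ref{coc3gone}; this was proved for countable fan tightness in the $\gone$ setting in \cite{coc3}, and I would cite that (or reprove it by the standard strategy-diagonalization argument, which only uses (i) and the filter-like behavior of neighborhoods of ${\bf o}$). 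The equivalence with $\Omega_{\bf o}\rightarrow(\Omega_{\bf o})^2_2$ then follows by rerunning the $\Omega$-argument of Theorem \ref{th:RealsRamsey} (the $(1)\Rightarrow(2)\Rightarrow(1)$ cycle) with $\Omega_{\bf o}$ in place of $\Omega$, which goes through word for word since it rests only on (i), (ii), and the game characterization.

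With (i)--(iii) secured, I would then observe that the proofs of the implications $(1)\Rightarrow(2)\Rightarrow(3)\Rightarrow(4)\Rightarrow(5)\Rightarrow(6)\Rightarrow(1)$ in Theorem \ref{egprothberger} — that is, $\sone\Rightarrow\egp\Rightarrow{\sf GP}\Rightarrow{\sf FG}\Rightarrow{\sf NW}\Rightarrow(\forall n,k)\,\rightarrow(\cdot)^n_k\Rightarrow\sone$ — are all stated for an abstract pair and invoke exactly these properties plus Ellentuck's theorem (Theorem \ref{galvinprikry}) and the Galvin--Prikry techniques, none of which is sensitive to the topological origin of the family. So each of those proofs transfers, giving the equivalence of (1)--(6) of Theorem \ref{strfantight}. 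The main obstacle I anticipate is property (i): unlike $\sigma$-compact or finite-powers-Lindel\"of spaces where countability of covers is immediate, here one must genuinely use that strong countable fan tightness of ${\sf C}_p(X)$ forces $X^n$ to be Lindel\"of for all $n$ (equivalently, forces the relevant $\omega$-covers in the dual picture to be countably generated), and getting the bookkeeping of the Arhangel'skii-style duality between ${\sf C}_p(X)$-neighborhoods of ${\bf o}$ and $\omega$-covers of $X$ exactly right is the one place where care is needed; everything downstream is then a routine transcription of the already-proved $\Omega$-arguments.
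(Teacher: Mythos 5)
Your overall plan -- verify the three structural requirements flagged in the Remarks section and then transfer the cycle of implications of Theorem \ref{egprothberger} -- is exactly the route the paper intends, and your verifications of the one-dimensional partition relation $\Omega_{\bf o}\rightarrow(\Omega_{\bf o})^1_k$ (via finite intersections of neighborhoods of ${\bf o}$) and of the game characterization (via \cite{coc3} and Sakai's duality) are sound. The genuine gap is in your property (i). Your ``clean'' direct argument is false: ${\sf C}_p(X)$ has a countable local base at ${\bf o}$ only when $X$ is countable, so for uncountable $X$ there is no enumeration $(B_n:n\in\naturals)$ of a local base from which to pick $a_n\in A\cap B_n$. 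More importantly, (i) is not an unconditional property of $\Omega_{\bf o}$ for Tychonoff $X$: countable tightness of ${\sf C}_p(X)$ at ${\bf o}$ is equivalent, by the Arhangel'skii--Pytkeev theorem, to all finite powers of $X$ being Lindel\"of, and this fails, for example, for an uncountable discrete $X$. Note that Theorem \ref{egprothberger} carries the corresponding countability condition as an explicit standing hypothesis on the space; it is not a free consequence of the ambient structure, and the same issue must be confronted here.

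Your proposed repair via Sakai -- that $\sone(\Omega_{\bf o},\Omega_{\bf o})$ is equivalent to $\sone(\Omega,\Omega)$ for $X$, which forces $X^n$ Lindel\"of for all $n$ and hence countable tightness at ${\bf o}$ -- yields (i) only under hypothesis (1). That suffices for the forward implications, since $\egp$, ${\sf GP}$, ${\sf FG}$ and ${\sf NW}$ quantify only over countable members of $\Omega_{\bf o}$. But it is circular in the reverse part of the cycle, precisely at the places where the transferred arguments begin with ``we may assume $A$ (or each member of the given sequence) is countable'': the step from ${\sf NW}(\Omega_{\bf o},\Omega_{\bf o})$ to statement (6), whose partition relation quantifies over arbitrary, possibly uncountable, members of $\Omega_{\bf o}$, and the closing implication from the partition relations back to $\sone(\Omega_{\bf o},\Omega_{\bf o})$, where the given sequence of members of $\Omega_{\bf o}$ must be enumerated by $\naturals$ to build the auxiliary set and the coloring. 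To complete the proof you must either show that each of the hypotheses used in those reverse steps itself implies countable tightness of ${\sf C}_p(X)$ at ${\bf o}$, or reroute those implications through the duality between neighborhoods of ${\bf o}$ and $\omega$-covers of $X$ so that the needed countable reduction is extracted from the hypothesis actually in force, or else build the restriction to countable members of $\Omega_{\bf o}$ into the statements themselves. As written, the transfer of the reverse implications is not justified by your verification of (i), and that verification is the one point the rest of your argument genuinely depends on.
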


For a topological space $X$ let $\mathcal{D}$ denote the collection whose members are of the form $\mathcal{U}$, a family of open subsets of $X$, such that no element of $\mathcal{U}$ is dense in $X$, but $\cup\mathcal{U}$ is dense in $X$. And let $\mathcal{D}_{\Omega}$ be the set of $\mathcal{U}\in\mathcal{D}$ such that for each finite family $\mathcal{F}$ of nonempty open subsets of $X$ there is a $U\in\mathcal{U}$ with $U\cap F\neq\emptyset$ for each $F\in\mathcal{F}$. The families $\mathcal{D}$ and $\mathcal{D}_{\Omega}$ were considered in \cite{coc5} where it was proved that for $X$ a set of real numbers, and ${\sf PR}(X)$ the Pixley-Roy space over $X$, the following holds:
\begin{theorem}\label{prreals} If $X$ is a set of real numbers, the following are equivalent for ${\sf PR}(X)$:
\begin{enumerate}
\item{$\sone(\mathcal{D}_{\Omega},\mathcal{D}_{\Omega})$.}
\item{ONE has no winning strategy in the game $\gone(\mathcal{D}_{\Omega},\mathcal{D}_{\Omega})$.}
\item{For each $n$ and $k$ $\mathcal{D}_{\Omega}\rightarrow(\mathcal{D}_{\Omega})^n_k$.}
\end{enumerate}
Each of these statements is equivalent to $X$ having $\sone(\Omega_X,\Omega_X)$.
\end{theorem}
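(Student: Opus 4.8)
The plan is to treat $\mathcal{D}_{\Omega}$ as one of the families $\mathcal{A}$ described at the start of this section: once the structural facts about $\mathcal{D}_{\Omega}$ on ${\sf PR}(X)$ are in place, the equivalence of (1), (2) and (3) is obtained by transcribing, with $\mathcal{D}_{\Omega}$ in the role of $\Omega$, the relevant implications from the proof of Theorem~\ref{egprothberger}; the equivalence with $\sone(\Omega_X,\Omega_X)$ is then a separate argument that genuinely uses the Pixley--Roy construction and is the substantive content of \cite{coc5}.

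The first task is to verify the structural facts. Since $X$ is a set of real numbers, hence hereditarily Lindel\"of, every $\mathcal{U}\in\mathcal{D}_{\Omega}$ contains a countable subfamily still in $\mathcal{D}_{\Omega}$: fix a countable base for $X$ and, for each of the countably many finite families of its members, pick one element of $\mathcal{U}$ meeting them all; the resulting countable subfamily has dense union (it meets every nonempty open subset of $X$) and no dense member. Next, the analogue of Lemma~\ref{lemma:OmegaisRamsey} holds: if $\mathcal{U}\in\mathcal{D}_{\Omega}$ and $\mathcal{U}=\mathcal{U}_1\cup\cdots\cup\mathcal{U}_k$, then some $\mathcal{U}_i\in\mathcal{D}_{\Omega}$ --- otherwise for each $i$ fix a finite family $\mathcal{F}_i$ of nonempty open subsets of $X$ witnessing $\mathcal{U}_i\notin\mathcal{D}_{\Omega}$, and then $\mathcal{F}_1\cup\cdots\cup\mathcal{F}_k$ witnesses $\mathcal{U}\notin\mathcal{D}_{\Omega}$, a contradiction; in particular $\mathcal{D}_{\Omega}\rightarrow(\mathcal{D}_{\Omega})^1_k$ for every $k$. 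Finally I would record (this being proven in \cite{coc5}) that $\sone(\mathcal{D}_{\Omega},\mathcal{D}_{\Omega})$ is equivalent to ONE having no winning strategy in $\gone(\mathcal{D}_{\Omega},\mathcal{D}_{\Omega})$, and that the analogue of Lemma~\ref{lemma:subscriptup} for $\mathcal{D}_{\Omega}$ holds (its proof uses only the two facts just established).

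With these in hand, (2)$\Rightarrow$(1) is automatic (the strategy for ONE that plays the $n$-th given cover in the $n$-th inning is not winning), and the diagonalization used for $\Omega\rightarrow[\Omega]^2_3\Rightarrow\sone(\Omega,\Omega)$ --- forming the $\mathcal{D}_{\Omega}$-family $\{U^1_m\cap U^m_n\}$ of the space, coloring a pair by whether its first indices agree, and reading a selector off a homogeneous subfamily --- transcribes verbatim to give $\mathcal{D}_{\Omega}\rightarrow[\mathcal{D}_{\Omega}]^2_3\Rightarrow\sone(\mathcal{D}_{\Omega},\mathcal{D}_{\Omega})$, since that argument invoked only the two structural facts. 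Running the remaining implications of Theorem~\ref{egprothberger} (through $\egp,\ {\sf GP},\ {\sf FG},\ {\sf NW}$, all of which likewise use only those facts and the game equivalence) yields $\mathcal{D}_{\Omega}\rightarrow(\mathcal{D}_{\Omega})^n_k$ for all $n$ and $k$ and closes the cycle (1)$\Leftrightarrow$(2)$\Leftrightarrow$(3).

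It remains to link these to $\sone(\Omega_X,\Omega_X)$, and here I would use the translation between $\omega$-covers of $X$ and elements of $\mathcal{D}_{\Omega}$ on ${\sf PR}(X)$: an $\omega$-cover $\mathcal{V}$ of $X$ by open sets gives rise to $\{[F,V]:F\in[X]^{<\omega},\ V\in\mathcal{V},\ F\subseteq V\}\in\mathcal{D}_{\Omega}$, while evaluating a $\mathcal{D}_{\Omega}$-family along singletons and using the Pixley--Roy topology recovers an $\omega$-cover of $X$; chasing a countable sequence of covers through $\sone$ in either direction, with the correspondence arranged to respect indexing, shows $\sone(\mathcal{D}_{\Omega},\mathcal{D}_{\Omega})$ for ${\sf PR}(X)$ holds exactly when $\sone(\Omega,\Omega)$ holds for $X$. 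This last equivalence is the main obstacle: one must check that the defining condition for $\mathcal{D}_{\Omega}$ --- meeting every finite family of nonempty open subsets of $X$ --- corresponds under the translation to the $\omega$-cover condition of covering every finite subset of $X$, and that the matching can be made uniformly along a sequence of covers so that one selection per cover on the Pixley--Roy side yields one selection per cover on $X$. Everything else is an application of the already-developed machinery with $\Omega$ replaced by $\mathcal{D}_{\Omega}$.
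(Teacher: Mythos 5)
The first thing to note is that the paper does not prove Theorem \ref{prreals} at all: it is quoted as a known result of \cite{coc5}, so there is no in-paper argument to compare against, only the general recipe stated at the start of the Remarks section. Your plan follows that recipe (check that $\mathcal{D}_{\Omega}$ has the properties required of the family $\mathcal{A}$, transcribe the machinery of Theorem \ref{egprothberger}, and then link to $\sone(\Omega_X,\Omega_X)$ through the Pixley--Roy construction), which is the right shape, but two of your steps do not work as written. The countable-subfamily verification is the first: you argue with a countable base of $X$, but the finite families in the definition of $\mathcal{D}_{\Omega}$ consist of nonempty open subsets of ${\sf PR}(X)$, and for uncountable $X\subseteq\mathbb{R}$ the space ${\sf PR}(X)$ is neither second countable nor separable (its basic sets $[F,V]$ are indexed by the uncountably many finite $F\subset X$), so there are not ``countably many finite families'' to diagonalize over. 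The fact you need is true, but requires a different argument: for instance, a subfamily $\mathcal{V}\subseteq\mathcal{U}$ satisfies the finite-family condition exactly when, for each $k$, the union of the boxes $V\times\cdots\times V$ ($V\in\mathcal{V}$) is dense in ${\sf PR}(X)^k$, and all finite powers of ${\sf PR}(X)$ are ccc (a $\Delta$-system argument with rational intervals), hence weakly Lindel\"of; alternatively one can obtain countability through the duality with $\omega$-covers of $X$. Since the whole transcription of Theorem \ref{egprothberger} is predicated on this countability hypothesis, this is not a cosmetic slip.

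The second and larger gap is the final clause, which is the substantive content of the theorem: the equivalence with $X$ having $\sone(\Omega_X,\Omega_X)$ is only sketched. ``Evaluating a $\mathcal{D}_{\Omega}$-family along singletons'' does not recover an $\omega$-cover of $X$: to cover a finite $F\subseteq X$ one must use the basic open set $[F,\mathbb{R}]$ of ${\sf PR}(X)$, pick a point $H\supseteq F$ in some member $U$ of the family together with a basic $[H,W]\subseteq U$, and take $W$; this assignment is multivalued and depends on $F$, and converting one selection per inning on the ${\sf PR}(X)$ side into one selection per inning on the $X$ side (and conversely) is precisely what has to be proven --- you flag it as ``the main obstacle'' and leave it unchecked. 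Your forward translation also needs repair: for an element $[F,V]$ of $\{[F,V]:F\subseteq V\in\mathcal{V}\}$ to meet a finite family $[H_1,W_1],\dots,[H_k,W_k]$ one needs $F\subseteq\bigcap_{i\le k}W_i$, which can be empty, so one must allow $F=\emptyset$ or use a different family. Finally, the equivalence of (1) and (2) is simply cited from \cite{coc5}; given that the paper itself quotes the entire theorem from that source this is defensible, but it means that the only part you actually argue is the passage to the partition relations, and that part currently rests on the flawed countability step above.
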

Using the techniques above one can prove:
\begin{theorem}\label{prRamsey} For a set $X$ of reals the following are equivalent for ${\sf PR}(X)$:
\begin{enumerate}
\item{$\sone(\mathcal{D}_{\Omega},\mathcal{D}_{\Omega})$.}
\item{$\egp(\mathcal{D}_{\Omega},\mathcal{D}_{\Omega})$.}
\item{${\sf GP}(\mathcal{D}_{\Omega},\mathcal{D}_{\Omega})$.}
\item{${\sf FG}(\mathcal{D}_{\Omega},\mathcal{D}_{\Omega})$.}
\item{${\sf NW}(\mathcal{D}_{\Omega},\mathcal{D}_{\Omega})$.}
\item{For all $n$ and $k$, $\mathcal{D}_{\Omega}\rightarrow(\mathcal{D}_{\Omega})^n_k$.}
\end{enumerate}
\end{theorem}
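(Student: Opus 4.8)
The plan is to reduce Theorem~\ref{prRamsey} to Theorem~\ref{egprothberger} by verifying that the family $\mathcal{A} = \mathcal{D}_{\Omega}$, viewed as a collection of covers of the Pixley--Roy space ${\sf PR}(X)$, satisfies the three structural hypotheses identified in the Remarks section: (a) every element of $\mathcal{D}_{\Omega}$ has a countable subset that is again in $\mathcal{D}_{\Omega}$; (b) for each $k$ the partition relation $\mathcal{D}_{\Omega}\rightarrow(\mathcal{D}_{\Omega})^1_k$ holds; and (c) the selection principle $\sone(\mathcal{D}_{\Omega},\mathcal{D}_{\Omega})$ is equivalent to ONE having no winning strategy in $\gone(\mathcal{D}_{\Omega},\mathcal{D}_{\Omega})$. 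Once (a)--(c) are in hand, the entire chain of implications proved for $\Omega$ in Theorem~\ref{egprothberger} goes through verbatim with $\Omega$ replaced by $\mathcal{D}_{\Omega}$ throughout, since each subsection of that proof used only these three properties (plus elementary manipulations of finite subfamilies, De Morgan's laws, and the Ellentuck machinery, none of which are specific to open $\omega$-covers of a space).

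The key steps, in order, are as follows. First I would recall the definition of ${\sf PR}(X)$ and observe that $\mathcal{D}_{\Omega}$ behaves formally like the family of $\omega$-covers: the ``$\omega$-like'' clause in the definition of $\mathcal{D}_{\Omega}$ (for each finite family $\mathcal{F}$ of nonempty open sets some $U\in\mathcal{U}$ meets every member of $\mathcal{F}$) plays exactly the role of the clause ``for each finite $F\subseteq X$ some $U\in\mathcal{U}$ contains $F$'' in the definition of $\Omega$. Next I would establish (b): given $\mathcal{U}=\mathcal{U}_1\cup\cdots\cup\mathcal{U}_k\in\mathcal{D}_{\Omega}$, if no $\mathcal{U}_i$ were in $\mathcal{D}_{\Omega}$ then for each $i$ pick a finite family $\mathcal{F}_i$ of nonempty open sets witnessing failure (or witnessing that $\cup\mathcal{U}_i$ is not dense, handled separately via a nonempty open set disjoint from $\cup\mathcal{U}_i$); amalgamating the $\mathcal{F}_i$ into a single finite family $\mathcal{F}$ yields a finite family met by no $U\in\mathcal{U}$, contradicting $\mathcal{U}\in\mathcal{D}_{\Omega}$ --- this is the exact analogue of Lemma~\ref{lemma:OmegaisRamsey}. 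For (a), I would invoke the fact, recorded in \cite{coc5} and underlying Theorem~\ref{prreals}, that for $X$ a set of reals the relevant covers of ${\sf PR}(X)$ can be taken countable (equivalently, that the Lindel\"of-type hypothesis needed for the Arkhangel'skii--Pytkeev style reduction holds here); this is precisely where the hypothesis that $X$ is a set of reals is used. For (c), I would appeal directly to the equivalence of (1) and (2) in Theorem~\ref{prreals}, which is exactly statement (c).

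With (a), (b), (c) verified, I would then state that the proof of Theorem~\ref{egprothberger} applies mutatis mutandis: the implication $\sone(\mathcal{D}_{\Omega},\mathcal{D}_{\Omega})\Rightarrow\egp(\mathcal{D}_{\Omega},\mathcal{D}_{\Omega})$ uses the accept/reject machinery together with the game equivalence (c) and the countable-reduction (a); the implications $\egp\Rightarrow{\sf GP}\Rightarrow{\sf FG}\Rightarrow{\sf NW}\Rightarrow(\forall n,k)\,\mathcal{D}_{\Omega}\rightarrow(\mathcal{D}_{\Omega})^n_k$ are formal and carry over with no change; and the implication $(\forall n,k)\,\mathcal{D}_{\Omega}\rightarrow(\mathcal{D}_{\Omega})^n_k\Rightarrow\sone(\mathcal{D}_{\Omega},\mathcal{D}_{\Omega})$ (closing the loop) runs as in the $\Omega$ case, using the analogue of forming intersections $U^1_m\cap U^m_n$ and the two-colouring argument, where one checks that the relevant finite intersections of members of the given countable covers again form a member of $\mathcal{D}_{\Omega}$.

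The main obstacle I anticipate is purely in the bookkeeping of step (a) and in the verification that the ``intersection'' constructions used in the loop-closing implications stay inside $\mathcal{D}_{\Omega}$ --- i.e., that a finite intersection of nonempty open subsets of ${\sf PR}(X)$ drawn from members of a combinatorial-$\omega$-style family is still nonempty and that the resulting family is still in $\mathcal{D}_{\Omega}$ and still dense. This requires a small amount of work with the specific combinatorics of the Pixley--Roy topology (basic open sets determined by pairs $(F,U)$ with $F$ finite and $U$ open in $X$), but it is routine and is already implicit in the arguments of \cite{coc5}. No genuinely new idea is needed beyond recognizing that $\mathcal{D}_{\Omega}$ is an instance of the abstract framework sketched in the Remarks; the content of the theorem is the transfer, not a fresh combinatorial insight.
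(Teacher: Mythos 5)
Your proposal matches the paper's intended argument: the paper gives no separate proof of Theorem~\ref{prRamsey}, but derives it exactly as you do, by noting (in the Remarks) that the abstract machinery of Theorem~\ref{egprothberger} applies to any family $\mathcal{A}$ with the countable-subfamily property, the relation $\mathcal{A}\rightarrow(\mathcal{A})^1_k$, and the game/selection equivalence, the latter supplied for $\mathcal{D}_{\Omega}$ on ${\sf PR}(X)$ by Theorem~\ref{prreals} from \cite{coc5}. Your verification of these three hypotheses and the ``mutatis mutandis'' transfer is precisely the route the paper takes, so the proposal is correct and essentially identical in approach.
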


For a non-compact topological space $X$ call an open cover $\mathcal{U}$ a {\sf k}-cover if there is for each compact $C\subset X$ a $U\in\mathcal{U}$ such that $C\subseteq U$, and if $X\not\in\mathcal{U}$. Let $\mathcal{K}$ denote the collection of {\sf k}-covers of such an $X$. If $X$ is a separable metric space then each member of $\mathcal{K}$ has a countable subset which still is a member of $\mathcal{K}$. Using the techniques above one can prove:
\begin{theorem}\label{kcovRamsey} For separable metric spaces $X$ the following are equivalent:
\begin{enumerate}
\item{ONE has no winning strategy in $\gone(\mathcal{K},\mathcal{K})$.}
\item{$\sone(\mathcal{K},\mathcal{K})$.}
\item{$\egp(\mathcal{K},\mathcal{K})$.}
\item{${\sf GP}(\mathcal{K},\mathcal{K})$.}
\item{${\sf FG}(\mathcal{K},\mathcal{K})$.}
\item{${\sf NW}(\mathcal{K},\mathcal{K})$.}
\item{For all $n$ and $k$, $\mathcal{K}\rightarrow(\mathcal{K})^n_k$.}
\end{enumerate}
\end{theorem}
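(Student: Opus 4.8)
The plan is to reduce Theorem \ref{kcovRamsey} to the machinery already developed for $\Omega$-covers by verifying that the family $\mathcal{K}$ of {\sf k}-covers of a separable metric space satisfies the three abstract hypotheses isolated in the Remarks section: (a) each member of $\mathcal{K}$ has a countable subset in $\mathcal{K}$; (b) for each $k$ the partition relation $\mathcal{K}\rightarrow(\mathcal{K})^1_k$ holds; and (c) $\sone(\mathcal{K},\mathcal{K})$ is equivalent to ONE not having a winning strategy in $\gone(\mathcal{K},\mathcal{K})$. Property (a) is asserted in the statement itself (it is where separable metrizability is used: a separable metric space is hereditarily Lindel\"of, and moreover its compact subsets form a cofinal $\sigma$-directed family, so a Lindel\"of-type argument extracts a countable {\sf k}-subcover). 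Property (b) is the exact analogue of Lemma \ref{lemma:OmegaisRamsey}: if $\mathcal{U}=\mathcal{U}_1\cup\cdots\cup\mathcal{U}_k$ and no $\mathcal{U}_i$ is a {\sf k}-cover, pick for each $i$ a compact $C_i$ not covered by any member of $\mathcal{U}_i$; then $C=C_1\cup\cdots\cup C_k$ is compact and no element of $\mathcal{U}$ contains it, a contradiction. Property (c) is the game-theoretic equivalence, which for $\mathcal{K}$-covers on separable metric spaces is known (it follows the Pawlikowski-style argument, or can be gleaned from the literature on {\sf k}-covers, e.g. the analogue of Theorem \ref{coc3gone}); in the body one would cite this or reprove it by the same strategy-diagonalization used in Theorem \ref{Th:commongeneralize}.

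Granting these three facts, the equivalences $(1)\Leftrightarrow(2)$, $(2)\Leftrightarrow(7)$, and the chain through $(3),(4),(5),(6)$ are obtained by transcribing, verbatim with $\Omega$ replaced by $\mathcal{K}$, the proofs of the corresponding implications in Theorem \ref{egprothberger}. Concretely: $(1)\Leftrightarrow(2)$ is hypothesis (c); $(2)\Rightarrow(3)$ runs the Galvin--Prikry accept/reject argument of Subsection 7.1, which used only $\sone(\mathcal{A},\mathcal{A})$, the game equivalence, and closure of $\mathcal{A}$ under the relevant finite modifications — all available here; $(3)\Rightarrow(4)$ is the trivial observation that $2^A$-open sets are Ellentuck-open; $(4)\Rightarrow(5)$ is Lemma \ref{densetoinitsegm} with $\mathcal{K}$ in place of $\Omega$; $(5)\Rightarrow(6)$ is Theorem \ref{thintohomogeneous}; $(6)\Rightarrow(7)$ is Theorem \ref{ramsey} (take $\mathcal{T}=[A]^n$); and $(7)\Rightarrow(2)$ is the one place where a genuine combinatorial argument is needed, namely the analogue of the implications $\Omega\rightarrow[\Omega]^2_3\Rightarrow\sone(\Omega,\Omega)$ and its companions — but since $(7)$ in particular gives $\mathcal{K}\rightarrow(\mathcal{K})^2_2$, one reuses the diagonal-intersection construction from the final subsections of Section 7 ($\mathcal{V}=\{U^1_m\cap U^m_k\}$, the coloring by ``same first index or not'', extract a homogeneous $\mathcal{K}$-subcover, and read off a selector), which goes through because finite intersections of {\sf k}-covers are {\sf k}-covers and a {\sf k}-cover cannot be refined by a single element of another {\sf k}-cover.

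I would organize the writeup as: first a short lemma establishing (a), (b), (c) for $\mathcal{K}$ on separable metric spaces (with (c) either cited or sketched), then a single paragraph stating that each implication in the cycle is obtained by the substitution $\Omega\rightsquigarrow\mathcal{K}$ into the already-given proofs, pointing to the specific lemmas and subsections, and finally spelling out in full only the $(7)\Rightarrow(2)$ direction since it is the one carrying real content and the one most sensitive to the special features of $\mathcal{K}$ (namely that proper members of a {\sf k}-cover miss some compact set).

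The main obstacle is property (c), the equivalence between $\sone(\mathcal{K},\mathcal{K})$ and ONE lacking a winning strategy in $\gone(\mathcal{K},\mathcal{K})$: unlike the $\sigma$-compact uniform-space cases of Section 2, there is no total-boundedness crutch here, so one genuinely needs a Pawlikowski-type or a direct strategy-diagonalization argument for {\sf k}-covers, and verifying that separable metrizability supplies exactly what that argument requires (hereditary Lindel\"of-ness to keep covers countable, plus a handle on the compact subsets) is the delicate step. Everything downstream of (c) is then a mechanical replay of Section 7; the only care needed is to confirm at each step that the finite-modification closure properties of $\Omega$ that those proofs silently use (adding or deleting finitely many sets, taking finite intersections, restricting to ``$A|s$'') are also enjoyed by $\mathcal{K}$, which they are because a family obtained from a {\sf k}-cover by such operations still absorbs every compact set.
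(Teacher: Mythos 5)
Your proposal is correct and follows essentially the same route as the paper: the paper likewise treats the theorem as an instance of the abstract scheme of Section 9, citing \cite{ST} for the game equivalence $(1)\Leftrightarrow(2)$ and \cite{dkm} for $(2)\Leftrightarrow(7)$ (with $n=k=2$), and then notes that the remaining equivalences are obtained exactly as for $\Omega$ in Theorem \ref{egprothberger}. Your identification of the game equivalence as the one input that cannot be had by transcription (and must be cited or proved by a Pawlikowski-type argument) matches the paper's treatment, and your direct verifications of $\mathcal{K}\rightarrow(\mathcal{K})^1_k$ and of the $(7)\Rightarrow(2)$ diagonal-intersection argument are sound substitutes for the citation to \cite{dkm}.
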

The equivalence of (2) and (7) for n=2 and k=2 is Theorem 8 of \cite{dkm}. The equivalence of (1) and (2) is a result of \cite{ST}. The remaining equivalences are then derived as was done above for $\Omega$.

In the above examples we did not list all the equivalences proved in Theorem \ref{egprothberger}. Indeed, we did not check all of these. To illustrate that the remaining equivalences are worth checking, consider the following example: When $\mathcal{A}$ is a free ultrafilter on $\naturals$, the following theorem holds:
\begin{theorem}\label{thm:ultrafilters} For a non-trivial ultrafilter $\mathcal{U}$ on $\naturals$ the following statements are equivalent:
\begin{enumerate}
\item{Selection principle $\sone(\mathcal{U},\mathcal{U})$ holds.}
\item{ONE has no winning strategy in the game $\gone(\mathcal{U},\mathcal{U})$.}
\item{For all natural numbers $m$ and $k$ the partition relation $\mathcal{U}\rightarrow(\mathcal{U})^m_k$ holds.}
\end{enumerate}
\end{theorem}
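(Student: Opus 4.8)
The plan is to establish the cycle $(1)\Rightarrow(2)\Rightarrow(3)\Rightarrow(1)$, noting that $(2)\Rightarrow(1)$ also holds for free by the general observation recorded after Definition~\ref{def:Game}. The first point is that a non-trivial ultrafilter $\mathcal{U}$ on $\naturals$ automatically meets the two structural demands under which the whole apparatus of Sections~7 and~8 was built: every member of $\mathcal{U}$ is a subset of $\naturals$, hence already countable; and for every $A\in\mathcal{U}$ and every finite partition $A=A_1\cup\cdots\cup A_k$ exactly one $A_i$ lies in $\mathcal{U}$, so that $\mathcal{U}\longrightarrow(\mathcal{U})^1_k$ holds for every $k$ --- the ultrafilter incarnation of Lemma~\ref{lemma:OmegaisRamsey}. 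Granting these, the arguments of Theorem~\ref{egprothberger} go through word for word with $\Omega$ replaced by $\mathcal{U}$, provided one supplies the ultrafilter version of Theorem~\ref{coc3gone}: that $\sone(\mathcal{U},\mathcal{U})$ holds exactly when ONE has no winning strategy in $\gone(\mathcal{U},\mathcal{U})$. That equivalence is precisely $(1)\Leftrightarrow(2)$, and I expect it to be the only part requiring genuine work.

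Half of it, $(2)\Rightarrow(1)$, is immediate. The heart of the matter is $(1)\Rightarrow(2)$: given $\sone(\mathcal{U},\mathcal{U})$ and a strategy $F$ for ONE in $\gone(\mathcal{U},\mathcal{U})$, one must produce an $F$-play $F(\emptyset),t_1,F(t_1),t_2,\dots$ whose outcome $\{t_n:n\in\naturals\}$ belongs to $\mathcal{U}$. The obvious attempt --- enumerate the countably many positions $\sigma$ of the strategy tree, set $A_\sigma=F(\sigma)\in\mathcal{U}$, and apply $\sone(\mathcal{U},\mathcal{U})$ once to the sequence $(A_\sigma:\sigma)$ --- fails, since the branch one can read off from the resulting selection is only a sparse subset of the selected set, so it need not lie in $\mathcal{U}$. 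The correct route is a \emph{fusion} (the kind of construction underlying Mathias's Ramsey theorem for selective ultrafilters): build, by recursion on $n$ and repeated use of $\sone(\mathcal{U},\mathcal{U})$, a single $B=\{b_1<b_2<\cdots\}\in\mathcal{U}$ which is simultaneously an $F$-play, i.e. $b_1\in F(\emptyset)$ and $b_{n+1}\in F(b_1,\dots,b_n)$ for all $n$; then $B$ is the outcome of a play lost by ONE. The delicate point is the stabilisation: at stage $n$ one must, from a $\mathcal{U}$-sized reservoir of admissible continuations, pass to a smaller $\mathcal{U}$-sized reservoir compatible with the next moves $F$ prescribes from the finitely many positions still alive, and then check that the diagonal of these reservoirs is again in $\mathcal{U}$. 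This is exactly the selectivity phenomenon that $\sone(\mathcal{U},\mathcal{U})$ encodes, and it is the step I expect to be the main obstacle; the argument is, however, of a standard kind.

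Once $(1)\Leftrightarrow(2)$ is in hand the rest is routine. For $(3)\Rightarrow(1)$ there is a short direct argument of the same flavour as the partition-relation-implies-selection-principle halves of Theorems~\ref{th:RealsRamsey} and~\ref{thm:rothberger}: given $(B_n:n\in\naturals)$ in $\mathcal{U}$, put $A_0=\naturals$ and $A_n=B_1\cap\cdots\cap B_n\in\mathcal{U}$, and colour $\{m,n\}\in[\naturals]^2$ with $m<n$ by $0$ if $n\in A_m$ and by $1$ otherwise; apply $\mathcal{U}\longrightarrow(\mathcal{U})^2_2$ to obtain a homogeneous $H\in\mathcal{U}$. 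Colour $1$ is impossible, since then $H\setminus\{\min H\}$ would be disjoint from $A_{\min H}\in\mathcal{U}$; so $H=\{h_0<h_1<\cdots\}$ is $0$-homogeneous, and setting $b_n=h_{n+1}$ we get $b_n\in A_{h_n}\subseteq A_n\subseteq B_n$ while $\{b_n:n\in\naturals\}=H\setminus\{h_0,h_1\}\in\mathcal{U}$, so $(b_n)$ witnesses $\sone(\mathcal{U},\mathcal{U})$. For $(2)\Rightarrow(3)$ one first gets $\mathcal{U}\longrightarrow(\mathcal{U})^2_2$ by the strategy-stealing argument used for $(1)\Rightarrow(2)$ in Theorem~\ref{th:RealsRamsey}: using the ultrafilter property $\mathcal{U}\longrightarrow(\mathcal{U})^1_k$, recursively build a decreasing tower $(V_n)$ in $\mathcal{U}$ with colours $(i_n)$ capturing the value of $f$ between $n$ and the surviving members of $V_n$, thin so that $V_n\cap\{m:i_m=j\}\in\mathcal{U}$ for a fixed $j$ and all $n$, and let ONE play these sets in $\gone(\mathcal{U},\mathcal{U})$; a play this strategy loses is an $f$-homogeneous member of $\mathcal{U}$. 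The higher exponents $\mathcal{U}\longrightarrow(\mathcal{U})^m_k$ for all $m$ and $k$ then follow exactly as in Section~7, running ${\sf NW}(\mathcal{U},\mathcal{U})$ through ${\sf FG}(\mathcal{U},\mathcal{U})$, ${\sf GP}(\mathcal{U},\mathcal{U})$ and $\egp(\mathcal{U},\mathcal{U})$, each proved for $\mathcal{U}$ by the argument already given for $\Omega$, the only inputs being countability of members, $\mathcal{U}\longrightarrow(\mathcal{U})^1_k$, and the now-available equivalence $(1)\Leftrightarrow(2)$.

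In summary, the whole content is concentrated in $(1)\Rightarrow(2)$ --- that $\sone(\mathcal{U},\mathcal{U})$ is already enough to defeat every strategy of ONE --- which is a fusion/selectivity argument rather than a bookkeeping consequence of the definitions; everything else is either the general principle after Definition~\ref{def:Game}, a one-paragraph colouring, or a transcription of Sections~7 and~8 with $\Omega$ replaced by $\mathcal{U}$. As a by-product one obtains that, for a non-trivial ultrafilter on $\naturals$, all of $\egp(\mathcal{U},\mathcal{U})$, ${\sf GP}(\mathcal{U},\mathcal{U})$, ${\sf FG}(\mathcal{U},\mathcal{U})$, ${\sf NW}(\mathcal{U},\mathcal{U})$ and the square-bracket relations of Theorem~\ref{egprothberger} are equivalent to the three statements listed in Theorem~\ref{thm:ultrafilters}.
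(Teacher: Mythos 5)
The paper itself offers no proof of Theorem \ref{thm:ultrafilters}: it attributes the equivalence of (1) and (3) to Kunen and that of (1) and (2) to Galvin and McKenzie. Your proposal is therefore a reconstruction, and its architecture is largely sound: the direct colouring for $(3)\Rightarrow(1)$ is correct (this is the classical argument, and your handling of the homogeneous colour and the tail of $H$ is fine), and your observation that the chain $\sone(\mathcal{U},\mathcal{U})\Rightarrow\egp(\mathcal{U},\mathcal{U})\Rightarrow{\sf GP}(\mathcal{U},\mathcal{U})\Rightarrow{\sf FG}(\mathcal{U},\mathcal{U})\Rightarrow{\sf NW}(\mathcal{U},\mathcal{U})\Rightarrow(\forall m,k)\,\mathcal{U}\rightarrow(\mathcal{U})^m_k$ transfers from Section 7 is defensible, since those arguments use only countability of members, freeness (members infinite and stable under deleting finite sets), the exponent-one relation $\mathcal{U}\rightarrow(\mathcal{U})^1_k$, and the ultrafilter analogue of Theorem \ref{coc3gone}. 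The genuine gap is that this last ingredient, $(1)\Leftrightarrow(2)$, is exactly where all the content of the theorem sits, and you do not prove it: you name the fusion construction, flag the stabilisation step as ``the main obstacle,'' and assert it is ``of a standard kind.'' As written this is a plan, not a proof --- the Galvin--McKenzie theorem that a selective ultrafilter defeats every strategy of ONE requires carrying out the recursion (at each stage bounding the finitely many live positions, intersecting the corresponding values of $F$, and using selectivity to diagonalise into a single member of $\mathcal{U}$ that is itself an $F$-play), and none of that is executed. Everything downstream in your write-up is conditional on this step.

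Separately, your closing ``by-product'' is false: the square-bracket relations of Theorem \ref{egprothberger} are \emph{not} equivalent to (1)--(3) for ultrafilters. The paper notes this immediately after the theorem: by Blass \cite{Blass}, under the Continuum Hypothesis there is an ultrafilter $\mathcal{U}$ with $\mathcal{U}\rightarrow\lbrack\mathcal{U}\rbrack^2_3$ which fails $\sone(\mathcal{U},\mathcal{U})$. This also shows that your blanket claim that the arguments of Theorem \ref{egprothberger} ``go through word for word'' with $\Omega$ replaced by $\mathcal{U}$ is too strong: the implication from $\Omega\rightarrow\lbrack\Omega\rbrack^2_3$ to $\sone(\Omega,\Omega)$ (via Lemma \ref{lemma:subscriptup} and the case analysis on intersections drawn from a sequence of covers, including the use of statement (11), which involves $\open$) genuinely exploits the $\omega$-cover setting and has no ultrafilter counterpart. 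Your route for the theorem itself happily avoids that leg --- exponent $2$ by the game argument as in Theorem \ref{th:RealsRamsey}, higher exponents through the ${\sf NW}$-chain, and $(3)\Rightarrow(1)$ directly --- so the remedy is simply to delete the square-bracket part of the by-product; but the unproved equivalence $(1)\Leftrightarrow(2)$ remains the substantive omission.
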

Ultrafilters $\mathcal{U}$ satisfying (1) of Theorem \ref{thm:ultrafilters} have been called selective ultrafilters, while ones satisfying (3) have been called Ramsey ultrafilters. It is known that the existence of an ultrafilter as in Theorem \ref{thm:ultrafilters}
is independent of \textsf{ZFC}. The equivalence of statements (1) and (3) is due to Kunen, while the equivalence of (1) and (2) is due to Galvin and McKenzie. Note that these implications are analogous to the ones for $\Omega$ appearing in Theorem \ref{egprothberger} among others. But not all equivalences in Theorem \ref{egprothberger} also hold for ultrafilters $\mathcal{U}$ on $\naturals$. For example: In \cite{Blass} Blass proved that under the Continuum Hypothesis there is an ultrafilter $\mathcal{U}$ on $\naturals$ such that $\mathcal{U}\rightarrow\lbrack\mathcal{U}\rbrack^2_3$, but $\mathcal{U}$ does not satisfy the selection principle $\sone(\mathcal{U},\mathcal{U})$. Thus $(8)\Rightarrow(1)$ of Theorem \ref{egprothberger} for $\Omega$, does not hold for ultrafilters on $\naturals$.

%%%%%%%%%%%%%%%%%%%%%%%%%%%%%%%%%%%%%%%%%%%%%%%%
\bibliographystyle{amsplain}

\end{document}